\newcommand{\dist}{3mm}
\numberwithin{equation}{section}
\newtheorem{theorem}{Theorem}[section]
\newtheorem{lemma}[theorem]{Lemma}
\newtheorem{proposition}[theorem]{Proposition}
\newtheorem{corollary}[theorem]{Corollary}
\theoremstyle{definition}
\newtheorem{definition}[theorem]{Definition}
\newtheorem{remark}[theorem]{Remark}
\newtheorem{example}[theorem]{Example}
\newcommand{\dimmatrix}{\mathtt{D}}
\newcommand{\dimv}{\mathtt{d}}
\newcommand{\typ}{\mathtt{t}}
\newcommand{\round}{\ring}
\newcommand{\lrarrow}{\vec}
\newcommand{\qq}{{\boxed{?}}}
\newcommand{\qqone}{{\boxed{?_1}}}
\newcommand{\qqtwo}{{\boxed{?_2}}}
\newcommand{\qqonetwo}{{\boxed{?_{1,2}}}}
\newcommand{\qqoneone}{{\boxed{?_{1,1}}}}
\newcommand{\op}{\operatorname}
\newcommand{\bmu}{{\boldsymbol{\mu}}}
\newcommand{\bmuh}{{\hat{\bmu}}}
\newcommand{\blah}{{\hat{\boldsymbol{\la}}}}
\newcommand{\ourOmega}{{\tau}}
\newcommand{\FF}{\Bbbk}
\newcommand{\Z}{\mathbb{Z}}
\newcommand{\mZ}{\mathbb{Z}}
\newcommand{\C}{\mathbb{C}} 
\newcommand{\Q}{\mathbb{Q}}
\newcommand{\N}{\mathbb{N}}
\newcommand{\bbI}{\mathbb{I}}
\newcommand{\fS}{\mathfrak{S}}
\newcommand{\fP}{\mathfrak{X}}
\newcommand{\G}{\mathrm{G}}
\newcommand{\I}{\mathrm{I}}
\newcommand{\W}{\mathrm{W}}
\newcommand{\rmP}{\mathrm{P}}
\newcommand{\Stab}{\mathrm{Stab}}
\newcommand{\Hom}{\mathrm{Hom}}
\newcommand{\End}{\mathrm{End}}
\newcommand{\cH}{\mathcal{H}}
\newcommand{\cS}{\mathcal{S}}
\newcommand{\cB}{\mathcal{B}}
\newcommand{\cP}{\mathcal{P}}
\newcommand{\cQ}{\mathcal{Q}}
\newcommand{\cI}{\mathcal{I}}
\newcommand{\cJ}{\mathcal{J}}
\newcommand{\ccH}{\widehat{\mathcal{H}}}\newcommand{\ccP}{\widehat{\mathcal{P}}}\newcommand{\ccQ}{\widehat{\mathcal{Q}}}
\newcommand{\cbR}{\widehat{\mathbf{R}}}\newcommand{\cbP}{\widehat{\mathbf{P}}}\newcommand{\cbQ}{\widehat{\mathbf{C}}}
\newcommand{\ccS}{\widehat{\mathcal{S}}}
\newcommand{\ccB}{\widehat{\mathcal{B}}}
\newcommand{\cbA}{\widehat{\mathbf{A}}}
\newcommand{\cbB}{\widehat{\mathbf{B}}}
\newcommand{\cGamma}{\widehat{\Theta}}
\newcommand{\Fa}{\mathbf{F}}
\newcommand{\cFa}{\widehat{\Fa}}
\newcommand{\cF}{\mathcal{F}}
\newcommand{\FH}{\mathbb{P}(\cH)}
\newcommand{\oFH}{\overline{\mathbb{P}(\cH)}}
\newcommand{\FS}{\mathbb{P}(\cS)}
\newcommand{\oFS}{\overline{\mathbb{P}}(\cS)}
\newcommand{\cFH}{\widehat{\mathbb{P}(\cH)}}
\newcommand{\coFH}{\widehat{\overline{\mathbb{P}}(\cH)}}
\newcommand{\cFS}{\widehat{\mathbb{P}(\cS)}}
\newcommand{\coFS}{\widehat{\overline{\mathbb{P}}(\cS)}}
\newcommand{\orho}{\overline{\rho}}
\newcommand{\crho}{\widehat{\rho}}
\newcommand{\corho}{\widehat{\overline{\rho}}}
\newcommand{\bA}{\mathbf{A}}\newcommand{\bB}{\mathbf{B}}
\newcommand{\bI}{\mathbf{I}}
\newcommand{\bJ}{\mathbf{J}} 
\newcommand{\bR}{\mathbf{R}}
\newcommand{\bP}{\mathbf{P}}
\newcommand{\bQ}{\mathbf{C}}
\newcommand{\bU}{\mathbf{U}}
\newcommand{\bC}{\boldsymbol{\pitchfork}}
\newcommand{\bM}{{}_{q}{\boldsymbol{\pitchfork}}}
\newcommand{\ba}{\mathbf{a}}
\newcommand{\bi}{\mathbf{i}}
\newcommand{\bm}{\mathbf{m}}
\newcommand{\bu}{\mathbf{u}}
\newcommand{\bx}{\mathbf{x}}
\newcommand{\unclear}{\bC_{a_1,\;1,\;b_1-1,a_2,b_2,\ldots,a_e,b_e}^{a_1+1, b_1-1,a_2,b_2,\ldots,a_e,b_e}} 
\newcommand{\uunclear}{\bC_{1,a_1-1,1,b_1-1,a_2,b_2,\ldots,a_e,b_e}^{1,a_1,\;\;\;\;\;\;\;\; b_1-1,a_2,b_2,\ldots,a_e,b_e}} 
\newcommand{\bv}{\mathbf{v}}
\newcommand{\bbv}{\overline{\mathbf{v}}}
\newcommand{\bb}{\mathbf{b}}
\newcommand{\cLa}{\widehat{\Lambda}}
\newcommand{\DM}{{\overrightarrow{\fP}}}
\newcommand{\AM}{{\overleftarrow{\fP}}}
\newcommand{\DMJ}{{\overrightarrow{\fP_J}}}
\newcommand{\AMJ}{{\overleftarrow{\fP_J}}}
\newcommand{\DMJp}{{\overrightarrow{\fP_J^>}}}
\newcommand{\AMJp}{{\overleftarrow{\fP_J^>}}}
\newcommand{\DMJm}{{\overrightarrow{\fP_J^<}}}
\newcommand{\AMJm}{{\overleftarrow{\fP_J^<}}}
\newcommand{\AMJum}{{\overleftarrow{\fP_{J_\bu}^<}}}
\newcommand{\DD}{\Delta}
\newcommand{\AD}{\nabla}
\newcommand{\e}{\overset{+}{e}}
\newcommand{\la}{\lambda}
\newcommand{\gt}{\theta}
\newcommand{\shift}{\kappa}
\title{Affine quiver Schur algebras and $p$-adic $GL_n$}
\author{Vanessa Miemietz, Catharina Stroppel}
\begin{document}
\begin{abstract}
In this paper we consider the (affine) Schur algebra %introduced by Vign\'eras 
which arises as the endomorphism algebra of certain permutation modules for the Iwahori-Matsumoto Hecke algebra. This algebra describes, for a general linear group over a $p$-adic field, a large part of the unipotent block over fields of characteristic different from $p$. We show that this Schur algebra is, after a suitable completion, isomorphic to the quiver Schur algebra attached to the cyclic quiver. The isomorphism is explicit, but nontrivial. As a consequence, the completed (affine) Schur algebra inherits a grading. As a  byproduct we  obtain a detailed description of the algebra with a basis adapted to the geometric basis of quiver Schur algebras. We illustrate the grading in the explicit example of $\op{GL}_2(\mathbb{Q}_5)$ in characteristic $3$.
\end{abstract}
\maketitle

\tableofcontents

\setlength{\parindent}{0pt} 
\setlength{\parskip}{3pt}

\section{Introduction}
This paper deals with affine Schur algebras for $p$-adic groups over fields of characteristic different from $p$. Classical Schur algebras were introduced by Sandy Green \cite{Green}  as an algebraic tool to study polynomial representations of the general linear group $\op{GL}_n$ over arbitrary fields and named after Schur because they arise as the endomorphism ring of the sum of certain permutation modules of the symmetric group $S_n$. Dipper and James \cite{DipperJames} introduced $q$-Schur algebras over arbitrary fields to study the modular  representation theory of the finite general linear groups $\op{GL}_n(\mathbb{F}_q)$ in non-describing or cross characteristic. The Schur algebras we consider in this paper are their analogues for the representation theory of the $p$-adic group $\op{GL}_n(E)$, where $E$ is a finite extension of $\Q_p$, over a field of characteristic different from $p$. As a main result, we show that (after a suitable completion) this algebraically defined algebra has a geometric realization as a convolution algebra with underlying vector space the equivariant cohomology of some partial quiver flag varieties introduced in \cite{SW} under the name {\it quiver Schur algebras}. 

Let $\FF$ be an algebraically closed field of 
characteristic $\ell \neq p$, such that the cardinality $q$ of the residue field of $E$ is not congruent to $1$ modulo $\ell$.
We are interested in the category of smooth representations of $\G=\op{GL}_n( E)$ over the field $\FF$ (or equivalently the category of nondegenerate representations of the global Hecke algebra of locally constant compactly supported functions on $G$). This is known to have a block decomposition by inertial classes of supercuspidal support \cite{B84}, \cite{Vigneras2},  \cite{SS}. In this article, we are interested in the so-called {\it unipotent block} $\cB$ which contains the trivial representation. The special case $q\equiv 1 \mod \ell$ is treated in Section~\ref{Appendix}.

As in the case of $\op{GL}_n(\mathbb{F}_q)$, the Schur algebra will not describe the whole unipotent block, but rather a proper subcategory $\cB^1$ which is the lowest layer in a finite filtration $\cB^1\subseteq\cB^2\subseteq\cB^3\subseteq\ldots\subseteq\cB$. Namely, let $I\subset \G$ be an Iwahori subgroup and let $\mathcal{I}$ be the annihilator of the $\G$-representation $\FF[I\backslash\G]$ (inside the global Hecke algebra). Then $\cB^i\subset\cB$ is the full subcategory consisting of all representations annihilated by $\mathcal{I}^i$. The categories $\cB^i$ are abelian. 
It is proved in \cite{V} that the first layer $\cB^1$ is equivalent to the category of all modules for the {\it affine Schur algebra} $\cS$,
\begin{eqnarray}
\label{BandS}
\cB^1\cong\cS-\op{Mod},
\end{eqnarray}
where $\cS$ is defined as the endomorphism ring 
\begin{eqnarray*}
\cS&=&\End_{{\FF[\I\backslash \G/\I]}}\left(\bigoplus_{J \subseteq \bbI} \FF[\rmP^J\backslash \G/\I]\right)\;=\;\End_{\cH}\left(\bigoplus_{J \subseteq \bbI} \bv_J \cH\right).
\end{eqnarray*}
Here $\cH=\FF[I\backslash \G/I]$ is the (affine) {\it Iwahori-Matsumoto Hecke algebra}, \cite{IM}, and the sum is taken over all standard parahoric subgroups $\rmP^J$ attached to a  subset $J$ of the set  $\bbI$ of (finite) simple reflections, and  $\bv_J \cH$ is the corresponding trivial representation induced to $\cH$.  
In particular, $\cS$ contains $\cH$ as an idempotent subalgebra from setting $J=\emptyset$. (We would like to stress however that, in general, multiplication with $\bv_J$ is not a projection, since $\bv_J$ does not need to be a quasi-idempotent. In case $q$ is a root of unity, it  might happen that $\bv_J^2=0$. This phenomenon is one of the technical difficulties in our paper.) 
Note that $\cB^1$, alias $\cS-\op{Mod}$, is in general not equivalent to $\cH-\op{Mod}$, since $\cB^1$ contains in addition the cuspidal representations, which are not included in the subcategory $\cH-\op{Mod}$ of $\cS-\op{Mod}$.  

We expect that $\cB$ in fact only differs from $\cB^1$ by self-extensions depending on the cuspidal support of the corresponding simple modules, and thus $\cB^1$ contains quite detailed information about the unipotent block $\cB$.
 
Note that the classification of irreducible representations in $\cB^1$ (or equivalently in the unipotent block $\cB$) is provided by \cite{Vigneras2}, \cite{MS}, and a convenient labelling set for the irreducible modules is given by certain multisegments, 
extending the Bernstein-Zelevinsky classification of irreducible modules for the Iwahori-Matsumoto Hecke algebras, \cite{BZ}, \cite{Zel1} in characteristic zero. The block decomposition and classification in \cite{Vigneras2} is via the local Langlands correspondence for $\mathrm{GL}_n$ in characteristic $\ell\neq p$, that is an extension of the local Langlands correspondence  over the complex numbers, \cite{HT}, \cite{Harris}, \cite{He}, \cite{Scholze} (or \cite{Wedhorn} for an overview). In particular, this gives the rank of the Grothendieck group of  $\cB^1$.

In this article, we take this one step further by providing tools for a better understanding of  extensions between simple modules and moreover of the structure and  the homological properties of the categories involved, as well as making a connection with geometry. To do so, we compare the affine Schur algebras  to the {\it quiver Schur algebras} from \cite{SW} attached to the cyclic quiver with $e$ vertices (viewed as the oriented affine Dynkin diagram for $\hat{\mathfrak{sl}}_e$). These algebras contain the so-called {\it quiver Hecke algebras} or {\it KLR}-algebras, originally introduced in \cite{KL}, \cite{Rouquier}, see also \cite{VV}. Over $\FF=\C$, their graded module categories furthermore provide by \cite{SW} a categorification of the generic Hall algebra (in the sense of \cite{Schiffmann}) for the cyclic quiver with $e$ vertices. Hereby  $e$ is the multiplicative order modulo $\ell$ of the cardinality of the residue field of $E$  and $e=\infty$ if $\ell=0$. 

Given a fixed dimension vector ${\bf d}$ for the cyclic quiver on $e$ vertices, one considers the space of {\it flagged nilpotent representations} with dimension vector ${\bf d}$, that is, representations together with a filtration such that the associated graded is semisimple. In contrast to the KLR-algebras we allow arbitrary partial flags instead of full flags only. Fixing a sequence $\blah$ of dimension vectors for the successive quotients we denote this space $\cQ(\blah)$. Following the ideas of Chriss and Ginzburg \cite{CG} we consider the ``Steinberg type'' variety  $\mathcal{Z}({\blah, \bmuh})=\cQ(\blah) \times_{\op{Rep}_{\bf d}}\cQ(\bmuh)$. The quiver Schur algebra $\bA_{\bf{d}}$ is then its $\op{GL}_{\bf{d}}(\mathbb{C})$-equivariant Borel-Moore homology  
\begin{eqnarray*}
\bA_{\bf{d}}&=&\bigoplus_{(\blah,\bmuh)}H_{\op{GL}_{\bf{d}}}^{\op{BM}}(\mathcal{Z}({\bmuh, \blah})),
\end{eqnarray*}
equipped with the convolution product.  By construction, this algebra comes along with {\it a $\mathbb{Z}$-grading} and with a faithful representation, see \cite{SW}.

Crucial for us here is that via the faithful representation we see that the quiver Schur algebra can be defined over any field, in particular over the field $\FF$.  Over the complex numbers,  the principal underlying constructions using convolution algebras, \cite{CG} and (other) geometric realisations of the affine Schur algebras, see e.g. \cite{GV},  are well-established.   In case of the field $\FF$, the technicalities and geometric tools are not as well developed as one might hope for. This also applies to the connection with the representation theory of affine Kac-Moody Lie algebras, but see \cite{VV2} for crucial steps in this direction.

{\bf Main result:} 
Our main result (Theorem~\ref{IsoTheorem}) is that the affine Schur algebra and the quiver Schur algebra (both over $\FF)$ are isomorphic after suitable completions. More precisely, we construct a sequence of isomorphisms of algebras
\begin{equation*}
\xymatrix{
\ccS_\bi\ar[rrr]^{\text{Proposition~\ref{StoQ}}\quad}&&&\cbQ_\bi\ar[rrr]^{\text{Proposition~\ref{twistA}}}&&&\cbB_\bi\ar[rrr]^{\text{Proposition~\ref{shiftA}}}&&& \cbA_\bi,
}
\end{equation*}
where the intermediate algebras are certain twisted versions of quiver Schur algebras.
This, in particular,  implies  that the category $\cB^1_\chi$ of representations $M$ in $\cB^1$ with fixed generalized central character $\chi$ 
(in the sense that each element in $M$ is annihilated by some power of $\chi$)
inherits a grading. The categories $\cB^1_\chi$, as $\chi$ varies over all central characters, in particular, contain all finite-length representations.

The existence of such a $\mZ$-grading seems to be quite unexpected and has no explanation in the $p$-adic representation theory at the moment. Although the modules are of infinite length,  the graded pieces are finite dimensional and so the grading allows us to consider Jordan-H\"older multiplicities degree-wise where they then, in fact, become finite and well-defined. Hence we can use formal power series to express the graded multiplicities. 

The additional algebras appearing in the main theorem interpolate between between the algebraic and geometric construction. Although they are in the current article more a technical tool than of own interest, their appearance shows subtle differences between the classical algebraic picture and the new graded version behind quiver Hecke algebras (like for instance the preference to work with the trivial versus the sign representation, the symmetric role of splits and merges in the graded version,  and the established labelling of the irreducible objects on each side). The isomorphism allows to make the explicit translation. 

We should remark that after having completed this article we found also a geometric construction of our intermediate {\it modified Schur algebra} which in fact connects the quiver Schur algebra more directly to Lusztig's original construction of quantum groups, \cite{Lusztigperv}. Namely instead of considering flagged representations  (that is representations equipped with a filtration)  such that the associated graded is semisimple, we consider the slightly weaker conditions of flagged representations without assuming the associated graded to be semisimple. Our isomorphism then identifies the two geometric constructions. More details on the geometric modified Schur algebra will appear in \cite{Tomasz}.

In small examples,  our isomorphism allows us to give a {\it complete} and {\it explicit} description of this category in terms of the path algebra of a quiver with generators and relations, an example is given in Section~\ref{Lastsection}. In particular, it allows us to compute extensions between simple modules in small examples. This provides a first step towards general results about the homological algebra of $\cB^1$, based on results on quiver Hecke algebras.

The proof of the main result relies on a very careful comparison of faithful representations of all involved algebras. The final result is then an {\it explicit} (non-trivial) isomorphism. 

Besides the main theorem, the paper contains some fundamental results about the algebras involved. For instance, we construct several generating sets for the affine Schur algebras (see in particular Corollary~\ref{corgenerating} and Proposition~\ref{generatingbetter}), explicit faithful representations (in Section~\ref{secfaithful}) and geometrically adapted bases (in Section~\ref{secgoodbasis}). The paper also contains (see Section~\ref{SectionDem}) explicit formulae for Demazure (divided difference) operators interacting with multiplication by polynomials, which we believe should play an important role in a possible categorification result. They generalise crucial formulae from the categorification of quantum groups, see e.g. \cite{KL3}, \cite{KLMS}, and well-known formulae from the geometry of flag varieties.

In characteristic zero and for generic $q$, the affine $q$-Schur algebra was studied in detail by Richard Green \cite{Gr2} who also realised it as a quotient of the quantum group for $\hat{\mathfrak{gl}}_n$. In this case a complete presentation of the algebra is available,  \cite{DG}. In our more general situation such a presentation does not exist yet, but our faithful representations turn the problem of finding a presentation into a problem of linear algebra. Moreover our explicit formulae should make it possible to generalise the geometric results for quiver Schur algebras defined over $\FF=\C$ to the positive characteristic case with $q$ a root of unity. 

We have tried to make this paper as self-contained as possible, in order to make it accessible to 
readers both from a representation theoretic or a number theoretic background. 

{\bf Acknowledgments} We thank G{\"u}nter Harder, David Helm, Peter Scholze, Shaun Stevens and Torsten Wedhorn for useful discussions on the background material of this paper, Ruslan Maksimau and Andrew Mathas for sharing their insight into Hecke algebras, and the referees for their advice. This work was partly supported by the DFG grant SFB/TR 45 and EPSRC grant EP/K011782/1.  

\section{Preliminaries}
%We fix an algebraically closed  field $\FF$ of characteristic $\ell$ and a number $p$ such that $p$ or $p\not=\ell$ is prime. 
We fix a prime $p$ and a natural number $n\geq 2$ and consider the general linear group $\G=\mathrm{GL}_n(E)$ for  a finite extension $E$ of $\Q_p$, the field of $p$-adic numbers. The field $E$ has a local ring $\mathfrak{o}$ of integers, whose quotient by its maximal ideal $\mathfrak{p}$ is a finite field of characteristic $p$. We let $q$ denote the cardinality of this residue field. We furthermore fix an algebraically closed  field $\FF$ of characteristic $\ell\geq 0$, $\ell\neq p$ and let  $e$ be the multiplicative order of  $q$ in  $\FF$. We assume $q\neq  1\mod \ell$.

\subsection{The extended affine Weyl group}
\label{extaff}
We start by recalling the definition and basic facts of the extended affine Weyl group attached to $\G$. For more details see e.g. \cite{IM} or \cite{Gr1}, \cite{Lu} for a description in terms of periodic permutations. 

The {\it extended affine Weyl group} associated to $\G$ is the group $\W$ generated by a set $\bbI_0=\{s_0, \dots, s_{n-1}\}$ of {\it simple reflections} of order two and an element $\ourOmega$ of infinite order, given by the following presentation: 
\begin{eqnarray}
\label{W} 
\W&=&\left\langle \ourOmega, s_i,  0\leq i\leq n-1\left|
\begin{array}{c} s_{\overline{i}}^2=1, \quad  \ourOmega s_{\overline{i}} = s_{\overline{i-1}} \ourOmega\\ s_{\overline{i+1}}s_{\overline{i}}s_{\overline{i+1}}=s_{\overline{i}}s_{\overline{i+1}}s_{\overline{i}} \end{array}\right.\right\rangle.
\end{eqnarray}
where $\overline{i}\in \{0,\dots, n-1\}$ with $\overline{i}\equiv i \mod n$. 

Using the relation $\ourOmega s_{\overline{i}}=s_{\overline{i-1}}\ourOmega$ we can
(in a unique way) write every element $w\in W$ as $x\ourOmega ^j$ for some $x$
 contained in the subgroup generated by $I_0$ and $j\in\mathbb{Z}$. Define  
the length of $w=x\ourOmega ^j$ as $\ell(w)=\ell(x)$, where $\ell(x)=r$ with $r$ minimal such that
 $x=s_{i_1}\cdots s_{i_r}$ for some $i_j\in 0,\ldots,n-1$. 

We view $\W$ as a subgroup of $\G$ by choosing lifts of its elements as follows: For $i=1,\dots, n-1$, we choose the corresponding permutation matrix interchanging the $i$th and $(i+1)$st rows and columns. For $s_0$ we take the matrix with entry $1$ in position $(j,j)$ for $j=2,\dots, n-1$, the uniformizer $\varpi$ (a fixed generator of $\mathfrak{p}$) in position $(n,1)$ and its inverse $\varpi^{-1}$ in position $(1,n)$, and all other entries being zero. Finally $\ourOmega$ has $\varpi$ in position $(n,1)$ and $1$ in positions $(j,j+1)$ for $j=1, \dots, n-1$, with again all other entries being zero.

There is another presentation of $\W$ as semi-direct product $\fS \rtimes \fP$,
 where $\fS$ is the symmetric subgroup generated by  $\bbI=\{s_1, \dots, s_{n-1}\}$ and $\fP$ is a free abelian (multiplicative) group generated by $X_1, \dots, X_n$, on which $\fS$ acts by permuting the generators.
More specifically, a general element of $\fP$ is a Laurent monomial $X_1^{a_1}\cdots X_n^{a_n}$ with $a_i\in \Z$ and $s_iX_is_i = X_{i+1}$.
A representative of $X_i$  in $\G$ can be chosen to be a matrix with $1$'s along the diagonal, except in position $(i,i)$ where we put $\varpi$. We record the following.

\begin{lemma}\label{weylgppres}
An isomorphism of groups $\W\cong \fS \rtimes \fP$ is given by 
\begin{eqnarray}
\label{tau}
&s_i\mapsto s_i\quad(i=1,\dots, n-1),\quad
%\nonumber\\
s_0\mapsto s_{n-1}\cdots s_2s_1s_2\dots s_{n-1}X_1X_n^{-1},\quad\quad&\nonumber\\
&\ourOmega\mapsto s_{n-1}\cdots s_1X_1.\quad\quad&
\end{eqnarray}
Its inverse sends $X_1$ to $s_1\cdots s_{n-1}\ourOmega$ and of course  $s_i$ to $s_i$  for $1\leq i\leq n-1$.
\end{lemma}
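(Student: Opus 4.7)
The plan is to verify directly that the map $\phi$ given by the displayed formulas extends to a well-defined group homomorphism $\W \to \fS \rtimes \fP$, and then to exhibit an explicit inverse $\psi$.

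First I would check well-definedness of $\phi$ by verifying that the images $\phi(s_0),\phi(s_1),\ldots,\phi(s_{n-1}),\phi(\ourOmega)$ satisfy the defining relations (2.1). The relations $s_i^2=1$ for $i\geq 1$ and the finite braid relations among $s_1,\ldots,s_{n-1}$ hold trivially. For $\phi(s_0)^2=1$, observe that $w:=s_{n-1}\cdots s_2 s_1 s_2\cdots s_{n-1}$ is the transposition $(1\ n)$, so conjugation by $w$ inverts $X_1 X_n^{-1}$, giving $(wX_1X_n^{-1})^2 = w^2\cdot w^{-1}(X_1X_n^{-1})w\cdot X_1 X_n^{-1} = X_n X_1^{-1}\cdot X_1 X_n^{-1}=1$. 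For the twisted braid $\ourOmega s_{\overline{i}}=s_{\overline{i-1}}\ourOmega$, the crucial fact is that $c:=s_{n-1}\cdots s_1$ acts as the $n$-cycle sending $k$ to $k-1$ modulo $n$, so conjugation by $c$ sends $s_i$ to $s_{i-1}$ with indices modulo $n$; combined with the fact that $X_1$ commutes with $s_i$ for $i\geq 2$, this handles the cases $i\notin\{0,1\}$. The remaining cases $i=0,1$ reduce, via the general semidirect product identity $w X_j w^{-1}=X_{w(j)}$, to a direct verification with Laurent monomials in $\fP$. The braid relations involving $s_0$ then follow by combining the twisted braid relation with the braid relations among the finite simple reflections.

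To construct the inverse $\psi:\fS\rtimes\fP\to\W$, I would set $\psi(s_i)=s_i$ and $\psi(X_1)=s_1\cdots s_{n-1}\ourOmega$, defining $\psi(X_k)$ for $k\geq 2$ recursively from $\psi(X_{k+1})=s_k\psi(X_k)s_k$. The remaining relations to check are commutation of the $\psi(X_i)$ among themselves and $\psi(s_i)\psi(X_j)=\psi(X_j)\psi(s_i)$ for $j\notin\{i,i+1\}$; both reduce, by repeated use of $\ourOmega s_{\overline{j}}=s_{\overline{j-1}}\ourOmega$ inside $\W$, to elementary telescoping identities. Finally one verifies on generators that $\phi\circ\psi$ and $\psi\circ\phi$ are the identity: for instance $\psi(\phi(\ourOmega))=s_{n-1}\cdots s_1\cdot s_1\cdots s_{n-1}\ourOmega=\ourOmega$ and $\phi(\psi(X_1))=s_1\cdots s_{n-1}\cdot s_{n-1}\cdots s_1\cdot X_1=X_1$, with the image $\phi(\psi(s_0))$ matching $\phi(s_0)$ by the computation already made.

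The hard part will be the bookkeeping for the twisted braid and commutation relations involving $s_0$ and $\ourOmega$; once the cycle structure of $s_{n-1}\cdots s_1$ is identified and the action of $\fS$ on $\fP$ via $wX_jw^{-1}=X_{w(j)}$ is used systematically, these reduce to straightforward but tedious manipulations with permutations and Laurent monomials.
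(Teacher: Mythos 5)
Your proposal is correct and is precisely the ``direct computation'' that the paper's one-line proof invokes: verify the defining relations for the forward map, define the inverse on generators, and check the two compositions on generators. The key observations you isolate --- that $s_{n-1}\cdots s_2s_1s_2\cdots s_{n-1}$ is the transposition $(1\ n)$, that $s_{n-1}\cdots s_1$ is the $n$-cycle $k\mapsto k-1$, and the systematic use of $wX_jw^{-1}=X_{w(j)}$ --- are exactly what makes the computation go through.
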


%\proof
%A direct computation shows that the assignments define group homomorphisms which are mutually inverse. 
%\endproof

From now on we will identify the two presentations (so that for instance the equality (*) in the next formula makes sense).
 
Let $\flat$  be the automorphism of $\W$ fixing generators $s_1, \dots, s_n$ and sending $X_i$ to $X_i^{-1}$ for $i=1,\dots, n$. For convenience, we record that 
\begin{eqnarray*}
&s_0^\flat = s_{n-1}\cdots s_1\cdots s_{n-1}X_1^{-1}X_n \stackrel{(*)}{=} s_{n-1}\cdots s_1\cdots s_{n-1}s_0s_{n-1}\cdots s_1\cdots s_{n-1}&\\
&\ourOmega^\flat = s_{n-1}\cdots s_1X_1^{-1} = s_{n-1}\cdots s_1\ourOmega^{-1}s_{n-1}\cdots s_1. &
\end{eqnarray*}

%We will need the following  easy fact:

\begin{lemma}
Let $wp \in \W$ with $w \in \fS$ and $p=X_1^{a_1}\cdots X_n^{a_n}\in \fP$.
\begin{enumerate}[i.)]
\item If  $a_i\geq 0$ for all $i\in\{1,\dots, n\}$, then $wp$ can be expressed in terms of generators from $\bbI$ and $\ourOmega$ (involving only positive powers of $\ourOmega$).
\item If  $a_i\leq 0$ for all $i\in\{1,\dots, n\}$, then $wp$ can be expressed in terms of generators from $\bbI$ and $\ourOmega^{-1}$ (involving only negative powers of $\ourOmega$).
\end{enumerate}
\end{lemma}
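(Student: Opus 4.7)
The plan is to reduce both statements to the single identity $X_1 = s_1 s_2 \cdots s_{n-1}\ourOmega$ coming from Lemma~\ref{weylgppres} together with the defining conjugation relation $s_i X_i s_i = X_{i+1}$ of the semi-direct product $\fS \rtimes \fP$.

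First, I would show by induction on $i$ that each generator $X_i$ of $\fP$ admits an expression as a word in $\bbI \cup \{\ourOmega\}$ using \emph{exactly one} positive power of $\ourOmega$ (and no $s_0$). The base case is the identity $X_1 = s_1 \cdots s_{n-1}\ourOmega$. The inductive step comes from $X_{i+1} = s_i X_i s_i$, which merely adds letters from $\bbI$ to the left and right of the word for $X_i$ without disturbing the $\ourOmega$-content. Concretely this gives $X_i = s_{i-1} s_{i-2} \cdots s_1 \cdot s_1 s_2 \cdots s_{n-1} \cdot \ourOmega \cdot s_1 s_2 \cdots s_{i-1}$.

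For part (i), given $wp \in \W$ with $w\in\fS$ a word in $\bbI$ and $p = X_1^{a_1}\cdots X_n^{a_n}$ with all $a_i\geq 0$, I substitute each $X_i$ by the word above and concatenate. Since every $X_i$ contributes a single positive $\ourOmega$ and no $\ourOmega^{-1}$, the product $wp$ becomes a word in $\bbI \cup \{\ourOmega\}$ with precisely $\sum_{i=1}^{n}a_i$ occurrences of $\ourOmega$, all positive. For part (ii), I would take inverses: the relation $X_1 = s_1 \cdots s_{n-1}\ourOmega$ yields $X_1^{-1} = \ourOmega^{-1} s_{n-1}\cdots s_1$, which is a word in $\bbI \cup \{\ourOmega^{-1}\}$ using exactly one negative power of $\ourOmega$; likewise $X_i^{-1} = s_{i-1}\cdots s_1 \, X_1^{-1}\, s_1\cdots s_{i-1}$ is such a word. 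Rewriting $p = X_1^{a_1}\cdots X_n^{a_n}$ with $a_i\leq 0$ as the ordered product of $|a_i|$-many copies of the $X_i^{-1}$'s, and prepending $w$, produces the desired expression.

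The argument is essentially just a substitution, so there is no serious obstacle; the only point to check is that concatenation does not accidentally force a negative power of $\ourOmega$ to appear. This is automatic because the commutation relation $\ourOmega s_{\overline{i}} = s_{\overline{i-1}} \ourOmega$ allows us to move $s_j$'s past $\ourOmega$ (or not move them at all) without inverting $\ourOmega$, so both substitution and concatenation preserve the sign of every $\ourOmega$-letter in the word.
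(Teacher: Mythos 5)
Your proof is correct and follows essentially the same route as the paper, which simply invokes the explicit expression of each $X_i$ as a word in $\bbI$ and a single positive power of $\ourOmega$ (your induction via $X_{i+1}=s_iX_is_i$ from $X_1=s_1\cdots s_{n-1}\ourOmega$ produces exactly this formula, and inverting it handles part (ii)). The closing worry about concatenation forcing a sign change is vacuous, since a product of words containing only positive $\ourOmega$-letters again contains only positive $\ourOmega$-letters.
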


\proof
This follows directly from the fact that $X_i = s_{i+1}\cdots s_{n-1}\ourOmega s_1\dots s_i$.
\endproof

\subsection{Parabolic subgroups and shortest double coset representatives}

For a subset $J\subsetneq \bbI_0$, we denote by $\W_J= \langle s_i \mid s_i \in J \rangle$ the parabolic subgroup generated by $J$. Note that this is a finite group, isomorphic to the direct product of some symmetric groups.

Let now $J,K\subseteq \bbI$.  Then each double coset in $\W_K\backslash\W/ \W_J$ contains a unique shortest (i.e. minimal length) coset representative. We denote the set of shortest double coset representatives by $D_{K,J}$. If moreover $J_1,J_2$ are both subsets of $K\subseteq \bbI$, we denote the (finite) set of shortest coset representatives in $\W_{J_1}\backslash \W_K/\W_{J_2}$ by $D^K_{J_1,J_2}$.

For $d\in D_{K,J}$, the set $dJ\cap K$ is defined as the intersection of $K$ with all elements in W of the form $ds_jd^{-1}$ for $s_j \in J$, i.e.
$dJ\cap K=\{s\in K\mid d^{-1}sd\in J\}$. Moreover we abbreviate $dJ=dJ\cap \bbI$.

For $d\in D_{K,J}$ and any element $w$ in $\W_Kd\W_J$, there exist unique elements  $w_K\in \W_K,w_J\in\W_J$, and $a\in D^J_{d^{-1}K\cap J ,\emptyset}$, respectively $b\in D^K_{ \emptyset, dJ\cap K}$ such that
\begin{equation}\label{eqaddlength}
w =w_kda = bdw_J \quad \hbox{ with }\quad l(w) = l(w_k)l(d)l(a) = l(b)l(d)l(w_J).
\end{equation}

\subsection{Another set of double coset representatives}\label{otherreps}

Let $J\subseteq\bbI$. A monomial $X_1^{a_1}\cdots X_n^{a_n}\in\fP$ is called $J$-{\it dominant} if $a_i \geq a_j$ for all $i\leq j$ such that $s_i$ and $s_j$ are conjugate in $W_J$. It is called $J$-{\it antidominant} if $a_i \leq a_j$ for all $i\leq j$ as above. We denote by $\DMJ$ and $\AMJ$ the set of $J$-dominant and the set of $J$-antidominant elements respectively. Note that each $W_J$-orbit in $\fP$ contains a unique $J$-dominant element and a unique $J$-antidominant element.

\begin{proposition}
\label{tiliting}
Let $J,K \subset \bbI$. The sets
\begin{eqnarray*}
\DD_{K,J}&=&\left\{ dp \mid d \in D^\bbI_{K,J}, p \in \DM_{d^{-1}K\cap J}\right\}\quad\text{and}\\
\AD_{K,J}&=&\left\{ dp \mid d \in D^\bbI_{K,J}, p \in \AM_{d^{-1}K\cap J}\right\}
\end{eqnarray*}
both form a complete set of inequivalent coset representatives in $\W_K\backslash \W/\W_J$.
\end{proposition}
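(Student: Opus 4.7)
The plan is to exploit the semidirect product decomposition $\W = \fS \ltimes \fP$ (with $\fP$ normal, since $\fS$ acts on $\fP$ by permutations of generators) and reduce the classification of $\W_K$-$\W_J$ double cosets in $\W$ to a combination of (i) the known classification of shortest double coset representatives inside the finite Coxeter group $\fS$, and (ii) the orbit structure of $\W_J$-actions on $\fP$.

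First I would observe that the quotient map $\pi\colon \W \to \W/\fP \cong \fS$ carries a double coset $\W_K w \W_J$ to $\W_K \pi(w) \W_J \subseteq \fS$, because $\W_K, \W_J \subseteq \fS$. Since $D^\bbI_{K,J}$ is a complete set of distinct shortest representatives for $\W_K \backslash \fS / \W_J$, each $\W$-double coset projects to $\W_K d \W_J$ for a unique $d \in D^\bbI_{K,J}$. Hence any candidate set of representatives must use each $d \in D^\bbI_{K,J}$ exactly once, coupled with some choice of $p \in \fP$.

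Next, fixing $d \in D^\bbI_{K,J}$, I would ask when $dp_1$ and $dp_2$ (with $p_i \in \fP$) lie in the same $\W$-double coset. Writing a general element of $\W_K\, dp_2\, \W_J$ as $a d p_2 b$ with $a \in \W_K,\ b \in \W_J$, the relation $p_2 b = b \cdot b^{-1}(p_2)$ in $\fS \ltimes \fP$ rewrites it as $(adb)\cdot b^{-1}(p_2)$. Uniqueness of the $\fS \ltimes \fP$ decomposition forces $adb = d$ and $b^{-1}(p_2) = p_1$. The first equation gives $a = d b^{-1} d^{-1}$, constraining $b$ to lie in $\W_J \cap d^{-1} \W_K d$. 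The key Coxeter-theoretic input, applied inside the symmetric group $\fS$ with $d$ of minimal length in $\W_K d \W_J$, is that this intersection equals the standard parabolic $\W_{d^{-1}K \cap J}$. Hence $dp_1$ and $dp_2$ represent the same double coset if and only if $p_1$ and $p_2$ lie in the same $\W_{d^{-1}K \cap J}$-orbit on $\fP$.

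To conclude, each $\W_{d^{-1}K \cap J}$-orbit on $\fP$ contains, by the definition recalled in Section~\ref{otherreps}, a unique $d^{-1}K \cap J$-dominant element and a unique $d^{-1}K \cap J$-antidominant one. This furnishes canonical representatives, one per orbit, so $\DD_{K,J}$ (resp.\ $\AD_{K,J}$) hits each double coset projecting to $\W_K d \W_J$ exactly once; ranging over all $d \in D^\bbI_{K,J}$ then enumerates each double coset of $\W$ exactly once. The one nontrivial step is the identification $\W_J \cap d^{-1} \W_K d = \W_{d^{-1}K \cap J}$, which I would either cite as the standard Kilmoyer-type lemma for parabolic subgroups of Coxeter groups, or, since $\fS$ is the symmetric group, verify directly using the length-additivity of the minimal representative $d$.
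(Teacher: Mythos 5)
Your argument is correct and follows essentially the same route as the paper's: both reduce modulo the normal factor $\fP$ to pin down the unique $d\in D^\bbI_{K,J}$ attached to a double coset, identify the remaining freedom in the $\fP$-component as the conjugation action of $\W_J\cap d^{-1}\W_K d=\W_{d^{-1}K\cap J}$, and conclude by the uniqueness of the dominant (resp.\ antidominant) element in each such orbit. The only cosmetic difference is that you invoke the Kilmoyer-type identification $\W_J\cap d^{-1}\W_K d=\W_{d^{-1}K\cap J}$ directly, whereas the paper extracts the same fact from the length-additive factorization \eqref{eqaddlength}.
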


\proof 
We only prove the first claim, since the second one is analogous. We first show that every double coset in $\W_K\backslash \W/\W_J$ contains an element from $\DD_{K,J}$.
We know it contains an element from $D_{K,J}$, so let $y \in D_{K,J}$ and write $y=wf$ for $w \in \fS,f\in \fP$.
Using \eqref{eqaddlength}, we can find $d\in D^\bbI_{K,J},w_K\in \W_K, a\in D^J_{d^{-1}K\cap J, \emptyset}$ such that $w = w_Kda$. Then $y=w_Kd (afa^{-1})a$. Now let $t \in \W_{d^{-1}K\cap J}$ such that $p=tafa^{-1}t^{-1}\in \fP$ is $J$-dominant.
Then $dtd^{-1} \in \W_{K\cap dJ}\subseteq \W_K$  and therefore $y= w_Kdtd^{-1}dpa \in \W_Kdp \W_J$ with $d(afa^{-1})\in\DD_{K,J}$ as claimed.

Conversely, we need to show that $dp$ is the unique element in $\W_Kdp \W_J$ with $d \in D^\bbI_{K,J}$ and $p \in \DM_{d^{-1}K\cap J}$, so take an element $w_1dpw_2 \in \W_Kdp \W_J$ and write it as $w_1dpw_2 = \sigma f$ with $\sigma \in \fS,f\in \fP$. Note that necessarily $\sigma = w_1d w_2$ and $f = w_2^{-1}p w_2$. Assume $\sigma \in D^\bbI_{K,J}$ and $f \in \DM_{d^{-1}K\cap J}$. Then $w_1dw_2 = d$, so writing 
$w_1 = ab$ with $a \in D^K_{\emptyset, dJ\cap K}, b \in \W_{dJ \cap K}$ and thus 
$d = w_1dw_2 = a d (d^{-1}bd)w_2$ with $(d^{-1}bd)w_2\in \W_J$, we have a presentation of $d$ of the form \eqref{eqaddlength}, from which we deduce $a=1$ and $(d^{-1}bd)w_2 = 1$, in particular $w_2 \in \W_{d^{-1}K\cap J}$. Hence $y = d w_2^{-1}p w_2$ with $w_2 \in \W_{d^{-1}K\cap J}$. Since $p$ is the unique $d^{-1}K\cap J$-dominant element in its $ \W_{d^{-1}K\cap J}$-orbit, it follows that $w_2^{-1}p w_2 = p$ and hence $y=dp$.
\endproof

\section{The Hecke algebra and Hecke modules}\label{heckesec}

The goal of this section is to define the Iwahori-Matsumoto-Hecke algebra, originally introduced in \cite{IM}, and to construct a faithful representation. Most of the statements can be found in \cite{LuHecke}. We collect some basic facts and give detailed proofs for those for which we could not find an appropriate reference.  
\subsection{The Iwahori-Matsumoto Hecke algebra of $\G$}
We start with the following presentation of the Hecke algebra due to Bernstein:
\begin{definition}{
The \emph{Iwahori-Matsumoto Hecke algebra} associated with $\G$ is the unitary $\FF$-algebra $\cH=\cH_n$ 
generated  by $T_1, \dots, T_{n-1}$, and $X_1^{\pm 1}, \dots, X_n^{\pm 1}$, subject to the defining relations 
\begin{equation*}
\begin{array}[t]{cllll}
\text{(H-1)}&&(T_i-q)(T_i+1)= 0, && \hbox{for } 1 \leq i \leq n-1,\\
\text{(H-2)}&&T_iT_j=T_jT_i&\hbox{if }|i-j|>1, &\hbox{for }1\leq i,j \leq n-1,\\
\text{(H-3)}&&T_iT_{i+1}T_i = T_{i+1}T_iT_{i+1} &&\hbox{for } 1 \leq i \leq n-2,\\
\text{(H-4)}&&X_iX_i^{-1}= 1=X_i^{-1}X_i&& \hbox{for } 1 \leq i \leq n-1,\\
\text{(H-5)}&&X_iX_j= X_jX_i&& \hbox{for } 1 \leq i \leq n-1,\\
\text{(H-6)}&&T_iX_j = X_jT_i& \hbox{if }|i-j|>1, &\hbox{for }1\leq i,j \leq n-1,\\
\text{(H-7)}&&T_iX_iT_i = qX_{i+1}&& \hbox{for } 1 \leq i \leq n-2,
\end{array}
\end{equation*}
where $q$ is the cardinality of the residue field of $E$.}
\end{definition}

Note that in particular, for $i=1,\dots, n-1$,
\begin{eqnarray}
\label{easyformel}
(T_i-q)T_i=-(T_i-q)&\text{and}&(T_i+1)T_i=q(T_i+1).
\end{eqnarray}
Moreover, the $T_i$ are invertible with
\begin{eqnarray}
T_i^{-1}=q^{-1}T_i+(q^{-1}-1)&\text{and}&T_i^2=(q-1)T_i+q.
\end{eqnarray}

We denote by $\cP=\cP_n=\FF[X_1^{\pm 1}, \dots, X_n^{\pm 1}]$ the subalgebra of $\cH$ generated by the $X_i^{\pm 1}$, where $i=1,\dots,n$. Note that the subalgebra in $\cP$ given by symmetric (Laurent) polynomials is central by (H-7).

The original definition of the Iwahori-Matsumoto Hecke algebra of $\G$ over the field $\FF$ is the convolution algebra $\FF[\I\backslash \G/\I]$ of compactly supported $\I$-bi-invariant functions on $\G$ with values in $\FF$, where $\I$ is the Iwahori subgroup.
Any such function can be written as $\sum_{w \in W}a_w \chi_{\I w\I}$ with some $a_w\in \FF$ where $\chi_{\I w\I}$ is the characteristic function on the double coset $\I w\I$.  

Abbreviating $T_w=\chi_{\I w\I}$ gives us the following presentation, \cite[Theorem 3.5]{IM}, of $\FF[\I\backslash \G/\I]$: The algebra is generated by 
$T_{s_i}$ for $i=0,\dots n-1$ and  $T_\ourOmega$ 
subject to relations (using notation as in \eqref{W}):
\begin{equation}\label{conv}
T_{s_i}^2=(q-1)T_{s_i}+q , \quad T_{s_{\overline{i+1}}} T_{s_{\overline{i}}} T_{s_{\overline{i+1}}}= T_{s_{\overline{i}}} T_{s_{\overline{i+1}}} T_{s_{\overline{i}}},\quad   T_\ourOmega T_{s_{\overline{i}}} = T_{s_{\overline{i-1}}} T_{\ourOmega}.
\end{equation}

The following isomorphism  justifies the twofold use of the same notation:

\begin{lemma}\label{heckepres}
We have an isomorphism of $\FF$-algebras
$$
\begin{array}{cccl}
\Theta: & \FF[\I\backslash \G/\I] & \to & \cH \\
& T_{s_i} & \mapsto & T_i, \quad (i=1,\dots, n-1),\\
& T_{s_0}  & \mapsto & q^{n-1}X_1^{-1}X_n (T_{n-1}\cdots T_2T_1T_2\cdots T_{n-1})^{-1},\\
& T_\ourOmega &  \mapsto & q^{-(n-1)/2} T_{n-1}\cdots T_1 X_1.
\end{array}$$
This isomorphism sends $T_\ourOmega^n$ to $X_1 \cdots X_n$.
\end{lemma}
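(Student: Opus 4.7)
The plan is to verify that $\Theta$ is a well-defined algebra homomorphism by checking the defining relations \eqref{conv}, construct an explicit inverse, and then deduce the final formula for $T_\ourOmega^n$.

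First I would verify the relations. Those involving only $T_{s_1},\ldots,T_{s_{n-1}}$ are immediate from (H-1)--(H-3). All remaining relations involve $T_{s_0}$ or $T_\ourOmega$; the cleanest strategy is to first establish the conjugation identity
\[
\Theta(T_{s_0}) \;=\; \Theta(T_\ourOmega)\,T_1\,\Theta(T_\ourOmega)^{-1},
\]
which mirrors $s_0=\ourOmega s_1\ourOmega^{-1}$ from Lemma \ref{weylgppres}; from this, $\Theta(T_{s_0})^2=(q-1)\Theta(T_{s_0})+q$ follows by conjugation. Similarly I would show $\Theta(T_\ourOmega)\,T_i\,\Theta(T_\ourOmega)^{-1}=T_{i-1}$ for $2\le i\le n-1$ using the Weyl-group identity $\sigma s_i\sigma^{-1}=s_{i-1}$ for $\sigma=s_{n-1}\cdots s_1$, lifted to the Hecke relation $T_{n-1}\cdots T_1\cdot T_i=T_{i-1}\cdot T_{n-1}\cdots T_1$ by length additivity (Matsumoto), together with (H-6) to commute past $X_1$. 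These two conjugation identities jointly yield all the relations $T_\ourOmega T_{s_{\overline{i}}}=T_{s_{\overline{i-1}}}T_\ourOmega$ and the cyclic braid relations wrapping around through $s_0$, essentially by direct rewriting. The key auxiliary computation underpinning the conjugation formula is
\[
T_{n-1}T_{n-2}\cdots T_1 \cdot X_1 \cdot T_1 T_2\cdots T_{n-1} \;=\; q^{n-1}X_n,
\]
obtained by $n-1$ applications of (H-7) from the inside out; one checks directly that this turns $\Theta(T_\ourOmega)\,T_1\,\Theta(T_\ourOmega)^{-1}$ into the explicit formula for $\Theta(T_{s_0})$.

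For the inverse, I would define $\Psi:\cH\to\FF[\I\backslash \G/\I]$ by $\Psi(T_i)=T_{s_i}$, together with $\Psi(X_1)=q^{(n-1)/2}T_{s_1}^{-1}T_{s_2}^{-1}\cdots T_{s_{n-1}}^{-1}T_\ourOmega$ (forced by inverting the formula for $\Theta(T_\ourOmega)$, and a Hecke-algebraic renormalization of the expression $X_1=s_1\cdots s_{n-1}\ourOmega$ from Lemma \ref{weylgppres}), and $\Psi(X_{i+1})=q^{-1}\Psi(T_i)\Psi(X_i)\Psi(T_i)$ inductively. Well-definedness requires checking (H-1)--(H-7); the nontrivial parts---the commutation of the $\Psi(X_j)$ among themselves and with $\Psi(T_i)$ for $|i-j|>1$---again reduce to Weyl-group-level commutations lifted via length additivity. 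Mutual inverseness $\Theta\circ\Psi=\mathrm{id}=\Psi\circ\Theta$ is then a generator-by-generator check. Finally, the identity $\Theta(T_\ourOmega^n)=X_1\cdots X_n$ follows by iterating $\Theta(T_\ourOmega)=q^{-(n-1)/2}T_{n-1}\cdots T_1\,X_1$: at the Weyl-group level, the easy induction $\ourOmega^k=\sigma^k X_kX_{k-1}\cdots X_1$ (using the semidirect product structure) gives $\ourOmega^n=X_1\cdots X_n$ since $\sigma^n=1$, and lifting to $\cH$ via the same auxiliary identity produces an accumulated $q$-factor that cancels the prefactor $q^{-n(n-1)/2}$ exactly.

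The main obstacle I anticipate is the careful bookkeeping of $q$-powers when commuting the $X_j$ past braid generators; once the auxiliary identity $(T_{n-1}\cdots T_1)X_1(T_1\cdots T_{n-1})=q^{n-1}X_n$ is in hand, all remaining verifications reduce to Weyl-group rewritings lifted by length additivity, and the isomorphism falls out.
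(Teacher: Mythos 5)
Your verification of well-definedness follows the paper's route: the relations among $T_{s_1},\dots,T_{s_{n-1}}$ are immediate, the formula for $\Theta(T_{s_0})$ is engineered so that $T_\ourOmega T_{s_1}=T_{s_0}T_\ourOmega$ holds, and the conjugations $\Theta(T_\ourOmega)T_i\Theta(T_\ourOmega)^{-1}=T_{i-1}$ for $2\le i\le n-1$ are handled by commuting $T_{n-1}\cdots T_1$ past $T_i$. One imprecision: these identities do \emph{not} "jointly yield" the wrap-around case $i=0$, i.e. $\Theta(T_\ourOmega)\Theta(T_{s_0})\Theta(T_\ourOmega)^{-1}=T_{n-1}$, equivalently $\Theta(T_\ourOmega)^2T_1\Theta(T_\ourOmega)^{-2}=T_{n-1}$. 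This is an independent constraint and is exactly the computation the paper singles out, via an explicit expansion of $(T_{n-1}\cdots T_1X_1)^2$; your auxiliary identity $(T_{n-1}\cdots T_1)X_1(T_1\cdots T_{n-1})=q^{n-1}X_n$ is the right tool, but you should state this case as a separate check rather than as a formal consequence.

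Where you genuinely diverge is bijectivity. The paper does not construct an inverse: it observes that $\Theta$ is surjective, that the $T_w$ ($w\in\W$) form a basis of $\FF[\I\backslash\G/\I]$, and that comparison with Lemma~\ref{weylgppres} matches this basis (triangularly, up to units) with the Bernstein basis $\{T_wX_1^{a_1}\cdots X_n^{a_n}\}$ of $\cH$ quoted from Lusztig. Your inverse $\Psi$ is a legitimate alternative, but be aware of what it costs: since $T_{s_i}^{-1}$ is a linear combination of $T_{s_i}$ and $1$, the elements $\Psi(X_j)$ are not single basis elements $T_w$, so verifying (H-5) and (H-6) is not a plain length-additivity argument -- it is essentially the hard half of the Bernstein presentation theorem, which the paper's basis-comparison deliberately sidesteps by citing the known basis of $\cH$. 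Both routes work; the paper's is shorter given the cited input, yours is more self-contained but substantially longer than sketched. Your derivation of $\Theta(T_\ourOmega^n)=X_1\cdots X_n$ (Weyl-group identity $\ourOmega^n=X_1\cdots X_n$ plus bookkeeping of the $q$-powers) is fine and matches the $n=2$ sanity check $q^{-1}T_1X_1T_1X_1=X_1X_2$.
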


From now on we will freely identify the two presentations.

The following two sets are $\FF$-bases of $\cH$, \cite[Proposition 3.7]{LuHecke}:
\begin{equation}\label{heckebasis}
\{X_1^{a_1}\cdots X_n^{a_n}T_w \mid w \in \fS, a_i \in \Z \},\;\{T_wX_1^{a_1}\cdots X_n^{a_n}\mid w \in \fS, a_i \in \Z \}.
\end{equation}

\subsection{The ideals $\bv_J\cH$}

For $J \subseteq \bbI$  with corresponding parabolic subgroup $W_J$ of $W$ we denote by  $\cH_J\subset \cH$ the (finite-dimensional) Hecke subalgebra generated by $\{ T_i \mid s_i \in J\}$ and define
\begin{eqnarray}
\label{v}
\bv_J\;=\; \sum_{w \in W_J} T_w,&\text{and}& \bbv_J =  \sum_{w \in W_J} (-q)^{-l(w)}T_w.
\end{eqnarray}
We often abbreviate $\bv=\bv_\bbI$ and $\bbv=\bbv_\bbI$. Note that $\FF\bv_J$ and  $\FF\bbv_J$ are the $1$-dimensional trivial respectively sign (right) $\cH_J$-modules via \eqref{easyformel}. They generate the following right ideals in $\cH$ which play the role of permutation modules in the representation theory of the symmetric group.

%For $J \subsetneq \bbI_0$, we define the right ideal $\cH^J$  by
%$$\cH^J= \{h \in \cH \mid (T_{i}-q)h=0 \hbox{ for all $i$ such that } s_i \in J\}.$$

\begin{lemma}
\label{cyclic}
The right ideals 
\begin{eqnarray*}
\cH^J_{\op{triv}}&=&\{h \in \cH \mid (T_{i}-q)h=0 \hbox{ for all $i$ such that } s_i \in J\},\\  
\cH^J_{\op{sgn}}&=&\{h \in \cH \mid (T_{i}+1)h=0 \hbox{ for all $i$ such that } s_i \in J\}
\end{eqnarray*}
are principal right ideals in $\cH$, generated by $\bv_J$ respectively $\bbv_J$. 
%and as right modules isomorphic to  $\FF\bv_J\otimes_{\cH_J}\cH$ respectively $\FF\bbv_J\otimes_{\cH_J}\cH$. 
\end{lemma}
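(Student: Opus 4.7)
The plan is to prove the claim for $\cH^J_{\op{triv}}$; the argument for $\cH^J_{\op{sgn}}$ will run in parallel by replacing $\bv_J$ by $\bbv_J$ and the eigenvalue $q$ by $-1$ throughout. Both spaces are manifestly right ideals, so for one inclusion it suffices to show $(T_i-q)\bv_J=0$ for every $s_i\in J$. I would verify this by splitting $W_J$ into the pairs $\{w,s_iw\}$ determined by $W_J^{i,-}=s_iW_J^{i,+}$ and applying the quadratic relation (H-1) piecewise: the contributions $T_iT_w=T_{s_iw}$ (for $w$ with $\ell(s_iw)>\ell(w)$) and $T_iT_{s_iw}=(q-1)T_{s_iw}+qT_w$ cancel so as to give $T_i\bv_J=q\bv_J$. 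The same bookkeeping against the weights $(-q)^{-\ell(w)}$ yields $T_i\bbv_J=-\bbv_J$, securing the analogous containment for the sign space.

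For the harder inclusion, the idea is to use the basis of $\cH$ adapted to right $W_J$-cosets. Let $D^J\subseteq W$ be the set of shortest representatives in $W_J\backslash W$; the length-additive factorisation $w=w_Jd$ (with $w_J\in W_J$, $d\in D^J$) implies that $\{T_{w_J}T_d:w_J\in W_J,\,d\in D^J\}$ is just a re-indexing of $\{T_w:w\in W\}$ and thus still a basis of $\cH$. Any $h\in\cH^J_{\op{triv}}$ therefore has a unique expansion $h=\sum_{d\in D^J}c_dT_d$ with $c_d\in\cH_J$. Since $(T_i-q)c_d\in\cH_J$, the condition $\sum_d\bigl((T_i-q)c_d\bigr)T_d=0$ forces each $(T_i-q)c_d=0$ in $\cH_J$, by uniqueness of expansion in $\{T_w\}_{w\in W}$.

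The whole question thereby reduces to the finite statement that inside $\cH_J$ one has $\{c\in\cH_J:(T_i-q)c=0\text{ for all }s_i\in J\}=\FF\bv_J$. This is the step I expect to require the most explicit calculation, although it is still short. I would write $c=\sum_{u\in W_J}a_uT_u$, expand $T_ic$ via (H-1), and compare coefficients of $T_u$ in $T_ic=qc$. Treating the two length cases for $s_iu$ separately, both collapse into the single recursion $a_{s_iu}=a_u$, where invertibility of $q\in\FF$ is needed (guaranteed by $\ell\neq p$, since $q$ is a power of $p$). As $J$ generates $W_J$, the function $u\mapsto a_u$ must be constant on $W_J$, so $c\in\FF\bv_J$. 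Writing $c_d=\lambda_d\bv_J$ and substituting back gives $h=\bv_J\bigl(\sum_d\lambda_dT_d\bigr)\in\bv_J\cH$, which concludes the argument. No step presents a serious obstacle; the main care required is in tracking the left-versus-right action conventions and in invoking length-additivity for the coset factorisation.
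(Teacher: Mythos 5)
Your proposal is correct and follows essentially the same route as the paper: the paper's (much terser) proof also checks $(T_i-q)\bv_J=0$ directly and then invokes the freeness of $\cH$ as a left $\cH_J$-module coming from \eqref{heckebasis} to reduce the reverse inclusion to the finite Hecke algebra $\cH_J$, where the $q$-eigenspace is $\FF\bv_J$. You merely make explicit the coefficient comparison $a_{s_iu}=a_u$ (and the use of $q\neq 0$ in $\FF$) that the paper leaves implicit.
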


\proof
Clearly,  $\bv_J\in\cH^J_{\op{triv}}$. Since $\cH$ is by \eqref{heckebasis} a free left module over $\cH_J$, 
$\cH^J_{\op{triv}} = \{h \in \cH_J\otimes_{\cH_J} \cH \mid (T_{i}-q)h=0 \hbox{ for all $i$ such that } s_i \in J\}=  \bv_J\cH,$
and the first claim follows. The second is similar.
\endproof

\begin{corollary} 
\label{basisHJ}
In case $J \subseteq \bbI$, a $\FF$-basis for $\bv_J\cH$ respectively $\bbv_J\cH$ is given by 
$$\left\{T_wX_1^{a_1}\cdots X_n^{a_n}\left|  w \in D^\bbI_{J,\emptyset}, a_i \in \Z \right.\right\}.$$
\end{corollary}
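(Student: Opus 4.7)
The plan is to view $\bv_J\cH$ as a right ideal of $\cH$ and produce the claimed basis by combining the Bernstein basis \eqref{heckebasis} with the coset decomposition $\fS = W_J\cdot D^{\bbI}_{J,\emptyset}$ (which is a disjoint decomposition with additive lengths, so that $T_{ud}=T_uT_d$ for $u\in W_J$, $d\in D^{\bbI}_{J,\emptyset}$). Reading the statement as asserting that $\{\bv_J T_w X_1^{a_1}\cdots X_n^{a_n}\mid w\in D^{\bbI}_{J,\emptyset},\ a_i\in\Z\}$ is a basis of $\bv_J\cH$ (and similarly with $\bbv_J$), the content reduces to a spanning and a linear independence assertion.

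For spanning, I would first record that $\bv_J T_i = q\bv_J$ for every $s_i\in J$, which is immediate from Lemma~\ref{cyclic} applied to $\bv_J\in\cH^J_{\op{triv}}$ (or from a direct pairing argument in the sum defining $\bv_J$). Iterating along a reduced expression gives $\bv_J T_u = q^{\ell(u)}\bv_J$ for all $u\in W_J$. By Lemma~\ref{cyclic} every element of $\bv_J\cH$ has the form $\bv_J h$ for some $h\in\cH$; expanding $h$ in the basis \eqref{heckebasis} and using the coset decomposition to write the $\fS$-index as $w=ud$, we obtain
\begin{equation*}
\bv_J h \;=\; \sum_{u,d,a} c_{u,d,a}\,\bv_J T_u T_d X_1^{a_1}\cdots X_n^{a_n} \;=\; \sum_{d,a}\Bigl(\sum_{u\in W_J}q^{\ell(u)}c_{u,d,a}\Bigr)\bv_J T_d X_1^{a_1}\cdots X_n^{a_n},
\end{equation*}
so the proposed set spans $\bv_J\cH$.

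For linear independence, I would expand $\bv_J = \sum_{u\in W_J}T_u$ inside a hypothetical relation $\sum_{d,a} c_{d,a}\bv_J T_d X_1^{a_1}\cdots X_n^{a_n}=0$. Using the additive length property $T_uT_d=T_{ud}$ this rewrites as $\sum_{d,a,u} c_{d,a} T_{ud} X_1^{a_1}\cdots X_n^{a_n}=0$. Since $(u,d)\mapsto ud$ is a bijection $W_J\times D^{\bbI}_{J,\emptyset}\to \fS$, the resulting elements are pairwise distinct members of the Bernstein basis \eqref{heckebasis} of $\cH$, which forces all $c_{d,a}=0$.

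The sign case is completely parallel: one first verifies $\bbv_J T_i = -\bbv_J$ for $s_i\in J$ by the same pairing trick in $\bbv_J T_i = \sum_{v\in W_J}(-q)^{-\ell(v)}T_vT_i$, which yields $\bbv_J T_u = (-1)^{\ell(u)}\bbv_J$ for $u\in W_J$. Spanning then follows verbatim, and for independence one expands $\bbv_J=\sum_u(-q)^{-\ell(u)}T_u$, obtaining $\sum_{d,a,u}(-q)^{-\ell(u)}c_{d,a}T_{ud}X_1^{a_1}\cdots X_n^{a_n}=0$, whence each $c_{d,a}=0$ by the same appeal to \eqref{heckebasis}. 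I do not expect any real obstacle: the corollary is essentially bookkeeping built on the additive-length factorisation $\fS=W_J\cdot D^{\bbI}_{J,\emptyset}$ and Lemma~\ref{cyclic}; the only mildly delicate point is the verification of the eigenvalue identity $\bbv_J T_u=(-1)^{\ell(u)}\bbv_J$, which is a direct pairing-up computation.
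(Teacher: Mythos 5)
Your proof is correct and is essentially the paper's own (one-line) argument written out in full: the paper also deduces the corollary from Lemma~\ref{cyclic} together with the Bernstein basis \eqref{heckebasis}, the length-additive factorisation $\fS=W_J\cdot D^{\bbI}_{J,\emptyset}$ doing the bookkeeping exactly as you describe. (Only minor caveat: the membership $\bv_J\in\cH^J_{\op{triv}}$ gives the \emph{left} eigenvalue identity $T_i\bv_J=q\bv_J$, so the right-sided identity $\bv_JT_i=q\bv_J$ should be justified by your pairing argument or by \eqref{easyformel}, as you indeed note parenthetically.)
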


\proof
This follows directly from Lemma~\ref{cyclic} and \eqref{heckebasis}.
\endproof

Note that the ideals $\bv_J\cH$ and  $\bbv_J\cH$ have isomorphic endomorphism rings. To pass between them we will later need the algebra automorphism $\sharp$ of $\cH$, which is the $q$-analogue of $\flat$, defined on the generators by 
\begin{equation}\label{eqhash}
\begin{array}{rclc}
T_i&\mapsto&T_i^\sharp =q-1-T_i=-qT_i^{-1},& \qquad (i=1,\dots,n-1),\\
X_j&\mapsto&X_j^\sharp= X_j^{-1} &\qquad (j=1,\dots,n).
\end{array}
\end{equation}
\begin{remark}
\label{commute}
If $f\in\cP$ is $s_i$-invariant, then $f T_i^\sharp =T_i^\sharp f$ for $i=1, \dots, n-1$.
\end{remark}

\subsection{A completion of $\cH$}\label{Hcompletionsec}
Recall, \cite[Proposition 3.11]{LuHecke}, that the centre of $\cH$ is given by 
%\begin{eqnarray}
%\label{center}
$Z(\cH) = \FF[X_1^{\pm 1},\dots, X_n^{\pm 1}]^{\fS}$.
%Z(\cH) &=& \FF[X_1^{\pm 1},\dots, X_n^{\pm 1}]^{\fS}.
%\end{eqnarray}
For $\ba =(a_1,\dots, a_n)\in (\FF^*)^n$ define the corresponding central character 
%\begin{eqnarray*}
$\chi_\ba:\;Z(\cH)\to \FF,$ by restriction from $X_i\mapsto a_i.$
%\end{eqnarray*}
Two characters $\chi_\ba$ and $\chi_{\ba'}$ coincide if and only if $\ba$ and $\ba'$ belong to the same $\fS$-orbit. We can decompose any finite-dimensional representation $M$ of $\cH$ as $M = \bigoplus_\chi M_\chi$, where $\chi$ runs over a set of representatives of $\fS$ -orbits on $\FF^n$ and $M_\chi$ consists of all elements of $M$ which are annihilated by a sufficiently large power of $\bm_\chi = \ker \chi$.

{\bf Conventions:}
For the following, the most interesting cases are those where the components of $\ba=(a_1,\dots, a_n)$ belong to the same multiplicative $q$-orbit; in other words, where there is an $a \in \FF$, such that for each $j=1,\dots n$, we have  $a_j=q^{i_j}a$ for some integer $i_j$. We will therefore stick to these cases. 
Moreover, our constructions in fact turn out to be independent of $a$, so without loss of generality, we chose $a=1$, i.e.  $\ba = (q^{i_1}, \dots, q^{i_n})$ with $\bi=(i_1,\dots, i_n)\in\mZ^n$, and write $\chi_\bi$ for the central character $\chi_\ba$. If $q$ is an $e$th root of unity we usually choose the exponents $i_j$ from the representatives $0,\dots, e-1$ for $\Z/e\Z$.

\begin{definition}\label{idef}
From now on for the rest of this paper, we fix $\bi \in \mZ^n$, viewed as an element of $\Z/e\Z$ if $e \neq 0$.
\end{definition}

\begin{definition}
\label{defcompl}
{
Given a central character $\chi=\chi_\bi$, we define the \emph{completion} $\ccH_\bi$ of $\cH$ with respect to powers of the ideal $\cI_\bm$ of $\cH$ generated by $\bm=\bm_\chi$. We have a decomposition
\begin{equation}\label{compldecomp}\ccH_\bi = \bigoplus_{\bu \in \fS\bi} \ccH_\bi e_\bu\end{equation} 
where 
\begin{eqnarray}
\label{eigenspaces}
\ccH_\bi e_\bu &=&\left \{h \in \ccH_\bi \;\left|
\begin{array}[c]{c}
  \forall m \in \N\; \exists\; N \in \N\;\text{such that }\\ \forall j \in \{1 \dots, n\}\;
h(X_j-q^{u_j})^N\in \bm_\chi^m
\end{array}
\right.
\right\}
\end{eqnarray}
The $e_\bu$ form a complete set of pairwise orthogonal idempotents in $\ccH_\bi$.}
\end{definition}

The following is a direct consequence of \eqref{heckebasis} and the description of the centre:
\begin{lemma} 
\label{basisH}
The following sets 
\begin{eqnarray*}
&\left\{ T_wX_1^{a_1}\cdots X_n^{a_n} e_\bu\left|\bu \in \fS(i_1,\dots, i_n), w \in \fS, a_i \in \Z_{\geq 0}\right .\right \},&\\ &\left\{T_w X_1^{a_1}\cdots X_n^{a_n}e_\bu \left|\bu \in  \fS(i_1,\dots, i_n), w \in \fS, a_i \in \Z_{\leq 0}\right .\right \},&
\end{eqnarray*}
both form a topological basis of $\ccH_\bi$.
\end{lemma}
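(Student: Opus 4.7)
The plan is to deduce both claims from the PBW basis \eqref{heckebasis} of $\cH$ together with the block decomposition \eqref{compldecomp}. The key topological fact, built into the definition \eqref{eigenspaces}, is that on each block $\ccH_\bi e_\bu$ the element $X_j - q^{u_j}$ is topologically nilpotent: for every $m$ there is an $N$ with $(X_j - q^{u_j})^N e_\bu \in \cI_\bm^m \ccH_\bi$. Granting this, each $X_j$ becomes a unit on the block, with inverse given by the $\cI_\bm$-adically convergent geometric series
\[
X_j^{-1} e_\bu \;=\; \sum_{k\geq 0} (-1)^k\, q^{-u_j(k+1)}\, (X_j - q^{u_j})^k\, e_\bu .
\]

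Expanding each $X_j^{-1}$ that appears in \eqref{heckebasis} by means of this series turns any vector $T_w X_1^{a_1}\cdots X_n^{a_n} e_\bu$ with some $a_j < 0$ into a convergent sum of vectors of the same shape but with all exponents nonnegative; summing over $\bu \in \fS\bi$ writes an arbitrary element of $\ccH_\bi$ in the first proposed form. A symmetric expansion, based on the identity $X_j^{-1} - q^{-u_j} = -q^{-u_j}X_j^{-1}(X_j - q^{u_j})$ (so $X_j^{-1} - q^{-u_j}$ is also topologically nilpotent on $\ccH_\bi e_\bu$), yields the analogous statement with all $a_j\leq 0$. Linear independence is inherited from \eqref{heckebasis}: modulo $\cI_\bm^m$ only finitely many monomials in the $X_j - q^{u_j}$ survive on each block, and the resulting finite-dimensional quotients are free over the span of the $T_w$'s by the PBW structure combined with \eqref{center}; pushing this through the inverse limit forces linear independence in $\ccH_\bi$.

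The main obstacle is the topological nilpotency claim. Via \eqref{center}, this reduces to a statement about the finite-dimensional commutative algebra $\cP/\bm_\chi^m \cP$: one has to match its primary decomposition, indexed by the $\fS$-orbit $\fS\bi$, with the decomposition of $\ccH_\bi$ given by the idempotents $e_\bu$, and show that on the $\bu$-component the classes of $X_j - q^{u_j}$ are nilpotent of order bounded uniformly (in $j$) by a function of $m$. This rests on the classical fact that $\cP$ is free of rank $n!$ over the subring $Z(\cH) = \cP^{\fS}$ of symmetric Laurent polynomials, which gives the required control on the descent of the $\bm_\chi$-adic filtration to the non-symmetric polynomial ring $\cP$; once it is in place, the rest of the argument is a routine expansion using \eqref{heckebasis}.
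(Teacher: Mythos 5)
Your argument is correct and is exactly the fleshed-out version of what the paper intends: the lemma is stated there as ``a direct consequence of \eqref{heckebasis} and \eqref{center}'', and your proof uses precisely these two ingredients (the PBW basis of $\cH$, and the freeness of $\cP$ over $Z(\cH)=\cP^{\fS}$ to control the $\bm_\chi$-adic topology blockwise, making $X_j-q^{u_j}$ topologically nilpotent and $X_j$ a unit on $\ccH_\bi e_\bu$). The geometric-series expansion of $X_j^{\pm 1}e_\bu$ and the inverse-limit argument for linear independence are both sound, so nothing is missing.
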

 Since $T_re_\bu-e_{s_r(\bu)}T_r \in \bigoplus_{\bu \in \fS\bi}\FF[[X_1,\dots, X_n]]$, induction on the length of $w$ shows that another topological $\FF$-basis of $\ccH_\bi$ is given by $$\left\{e_\bu T_wX_1^{a_1}\cdots X_n^{a_n} \mid\bu \in \fS(i_1,\dots, i_n), w \in \fS, a_i \in \Z_{\geq 0}\right \}$$ and similarly for the version with negative powers of the polynomial generators.

In $\ccH_\bi$, we have for  
   for $r=1,\dots, n-1$ the \emph{intertwining elements}
\begin{eqnarray}
\label{Phi}
\Phi_r&=&T_r + \sum_{u_{r+1}\neq u_r}\frac{1-q}{1-X_rX_{r+1}^{-1}}e_\bu + \sum_{u_{r+1}=u_r}e_\bu.
\end{eqnarray}
Their properties were, for instance, studied in \cite{BK}.

For each $w \in \fS$ we fix a reduced expression $w=s_{i_1}\cdots s_{i_r}$ and define $\Phi_{[w]}=\Phi_{i_1}\cdots \Phi_{i_r}$. We indicate by $[w]$ that this does depend on the choice of reduced expression.
It follows then directly that another topological $\FF$-basis of $\ccH_\bi$ is 
\begin{equation}
\left\{e_\bu \Phi_{[w]}X_1^{a_1}\cdots X_n^{a_n}\mid \bu \in \fS(i_1,\dots, i_n), w \in \fS, a_i \in \Z_{\geq 0} \right\}
\end{equation}
and similarly for the version with negative powers of the polynomial generators.

Then, similarly to Corollary~\ref{basisHJ}, we obtain several topological bases of $\bv_J\ccH_\bi$:
\begin{lemma}
\label{basislemma}
Any of the following sets is a  topological $\FF$-basis of $\bv_J\ccH_\bi$:
\begin{eqnarray*}
&\left\{\bv_Je_\bu T_w X_1^{a_1}\cdots X_n^{a_n}\mid\bu \in  \fS(i_1,\dots, i_n), w \in D_J, a_i \in \Z_{\geq 0} \right\},&\\
&\left\{\bv_J e_\bu T_w X_1^{a_1}\cdots X_n^{a_n}\mid\bu \in  \fS(i_1,\dots, i_n), w \in D_J, a_i \in \Z_{\leq 0} \right\},&\\
&\left\{\bv_Je_\bu \Phi_{[w]} X_1^{a_1}\cdots X_n^{a_n}\mid\bu \in  \fS(i_1,\dots, i_n), w \in D_J, a_i \in \Z_{\geq 0} \right\},&\\
&\left\{\bv_J e_\bu \Phi_{[w]} X_1^{a_1}\cdots X_n^{a_n}\mid\bu \in  \fS(i_1,\dots, i_n), w \in D_J, a_i \in \Z_{\leq 0} \right\}.&
\end{eqnarray*}
Analogously, we obtain bases for $\bbv_J\ccH_\bi$ if we replace $\bv$ by $\bbv$.
\end{lemma}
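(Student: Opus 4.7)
The plan is to mimic the proof of Corollary~\ref{basisHJ}, upgrading from $\bv_J\cH$ to the completion $\bv_J\ccH_\bi$ by feeding in the topological bases of $\ccH_\bi$ supplied by Lemma~\ref{basisH} and the remarks that immediately follow it.

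The combinatorial input is the decomposition $\fS = \bigsqcup_{d \in D_J} \W_J\, d$ with additive lengths $\ell(vd) = \ell(v) + \ell(d)$ for $v \in \W_J$, combined with the identity $\bv_J T_v = q^{\ell(v)}\bv_J$, which is immediate from \eqref{easyformel}. Together these yield the collapsing formula $\bv_J T_w = q^{\ell(v)}\bv_J T_d$ whenever $w = vd$. Starting from the basis $\{T_w X_1^{a_1}\cdots X_n^{a_n} e_\bu : w \in \fS,\, a_i \in \Z_{\geq 0}\}$ of $\ccH_\bi$ from Lemma~\ref{basisH}, I would first show that the restricted set $\{\bv_J T_d X_1^{a_1}\cdots X_n^{a_n} e_\bu : d \in D_J,\, a_i \in \Z_{\geq 0},\, \bu \in \fS\bi\}$ is a topological basis of $\bv_J\ccH_\bi$: spanning follows by writing any $h \in \bv_J\ccH_\bi$ as $\bv_J h'$, expanding $h'$, and applying the collapsing formula; linear independence follows by substituting $\bv_J = \sum_{v \in \W_J} T_v$ into a hypothetical relation and invoking the basis of $\ccH_\bi$ via the bijection $\W_J \times D_J \leftrightarrow \fS$.

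The stated form $\{\bv_J e_\bu T_w X_1^{a_1}\cdots X_n^{a_n}\}$ then follows by a triangular change of basis. The commutation $T_r e_\bu - e_{s_r(\bu)} T_r \in \bigoplus_{\bu'} \FF[[X_1,\dots, X_n]]$ recorded in the excerpt, iterated along a reduced expression, gives $T_d e_\bu = e_{d\bu} T_d + (\text{strictly shorter } T_{w'}\text{-terms with polynomial coefficients})$. Rewriting each shorter term via $w' = v'd'$ with $d' \in D_J$ and collapsing by $\bv_J T_{v'} = q^{\ell(v')}\bv_J$ keeps everything within the stated index set; the resulting transformation is block upper-triangular with respect to the length filtration on $D_J$, with diagonal blocks merely permuting the index $\bu$, and is therefore invertible. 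The $\Phi_w$-variants are handled identically using \eqref{Phi}, which gives $\Phi_r = T_r + g_r(X,e_*)$ for a polynomial-completion correction $g_r$, hence $\Phi_w = T_w + (\text{strictly shorter } T_{w'}\text{-terms})$. The variants with $a_i \in \Z_{\leq 0}$ instead use the second basis of $\ccH_\bi$ from Lemma~\ref{basisH}. Finally, the $\bbv_J$-statements are identical after substituting $\bbv_J T_v = (-1)^{\ell(v)}\bbv_J$ for $\bv_J T_v = q^{\ell(v)}\bv_J$, which is likewise immediate from \eqref{easyformel}.

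The main obstacle is topological bookkeeping: one must verify that all the triangular changes of basis and coset-collapsing rewrites converge in the $\cI_\bm$-adic topology on $\ccH_\bi$. The inverse $(1 - X_r X_{r+1}^{-1})^{-1}$ appearing in \eqref{Phi} is precisely designed to live in the completion, but the several iterated expansions must be shown to converge uniformly, so that each proposed change of basis is a continuous invertible map between complete topological $\FF$-vector spaces.
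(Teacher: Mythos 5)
Your argument is correct and follows essentially the same route the paper (implicitly) takes: the paper offers no written proof, deferring to Corollary~\ref{basisHJ} and the preceding triangular change-of-basis remarks for $\ccH_\bi$, which is exactly the coset-collapsing $\bv_J T_{vd}=q^{\ell(v)}\bv_J T_d$ plus length-triangularity argument you spell out. Your closing worry about convergence is not a real obstacle, since $D_J$ is finite, so the length filtration has finitely many steps and the strictly triangular part of each transition map is nilpotent, making it invertible as a continuous map on the complete space.
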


% \begin{proposition}\label{cchjbasis}
% For $J\subseteq \bbI$, $\ccH^J= \cH^J\otimes_\cH\ccH$ has topological $\FF$-bases given by 
% $$\{e_{\bu_J}\bv_J T_w X_1^{a_1}\cdots X_n^{a_n}|\bu \in \bU_J, w \in D_J, a_i \in \Z_{\geq 0} \}$$ or
% $$\{e_{\bu_J}\bv_J T_w X_1^{a_1}\cdots X_n^{a_n}|\bu \in  \bU_J, w \in D_J, a_i \in \Z_{\leq 0} \}.$$
% \end{proposition}

% \proof
% Using Lemma~\ref{idempcomm} and the decomposition \eqref{compldecomp} we have $\bv_Je_{\bu_J}\ccH_\bi\subseteq e_{\bu_J}\ccH_\bi$  for any $\bu \in \bU_J$ and $\bv_J\ccH_\bi = \bigoplus_{\bu \in \bU_J} e_\bu\bv_J \ccH_\bi$, so it suffices to show that, for each $\bu \in \bU_J$,  the set $$\{e_{\bu_J}\bv_J T_w X_1^{a_1}\cdots X_n^{a_n}| w \in D_J, a_i \in \Z_{\geq 0} \}$$ forms a basis for $e_\bu\bv_J \ccH_\bi$.
% \todo{no clue how to do this!!}
% \endproof

%Similarly, for $J\subseteq \bbI$, we can complete the ideal $\cH^J$ to $\ccH^J= \cH^J\otimes_\cH\ccH$, which then has topological $\FF$-bases given by 
%$$\{e_\bu X_1^{a_1}\cdots X_n^{a_n}T_w |\bu \in  \fS(i_1,\dots, i_n), w \in D_J, a_i \in \Z_{\geq 0} \}$$ or
%$$\{e_\bu X_1^{a_1}\cdots X_n^{a_n}T_w |\bu \in \fS(i_1,\dots, i_n) , w \in D_J, a_i \in \Z_{\leq 0} \}.$$

\subsection{A faithful representation of the Hecke algebra}\label{heckefaith}

We now construct a faithful representation of $\cH$ respectively $\ccH_\bi$ which allows us to realise either algebra as a subalgebra of the endomorphisms of some Laurent polynomial respectively power series ring. 

Fix the left ideal $\overline{U}=\left(\sum_{1 \leq i \leq n-1}\cH(T_i+1)\right)$.  We obtain the following two faithful representations; the second one agrees with  \cite[3.1.4]{Rouquier}, see e.g. \cite[2.17 with 3.2.5]{Rouquier2} for the faithfulness. 

\begin{proposition}[Faithful representation of Hecke algebra I]\hfill
\label{Ben}
\begin{enumerate}[i.)]
\item The natural action of  $\cH$ on $\cH/\overline{U}$ by left multiplication is faithful. 
\item This representation is canonically isomorphic to $\FF[X_1^{\pm 1}, \dots,X_n^{\pm 1}]\bbv$, where the generators $X_i^\pm$, $1\leq i\leq n$, and $T_i+1$, $1\leq i\leq n-1$ act just by left multiplication respectively by   
\begin{eqnarray}
\label{faithfulHecke1}
 (T_i+1)f\bbv &=&\frac{qX_{i+1}-X_i}{X_{i+1}-X_i}(f-{s_i}(f))\bbv, 
 \end{eqnarray}
where ${s_i}(f)$ is the Laurent polynomial $f$ with $X_{i+1}$ and $X_i$ interchanged. 
\end{enumerate}
\end{proposition}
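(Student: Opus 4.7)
The plan is to realise $\cH/\overline{U}$ as the submodule $\cH\bbv \subseteq \cH$ via the $\cH$-equivariant map $[h]\mapsto h\bbv$, to identify this submodule with $\cP\bbv$ carrying the claimed action (derived from the Bernstein commutation relation between the $T_i$ and $\cP$), and finally to deduce the faithfulness in (i) from the resulting realisation inside the regular representation combined with a leading-term argument.

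A direct calculation using the quadratic relation (H-1) and the definition \eqref{v} of $\bbv$ yields $T_i\bbv = -\bbv$ for every $s_i\in\bbI$, hence $(T_i+1)\bbv = 0$ and $\overline U\bbv = 0$. Thus $[h]\mapsto h\bbv$ descends to a surjective $\cH$-equivariant map $\cH/\overline U\twoheadrightarrow \cH\bbv$. Using the PBW-type vector space decomposition $\cH\cong \cP\otimes_\FF\cH_\bbI$ implicit in the basis \eqref{heckebasis} together with $T_w\bbv = (-1)^{\ell(w)}\bbv$, one sees that $\cH\bbv = \cP\bbv$. Moreover, an induction on length shows that the left ideal $\sum_i\cH_\bbI(T_i+1)\subset\cH_\bbI$ coincides with the kernel of the sign character $T_j\mapsto -1$; consequently under PBW $\overline U = \cH\cdot\sum_i\cH_\bbI(T_i+1)$ is exactly the kernel of $h\mapsto h\bbv$. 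Hence $\cH/\overline U \xrightarrow{\sim} \cP\bbv$ is an isomorphism of left $\cH$-modules, with basis $\{X^\alpha\bbv\}_{\alpha\in\mZ^n}$.

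For the action formula in (ii), I would first establish the Bernstein commutation relation
\[ T_i f = s_i(f)T_i + (q-1)\tfrac{f - s_i(f)}{1 - X_iX_{i+1}^{-1}}\qquad(f\in\cP) \]
by verifying it on $X_i^{\pm 1}, X_{i+1}^{\pm 1}$ using (H-6) and (H-7) and extending multiplicatively. Applying it to $f\bbv$ and using $T_i\bbv = -\bbv$ gives
\[(T_i+1)f\bbv = (f-s_i(f))\Bigl(1+\tfrac{q-1}{1-X_iX_{i+1}^{-1}}\Bigr)\bbv,\]
and clearing the common denominator $X_{i+1}-X_i$ reduces the coefficient to $\tfrac{qX_{i+1}-X_i}{X_{i+1}-X_i}$, which is precisely \eqref{faithfulHecke1}.

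The faithfulness in (i) is the main obstacle. Assume $h\in\cH$ acts as zero on $\cP\bbv$; then $hX^\beta\bbv = 0$, equivalently $hX^\beta\in\overline U$, for every $\beta\in\mZ^n$. Writing $h = \sum_{w\in\fS}f_w T_w$ with $f_w\in\cP$ and iteratively commuting $X^\beta$ past the $T_w$'s via the Bernstein relation, this becomes an identity $\sum_{w\in\fS}f_w\,\widehat T_w(X^\beta) = 0$ in $\cP$ valid for every $\beta$, where $\widehat T_w\colon\cP\to\cP$ is the $\FF$-linear operator determined by $T_w g\bbv = \widehat T_w(g)\bbv$. A direct induction on $\ell(w)$ shows that each $\widehat T_w$ expands as an upper-triangular combination of the permutation operators $\{\sigma\in\fS\}$ with respect to the Bruhat order, with invertible diagonal entries (rational functions in the $X_i$). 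Combined with the standard linear independence of the Laurent polynomials $\{X^{w(\beta)}\}_{w\in\fS}$ over $\cP$ for $\beta$ sufficiently generic, a Vandermonde-type argument extracting top coefficients in turn forces every $f_w=0$, whence $h = 0$.
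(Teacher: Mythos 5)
Your proof is correct; it reaches the same module but from the opposite direction to the paper. The paper takes the formula \eqref{faithfulHecke1} as the \emph{definition} of an action on $\FF[X_1^{\pm 1},\dots,X_n^{\pm 1}]\bbv$, checks that the relations (H-1)--(H-7) hold, asserts faithfulness because the basis \eqref{heckebasis} is sent to linearly independent operators, and only then identifies this module with $\cH/\overline{U}$ via $1\mapsto\bbv$ by matching bases; the formula itself is then re-derived by an induction over the decomposition $\cP=\cP^{s_i}\oplus X_i\cP^{s_i}$ using $T_iX_iT_i=qX_{i+1}$, which amounts to the same Bernstein--Lusztig commutation relation you invoke. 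Your route --- realising $\cH/\overline{U}$ as the left ideal $\cH\bbv=\cP\bbv$ inside the regular representation --- makes well-definedness of the action automatic (no need to verify the braid and quadratic relations for the resulting Demazure--Lusztig operators), at the cost of having to identify $\overline{U}$ exactly with the kernel of $h\mapsto h\bbv$, which you do correctly via the codimension-one sign ideal of $\cH_{\bbI}$ (the left-ideal analogue of Lemma~\ref{cyclic}). Your faithfulness argument is the honest, spelled-out version of the paper's one-line claim: the upper-triangular expansion of $\widehat{T}_w$ in the permutation operators with nonzero diagonal entries, combined with Dedekind--Artin independence of the automorphisms $\sigma\in\fS$ of $\op{Frac}(\cP)$ (your Vandermonde-type argument), does force all $f_w=0$. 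Both proofs ultimately rest on the same ingredients: the basis \eqref{heckebasis}, the relation $T_i\bbv=-\bbv$, and the triangularity of the induced operators on $\cP$.
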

\begin{comment}
\begin{proof}
It is a straightforward calculation to verify that the assignments in (ii) define a well-defined action, i.e. that relations (H-1)-(H-7) hold. Moreover, it is clear that the basis \eqref{heckebasis} is mapped to linearly independent endomorphisms.
By \eqref{easyformel} the canonial $\cH$-equivariant map $\mu:\cH\rightarrow\FF[X_1^{\pm 1}, \dots,X_n^{\pm 1}]\bbv$, $1\mapsto\bbv$ factors through $\overline{U}$. By \eqref{heckebasis}, $\cH/\overline{U}=\cP$ and $\mu$ maps a basis to a basis. Hence it is an isomorphism. Hence it remains to check the formula \eqref{faithfulHecke1}. If $f$ is $s_i$-invariant, then both sides vanish by definition and \eqref{center}. Assume $f=X_ig$ for some $s_i$-invariant $g\in\FF[X_1^{\pm 1}, \dots,X_n^{\pm 1}]$ and abbreviate $h=\frac{qX_{i+1}-X_i}{X_{i+1}-X_i}$. Then we have $(T_i+1)f\bbv=T_iX_iT_iT_i^{-1}g\bbv+X_ig\bbv=qX_{i+1}T_i^{-1}g\bbv+X_ig\bbv=X_{i+1}(T_i+1)g\bbv-(qX_{i+1}-X_i)g\bbv$. Now, by induction, the latter is equal to 
\begin{eqnarray*}
&&X_{i+1}h(f-{s_i}(f))\bbv+\frac{qX_{i+1}X_i-qX_{i+1}^2+X_iX_{i+1}-X_i^2}{X_{i+1}-X_i}g\bbv\\
&=&
h(X_if-X_{i+1}{s_i}(f))
\end{eqnarray*}
and the claim follows.
\end{proof}
\end{comment}

By twisting with the automorphism $\sharp$ from \eqref{eqhash} we obtain:
\begin{proposition}[Faithful representation of Hecke algebra II]\hfill
\label{Bensign}

Let $U=\sum_{1\leq i <n}\cH(T_i-q)$. Then there is a faithful representation of $\cH$ on $$\cH/U\cong \FF[X_1^{\pm 1}, \dots,X_n^{\pm 1}]\bv.$$ 
Explicitly, the action is given by 
\begin{eqnarray}
\label{faithfulHecke2}
 (T_i-q)f\bv &=&\frac{qX_{i}-X_{i+1}}{X_{i+1}-X_{i}}(f-{s_i}(f))\bv.
 \end{eqnarray}
\end{proposition}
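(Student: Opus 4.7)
The strategy is to derive this proposition from Proposition~\ref{Ben} by twisting with the algebra involution $\sharp$ of $\cH$ defined in \eqref{eqhash}.

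First I would record the key computations $(T_i+1)^\sharp = q - T_i = -(T_i-q)$ and $X_j^\sharp = X_j^{-1}$, and observe that $\sharp$ is an involutive algebra automorphism of $\cH$. Consequently $\sharp$ sends the left ideal $\overline U = \sum_i\cH(T_i+1)$ onto $U = \sum_i\cH(T_i-q)$, and hence induces an $\FF$-linear bijection $\psi : \cH/\overline U \to \cH/U$, $h + \overline U \mapsto h^\sharp + U$. Since $\sharp$ is an involutive algebra automorphism, a short check shows that $\psi$ intertwines the natural left $\cH$-action on $\cH/U$ with the $\sharp$-twist of the natural left action on $\cH/\overline U$; in particular, the faithfulness established in Proposition~\ref{Ben} transports to faithfulness of the natural action on $\cH/U$.

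The identification $\cH/U \cong \FF[X_1^{\pm 1},\ldots,X_n^{\pm 1}]\bv$ then follows by transporting the analogous identification in Proposition~\ref{Ben} through $\psi$: the basis $\{X^{\ba}\bbv\}_{\ba\in\Z^n}$ of $\cH/\overline U$ is sent by $\psi$ to $\{X^{-\ba} + U\}_{\ba\in\Z^n} = \{X^{\ba} + U\}_{\ba\in\Z^n}$, giving $\cH/U \cong \cP$ as vector spaces, with $\bv$ identified with $1+U$.

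To extract \eqref{faithfulHecke2} explicitly, I would apply $\sharp$ termwise to \eqref{faithfulHecke1}. The action of $-(T_i-q)$ on $\psi(f\bbv) = f(X_1^{-1},\dots,X_n^{-1}) + U$ equals $\psi$ applied to the right-hand side of \eqref{faithfulHecke1}; using that $\sharp$ commutes with the $s_i$-action on Laurent polynomials and that
\[\frac{qX_{i+1}-X_i}{X_{i+1}-X_i}\;\longmapsto\; \frac{qX_{i+1}^{-1}-X_i^{-1}}{X_{i+1}^{-1}-X_i^{-1}} \;=\; \frac{qX_i - X_{i+1}}{X_i - X_{i+1}}\]
after clearing $X_i X_{i+1}$ in numerator and denominator, one recovers precisely the right-hand side of \eqref{faithfulHecke2}, the overall minus sign coming from $(T_i+1)^\sharp = -(T_i-q)$.

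The main technical point to keep straight is the bookkeeping of the $\sharp$-twist, ensuring that the sign flip $T_i+1 \leftrightarrow -(T_i-q)$ and the substitution $X_j \leftrightarrow X_j^{-1}$ combine coherently via $\psi$. Beyond this, the proposition follows from Proposition~\ref{Ben} by pure transport of structure, and no further calculation with the Bernstein relations is needed.
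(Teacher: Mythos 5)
Your proposal is correct and is exactly the paper's argument: the paper proves Proposition~\ref{Bensign} in one line by observing $\cH/\overline{U}\cong{}^\sharp(\cH/U)$, i.e.\ by transporting Proposition~\ref{Ben} through the involution $\sharp$, which is precisely what you do (just spelled out in more detail, including the correct sign bookkeeping $(T_i+1)^\sharp=-(T_i-q)$ and the simplification of the twisted rational coefficient).
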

%\proof
%This follows directly from the fact that $\cH/\overline{U}\cong{}^\sharp(\cH/U)$, the  representation $\cH/U$ twisted by the automorphism $\sharp$. \endproof

We denote by $\oFH$ respectively $\FH$ the faithful representations from \eqref{faithfulHecke1} and \eqref{faithfulHecke2} respectively. The following is immediate.

\begin{corollary}\label{normalhash}
We have an isomorphism of $\cH$-modules 
\begin{eqnarray*}
\oFH \to {}^\sharp(\FH)&\text{given by}& f\bv \mapsto  f^\sharp\bbv.
\end{eqnarray*}
\end{corollary}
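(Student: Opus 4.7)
The natural approach is to lift the claimed map to the level of $\cH$ itself, using the algebra automorphism $\sharp$ from \eqref{eqhash}. The crucial observation is that
\[
\sharp(T_i + 1) \;=\; q - T_i \;=\; -(T_i - q), \qquad i = 1, \dots, n-1.
\]
Since $\sharp$ is an algebra automorphism, this shows that it carries the left ideal $\overline{U} = \sum_i \cH(T_i+1)$ bijectively onto $U = \sum_i \cH(T_i-q)$, and therefore descends to an $\FF$-linear bijection $\bar\sharp\colon \cH/\overline{U} \to \cH/U$.

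This bijection is $\cH$-equivariant when the target is read as the twisted module ${}^\sharp(\cH/U)$. Indeed, for any $h, x \in \cH$,
\[
\bar\sharp(h \cdot [x]) \;=\; [\sharp(h)\sharp(x)] \;=\; \sharp(h) \cdot \bar\sharp([x]) \;=\; h \cdot_\sharp \bar\sharp([x]),
\]
where $\cdot_\sharp$ denotes the twisted action $h \cdot_\sharp m = \sharp(h) \cdot m$. Combining the resulting isomorphism $\cH/\overline{U} \cong {}^\sharp(\cH/U)$ with the identifications $\cH/\overline{U} \cong \cP\bbv$ and $\cH/U \cong \cP\bv$ provided by Propositions~\ref{Ben} and~\ref{Bensign}, and using that $\sharp$ preserves $\cP \subset \cH$ and acts there by $X_j \mapsto X_j^{-1}$, we obtain exactly the isomorphism stated in the corollary, with $f\bbv$ going to $f^\sharp \bv$.

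I do not anticipate any serious obstacle: the whole statement is essentially the functoriality of $\sharp$. The only bookkeeping step is to verify that the identifications of the two quotients with $\cP\bbv$ and $\cP\bv$ really do turn $\bar\sharp$ into $f \mapsto f^\sharp$ on the polynomial part, which follows immediately from the fact that $\sharp$ acts on $\cP$ as the map $f \mapsto f^\sharp$. As a sanity check one could instead verify $\cH$-equivariance directly on the generators $X_j$ and $T_i$: for $X_j$ both sides obviously become $X_j^{-1} f^\sharp \bv$, and for $T_i$ one matches the two explicit Demazure-type formulae \eqref{faithfulHecke1} and \eqref{faithfulHecke2} via the identity $\frac{qX_{i+1}^{-1} - X_i^{-1}}{X_{i+1}^{-1} - X_i^{-1}} = \frac{qX_i - X_{i+1}}{X_i - X_{i+1}}$ together with the observation that $s_i$ commutes with $\sharp$. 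The algebraic lift above avoids this calculation altogether.
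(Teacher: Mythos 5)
Your argument is correct and is exactly the route the paper takes implicitly: the corollary is declared ``immediate'' because the proof of Proposition~\ref{Bensign} already rests on the identification $\cH/\overline{U}\cong{}^\sharp(\cH/U)$ induced by $\sharp(T_i+1)=-(T_i-q)$, which is precisely what you spell out, together with the polynomial identifications of the two quotients. Note only that the displayed formula in the corollary has $\bv$ and $\bbv$ interchanged relative to the declared source $\oFH=\cP\bbv$ and target $\FH=\cP\bv$; your version $f\bbv\mapsto f^\sharp\bv$ is the one consistent with the stated domain and codomain (and since $\sharp$ is an involution, the other direction is just its inverse).
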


Completion gives us faithful representations of the completed algebra:
\begin{corollary}
\label{faithfulcompl} 
\begin{enumerate}[i.)]
\item 
There is a  faithful representation of $\ccH_\bi$ on $$\coFH_\bi=\ccH_\bi\otimes_\cH \cH/\overline{U}\cong \bigoplus_{\bu \in \fS\bi}\FF[[X_1, \dots,X_n]] e_\bu\bbv$$ by completing the representation from Proposition~\ref{Ben} with respect to the maximal ideal generated by the elements $(X_r-i_r)e_\bu$, $1\leq r\leq n$.
\item There is a  faithful representation of $\ccH_\bi$ on $$\cFH_\bi=\ccH_\bi\otimes_\cH \cH/U\cong \bigoplus_{\bu \in \fS\bi}\FF[[X_1^{- 1}, \dots,X_n^{-1}]] e_\bu\bv$$ by completing the representation from Proposition~\ref{Bensign} with respect to the ideal generated by  $Y^{-1}_r e_\bu=(X^{-1}_r+i_r)e_\bu$, $1\leq r\leq n$.
\end{enumerate}
\end{corollary}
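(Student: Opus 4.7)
\emph{Proof plan.} The strategy is to extend the faithful $\cH$-actions from Propositions~\ref{Ben} and~\ref{Bensign} by continuity to $\ccH_\bi$-actions on the completed spaces, identify the completed modules with the stated direct sums, and deduce faithfulness from that of the uncompleted actions. I focus on part (i); part (ii) is entirely analogous, or alternatively follows by twisting with $\sharp$ via Corollary~\ref{normalhash}.

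Since $\oFH\cong\FF[X_1^{\pm 1},\dots,X_n^{\pm 1}]\bbv$ is free of rank $n!$ over the centre $Z(\cH)=\FF[X_1^{\pm 1},\dots,X_n^{\pm 1}]^{\fS}$, its $\bm_\chi$-adic completion is flat and equals $\ccH_\bi\otimes_{\cH}\oFH$. In the extension $Z(\cH)\subseteq\FF[X_1^{\pm 1},\dots,X_n^{\pm 1}]$, the maximal ideal $\bm_\chi$ pulls back to the intersection of the distinct maximal ideals $\mathfrak{M}_\bu=(X_j-q^{u_j}\mid j=1,\dots,n)$ indexed by $\bu\in\fS\bi$, so the Chinese Remainder Theorem identifies the completion as $\bigoplus_{\bu\in\fS\bi}\FF[[X_1-q^{u_1},\dots,X_n-q^{u_n}]]\,e_\bu\bbv$. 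This matches the statement after a harmless variable shift on each summand, with $e_\bu$ realized as the topological projector onto the $\bu$-summand. To extend the action, I would work with the intertwining elements $\Phi_r$ of \eqref{Phi}: on each $\bu$-summand the only potential singularity $(1-q)(1-X_rX_{r+1}^{-1})^{-1}$ appears under the hypothesis $u_r\neq u_{r+1}$, in which case $1-X_rX_{r+1}^{-1}$ has invertible constant term $1-q^{u_r-u_{r+1}}\neq 0$, while the diagonal case $u_r=u_{r+1}$ is absorbed into the separate idempotent summand $e_\bu$ in \eqref{Phi}. Thus each $\Phi_r$ is a continuous operator on $\coFH_\bi$, and rewriting $T_r$ as $\Phi_r$ minus these well-defined terms shows the same for $T_r$. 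Since $\cI_\bm^m$ corresponds to the $m$th power of the maximal ideal on each summand, the $\cH$-action factors continuously through $\ccH_\bi$.

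For faithfulness, given $h\in\ccH_\bi$ annihilating $\coFH_\bi$, expand $h$ in the topological basis of Lemma~\ref{basisH} (or its $\Phi_w$-variant from the subsequent discussion) and evaluate on the cyclic vectors $e_{\bu}\bbv$. The elements $\Phi_w\cdot e_{\bu}\bbv$ are triangular with respect to $\fS$-length, with nonzero leading term in the $w\bu$-summand of $\coFH_\bi$, so the coefficient of each basis element in $h$ can be extracted one $\bu$-summand at a time, forcing $h=0$; this is really a completion of the very same triangularity argument that underpins faithfulness in Proposition~\ref{Ben}. The main obstacle is precisely the continuous extension of the action, i.e.\ verifying that \eqref{Phi} gives a well-defined operator on the completion: both the ``regular'' case $u_r\neq u_{r+1}$ (where the denominator has invertible constant term) and the ``diagonal'' case $u_r=u_{r+1}$ (where the whole rational function is replaced by the projector $e_\bu$) must be accommodated simultaneously---this is exactly why \eqref{Phi} has two distinct summands, and it is the same phenomenon as the divided-difference interpretation needed when attempting to extend formula \eqref{faithfulHecke1} for $T_i+1$ directly.
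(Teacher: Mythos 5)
Your overall route---identify $\ccH_\bi\otimes_\cH\cH/\overline U$ with $\bigoplus_{\bu\in\fS\bi}\FF[[X_1,\dots,X_n]]e_\bu\bbv$ via the splitting of $\bm_\chi$ into the maximal ideals $\mathfrak{M}_\bu$, check that the operators of Proposition~\ref{Ben} extend continuously to each summand, and then deduce faithfulness---is exactly what the paper's one-line justification (``by completing the representation'') intends, and the first two steps are correct. In particular, the observation that on the $e_\bu$-summand the denominator $1-X_rX_{r+1}^{-1}$ has invertible constant term $1-q^{u_r-u_{r+1}}$ precisely when $u_r\neq u_{r+1}$, while the case $u_r=u_{r+1}$ is handled by the separate idempotent summand in \eqref{Phi}, is the right point.

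The faithfulness step, however, contains a concrete error. You claim that $\Phi_w\cdot e_\bu\bbv$ has a nonzero leading term in the $w\bu$-summand, so that the coefficients of $h$ in the topological basis can be read off from its values on the cyclic vectors $e_\bu\bbv$ alone. This fails: by \eqref{firstformula} one has $\Phi_r\, e_\bu\bbv=0$ whenever $u_r=u_{r+1}$ (already for $n=2$ and $\bu=(1,1)$ the nonzero operator $\Phi_1e_{(1,1)}$ kills $e_{(1,1)}\bbv$), whereas by \eqref{phiaction2} it does not kill $(X_{r+1}-X_r)e_{(1,1)}\bbv$. So there are nonzero elements of $\ccH_\bi$ annihilating your entire test set, and the coefficient extraction breaks down. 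The repair is to test against all of $\FF[[X_1,\dots,X_n]]e_\bu\bbv$ rather than only the generators: writing $h=\sum_w T_wf_w$ with $f_w\in\ccP_\bi$ as in Lemma~\ref{basisH} and using the Bernstein--Lusztig commutation $T_wg=w(g)T_w+(\text{terms }T_v,\ v<w)$ together with $T_v\bbv=(-1)^{l(v)}\bbv$, one obtains for every $g$ a relation $\sum_w f_w\,c_w(g)=0$ whose leading coefficients are $\pm w(g)$, and an independence-of-characters argument (after localizing at the elements $X_i-X_j$ that are units on the relevant summands, exactly as in the uncompleted Proposition~\ref{Ben}) forces all $f_w=0$. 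The statement is correct, but not by the argument you gave.
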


The definitions directly imply the following connection:

\begin{corollary}
\label{corhash}
There is an isomorphism of $\ccH_\bi$-modules 
\begin{eqnarray*}
\cFH_\bi\;\cong\;{}^\sharp(\coFH_{-\bi})&\text{via}&f\bv\;\mapsto\; f^\sharp\bbv,
\end{eqnarray*}
identifying $\FF[[X_1^{-1}, \dots,X_n^{-1}]] e_\bu\bv$ with $\FF[[X_1, \dots,X_n]] e_{-\bu}\bbv$, where the minus signs applies to all entries, i.e. $-\bi=(-i_1,\dots, -i_n)$ and $-\bu=(-u_1,\ldots, -u_n)$.
\end{corollary}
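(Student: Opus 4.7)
The plan is to deduce this from the uncompleted version in Corollary~\ref{normalhash} by completing both sides at the appropriate central characters. The key observation is that the algebra automorphism $\sharp$ of $\cH$ does not preserve a fixed central character but instead interchanges $\chi_\bi$ and $\chi_{-\bi}$, because $\sharp$ sends $X_j\mapsto X_j^{-1}$.

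To make this precise, I would first check that for the maximal ideal $\bm_{\chi_\bi}\subset Z(\cH)=\FF[X_1^{\pm 1},\dots,X_n^{\pm 1}]^{\fS}$ we have the relation
\[
\sharp(X_j-q^{i_j})\;=\;X_j^{-1}-q^{i_j}\;=\;-q^{i_j}X_j^{-1}(X_j-q^{-i_j}),
\]
and note that $X_j^{-1}$ is a unit in the relevant local ring. Consequently $\sharp(\bm_{\chi_\bi})$ and $\bm_{\chi_{-\bi}}$ have the same radical, so $\sharp$ extends to a topological algebra isomorphism $\sharp\colon\ccH_\bi\xrightarrow{\sim}\ccH_{-\bi}$. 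Under this isomorphism the idempotent $e_\bu$ (which detects that $X_j$ acts close to $q^{u_j}$ for each $j$) corresponds to the idempotent $e_{-\bu}$ of $\ccH_{-\bi}$ (where $X_j$ acts close to $q^{-u_j}$, equivalently $X_j^{-1}$ close to $q^{u_j}$).

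With this in place, I would complete the module isomorphism of Corollary~\ref{normalhash} by applying the completion functor $\ccH_\bi\otimes_\cH(-)$ on one side and, via the compatibility above, $\ccH_{-\bi}\otimes_\cH(-)$ twisted by $\sharp$ on the other. The left-hand side becomes $\cFH_\bi$ by Corollary~\ref{faithfulcompl}(ii), while the right-hand side becomes ${}^\sharp(\coFH_{-\bi})$ by Corollary~\ref{faithfulcompl}(i). The explicit formula $f\bv\mapsto f^\sharp\bbv$ extends by continuity from the dense subspace of Laurent polynomials, and the $\cH$-equivariance upgrades to $\ccH_\bi$-equivariance since both sides are continuous for the respective completion topologies.

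Finally, the matching of idempotent pieces $\FF[[X_1^{-1},\dots,X_n^{-1}]]e_\bu\bv \to \FF[[X_1,\dots,X_n]]e_{-\bu}\bbv$ follows at once from Step~1: an element of the domain is a power series in quantities topologically nilpotent on the $\bu$-component of the completion, namely in $X_j-q^{u_j}$, or equivalently in $X_j^{-1}-q^{-u_j}$; applying $\sharp$ yields a power series in $X_j-q^{-u_j}$, which lies in the $-\bu$-component of $\coFH_{-\bi}$ as claimed. The main point requiring care is precisely this passage between two different power series presentations of the same local factor: the elements $X_j-q^{u_j}$ and $X_j^{-1}-q^{-u_j}$ generate the same principal ideal up to a unit, so the completions are canonically identified. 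Once this linear-algebra observation is settled, the statement follows formally from Corollary~\ref{normalhash}.
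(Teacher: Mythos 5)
Your proposal is correct and follows essentially the same route as the paper: the paper's own proof consists of the remark that the first statement is clear (i.e.\ it is the completion of Corollary~\ref{normalhash}) and that the matching of idempotent components follows from $X_i^\sharp=X_i^{-1}$, Corollary~\ref{faithfulcompl} and the definition \eqref{eigenspaces}. You have merely spelled out the details the authors leave implicit — that $\sharp$ swaps the central characters $\chi_\bi$ and $\chi_{-\bi}$ and hence induces $\ccH_\bi\xrightarrow{\sim}\ccH_{-\bi}$ sending $e_\bu$ to $e_{-\bu}$, with the unit-multiple identity $X_j^{-1}-q^{u_j}=-q^{u_j}X_j^{-1}(X_j-q^{-u_j})$ doing the work — so no further comparison is needed.
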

\proof
The first statement is clear. The identification follows directly from the fact that $X_i^\sharp =X_i^{-1}$ for all $i$, Corollary~\ref{faithfulcompl} and the definition in \eqref{eigenspaces}.
\endproof

We finish this section with a few important explicit formulae for the action of the intertwining elements from \eqref{Phi}.

\begin{lemma}
\label{erstesLemma}
For any idempotent $e_\bu$ as in Definition~\ref{defcompl}, the following equalities hold. For $1\leq r\leq n-1$, we have
\begin{eqnarray}
\label{firstformula}
e_{s_r\cdot\bu}\Phi_r\bbv&=&
\begin{cases}
\frac{X_r-qX_{r+1}}{X_{r+1}-X_r}e_{s_r\cdot\bu}\bbv& \hbox{if $u_{r+1}\neq u_r$,}\\ 
0&\text{otherwise},
\end{cases}
\end{eqnarray}
and
\begin{eqnarray}\label{phiaction2}
e_{s_r\cdot\bu}\Phi _r(X_{r+1}-X_r)\bbv&=&
\begin{cases}
(qX_{r+1}-X_r)e_{s_r\cdot\bu}\bbv& \hbox{if $u_{r+1}\neq u_r$,}\\ 
2(qX_{r+1}-X_r)e_\bu\bbv& \hbox{if $u_{r+1}= u_r$.}
\end{cases}
\end{eqnarray}
\end{lemma}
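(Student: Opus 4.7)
The plan is to verify both identities inside the faithful representation $\coFH_\bi$ of Corollary~\ref{faithfulcompl}(i), where the action of $T_r+1$ is given by the explicit formula~\eqref{faithfulHecke1}. Extended by continuity to the completion, this becomes
\begin{equation*}
T_r(f\bbv)\;=\;\frac{qX_{r+1}-X_r}{X_{r+1}-X_r}\bigl(f-s_r(f)\bigr)\bbv\;-\;f\bbv
\end{equation*}
for any formal power series $f$ in the $X_i$'s. Two further observations are crucial: in the completion, the idempotent $e_\bu$ is itself a formal power series in the variables $(X_i-q^{u_i})_i$ acting by multiplication, and is transformed by the variable swap to $s_r(e_\bu)=e_{s_r\bu}$; moreover $e_\bu e_\bv=\delta_{\bu\bv}e_\bu$, so $e_\bu e_{s_r\bu}$ vanishes exactly when $u_r\neq u_{r+1}$. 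In particular, in this latter case the factor $\tfrac{1}{X_{r+1}-X_r}$ only ever multiplies differences $e_\bu-e_{s_r\bu}$ (or products of $(X_{r+1}-X_r)$ with something regular), which is genuinely well defined on the corresponding pair of eigenspaces.

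Next I would decompose $\bbv=\sum_{\bv\in\fS\bi}e_\bv\bbv$ and note from \eqref{Phi} that, by $e_\bu e_\bv=\delta_{\bu\bv}e_\bu$,
\begin{equation*}
\Phi_re_\bu=\begin{cases}T_re_\bu+\frac{1-q}{1-X_rX_{r+1}^{-1}}e_\bu&\text{if }u_r\neq u_{r+1},\\ T_re_\bu+e_\bu&\text{if }u_r=u_{r+1}.\end{cases}
\end{equation*}
A direct computation using the displayed formula then shows that $\Phi_r(e_\bv\bbv)$ is supported in the $s_r\bv$-generalised eigenspace (this is the standard intertwining property $\Phi_re_\bv=e_{s_r\bv}\Phi_r$ in disguise). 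Consequently $e_{s_r\bu}\Phi_re_\bv\bbv=0$ unless $\bv=\bu$, and it suffices to compute $e_{s_r\bu}\Phi_re_\bu\bbv$.

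For the first identity, substituting $f=e_\bu$ into the displayed formula gives $T_re_\bu\bbv=\tfrac{qX_{r+1}-X_r}{X_{r+1}-X_r}(e_\bu-e_{s_r\bu})\bbv-e_\bu\bbv$, to which one adds $\tfrac{(1-q)X_{r+1}}{X_{r+1}-X_r}e_\bu\bbv$ (case $u_r\neq u_{r+1}$) or $e_\bu\bbv$ (case $u_r=u_{r+1}$). In the first case a short polynomial cancellation makes the total coefficient of $e_\bu\bbv$ vanish, leaving precisely $\tfrac{X_r-qX_{r+1}}{X_{r+1}-X_r}e_{s_r\bu}\bbv$; in the second case $e_\bu=e_{s_r\bu}$ collapses the leading fraction to zero and the two $e_\bu\bbv$ terms cancel. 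For the second identity I take $f=(X_{r+1}-X_r)e_\bu$; the antisymmetry $s_r(X_{r+1}-X_r)=-(X_{r+1}-X_r)$ converts $f-s_r(f)$ into $(X_{r+1}-X_r)(e_\bu+e_{s_r\bu})$, cancelling the denominator. The same two-case analysis then yields $(qX_{r+1}-X_r)e_{s_r\bu}\bbv$ when $u_r\neq u_{r+1}$, and in the case $u_r=u_{r+1}$ the identity $e_\bu+e_{s_r\bu}=2e_\bu$ produces the factor of $2$ in the stated answer $2(qX_{r+1}-X_r)e_\bu\bbv$.

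The main obstacle is formal rather than computational: one must justify working with the rational coefficients appearing in $\Phi_r$ as actual operators on the completed module, and make sure the ``telescoping'' of $e_\bu\bbv$-coefficients in the first case is not an artefact of symbolically cancelling an ill-defined $\tfrac{1}{X_{r+1}-X_r}$. Once this is handled via the two observations in the first paragraph (support on disjoint eigenspaces, or multiplication by $(X_{r+1}-X_r)$), both identities drop out of the single formula~\eqref{faithfulHecke1}.
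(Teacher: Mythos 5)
Your proposal is correct: both displayed identities do follow from the computations you describe, and I checked that your case analysis reproduces the stated right-hand sides (including the factor $2$ from $e_\bu+e_{s_r\cdot\bu}=2e_\bu$). The route is genuinely different from the paper's, though. The paper never invokes the explicit module formula \eqref{faithfulHecke1}: for \eqref{firstformula} it simply uses $T_r\bbv=-\bbv$ (the sign-representation property of $\bbv$ coming from \eqref{easyformel}) together with the definition \eqref{Phi} of $\Phi_r$, so that $e_{s_r\cdot\bu}\Phi_r\bbv=-e_{s_r\cdot\bu}\bbv+\frac{(1-q)X_{r+1}}{X_{r+1}-X_r}e_{s_r\cdot\bu}\bbv$ (resp.\ $-e_\bu\bbv+e_\bu\bbv=0$); and for \eqref{phiaction2} it first establishes the algebra identity $T_r(X_{r+1}-X_r)=(X_{r+1}-X_r)T_r-2qX_{r+1}(q^{-1}T_r+q^{-1}-1)$ via (H-7), applies it to $\bbv$ to get $T_r(X_{r+1}-X_r)\bbv=((2q-1)X_{r+1}-X_r)\bbv$, and then adds the $\Phi_r$-correction terms. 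That argument is shorter and avoids exactly the formal points you rightly worry about — treating $e_\bu$ and the rational coefficients of $\Phi_r$ as multiplication operators on the completion, and the intertwining property $\Phi_re_\bu=e_{s_r\cdot\bu}\Phi_r$ needed to reduce to the single summand $e_\bu\bbv$ — since it only ever needs $e_{s_r\cdot\bu}$ applied to the full element $\Phi_r\bbv$. Your version buys uniformity (both identities drop out of the single formula \eqref{faithfulHecke1}) at the cost of those justifications, which you do address adequately: when $u_r\neq u_{r+1}$ the element $X_{r+1}-X_r$ is invertible on $e_\bu\ccP_\bi$ because $q^{u_r}\neq q^{u_{r+1}}$, and when $u_r=u_{r+1}$ the numerators $f-s_r(f)$ you encounter are divisible by $X_{r+1}-X_r$, so no ill-defined cancellation occurs.
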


\begin{comment}
\begin{proof}
Since $T_r\bbv=-\bbv$, using the definition \eqref{Phi} of $\Phi_r$, we obtain  that $e_{s_r\cdot\bu}\Phi_r \bbv$ equals 
$-e_{s_r\cdot\bu}\bbv+\frac{1-q}{1-X_rX_{r+1}}e_{s_r\cdot\bu}\bbv=\frac{X_r-qX_{r+1}}{X_{r+1}-X_r}e_{s_r\cdot\bu}\bbv$ if $u_{r+1}\neq u_r$ and equals zero otherwise. This shows \eqref{firstformula}.
On the other hand 
\begin{eqnarray*}
T_r(X_{r+1}-X_r)&=&(X_{r+1}+X_r)-2T_rX_r\\
&=&(X_{r+1}-X_r)T_r-2qX_{r+1}(q^{-1}T_r+q^{-1}-1).
\end{eqnarray*}
Hence $T_r(X_{r+1}-X_r)\bbv=(2q-1)X_{r+1}-X_r$. Therefore by \eqref{Phi} we obtain
 \begin{eqnarray*}
 e_{s_r\cdot\bu}\Phi_r (X_{r+1}-X_r) \bbv &=&(2q-1)X_{r+1}-X_r+
 \begin{cases}
 (1-q)X_{r+1}&\text{if $u_r\not=u_{r+1}$},\\
 X_{r+1}-X_r&\text{if $u_r=u_{r+1}$}.
 \end{cases}
\end{eqnarray*}
Hence the lemma follows.
\end{proof}
\end{comment}

\begin{lemma}
For any idempotent $e_\bu$ as in Definition~\ref{defcompl}, the following equalities hold: For $1\leq r\leq n-1$, we have
\begin{equation}\label{phiaction3}
e_{s_r\cdot\bu}\Phi_r \bv =  \left\{\begin{array}{ll} \frac{X_{r+1}-qX_{r}}{X_{r+1}-X_r}e_{s_r\cdot\bu}\bv & \hbox{if $u_{r+1}\neq u_r$,}\\ (q+1) e_{s_r\cdot\bu}\bv& \hbox{if $u_{r+1}= u_r.$}\end{array}\right.\end{equation} and
\begin{equation}\label{phiaction4}
e_{s_r\cdot\bu}\Phi (X_{r+1}-X_r) \bv=  \left\{\begin{array}{ll} (qX_{r}-X_{r+1})e_{s_r\cdot\bu}\bv & \hbox{if $u_{r+1}\neq u_r,$} \\ (q-1)(X_{r+1}+X_r)e_\bu\bv& \hbox{if $u_{r+1}= u_r.$}\end{array}\right.\end{equation}
\end{lemma}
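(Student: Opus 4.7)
The plan is to carry out direct computations entirely parallel to Lemma~\ref{erstesLemma}, replacing the sign module $\bbv$ (where $T_r\bbv = -\bbv$) by the trivial module $\bv$, using the relation $T_r\bv = q\bv$ from \eqref{easyformel}. The key observation allowing a clean case analysis is that if $\bu'=s_r\cdot\bu$ then $u'_r = u_{r+1}$ and $u'_{r+1} = u_r$, so the condition $u'_{r+1}\neq u'_r$ is equivalent to $u_{r+1}\neq u_r$; consequently, after multiplying the defining expression \eqref{Phi} for $\Phi_r$ by $e_{s_r\cdot\bu}$ on the left, the two idempotent sums collapse to a single summand determined by whether $u_{r+1}\neq u_r$ or $u_{r+1}=u_r$.

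For \eqref{phiaction3}, I would then simply apply the resulting expression for $e_{s_r\cdot\bu}\Phi_r$ to $\bv$. In the case $u_{r+1}\neq u_r$ this produces $\bigl(q + \tfrac{1-q}{1-X_rX_{r+1}^{-1}}\bigr)e_{s_r\cdot\bu}\bv$, and a short algebraic manipulation (bringing the two summands to a common denominator $X_{r+1}-X_r$) simplifies the scalar to $\tfrac{X_{r+1}-qX_r}{X_{r+1}-X_r}$. In the case $u_{r+1}=u_r$ one has $s_r\cdot\bu = \bu$ and the contribution of the third sum in \eqref{Phi} is $e_\bu$, so applying $T_r\bv=q\bv$ yields $(q+1)e_\bu\bv$ immediately.

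For \eqref{phiaction4}, the first step would be to commute $T_r$ past $(X_{r+1}-X_r)$. Using the Bernstein-type relations derived from (H-7), explicitly $T_rX_r = X_{r+1}T_r - (q-1)X_{r+1}$ and $T_rX_{r+1} = X_rT_r + (q-1)X_{r+1}$, one obtains
\[
T_r(X_{r+1}-X_r) = (X_r-X_{r+1})T_r + 2(q-1)X_{r+1}.
\]
Applying this to $\bv$ and invoking $T_r\bv=q\bv$ gives $T_r(X_{r+1}-X_r)\bv = (qX_r + (q-2)X_{r+1})\bv$. Multiplying the two-summand expressions for $e_{s_r\cdot\bu}\Phi_r$ found above on the right by $(X_{r+1}-X_r)$, evaluating on $\bv$, and combining with the rational/idempotent term (which simplifies to $-(q-1)X_{r+1}e_{s_r\cdot\bu}\bv$ in the non-equal case, and to $(X_{r+1}-X_r)e_\bu\bv$ in the equal case) yields respectively $(qX_r-X_{r+1})e_{s_r\cdot\bu}\bv$ and $(q-1)(X_r+X_{r+1})e_\bu\bv$, precisely the claimed formulas.

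No real obstacle is anticipated: the proof is a routine computation mirroring the preceding lemma. The only pitfalls are careful bookkeeping of signs when moving $T_r$ past $X_{r+1}-X_r$, and the correct translation of the idempotent support conditions under $\bu\leftrightarrow s_r\cdot\bu$.
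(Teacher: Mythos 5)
Your proposal is correct and follows exactly the route the paper intends: the paper's proof is simply "analogous to Lemma~\ref{erstesLemma}", i.e.\ the same direct computation with $T_r\bbv=-\bbv$ replaced by $T_r\bv=q\bv$, which is what you carry out. All your intermediate identities (the commutation $T_r(X_{r+1}-X_r)=(X_r-X_{r+1})T_r+2(q-1)X_{r+1}$, the evaluation $T_r(X_{r+1}-X_r)\bv=(qX_r+(q-2)X_{r+1})\bv$, and the collapse of the idempotent sums via $u'_{r+1}\neq u'_r\Leftrightarrow u_{r+1}\neq u_r$) check out and yield the stated formulas.
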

%\proof This is proved analogously to Lemma~\ref{erstesLemma}.\endproof

\section{Affine Schur algebra}
In this section we recall the {\it (affine) Schur algebra} and construct a faithful representation for this algebra as well. We describe in detail the basis used by Vign\'eras. These two tools allow us to give an alternative basis together with a set of algebra generators more in the spirit of the geometric basis of the quiver Schur algebra from \cite{SW}. This will then finally allow us to connect the two algebras in the last section.

\begin{definition}
\label{affSchur}
The (affine) Schur algebra $\cS$ is defined as 
\begin{eqnarray*}
\cS&=&\End_{\FF[\I\backslash \G/\I]}\left(\bigoplus_{J \subseteq \bbI} \FF[\rmP^J\backslash \G/\I]\right)\cong\End_{\cH}\left(\bigoplus_{J \subseteq \bbI} \bv_J \cH\right),
\end{eqnarray*}
where $P^J$ denotes the standard parahoric subalgebra (containing $I$) attached to $J$, and the isomorphism 
is that from Lemma~\ref{heckepres}. 
The product of two elements, $f,f'$ in $\cS$ is denoted by $f\circ f'$ or just $ff'$. 
\end{definition}

We start our study of $\cS$ by recalling a basis from \cite[4.2.13]{V} (which generalizes the basis for finite Schur algebras from \cite[Theorem~4.7]{Mathasbook}).
\begin{lemma} 
\label{VSchurbasis}
A basis of $\cS$ is given by
$\{\bb^d_{K,J} \mid J, K \subseteq \bbI, d \in D_{K,J}\}$,
where $\bb^d_{K,J} \in \Hom_{\cH_\bi}( \bv_J \cH, \bv_K \cH)$ is defined by 
\begin{equation}
\label{bbdef}\bb^d_{K,J}(\bv_J) = \sum_{w \in W_K d W_J} T_w. 
\end{equation}
\end{lemma}

\begin{remark}
\label{stupidremark}
Note that  \eqref{bbdef} is indeed well-defined, since $\sum_{w \in W_K d W_J} T_w\in \bv_K \cH$ by Lemma~\ref{cyclic}. Moreover, any element in $\Hom_{\cH}(\bv_J\cH,\bv_K\cH)$, and in particular $\bb^d_{K,J}$, is already uniquely determined by its image of $\bv_J$. 
\end{remark}

\begin{example}
\label{chicken}
If, for instance, $\bbI=\{s_1,s_2\}$ and $K=\{s_1\}$, $J=\{s_2\}$, then $D_{K,J} \supset D^\bbI_{K,J}=\{1,s_2s_1\}$ and for these two shortest double coset representatives and we have
%\begin{eqnarray*}
$\bb^1_{K,J}=1+T_{1}+T_2+T_2T_1$ and $\bb^{s_2s_1}_{K,J}=T_2T_1+T_1T_2T_1$.
Note that we just sum over all basis elements from a fixed double coset. 
\end{example}

\begin{example}
\label{specialmorph}
If  $J\subseteq K$, we have $\bb^1_{K,J}\bv_J=\sum_{w\in\W_K}T_w=\bv_K$ and $\bb^1_{J,K}\bv_K=\sum_{w\in\W_K}T_w=\bv_J(\sum_{d'\in D^K_{J,\emptyset}}T_{d'})$. Hence  $\bb^1_{K,J}$ is the projection sending $ \bv_Jh$ to $\bv_Kh$ and $\bb^1_{J,K}$ is the inclusion sending $\bv_Kh$ to $\bv_J(\sum_{d'\in D^K_{J,\emptyset}}T_{d'})h$, for $h\in\cH$. 
\end{example}

We like to point out that the labelling of the basis vector $\bb^d_{K,J}$ involves a choice $d$ of a shortest double coset representative, although the basis element itself only depends on the coset containing $d$. In particular the basis can be relabelled when chosing different representatives.  
If, for instance, for $K,J\subset \bbI, w \in \fS, p \in \fP$ we define
the element  $\bb^{wp}_{K,J} \in \Hom_{\cH_\bi}( \bv_J \cH, \bv_K \cH)$ via
\begin{equation*}
\bb^{wp}_{K,J}( \bv_J) = \sum_{v \in W_K wp W_J} T_v,
\end{equation*}
then $\bb^{wp}_{K,J} = \bb^d_{K,J}$ for $d\in D_{K,J}\cap W_K wp W_J$, and  with the choice of double coset representatives from Section~\ref{otherreps} we directly obtain the following.

\begin{lemma}
\label{samebasis}
Both sets
\begin{equation}
\{\bb^{wp}_{K,J} \mid J, K \subseteq \bbI, w \in D^\bbI_{K,J}, p \in \DM_{d^{-1}K\cap J}\},
\end{equation}
and 
\begin{equation}
\{\bb^{wp}_{K,J} \mid J, K \subseteq \bbI, w \in D^\bbI_{K,J}, p \in \AM_{d^{-1}K\cap J}\}
\end{equation}
form the same basis of $\cS$ as the one in Lemma~\ref{VSchurbasis}, just labelled differently.
\end{lemma}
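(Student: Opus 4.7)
The plan is short because almost all of the work has already been done in Proposition~\ref{tiliting} and in the observation immediately preceding the lemma. First I would record the key fact that the element $\bb^{wp}_{K,J}\in\Hom_\cH(\bv_J\cH,\bv_K\cH)$ defined via $\bb^{wp}_{K,J}(\bv_J)=\sum_{v\in \W_K wp\W_J}T_v$ depends only on the double coset $\W_K wp\W_J\subseteq\W$ and not on the chosen representative: this is manifest from the defining formula, combined with Remark~\ref{stupidremark} which says that a homomorphism in $\Hom_\cH(\bv_J\cH,\bv_K\cH)$ is determined by its value on $\bv_J$. In particular, if $wp$ lies in the same $(\W_K,\W_J)$-double coset as the unique shortest representative $d\in D_{K,J}$, then $\bb^{wp}_{K,J}=\bb^d_{K,J}$.

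Next I would invoke Proposition~\ref{tiliting}, whose statement is exactly that $\DD_{K,J}$ and $\AD_{K,J}$ are each complete systems of inequivalent coset representatives for $\W_K\backslash\W/\W_J$. Combining this with the previous paragraph, the assignment $wp\mapsto \bb^{wp}_{K,J}$ defined on $\DD_{K,J}$ (respectively $\AD_{K,J}$) produces precisely one element $\bb^d_{K,J}$ of the basis \eqref{schurbasis} for each double coset, with no repetitions and no omissions. Hence the two sets listed in the lemma coincide, as sets of elements of $\cS$, with the basis \eqref{schurbasis}; the only difference is that a basis element originally labelled by $d\in D_{K,J}$ is now labelled by the unique element of $\DD_{K,J}$ (respectively $\AD_{K,J}$) lying in $\W_K d\W_J$.

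There is no real obstacle: once Proposition~\ref{tiliting} is available, the lemma is a matter of unpacking definitions and invoking the coset-independence of $\bb^{wp}_{K,J}$. The only thing to be careful about is to make sure that for a chosen $d\in D^\bbI_{K,J}$, the dominant (or antidominant) element $p$ is taken in $\W_{d^{-1}K\cap J}$ rather than in the full group $\fP$, so that the reindexing bijection with $D_{K,J}$ from Proposition~\ref{tiliting} is used consistently; this is already baked into the notation $\DM_{d^{-1}K\cap J}$ and $\AM_{d^{-1}K\cap J}$ in the statement.
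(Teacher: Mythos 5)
Your proposal is correct and follows exactly the paper's route: the paper also observes that $\bb^{wp}_{K,J}$ depends only on the double coset $\W_K wp\W_J$ and then cites Proposition~\ref{tiliting} to conclude that the sets are relabellings of the basis \eqref{schurbasis}. Nothing is missing.
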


\subsection{A faithful representation of $\mathcal{S}$}
\label{secfaithful}
To construct a faithful representation of the Schur algebra we enlarge the space $\FH$. 

For any parabolic subgroup ${W_K}$ of $W$ with $K\subseteq \bbI$, let $\FF[X_1^{\pm 1}, \dots,X_n^{\pm 1}]^{W_K}$ denote the $W_K$-invariants under the usual permutation action. We set 
\begin{eqnarray*}
\FS^K&=&\FF[X_1^{\pm 1}, \dots,X_n^{\pm 1}]^{W_K}\bv^{(K)},
\end{eqnarray*}
where $\bv^{(K)}= \bv$ and the superscript $(K)$ is just a formal index.
We have the following characterisation of invariants:

\begin{lemma}\label{qifsymm}
Let $f\bv \in \FF[X_1^{\pm 1}, \dots,X_n^{\pm 1}]\bv$ and $K\subseteq \bbI$. Then 
$$ f\in \FS^K\quad\Leftrightarrow \quad (T_i-q)f\bv = 0 \text{ for all } s_i \in K.$$
\end{lemma}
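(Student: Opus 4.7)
The plan is to deduce the equivalence directly from the explicit formula \eqref{faithfulHecke2} in Proposition~\ref{Bensign}, which gives
\[
(T_i-q)f\bv \;=\; -\,\frac{qX_i-X_{i+1}}{X_i-X_{i+1}}\bigl(f-s_i(f)\bigr)\bv.
\]
The first thing I would check is that the right-hand side really lies in $\FF[X_1^{\pm 1},\dots,X_n^{\pm 1}]\bv$: since $f-s_i(f)$ is antisymmetric under $s_i$, it is divisible by $X_i-X_{i+1}$ in the Laurent polynomial ring, so the quotient is again a Laurent polynomial, and multiplication by $qX_i-X_{i+1}$ keeps us inside $\FF[X_1^{\pm 1},\dots,X_n^{\pm 1}]$.

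Next, I would argue that $(T_i-q)f\bv=0$ is equivalent to $f=s_i(f)$. Because $\FF[X_1^{\pm 1},\dots,X_n^{\pm 1}]$ is an integral domain and $qX_i-X_{i+1}$ is a nonzero element there (using that $q\neq 0$ in $\FF$, which follows from $\ell\neq p$ as $q$ is a power of $p$), multiplication by $qX_i-X_{i+1}$ is injective. Hence the displayed expression vanishes iff $(f-s_i(f))/(X_i-X_{i+1})=0$, iff $f=s_i(f)$.

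Finally, I would combine this with the fact that $W_K$ is generated by the simple reflections $\{s_i\mid s_i\in K\}$: a Laurent polynomial $f$ is $W_K$-invariant precisely when it is $s_i$-invariant for every $s_i\in K$. Putting the two equivalences together gives
\[
f\in \FF[X_1^{\pm 1},\dots,X_n^{\pm 1}]^{W_K}
\;\Longleftrightarrow\;
(T_i-q)f\bv=0 \text{ for all } s_i\in K,
\]
which is exactly the statement, since $\FS^K=\FF[X_1^{\pm 1},\dots,X_n^{\pm 1}]^{W_K}\bv$.

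There is no serious obstacle here; the only point that deserves a line of justification is the divisibility of $f-s_i(f)$ by $X_i-X_{i+1}$ needed to make sense of the formula, and the non-vanishing of $q$ in $\FF$ needed to conclude injectivity of the multiplication. Both are routine.
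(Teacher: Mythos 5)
Your proposal is correct and follows essentially the same route as the paper: the paper's proof simply recomputes $(T_i-q)f\bv=\frac{(X_{i+1}-qX_i)(s_i(f)-f)}{X_{i+1}-X_i}\bv$ directly (which is the same expression as \eqref{faithfulHecke2}) and concludes that vanishing is equivalent to $f=s_i(f)$, hence to $W_K$-invariance. Your added remarks on divisibility of $f-s_i(f)$ by $X_i-X_{i+1}$ and on $q\neq 0$ in $\FF$ are correct (and the latter is not even needed, since $qX_i-X_{{i+1}}$ is a nonzero Laurent polynomial regardless).
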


\proof A direct computation shows that, for $s_i \in K$,
\begin{eqnarray*}
%\begin{split}
(T_i-q)f\bv &=& T_if\bv-qf\bv = \left({s_i}(f)T_i+(q-1)X_{i+1}\frac{f-{s_i}(f)}{X_{i+1}-X_i} -qf\right)\bv\\\
%&= \left(q({s_i}(f)-f)+(q-1)X_{i+1}\frac{f-s_i(f)}{X_{i+1}-X_i}\right)\bv\\
&= &\frac{(X_{i+1}-qX_i)(s_i(f)-f)}{X_{i+1}-X_i}\bv.
%  \end{split}
\end{eqnarray*}
Hence $(T_i-q)f\bv=0$ if and only if $f=s_i(f)$.
\endproof

The following is the main theorem of this section.
\begin{theorem}
\label{faithfulrep}
There is a faithful representation $\rho$ of $\cS$ on 
$$\FS=\bigoplus_{K\subseteq \bbI}\FS^K .$$
 In this representation a basis element $\bb^d_{K,J}$ of $\cS$ acts via
\begin{eqnarray}\label{rhodef} 
\rho(\bb^d_{K,J})f\bv^{(J)}& = &\sum_{a \in D^K_{\emptyset,K\cap dJ}}T_aT_df\bv^{(K)}.
\end{eqnarray}
\end{theorem}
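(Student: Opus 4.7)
The plan is to realise $\rho$ as the transport of the tautological $\cS$-action on the defining module $\bigoplus_J\bv_J\cH$ along a canonical surjection to $\bigoplus_J\FS^J$ induced by the faithful $\cH$-representation $\FH$ from Proposition~\ref{Bensign}. For each $J\subseteq\bbI$ I introduce
\[
\mu_J\colon \bv_J\cH\longrightarrow \FH,\qquad x\longmapsto x\cdot\bv,
\]
and observe that its image is $\bv_J\cdot\FH=\FS^J$ by Lemma~\ref{qifsymm}. Once I check that the kernel of $\mu:=\bigoplus_J\mu_J$ is stable under the $\cS$-action, the induced action of $\cS$ on $\bigoplus_J\FS^J$ is automatically an algebra action, and it only remains to identify it with formula~\eqref{rhodef} and to verify faithfulness.

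Both stability of $\ker\mu$ and the identification with~\eqref{rhodef} will follow from the single identity
\[
\bb^d_{K,J}(\bv_J) \;=\; \sum_{a\in D^K_{\emptyset,K\cap dJ}}T_a T_d\,\bv_J \qquad\text{in }\cH,
\]
obtained by decomposing $\W_K d\W_J=\bigsqcup_a ad\W_J$ with length-additivity as in~\eqref{eqaddlength}. Right $\cH$-linearity of $\bb^d_{K,J}$ then computes $\mu_K(\bb^d_{K,J}(\bv_Jg))=\sum_a T_aT_d\,\mu_J(\bv_Jg)$, which gives stability and realises the formula on lifts $\bv_Jg$. To extract it on an arbitrary $f\bv^{(J)}\in\FS^J$ I lift via $f\bv^{(J)}=\mu_J(\bv_J f/\pi_J)$, where $\pi_J:=\sum_{w\in\W_J}q^{\ell(w)}$ and $\bv_J\cdot\bv=\pi_J\bv$; here I commute $f$ past $\bv_J$ using the Bernstein relation $T_if=fT_i$ for $s_i$-symmetric $f$, and I use $\pi_J\in\FF^\times$, which holds by the running hypothesis $q\not\equiv 1\bmod\ell$.

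For faithfulness I suppose $\rho(\phi)=0$ and decompose $\phi(\bv_J)=\sum_K\phi_K(\bv_J)$ with $\phi_K(\bv_J)\in\bv_K\cH$. The formula implies that each $\phi_K(\bv_J)$ annihilates $\FS^J$ as a left-multiplication operator on $\FH$. The crux is to promote this to vanishing on the whole of $\FH$, for which I establish the decomposition
\[
\FH \;=\; \FS^J \;+\; \sum_{s_i\in J}(T_i-q)\FH.
\]
This reduces to the identity $\pi_J-\bv_J\in\sum_{s_i\in J}(T_i-q)\cH$ inside $\cH$, which follows by induction on length using $q^{\ell(w)}-T_w=q(q^{\ell(w')}-T_{w'})-(T_i-q)T_{w'}$ whenever $w=s_iw'$ with $\ell(w)=\ell(w')+1$. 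Since $\bv_J(T_i-q)=0$ forces $\phi_K(\bv_J)(T_i-q)=0$ in $\cH$, the operator $\phi_K(\bv_J)$ also kills $\sum(T_i-q)\FH$ and hence all of $\FH$; the faithfulness of $\cH$ on $\FH$ then forces $\phi_K(\bv_J)=0$ in $\cH$ for every $J,K$, so $\phi=0$.

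The main obstacle is this last step: \emph{a priori} the hypothesis $\rho(\phi)=0$ only controls $\phi(\bv_J)$ on the proper subspace $\FS^J\subsetneq\FH$, so everything hinges on the decomposition $\FH=\FS^J+\sum(T_i-q)\FH$ to bridge the gap to all of $\FH$. This decomposition, though elementary, is precisely where the invertibility of $\pi_J$, equivalently the running assumption $q\not\equiv 1\bmod\ell$, genuinely enters the argument.
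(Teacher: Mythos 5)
Your strategy hinges on the claim that the symmetrization map $\mu_J\colon \bv_J\cH\to\FH$, $x\mapsto x\cdot\bv$, has image exactly $\FS^J$, and more pointedly on the invertibility of $\pi_J=\sum_{w\in\W_J}q^{\ell(w)}$. Both fail in the cases this paper cares about, and this is a genuine gap, not a technicality. The running hypothesis $q\not\equiv 1\bmod\ell$ only guarantees $e\geq 2$; it does not make $\pi_J$ invertible. Since $\pi_J=\prod_i[m_i]_q!$ for $\W_J\cong\fS_{m_1}\times\cdots\times\fS_{m_k}$ and $1+q+\cdots+q^{e-1}=0$ by the very definition of $e$, one has $\pi_J=0$ as soon as some part of $\W_J$ has size at least $e$. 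In the paper's own featured example ($n=e=2$, $q\equiv-1$, $J=\bbI$) one gets $\pi_J=1+q=0$. In that case $\bv_J\FH=(X_1+X_2)\,\FF[X_1^{\pm1},X_2^{\pm1}]^{s_1}\bv\subsetneq\FS^J$, so $\mu$ is not surjective and the transported action is only defined on a proper subspace of $\FS$; Lemma~\ref{qifsymm} characterizes $\FS^K$ as the joint kernel of the $(T_i-q)$, not as the image of $\bv_K\cdot\,$, and the two differ precisely when $\pi_K=0$. Your lifting step $f\bv^{(J)}=\mu_J(\bv_Jf/\pi_J)$ and your key decomposition $\FH=\FS^J+\sum_{s_i\in J}(T_i-q)\FH$ collapse for the same reason: in the example above $\sum_{s_i\in J}(T_i-q)\FH=(X_1+X_2)\FF[X^{\pm1}]^{s_1}\bv\subseteq\FS^J$, so the right-hand side of your decomposition is $\FS^J$ itself, a proper subspace of $\FH$, and $X_1\bv$ lies in neither summand. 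So both the construction of $\rho$ and the faithfulness argument break down exactly in the root-of-unity situation.

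The paper's proof is arranged so that only the \emph{easy} inclusion $\bv_J\FH\subseteq\FS^J$ is ever needed. It first checks directly (Lemma~\ref{inrightinv}, via Deodhar's lemma) that the right-hand side of \eqref{rhodef} lands in $\FS^K$; it then verifies multiplicativity (Lemma~\ref{rhorep}) by showing that the discrepancy between $\rho(\bb^{d_2}_{L,K})\rho(\bb^{d_1}_{K,J})$ and $\rho(\bb^{d_2}_{L,K}\bb^{d_1}_{K,J})$ lies in $\sum_{s_i\in J}\cH(T_i-q)f\bv^{(L)}$, which vanishes because $f$ is $\W_J$-invariant; and for faithfulness it argues in the opposite direction from yours: if $h$ annihilates $\FS^J$ then $h$ annihilates $\bv_J\FH\subseteq\FS^J$, hence $h\bv_J=0$ by faithfulness of $\cH$ on $\FH$, and an element of $\Hom_\cH(\bv_J\cH,\bv_K\cH)$ is determined by its value on $\bv_J$ (Remark~\ref{stupidremark}). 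If you want to keep your transport-of-structure viewpoint, you must either restrict to the (semisimple) case where all $\pi_J$ are invertible, or replace the surjectivity of $\mu_J$ and the decomposition of $\FH$ by arguments of the paper's type that avoid dividing by $\pi_J$.
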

The proof will follow directly from the next three lemmas. The first of these makes sure that the right hand-side of \eqref{rhodef} is at least in the correct space.

\begin{lemma}\label{inrightinv}
For $J,K\subseteq \bbI,d\in D_{K,J}$ and $f \in \FF[X_1^{\pm 1}, \dots,X_n^{\pm 1}]^{\W_J}$, we have
\begin{eqnarray}
\sum_{a \in D^K_{\emptyset,K\cap dJ}}T_aT_df\bv^{(K)}&\in&\FS^K.
\end{eqnarray}
%$\FF[X_1^{\pm 1}, \dots,X_n^{\pm 1}]^{\W_K}\bv^{(K)}.$$
\end{lemma}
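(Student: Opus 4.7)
The plan is to apply Lemma~\ref{qifsymm}: it suffices to show that for every $s_i\in K$ the expression $E:=\sum_{a\in A}T_aT_df\bv$, where $A:=D^K_{\emptyset,K\cap dJ}$, is annihilated by $T_i-q$ acting on the faithful $\cH$-module $\FH=\FF[X_1^{\pm 1},\ldots,X_n^{\pm 1}]\bv$. I would prove this by partitioning $A$ into orbits under the action induced by left multiplication by $s_i$ on the cosets of $W_{K\cap dJ}$ in $W_K$, and showing that the contributions from each orbit cancel.

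For each $a\in A$, write $s_ia=a^{*}w^{*}$ uniquely with $a^{*}\in A$, $w^{*}\in W_{K\cap dJ}$ and $\ell(s_ia)=\ell(a^{*})+\ell(w^{*})$, so that $T_{s_ia}=T_{a^{*}}T_{w^{*}}$. Using that $d\in D_{K,J}$, so that $W_{K\cap dJ}$ is conjugate via $d^{-1}$ to the subgroup $W_{d^{-1}K\cap J}\subseteq W_{J}$, one obtains the length-additive identity $T_{w^{*}}T_d=T_dT_{d^{-1}w^{*}d}$. Combining this with the $W_J$-invariance of $f$ (which, using the relations (H-6) and (H-7), forces $T_{d^{-1}w^{*}d}f=fT_{d^{-1}w^{*}d}$) and with $T_{d^{-1}w^{*}d}\bv=q^{\ell(w^{*})}\bv$ yields the key identity
\begin{equation*}
T_{s_ia}T_df\bv\;=\;q^{\ell(w^{*})}\,T_{a^{*}}T_df\bv.
\end{equation*}
Together with the quadratic relation, giving $(T_i-q)T_a=T_{s_ia}-qT_a$ when $\ell(s_ia)=\ell(a)+1$ and $(T_i-q)T_a=qT_{s_ia}-T_a$ when $\ell(s_ia)=\ell(a)-1$, this expresses $(T_i-q)T_aT_df\bv$ explicitly as an $\FF$-linear combination of $T_aT_df\bv$ and $T_{a^{*}}T_df\bv$.

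A short length comparison, based on $\ell(s_ia)\in\{\ell(a)\pm 1\}$ and $\ell(s_ia)=\ell(a^{*})+\ell(w^{*})$, reveals that only three orbit configurations can occur: (i) a singleton orbit with $a^{*}=a$, forcing $\ell(w^{*})=1$ and $\ell(s_ia)=\ell(a)+1$; (ii) a two-element orbit with $w^{*}=e$, so $s_ia=a^{*}\in A$ with $\ell(a^{*})=\ell(a)\pm 1$; and (iii) a two-element orbit with $w^{*}\neq e$, which forces $\ell(w^{*})=1$, $\ell(a)=\ell(a^{*})$, and $\ell(s_ia)=\ell(a)+1=\ell(s_ia^{*})$. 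In each case a direct substitution into the previous formula shows that the sum of the contributions from the orbit vanishes, whence $(T_i-q)E=0$ as required. The main obstacle is the combinatorial verification in case (iii): one has to rule out larger $w^{*}$, which follows by adding and subtracting the two length equations $\ell(a^{*})+\ell(w^{*})=\ell(a)\pm 1$ and $\ell(a)+\ell(w^{*})=\ell(a^{*})\pm 1$ to obtain $2\ell(w^{*})\in\{-2,0,2\}$ and hence $\ell(w^{*})=1$, together with the simultaneous equalities $\ell(a)=\ell(a^{*})$ and both orbit members lying in $A_{+}=\{b\in A:\ell(s_ib)>\ell(b)\}$.
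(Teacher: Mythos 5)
Your argument is correct and follows essentially the same route as the paper: reduce via Lemma~\ref{qifsymm}, cancel the pairs $\{a,s_ia\}\subseteq D^K_{\emptyset,K\cap dJ}$ using $(T_i-q)(T_i+1)=0$, and kill the remaining terms by conjugating the extra element of $W_{K\cap dJ}$ through $T_d$ into $W_J$ and using the $\W_J$-invariance of $f$. The only cosmetic difference is that your case (iii) is in fact vacuous --- Deodhar's Lemma (which the paper cites) shows that for $a\in D^K_{\emptyset,K\cap dJ}$ either $s_ia$ is again distinguished or $s_ia=as_j$ with $s_j\in K\cap dJ$ simple, so $w^*\neq e$ forces $a^*=a$ --- but since you handle that configuration correctly anyway, this does no harm.
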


\proof
In view of Lemma~\ref{qifsymm}, it suffices to check that  for all $s_i\in K$ we have
%\begin{eqnarray}
%\label{toshow}
$\sum_{a \in D^K_{\emptyset,K\cap dJ}}(T_i-q)T_aT_df\bv^{(K)} = 0.$
% \end{eqnarray}
The left hand side equals 
\begin{eqnarray}
\label{twosummands}
%\begin{split}
%\sum_{a \in D^K_{\emptyset,K\cap dJ}}&(T_i-q)T_aT_df\bv^{(K)} \\
%&=&
\sum_{\substack{a \in D^K_{\emptyset,K\cap dJ}\\ s_ia\in D^K_{\emptyset,K\cap dJ}}}(T_i-q)T_aT_df\bv^{(K)}&+& \sum_{\substack{a \in D^K_{\emptyset,K\cap dJ} \\ s_ia\notin D^K_{\emptyset,K\cap dJ} }}(T_i-q)T_aT_df\bv^{(K)}. 
%\end{split}
\end{eqnarray}
Denote by $S_1$ and $S_2$ the two summands in \eqref{twosummands} respectively.

In the first summand $S_1$, the summands appear in pairs $a,s_ia$. Since we have $(T_i-q)(T_a+T_{s_ia}) = 0$, they cancel and so $S_1=0$. In the second summand $S_2$, we have  $a \in D^K_{\emptyset,K\cap dJ}$ but $s_ia\notin D^K_{\emptyset,K\cap dJ}$. Then Deodhar's Lemma, see e.g. \cite[Lemma 2.1.2]{GeckPfeiffer}, shows that there exists $s_j\in K \cap dJ$ such that $s_ia = as_j$, and that, in particular, $l(s_ia)>l(a)$. In this case, $(T_i-q)T_a = T_a(T_j-q)$. Again using Deodhar's Lemma, we see that $T_jT_d = T_dT_k$ for some $s_k\in d^{-1}K\cap J$, and thus
\begin{eqnarray*}
%\begin{split}
%\sum_{\substack{a \in D^K_{\emptyset,K\cap dJ} \\ s_ia\notin D^K_{\emptyset,K\cap dJ} }}(T_i-q)T_aT_df\bv^{(K)}
S_2& \in& \sum_{s_j\in K\cap dJ} \cH(T_j-q)T_df\bv^{(K)} \subseteq \cH T_d  \sum_{s_k\in J} (T_k-q)f\bv^{(K)} = 0
 % \end{split}
\end{eqnarray*}
 by Lemma~\ref{qifsymm}, since $f \in\FF[X_1^{\pm 1}, \dots,X_n^{\pm 1}]^{\W_J}$. Hence we are done.
\endproof

\begin{lemma}\label{rhorep}
The assignment \eqref{rhodef} defines a representation of $\cS$ on $\FS$.
 \end{lemma}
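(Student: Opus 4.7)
My plan is to prove Lemma~\ref{rhorep} by exhibiting the action $\rho$ as a sub-representation of the natural faithful left $\cS$-module $M = \bigoplus_J \bv_J\cH$, where $\cS$ acts by composition.

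First, I would define an $\FF$-linear map $\iota\colon\FS\hookrightarrow M$ by $\iota(f\bv^{(K)}) = f\bv_K = \bv_K f$ for $f\in\cP^{W_K}$, where the equality $f\bv_K = \bv_Kf$ uses that $f$ commutes with each $T_w$ for $w\in W_K$ (by iteration of Remark~\ref{commute}). Injectivity of $\iota$ follows from Corollary~\ref{basisHJ}. Setting $\Lambda_{K,J}^d := \sum_{a\in D^K_{\emptyset, K\cap dJ}}T_aT_d \in \cH$, the formula~\eqref{rhodef} reads $\rho(\bb^d_{K,J})(f\bv^{(J)}) = g\bv^{(K)}$, where $g\in\cP^{W_K}$ is characterized by $g\bv = \Lambda_{K,J}^d f\bv$ in $\FH$ (well-defined by Lemma~\ref{inrightinv}). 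Using the length-additive decomposition $w = bdw_J$ from~\eqref{eqaddlength}, we have $\bb^d_{K,J}(\bv_J) = \Lambda_{K,J}^d\bv_J$ in $\cH$.

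The main step is to verify that $\iota$ is $\cS$-equivariant: for $\phi = \bb^d_{K,J}$ and $f\in\cP^{W_J}$, the identity $\iota(\rho(\phi)(f\bv^{(J)})) = \phi(\iota(f\bv^{(J)}))$ must hold in $M$. Unpacking, the right-hand side is $\bb^d_{K,J}(\bv_Jf) = \bb^d_{K,J}(\bv_J)\cdot f = \Lambda_{K,J}^d f\bv_J$, while the left-hand side is $g\bv_K$, so equivariance reduces to the identity $\Lambda_{K,J}^d f\bv_J = g\bv_K$ in $\cH$. Using the alternative length-additive decomposition $w = w_Kda'$ of~\eqref{eqaddlength} (with $a'\in D^J_{d^{-1}K\cap J,\emptyset}$) one rewrites $\Lambda_{K,J}^d\bv_J = \bv_KT_d\sum_{a'}T_{a'}$, and since $f\in\cP^{W_J}$ commutes with every $T_{a'}$ for $a'\in W_J$ (Remark~\ref{commute} applied to a reduced expression of $a'$), we obtain $\Lambda_{K,J}^d f\bv_J = \bv_KT_d f(\sum_{a'}T_{a'})$.

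The hard part will be proving that this element $\bv_KT_df\sum_{a'}T_{a'}$ lies in $\bv_K\cP^{W_K}$ and coincides with $g\bv_K$. This requires pushing the polynomial $f$ through $T_d$ via the Bernstein-type commutation relations $T_if = s_i(f)T_i + (q-1)X_{i+1}\frac{f-s_i(f)}{X_{i+1}-X_i}$, iterated along a reduced expression of $d$. The leading term produced is $d(f)T_d$, which is a priori $W_{K\cap dJ}$-invariant rather than $W_K$-invariant; the prefactor $\bv_K$ then symmetrises it (together with all correction terms) to land in $\cP^{W_K}\bv_K$. This symmetrisation step---essentially a compatibility between Demazure-type operators and parabolic symmetrisation, akin to the formulas developed in Section~\ref{SectionDem}---is the principal technical obstacle. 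Once it is verified, the equivariance of $\iota$ is established and, because the natural $\cS$-action on $M$ is automatically a representation, so is $\rho$ on $\FS$.
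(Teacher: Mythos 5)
Your reduction of the problem to the identity $\Lambda^d_{K,J}f\bv_J = g\bv_K$ in $\cH$ is carried out correctly, but that identity --- the ``hard part'' you defer --- is in fact false, so the approach cannot be completed. Take $n=2$, $K=\{s_1\}$, $J=\emptyset$, $d=1$ and $f=X_1$. Then $\Lambda^1_{K,\emptyset}=1+T_1$, and computing in $\FH$ via \eqref{faithfulHecke2} one finds $(1+T_1)X_1\bv=(X_1+X_2)\bv$, so $g=X_1+X_2$. Your identity would then require $(1+T_1)X_1=(X_1+X_2)(1+T_1)$ in $\cH$; but $(1+T_1)X_1=X_1+X_2T_1+(1-q)X_2$ by (H-7), while $(X_1+X_2)(1+T_1)=X_1+X_2+X_1T_1+X_2T_1$, and the difference $-qX_2-X_1T_1$ is visibly nonzero in the basis \eqref{heckebasis}. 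The conceptual reason is that $\rho$ is built from the left action of $\cH$ on the quotient module $\FH=\cH/U$, in which $T_i\bv=q\bv$, whereas your $\iota$ realises $\FS^K$ inside $\bv_K\cH$, where no such relation is imposed on the right; left multiplication by $\Lambda^d_{K,J}$ on $\bv_Kf$ and the operator $\rho(\bb^d_{K,J})$ are genuinely different operations. (A quotient map $\bv_Jh\mapsto\bv_Jh\bv$ \emph{is} equivariant essentially for free, but it need not be surjective onto $\FS^J$ when the Poincar\'e polynomial $\sum_{w\in W_J}q^{\ell(w)}$ vanishes in $\FF$ --- exactly the modular situation the paper is aimed at --- so that route does not yield multiplicativity on all of $\FS$ either.)

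The paper instead argues directly with structure constants: writing $\bb^{d_2}_{L,K}\bb^{d_1}_{K,J}=\sum_d c_d\,\bb^d_{L,J}$ and evaluating both sides on $\bv_J$ using \eqref{eqaddlength}, it deduces that the corresponding combination of the operators $\Lambda$ lies in the left ideal $\sum_{s_i\in J}\cH(T_i-q)$; by Lemma~\ref{qifsymm} this ideal annihilates $f\bv$ for every $W_J$-invariant $f$, which gives $\rho(\bb^{d_2}_{L,K})\rho(\bb^{d_1}_{K,J})=\rho(\bb^{d_2}_{L,K}\bb^{d_1}_{K,J})$ without ever needing an embedding of $\FS$ into $\bigoplus_J\bv_J\cH$. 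If you want to keep a module-theoretic flavour, you would have to work with the quotient picture and confront the failure of surjectivity noted above; the direct verification is what actually closes the argument.
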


\proof
It suffices to check that, for basis vectors as in \eqref{VSchurbasis},
\begin{eqnarray}
\label{product}
\rho(\bb^{d_2}_{L,K})\rho(\bb^{d_1}_{K,J})= \rho(\bb^{d_2}_{L,K}\bb^{d_1}_{K,J}).
\end{eqnarray} 
We start with some preparation.  Using the basis from Lemma~\ref{VSchurbasis}, we write 
$\bb^{d_2}_{L,K}\bb^{d_1}_{K,J} = \sum_{d \in D_{L,J}} c_d \bb^d_{L,J}$
for some coefficients $c_d\in \FF$.
On the one hand, 
\begin{eqnarray*}
\bb^{d_2}_{L,K}\bb^{d_1}_{K,J}(\bv_J)&=&\left(\sum_{d \in D_{L,J}} c_d  \sum_{b''\in D^J_{\emptyset, L\cap dJ}}T_{b''}T_d\right)\bv_J.
\end{eqnarray*}
On the other hand, repeatedly using \eqref{eqaddlength}, we obtain
\begin{equation*}\begin{split}
\bb&^{d_2}_{L,K}\bb^{d_1}_{K,J}\bv_J 
\overset{\eqref{bbdef}}{=}\bb^{d_2}_{L,K}\sum_{w \in W_K d_1 W_J} T_w\quad=\quad\bb^{d_2}_{L,K} \bv_K T_{d_1}\sum_{a\in D^J_{d_1^{-1}K\cap J,\emptyset}}T_a.
\end{split}\end{equation*}
Using \eqref{bbdef}, this equals
\begin{equation*}\begin{split}
%\overset{\eqref{bbdef}}{=}
\sum_{w \in W_{L} d_2 W_K} T_w T_{d_1}\sum_{a\in D^J_{d_1^{-1}K\cap J,\emptyset}}T_a%\\&
&= \sum_{b \in D^{L}_{\emptyset, d_2K\cap L}}T_b T_{d_2} \bv_KT_{d_1}\sum_{a\in D^J_{d_1^{-1}K\cap J,\emptyset}}T_a\\
&=\left(\sum_{b \in D^{L}_{\emptyset, d_2K\cap L}}T_{b} T_{d_2} \sum_{b'\in D^J_{\emptyset,K\cap d_1J}}T_{b'} T_{d_1}\right) \bv_{J}.
\end{split}\end{equation*}
Thus, 
%\begin{eqnarray*}
$\sum_{b \in D^{L}_{\emptyset, d_2K\cap L}}T_{b} T_{d_2} \sum_{b'\in D^J_{\emptyset, K\cap d_1J}}T_{b'} T_{d_1}   -  \sum_{d \in D_{L,J}} c_d  \sum_{b''\in D^J_{\emptyset,L\cap dJ}}T_{b''}T_d$
%\end{eqnarray*}
is contained in $\sum_{s_i\in J}\cH(T_i-q)$.
To verify formula \eqref{product}, we now calculate 
%\begin{eqnarray}
$\rho(\bb^{d_2}_{L,K})\rho(\bb^{d_1}_{K,J})f\bv^{(J)} =\rho(\bb^{d_2}_{L,K}) \sum_{a \in D^K_{\emptyset,K\cap d_1J}}T_aT_{d_1}f\bv^{(K)}$
%
%\nonumber\\&=&
which equals
\begin{eqnarray}
  \sum_{b \in D^{L}_{\emptyset, d_2K\cap L}}\!\!T_{b} T_{d_2} \sum_{b' \in D^K_{\emptyset,K\cap d_1J}}T_{b'}T_{d_1}f\bv^{(L)},\quad\;\;\label{Teil1}
\end{eqnarray}
 where $f\in\FF[X_1^{\pm 1}, \dots,X_n^{\pm 1}]^{\W_J}$, and 
\begin{eqnarray}
\label{Teil2}
\sum_{d \in D_{L,J}} c_d \rho(\bb^d_{L,J})f \bv^{(J)}  = \sum_{d \in D_{L,J}} c_d  \sum_{b''\in D^J_{\emptyset,L\cap dJ}}T_{b''}T_df\bv^{(L)}.
\end{eqnarray}
Thus \eqref{Teil1}-\eqref{Teil2} equals $\big( \rho(\bb^{d_2}_{L,K})\rho(\bb^{d_1}_{K,J})-\sum_{d \in D_{L,J}} c_d \rho(\bb^d_{L,J})\big) f \bv^{(J)}$
\begin{eqnarray*}
=\;\left(\sum_{b \in D^{L}_{\emptyset, d_2K\cap L}}T_{b} T_{d_2} \sum_{b'\in D^J_{\emptyset,K\cap d_1J}}T_{b'} T_{d_1}   -  \sum_{d \in D_{L,J}} c_d  \sum_{b''\in D^J_{\emptyset,L\cap dJ}}T_{b''}T_d\right)f \bv^{(L)}.
\end{eqnarray*}
By  the above, this is, however, contained in $\sum_{s_i\in J}\cH(T_i-q)f \bv^{(L)}$ and hence  must be zero by  Lemma~\ref{qifsymm}, as $f \in \FF[X_1^{\pm 1}, \dots,X_n^{\pm 1}]^{\W_J}$.
\endproof

\begin{lemma}\label{rhofaith}
The representation $\rho$ from \eqref{rhodef} is faithful.
\end{lemma}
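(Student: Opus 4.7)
The plan is to show $\ker\rho = 0$. Since $\cS = \bigoplus_{K,J}\Hom_\cH(\bv_J\cH,\bv_K\cH)$ and formula~\eqref{rhodef} makes clear that $\rho$ carries the $(K,J)$-summand into $\Hom_\FF(\FS^J,\FS^K)$, I would first reduce to a fixed pair $(K,J)$ and write a would-be kernel element as $\phi = \sum_{d\in D_{K,J}}c_d\bb^d_{K,J}$ with $\rho(\phi)=0$, aiming to conclude $\phi=0$.

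My next step is to identify a single Hecke-algebra element responsible for both $\phi(\bv_J)$ and $\rho(\phi)$. By the length-additive factorisation~\eqref{eqaddlength}, $\bb^d_{K,J}(\bv_J) = \bigl(\sum_{a\in D^K_{\emptyset, K\cap dJ}}T_aT_d\bigr)\bv_J$; setting $g := \sum_d c_d\sum_a T_aT_d \in \cH$ therefore yields simultaneously $\phi(\bv_J) = g\bv_J$ in $\bv_K\cH$ and, directly from~\eqref{rhodef}, $\rho(\phi)(f\bv^{(J)}) = gf\bv^{(K)}$ for every $f\in\cP^{W_J}$. Under the tautological injection $\FS^K\hookrightarrow\FH$, $h\bv^{(K)}\mapsto h\bv$, the hypothesis $\rho(\phi)=0$ becomes the statement that $g$ annihilates the subspace $\cP^{W_J}\bv\subseteq\FH$ via the left action from Proposition~\ref{Bensign}.

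The heart of the argument is then to bootstrap this to $g\bv_J=0$. I would prove the key claim that $\bv_J$, acting on the left on $\FH$, lands inside $\FS^J = \cP^{W_J}\bv$: pairing $w$ with $s_iw$ in $W_J$ gives the Hecke-algebra identity $T_i\bv_J = q\bv_J$ for every $s_i\in J$, whence $T_i(\bv_Jv) = q(\bv_Jv)$ for all $v\in\FH$ by module associativity, and Lemma~\ref{qifsymm} then places $\bv_Jv$ in $\FS^J$. Consequently $g\bv_J\cdot v = g\cdot(\bv_Jv) = 0$ for every $v\in\FH$, so that $g\bv_J$ acts as zero on all of $\FH$. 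Faithfulness of the $\cH$-action on $\FH$ (Proposition~\ref{Bensign}) now forces $g\bv_J = 0$ in $\cH$, i.e.\ $\phi(\bv_J)=0$, and Remark~\ref{stupidremark} concludes that $\phi=0$. The principal (and essentially only) obstacle is the intermediate claim $\bv_J\cdot\FH\subseteq\FS^J$; everything else is combining the pre-established faithfulness of $\FH$ as an $\cH$-module with the uniqueness from Remark~\ref{stupidremark}.
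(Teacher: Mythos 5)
Your proposal is correct and follows essentially the same route as the paper: the paper's key observation \eqref{hvanish} ($h\,\FS^L=0$ implies $h\bv_L=0$) is exactly your claim that $\bv_J\cdot\FH\subseteq\FS^J$ combined with faithfulness of $\FH$ from Proposition~\ref{Bensign}, and both arguments conclude via Remark~\ref{stupidremark}. No gaps.
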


\proof
Let $J, K \subset\bbI$ and $Z=\sum_{\substack{J,K \subseteq \bbI \\ d \in D_{K,J}}} c_d\; \rho(\bb^d_{K,J})$ with arbitrary $c_d\in\FF$. Then it is  
enough to show that $Z=0$ implies $\sum_{d \in D_{K,J}} c_d\bb^d_{K,J} = 0$ for each pair $J,K\subset \bbI$.  If $h \in \cH$ and $L\subset\bbI$, then 
\begin{eqnarray}
\label{hvanish}
h\;\FS^L = 0&\hbox{ implies }&h\left(\sum_{w \in \W_L}T_w\right) = 0.
\end{eqnarray} 
Indeed,  assume $h\;\FS^L = 0$. Since $(T_i-q)\sum_{w \in \W_L}T_w=0$ for any $s_i\in L$,  Lemma~\ref{qifsymm} yields that
%\begin{eqnarray*}
$\sum_{w \in \W_L}T_w \FF[X_1^{\pm 1}, \dots,X_n^{\pm 1}]\bv\subseteq \FF[X_1^{\pm 1}, \dots,X_n^{\pm 1}]^{\W_L}\bv.$
%\end{eqnarray*}
In particular, 
\begin{eqnarray*}
\left(h\sum_{w \in \W_L}T_w\right)\FF[X_1^{\pm 1}, \dots,X_n^{\pm 1}]\bv&\subseteq h\;\FF[X_1^{\pm 1}, \dots,X_n^{\pm 1}]^{\W_L}\bv\stackrel{\text{(ass.)}}{=}0.
\end{eqnarray*}
Together with the  faithfulness of the representation of $\cH$ on $\FF[X_1^{\pm 1}, \dots,X_n^{\pm 1}]\bv$ in Lemma~\ref{Bensign}, the claim in \eqref{hvanish} follows.

Now suppose that $Z = 0$.
Projecting onto the different summands of $\FS$ gives that for any $J,K\subseteq \bbI$ we have
\begin{eqnarray*}
\sum_{ d \in D_{K,J}} c_d \rho(\bb^d_{K,J}) \FS^J&= &\left(\sum_{d \in D_{K,J}} c_d  \sum_{a\in D^J_{\emptyset,K\cap dJ}}T_aT_d \right)\FS^K =0.
\end{eqnarray*}

Using \eqref{v}, observation \eqref{hvanish} implies that 
 $$0= \sum_{d \in D_{K,J}} c_d  \sum_{a\in D^J_{\emptyset,K\cap dJ}}T_aT_d\sum_{w \in \W_J}T_w = \sum_{d \in D_{K,J}} c_d\bb^d_{K,J}\bv_J.$$
%\begin{eqnarray*}
%0&=& \sum_{d \in D_{K,J}} c_d  \sum_{a\in D^J_{\emptyset,K\cap dJ}}T_aT_d\sum_{w \in \W_J}T_w\stackrel{\eqref{v}}{ =}  \sum_{d \in D_{K,J}} c_d  \sum_{a\in D^J_{\emptyset,K\cap dJ}}T_aT_d \bv_J\\& = & \sum_{d \in D_{K,J}} c_d\bb^d_{K,J}\bv_J.
%\end{eqnarray*}
Thus, $\sum_{d \in D_{K,J}} c_d\bb^d_{K,J} = 0$ for every $J,K\subseteq \bbI$, which completes the proof.
\endproof
Theorem~\ref{faithfulrep} is proved.

\subsection{Generators and (some) relations for $\cS$}\label{sgenrelsec}
In this section, we determine a nice generating set for the algebra $\cS$. We start with a few technical tools.
\begin{lemma}
Let $K_1,K_2 \subseteq \bbI, d \in D_{K_2,K_1}$ and let $J=K_1\cap d^{-1}K_2$.
Then \begin{eqnarray}\label{eqschurrel}\bb^d_{K_2,K_1} &=& \bb^1_{K_2,dJ}\bb^d_{dJ,J}\bb^1_{J,K_1}.\end{eqnarray}
\end{lemma}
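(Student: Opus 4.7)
By Remark~\ref{stupidremark} it suffices to evaluate both sides on the cyclic generator $\bv_{K_1}$. The left hand side equals $\sum_{v\in W_{K_2}dW_{K_1}}T_v$ directly from \eqref{bbdef}. So the task is to evaluate the three-fold composition on the right hand side and recognize the same sum.

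First I would check that the three factors are well-defined basis elements. By definition $J=K_1\cap d^{-1}K_2\subseteq K_1$, and for any $s_j\in J$ the element $ds_jd^{-1}$ is a simple reflection belonging to $K_2$ (by the characterization of $d^{-1}K_2\cap K_1$ given just before \eqref{eqaddlength}), so $dJ\subseteq K_2$; furthermore $1\in D_{K_2,dJ}$ and $1\in D_{J,K_1}$ trivially, and $d\in D_{dJ,J}$ because $W_{dJ}=dW_Jd^{-1}$ forces $W_{dJ}dW_J=dW_J$. In particular this last identity, via \eqref{bbdef}, yields
\[
\bb^d_{dJ,J}(\bv_J)\;=\;\sum_{w\in dW_J}T_w\;=\;T_d\bv_J,
\]
where the last equality uses that $d\in D^{\bbI}_{\emptyset,J}$ (which follows from $d\in D_{K_2,K_1}$ and $J\subseteq K_1$), so $\ell(dw)=\ell(d)+\ell(w)$ for all $w\in W_J$.

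The key technical identity is
\begin{equation}\label{eqkey}
T_d\bv_J\;=\;\bv_{dJ}T_d.
\end{equation}
To verify this I would argue that for $w\in W_J$ we have $\ell(dwd^{-1})=\ell(w)$ because conjugation by $d$ is a length-preserving bijection from $W_J$ onto $W_{dJ}$, and that $\ell((dwd^{-1})d)=\ell(dwd^{-1})+\ell(d)$ because both sides equal $\ell(dw)=\ell(d)+\ell(w)$. Consequently $T_dT_w=T_{dwd^{-1}}T_d$, and summing over $w\in W_J$ (or equivalently over $w'=dwd^{-1}\in W_{dJ}$) gives \eqref{eqkey}.

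With this in hand the composition on $\bv_{K_1}$ unwinds as follows. By Example~\ref{specialmorph} the inclusion $\bb^1_{J,K_1}$ sends $\bv_{K_1}=\bv_J\!\sum_{d'\in D^{K_1}_{J,\emptyset}}T_{d'}$ to the same element, now viewed in $\bv_J\cH$. Applying $\bb^d_{dJ,J}$ and using $\cH$-linearity together with \eqref{eqkey} gives
\[
\bb^d_{dJ,J}\bigl(\bv_{K_1}\bigr)\;=\;T_d\bv_J\!\sum_{d'\in D^{K_1}_{J,\emptyset}}T_{d'}\;=\;\bv_{dJ}T_d\!\sum_{d'\in D^{K_1}_{J,\emptyset}}T_{d'}.
\]
Applying the projection $\bb^1_{K_2,dJ}$ (Example~\ref{specialmorph}) then replaces $\bv_{dJ}$ by $\bv_{K_2}$, yielding
\[
\bb^1_{K_2,dJ}\bb^d_{dJ,J}\bb^1_{J,K_1}(\bv_{K_1})\;=\;\bv_{K_2}T_d\!\sum_{a\in D^{K_1}_{J,\emptyset}}T_a.
\]
Finally I would identify this with $\sum_{v\in W_{K_2}dW_{K_1}}T_v$ via the factorization \eqref{eqaddlength}: each $v\in W_{K_2}dW_{K_1}$ is uniquely $v=w_{K_2}\,d\,a$ with $w_{K_2}\in W_{K_2}$ and $a\in D^{K_1}_{d^{-1}K_2\cap K_1,\emptyset}=D^{K_1}_{J,\emptyset}$, and $\ell(v)=\ell(w_{K_2})+\ell(d)+\ell(a)$, so that $T_v=T_{w_{K_2}}T_dT_a$ and the sum collapses to the displayed expression.

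The only nontrivial point is \eqref{eqkey}, which rests on the length-additivity $\ell(dw)=\ell(d)+\ell(w)$ (consequence of $d\in D_{K_2,K_1}$, $J\subseteq K_1$) and on the fact that conjugation by $d$ matches $J$ bijectively with $dJ$ as subsets of $\bbI$ (built into the definition of $dJ$). Everything else is bookkeeping with \eqref{eqaddlength} and Example~\ref{specialmorph}.
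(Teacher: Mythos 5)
Your proposal is correct and follows essentially the same route as the paper: evaluate both sides on $\bv_{K_1}$, write $\bv_{K_1}=\bv_J\sum_{a\in D^{K_1}_{J,\emptyset}}T_a$, use $\bb^d_{dJ,J}(\bv_J)=\bv_{dJ}T_d$ and the projection $\bb^1_{K_2,dJ}$, and then recognize $\bv_{K_2}T_d\sum_a T_a$ as $\sum_{v\in W_{K_2}dW_{K_1}}T_v$ via \eqref{eqaddlength}. The only difference is that you spell out the commutation $T_d\bv_J=\bv_{dJ}T_d$ (the paper's \eqref{niceformula1}) via the length-preserving conjugation argument, which the paper states without detailed justification.
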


\proof
We first note that for $|J|=|dJ|$ (which holds for $J$ as in the lemma), 
\begin{equation}
\label{niceformula1}
\bb^d_{dJ,J}\bv_{J}\quad =\quad\bv_{dJ} T_d \sum_{b\in D^J_{J\cap J, \emptyset}}T_b \quad=\quad\bv_{dJ} T_d 
\end{equation}
   and that, if $J\subseteq K$, then
\begin{equation}
\label{niceformula2}
\bb^1_{K,dJ}\bv_{dJ}  = \sum_{w \in \W_K1\W_{dJ}}T_w = \bv_K.
\end{equation}

We now apply the right hand side of \eqref{eqschurrel} to $\bv_{K_1}$ and deduce
\begin{eqnarray*}
&&\bb^1_{K_2,dJ}\bb^d_{dJ,J}\bb^1_{J,K_1}(\bv_{K_1})
\quad=\quad\bb^1_{K_2,dJ}\bb^d_{dJ,J}\left( \sum_{w \in \W_{K_1}}T_w\right)\\
&=&\bb^1_{K_2,dJ} \bb^d_{dJ,J}\left( \bv_{J} \sum_{a \in D^{K_1}_{J, \emptyset }} T_a\right)
\quad=\quad\bb^1_{K_2,dJ}\left(\bv_{dJ} T_d \sum_{a \in D^{K_1}_{J, \emptyset }} T_a\right)\\
&\stackrel{\eqref{niceformula2}}{=}&\bv_{K_2}T_d \sum_{a \in D^{K_1}_{d^{-1}K_2\cap K_1, \emptyset }} T_a
\quad=\quad\bb^d_{K_2,K_1}\bv_{K_1} \qedhere
\end{eqnarray*}
as desired.
\endproof

\begin{corollary}
\label{corgenerating}
The Schur algebra $\cS$ is generated (as an algebra) by
\begin{equation}\label{eqschurgens}
\{\bb^1_{K,J}, \bb^w_{wJ,J} \mid J,K \subseteq \bbI, w \in \W \textrm{ with } |J|=|wJ| \}.
\end{equation}
\end{corollary}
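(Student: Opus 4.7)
The plan is straightforward: combine the basis from Lemma~\ref{VSchurbasis} with the factorization formula \eqref{eqschurrel}.

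First, by Lemma~\ref{VSchurbasis}, the set $\{\bb^d_{K,J}\mid J,K\subseteq \bbI,\ d\in D_{K,J}\}$ is an $\FF$-basis of $\cS$, so it suffices to show that each individual basis element $\bb^d_{K_2,K_1}$ lies in the subalgebra generated by the elements listed in \eqref{eqschurgens}. For this, apply \eqref{eqschurrel} with $J=K_1\cap d^{-1}K_2$, which gives
\[
\bb^d_{K_2,K_1}\;=\;\bb^1_{K_2,dJ}\,\bb^d_{dJ,J}\,\bb^1_{J,K_1}.
\]
The outer factors $\bb^1_{K_2,dJ}$ and $\bb^1_{J,K_1}$ are of the form $\bb^1_{K,J}$ appearing in \eqref{eqschurgens}, so they are generators by construction.

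The remaining point is to verify that the middle factor $\bb^d_{dJ,J}$ is of the form $\bb^w_{wJ,J}$ with $|J|=|wJ|$. Here $wJ$ means $dJ\cap\bbI$ in the notation set up before Proposition~\ref{tiliting}. The key fact is that since $d$ is the minimal-length representative of its $(\W_{K_2},\W_{K_1})$-double coset and $s_j\in J=K_1\cap d^{-1}K_2$, the element $ds_jd^{-1}$ is again a simple reflection (and lies in $K_2$); this is the standard Deodhar-type fact already invoked in the proof of Lemma~\ref{rhorep} (and recorded as \cite[Lemma 2.1.2]{GeckPfeiffer}). Consequently conjugation by $d$ gives an injection $J\hookrightarrow \bbI$ with image $dJ\subseteq K_2$, whence $|dJ|=|J|$, as required.

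I do not anticipate any serious obstacle: the combinatorics of shortest double coset representatives is already developed earlier in the paper, and the factorization identity \eqref{eqschurrel} does all the algebraic work. The only point to be careful about is that the $J$ produced in \eqref{eqschurrel} is compatible with the length-preservation condition $|J|=|wJ|$ in \eqref{eqschurgens}; this is handled by the Deodhar argument above.
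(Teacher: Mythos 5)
Your proposal is correct and is essentially the paper's own argument: the corollary is stated there as an immediate consequence of the factorization \eqref{eqschurrel} applied to the basis of Lemma~\ref{VSchurbasis}, which is exactly what you do. Your extra verification that $|dJ|=|J|$ via the Deodhar-type fact (conjugation by the minimal double coset representative $d$ sends each $s_j\in J=K_1\cap d^{-1}K_2$ to a simple reflection in $K_2$) is a detail the paper leaves implicit, and you handle it correctly.
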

Based on this,  we will give another generating set in Proposition~\ref{generatingbetter}.

%\begin{example}
%Let $\bbI=\{s_1,s_2\}$. Then the Schur algebra $\cS$ is generated by all $\bb^1_{K,J}$ where $J,K \subseteq \bbI$, the elements $\bb^d_{\emptyset, \emptyset}$ for all $d \in \W $ and $\bb^{s_1}_{\{s_1\},\{s_1\}}$,  $\bb^{s_1s_2}_{\{s_2\},\{s_1\}}$,   $\bb^{s_1s_2s_1}_{\{s_2\},\{s_1\}}$, 
%$\bb^{s_2}_{\{s_2\},\{s_2\}}$,  $\bb^{s_2s_1}_{\{s_1\},\{s_2\}}$,   $\bb^{s_2s_1s_2}_{\{s_1\},\{s_2\}}$. (Note that the elements $\bb^{s_1s_2s_1}_{\{s_2\},\{s_1\}}$ and $\bb^{s_2s_1s_2}_{\{s_1\},\{s_2\}}$ are redundant, as they equal $\bb^{s_1s_2}_{\{s_2\},\{s_1\}}$, and $\bb^{s_2s_1}_{\{s_1\},\{s_2\}}$ respectively.
%\end{example}

\subsection{The centre of $\cS$}
We next show that the centre of the Schur algebra is just given by multiplication with elements from the centre of the Hecke algebra.
\begin{lemma}\label{Scentre}
The centre $Z(\cS)$ of $\cS$ equals $Z(\cH)$ in the sense that
\begin{eqnarray*}
Z(\cS)=\{\cdot z\mid z\in Z(\cH)\}&\subseteq& \End_{\cH}(\bigoplus_{J \subseteq \bbI} \bv_J \cH)=\cS.
\end{eqnarray*}
\end{lemma}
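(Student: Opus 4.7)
The natural map $\iota: Z(\cH) \to \cS$ sends a central element $z$ to the endomorphism of $\bigoplus_J \bv_J \cH$ which on each summand is multiplication by $z$. Since $z \in Z(\cH)$, this is a well-defined right $\cH$-module endomorphism (left and right multiplication by $z$ agree on each $\bv_J \cH$). My first step is to observe that $\iota(z)$ lies in $Z(\cS)$: for any $f \in \cS$ and $x \in \bv_J \cH$, one has $(\iota(z) \circ f)(x) = f(x)\, z = f(xz) = (f\circ\iota(z))(x)$, using that $f$ is a right $\cH$-module map. The map $\iota$ is clearly injective, since its composition with the projection to $\End_\cH(\cH) \cong \cH$ (the $J=\emptyset$ summand) is the identity of $Z(\cH)$.

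The real content is the reverse inclusion $Z(\cS) \subseteq \iota(Z(\cH))$. Let $f \in Z(\cS)$. Since $f$ commutes with each of the orthogonal idempotents $e_J \in \cS$ projecting to the summand $\bv_J \cH$, the map $f$ preserves this decomposition; write $f_J$ for its restriction to $\bv_J \cH$. Taking $J = \emptyset$, so $\bv_\emptyset = 1$ and $\bv_\emptyset \cH = \cH$, the endomorphism $f_\emptyset \in \End_\cH(\cH)$ is left multiplication by a unique element $z := f_\emptyset(1) \in \cH$. Because $f$ centralises all of $e_\emptyset \cS e_\emptyset \cong \cH$, left multiplication by $z$ commutes with left multiplication by every $h \in \cH$, which forces $zh = hz$ for all $h \in \cH$, i.e.\ $z \in Z(\cH)$.

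It remains to show $f = \iota(z)$, i.e.\ $f_K(\bv_K h) = \bv_K z h$ for every $K \subseteq \bbI$ and every $h \in \cH$. For this I use the basis element $\bb^1_{K,\emptyset} \in \cS$ (cf.\ Example~\ref{specialmorph}), which is the map $\cH \to \bv_K \cH,\ h \mapsto \bv_K h$. Since $f$ commutes with $\bb^1_{K,\emptyset}$,
\begin{equation*}
f_K(\bv_K h) \;=\; f \bigl(\bb^1_{K,\emptyset}(h)\bigr) \;=\; \bb^1_{K,\emptyset}\bigl(f_\emptyset(h)\bigr) \;=\; \bb^1_{K,\emptyset}(zh) \;=\; \bv_K z h,
\end{equation*}
exactly as required. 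Hence $f = \iota(z)$, completing the proof that $Z(\cS) = \iota(Z(\cH))$.

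There is no real obstacle; the argument is structural and relies only on two ingredients already at hand: the idempotent decomposition coming from the direct sum, and the existence of the ``unit'' morphisms $\bb^1_{K,\emptyset}$ making $\bv_K \cH$ a quotient (equivalently an image) of $\cH$. The mildest point to double-check is that left and right multiplication by $z \in Z(\cH)$ indeed coincide on $\bv_K\cH$, which is immediate from centrality: $z \bv_K h = \bv_K z h = \bv_K h z$.
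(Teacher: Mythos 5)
Your proof is correct, and it takes a noticeably different (and more economical) route than the paper's. The paper argues via the generating set of Corollary~\ref{corgenerating}: it tests a central $f$ against the elements $\bb^d_{dJ,J}$ to show $f$ is diagonal with components $h_L$ satisfying $h_{dJ}T_d=T_dh_J$, then against $\bb^1_{K,J}$ for $J\subseteq K$ to force all $h_J$ equal to a single $h$, and only then combines the two relations to conclude $T_dh=hT_d$, i.e.\ $h\in Z(\cH)$. You instead extract $z$ directly from the $\emptyset$-component, obtain $z\in Z(\cH)$ immediately from the fact that $f$ commutes with the idempotent subalgebra $e_\emptyset\cS e_\emptyset\cong\cH$ (left multiplication operators on the summand $\bv_\emptyset\cH=\cH$), and then propagate to every other summand using only the single morphism $\bb^1_{K,\emptyset}:h\mapsto\bv_Kh$ together with the cyclicity of $\bv_K\cH$ (Lemma~\ref{cyclic}). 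This bypasses the generating set and the relation $h_{dJ}T_d=T_dh_J$ entirely; what the paper's longer computation buys in exchange is explicit information about how central elements interact with each basis morphism, which is reused later, but for the statement itself your argument is complete. The only steps worth flagging, both of which you handle, are that the summand idempotents lie in $\cS$ (so a central $f$ is diagonal) and that $z\bv_Kh=\bv_Kzh$ so that $\iota(z)$ really restricts to each $\bv_K\cH$.
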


\proof
For the inclusion $\supseteq$, it is clear that multiplication with $z \in Z(\cH)$ commutes with any $\cH$-endomorphism of $\bigoplus_{J \subseteq \bbI} \bv_J \cH$ and hence belongs to $\cS$. It furthermore commutes with any element in $\cS$ and hence belongs to $Z(\cS)$.

For the inclusion $\subseteq$, let $f\in Z(\cS)$ and test with the generators given in \eqref{eqschurgens}.
For $\bb^d_{dJ, J}$  we see that 
\begin{equation*}\begin{split}
\bb^d_{dJ, J}\circ f \left(\sum_{K\subseteq\bbI} \bv_K\right) 
&=\bb^d_{dJ, J} \left(\sum_{L\in\subseteq\bbI} \bv_Lh_L\right) 
%=\bb^d_{dJ, J} \left(\sum_{L\in\subseteq\bbI} \bv_L\right)h_L %\\&
\stackrel{\eqref{niceformula1}}{=} \bv_{dJ}T_d h_{J}
\end{split}\end{equation*}
for some $h_L\in\cH$. On the other hand, since $f$ is central, 
$$\bb^d_{dJ, J}\circ f \left(\sum_{K\subseteq\bbI} \bv_K\right)=f\circ \bb^d_{dJ, J}\left (\sum_{K\subseteq\bbI} \bv_K\right)  \stackrel{\eqref{niceformula1}}{=} f(\bv_{dJ} T_d)=f(\bv_{dJ})T_d.$$
By comparing the two formulae,  $f(\bv_J) \in \bv_{dJ}\cH$ and thus $f \in \sum_{J\subseteq\bbI} \End_\cH(\bv_J \cH)$ is a diagonal endomorphism. Therefore, for any $K\subseteq \bbI$,
\begin{eqnarray}
\label{comm}
f(\bv_L)=\bv_{L}h_L,&\text{  and moreover   } &h_{dJ}T_d = T_d h_{J},
\end{eqnarray}
again by comparing the above two formulae.

Now we test with $\bb^1_{K,J}$ for $J \subseteq K$ and obtain
$$f \circ\bb^1_{K,J} \left(\sum_{L\subseteq\bbI} \bv_L\right)= f \circ\bb^1_{K,J} \left( \bv_J\right)\stackrel{\eqref{niceformula2}}{=}f\left(\bv_K\right) \stackrel{\eqref{comm}}{=} \bv_Kh_K.$$
Since $f$ is central, this equals
$$ \bb^1_{K,J}\circ f \left(\sum_{L\subseteq\bbI} \bv_L\right)  \stackrel{\eqref{comm}}{=} 
\sum_{L\subseteq\bbI} \bb^1_{K,J}(\bv_L h_L)=\sum_{L\subseteq\bbI} \bb^1_{K,J}(\bv_L) h_L=\bv_Kh_J.$$
As all $J \subseteq \bbI$ contain $\emptyset$, this yields, for all $J,K\subseteq \bbI$, the equality
$h_J = h_K =:h.$
Using \eqref{comm} we obtain $T_d h = h T_d$ for all $T_d \in \cH$, and thus $f=\cdot h \in Z\left(\cH\right)$.
\endproof

\subsection{Another basis for $\cS$}
\label{secgoodbasis}
The main goal of this section is to construct an alternative basis of $\cS$ which mimics the geometric basis of the quiver Schur algebra defined in \cite{SW}. We start by investigating the sets $D_{K_2,K_1}$ further.

\begin{lemma}\label{tech2}
Assume $K\subseteq \bbI$ and
suppose $v=wp$ with $w \in \fS$ and $p = X_1^{a_1}\cdots X_n^{a_n} \in \fP$ with $a_i \leq 0$ for $i=1,\dots n$.
Then $T_v = \sum_{u \in \fS}c_uT_up$, for some coefficients $c_u\in\FF$.
\end{lemma}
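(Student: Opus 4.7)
The plan is to show that $T_v$ lies in $\cH_\bbI\cdot p$, which equals the Bernstein summand $\cH_\bbI\cdot X^a$ in the decomposition $\cH=\bigoplus_{b\in\mZ^n}\cH_\bbI\cdot X^b$ coming from the second basis in \eqref{heckebasis}.

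Since $a_i\leq 0$ for all $i$, the preceding lemma writes $v$ as a word in $\bbI\cup\{\ourOmega^{-1}\}$, and hence $T_v$ is a $\FF$-linear combination of products with each factor in $\cH_\bbI\cup\{T_\ourOmega^{-1}\}$ (any scalar corrections coming from (H-1) applied to non-reduced subwords). Substituting $T_\ourOmega^{-1}=q^{(n-1)/2}X_1^{-1}T_1^{-1}\cdots T_{n-1}^{-1}$ from Lemma~\ref{heckepres} and iteratively applying the commutation relations derived from (H-7), such as $T_iX_i^{-1}=X_{i+1}^{-1}T_i+(q-1)X_i^{-1}$, $T_iX_{i+1}^{-1}=X_i^{-1}T_i-(q-1)X_i^{-1}$ and $T_iX_j^{-1}=X_j^{-1}T_i$ for $j\neq i,i+1$, one moves every occurrence of $X_i^{-1}$ past all the $T_i^{\pm 1}$ factors to the right. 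This produces an expansion $T_v=\sum_u c_uT_u X^b$ for some $b\in\mZ^n$.

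To conclude, I would argue by induction on $k=-\sum a_i\geq 0$ that $b=a$: the base case $a=0$ is immediate since then $v=w\in\fS$ and $T_v=T_w\cdot 1$. For the inductive step, pick $j$ with $a_j\leq -1$, set $v'=w\cdot X^{a+e_j}$ so that $v=v'\cdot X_j^{-1}$, apply the inductive hypothesis to place $T_{v'}$ in $\cH_\bbI\cdot X^{a+e_j}$, and use right-multiplication by $X_j^{-1}\in\cP\subset\cH$ to obtain $T_{v'}\cdot X_j^{-1}\in\cH_\bbI\cdot X^a$.

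The hard part will be controlling the difference $T_v-T_{v'}\cdot X_j^{-1}$: these elements disagree in general because length-additivity $\ell(v)=\ell(v')+\ell(X_j^{-1})$ may fail, and their discrepancy must be shown to also lie in $\cH_\bbI\cdot X^a$. The key observation making this possible is that each elementary commutation $X_j^{\pm 1}\leftrightsquigarrow X_{s_i(j)}^{\pm 1}$ mirrors exactly the conjugation action of $s_i$ on $\fP$ inside $\W=\fS\ltimes\fP$, so the polynomial exponent accumulated on the right faithfully tracks the $\fP$-component of $v$, forcing it to equal $X^a$ rather than merely an $\fS$-translate.
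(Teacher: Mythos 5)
There is a genuine gap, and it sits exactly where you flag it: the relationship between $T_v$ for $v=v'X_j^{-1}$ and the product $T_{v'}\cdot X_j^{-1}$ is essentially the entire content of the lemma, and neither your induction nor your ``key observation'' closes it. Two concrete problems. First, in your second paragraph, knowing that $v$ factors in the group $\W$ as a word in $\bbI\cup\{\ourOmega^{-1}\}$ does not let you write $T_v$ as a linear combination of the corresponding products of $T_i$'s and $T_\ourOmega^{-1}$: for a word that is not length-additive, the product $T_{g_1}\cdots T_{g_m}$ expands as a combination of many basis elements $T_u$, not as a scalar multiple of $T_v$, so the parenthetical about ``scalar corrections coming from (H-1)'' is incorrect, and inverting this triangular relationship is an argument you would still have to supply. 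Second, and more seriously, the principle that ``the polynomial exponent accumulated on the right faithfully tracks the $\fP$-component'' is false for the commutation process you describe: the error terms in the Bernstein relations drop a $T_i$ and genuinely change the monomial, even up to the $\fS$-action. For $n=2$ one computes $T_1X_2^{-2}=X_1^{-2}T_1-(q-1)\bigl(X_1^{-2}+X_1^{-1}X_2^{-1}\bigr)$, and $X_1^{-1}X_2^{-1}$ lies in a different summand $\cH_{\bbI}X^b$ of your decomposition than $X_1^{-2}$ does. So a generic ``commute everything to the right'' computation scatters a product across several summands; that all contributions of $T_v$ outside $\cH_{\bbI}\cdot p$ cancel is precisely what needs proof, and your expansion ``$T_v=\sum_u c_uT_uX^b$ for some $b$'' with a single $b$ presupposes the answer.

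The paper avoids all of this by never producing error terms. It first reduces, by induction on the degree of $p$ and commutativity of the $X_i$, to the case $p=X_i^{-1}$, and then exhibits a \emph{reduced} (length-additive) expression $v=\tilde v\,\ourOmega^{-1}s_{n-1}\cdots s_i$ with $\tilde v\in\fS$, so that $T_v=T_{\tilde v}T_\ourOmega^{-1}T_{n-1}\cdots T_i$ holds on the nose. The tail $T_\ourOmega^{-1}T_{n-1}\cdots T_i$ equals a power of $q$ times $T_1\cdots T_{i-1}X_i^{-1}$ by Lemma~\ref{heckepres}, which places $T_v$ in $\cH_{\bbI}X_i^{-1}$ with no cancellation to justify. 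If you want to salvage your approach, the missing ingredient is exactly such a length-additivity statement for a well-chosen factorization of $v$; without it the discrepancy $T_v-T_{v'}X_j^{-1}$ is not controlled.
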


\proof
Note first that, using induction on the degree of $p$ and commutativity of $X_1, \dots, X_n$, it suffices to check this for $p=X_i^{-1}$. So suppose $v=wX_i^{-1}$. Then by \eqref{tau} $v=ws_{i-1}s_{i-2}\cdots s_{1}\ourOmega^{-1} s_{n-1}s_{n-2}\cdots s_{i}$, and thus $v$ has a reduced expression $\tilde{v}\ourOmega^{-1}s_{n-1}\cdots s_{i}$ with some $\tilde{v}\in \fS$. Thus $T_v= T_{\tilde{v}}T_{\ourOmega}^{-1}T_{n-1}\cdots T_{i} =a T_{\tilde{v}}T_1T_2\cdots T_{i-1}X_i^{-1}$ where $a$ is a power of $q$. Writing $aT_{\tilde{v}}T_1T_2\cdots T_{i-1} = \sum_{u \in \fS} c_u T_u$, the claim follows.\endproof

\begin{lemma}
\label{dwp}
Let $d \in D_{K_2,K_1}$, $J=K_1\cap d^{-1}K_2$ and write $d=wp$ with $w\in \fS$ and $p \in \fP$. Then $wJ=dJ$ and $p \in \fP^{\W_J}$.
\end{lemma}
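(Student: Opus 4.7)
The plan is to exploit the semidirect product structure $\W = \fS \ltimes \fP$ from Lemma~\ref{weylgppres} together with the relation $s_i X_j s_i = X_{s_i(j)}$. Both claims will follow once I pin down how conjugation by $p$ acts on a simple reflection.

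First I would record the conjugation formula in $\W$: for $p = X_1^{a_1}\cdots X_n^{a_n} \in \fP$ and $s_i \in \bbI$, the identity $s_i p s_i = s_i(p)$ rearranges to
\[ p s_i p^{-1} \;=\; s_i \cdot (s_i(p))\cdot p^{-1} \;=\; s_i \cdot X_i^{a_{i+1}-a_i} X_{i+1}^{a_i-a_{i+1}}, \]
since every $X_j$ with $j \notin \{i,i+1\}$ cancels. Written in the unique decomposition $\fS \ltimes \fP$, the $\fS$-part is $s_i$ and the $\fP$-part is $X_i^{a_{i+1}-a_i} X_{i+1}^{a_i-a_{i+1}}$. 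In particular $p s_i p^{-1} \in \fS$ if and only if $a_i = a_{i+1}$, equivalently $s_i(p) = p$.

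Next I would take any $s = s_i \in J = K_1 \cap d^{-1}K_2$. By definition of $J$, the conjugate $dsd^{-1}$ lies in $K_2 \subseteq \bbI \subseteq \fS$. Substituting $d=wp$ gives
\[ dsd^{-1} \;=\; w\,(psp^{-1})\,w^{-1}, \]
and since $w \in \fS$ normalises $\fS$ (and sends $\fP$ to $\fP$) under conjugation, this element lies in $\fS$ if and only if $psp^{-1} \in \fS$. By the first step, this forces $s_i(p) = p$. Running $s$ over a generating set of $\W_J$ yields $p \in \fP^{\W_J}$, which is the second assertion.

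With $p$ now known to commute with $\W_J$, the final step is immediate: $d J d^{-1} = w\,(p J p^{-1})\,w^{-1} = w J w^{-1}$, and since $dJd^{-1} \subseteq K_2 \subseteq \bbI$, both sides already lie in $\bbI$, so intersecting with $\bbI$ gives $wJ = dJ$. The only real work is the bookkeeping in the semidirect product; there is no genuine obstacle, and the hypothesis that $d$ be a shortest double coset representative is never needed beyond the fact that $d \in \W$.
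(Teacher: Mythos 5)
Your proof is correct and takes essentially the same route as the paper's: both conjugate a generator $s_i\in J$ by $d=wp$, use the unique decomposition in $\fS\ltimes\fP$ to see that the $\fP$-component of $ds_id^{-1}$ must vanish (forcing $s_i(p)=p$) and that the $\fS$-component is $ws_iw^{-1}$ (forcing $wJ=dJ$). Your added observation that minimality of $d$ is not used is accurate; the paper's argument does not use it either.
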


\proof
We have $d(K_1\cap d^{-1}K_2) = dK_1\cap K_2$ and thus for any $i \in (K_1\cap d^{-1}K_2)$, $ds_id^{-1} = s_j$ for some $j \in dK_1\cap K_2$. On the other hand, 
$$ds_id^{-1} = wps_ip^{-1}w^{-1} = ws_iw^{-1}ws_ips_ip^{-1}w^{-1} = vh,$$
where $v=ws_iw^{-1} \in \fS$ and $h=ws_ips_ip^{-1}w^{-1}\in \fP$, hence $v=s_j, h=1$.
Now $ws_ips_ip^{-1}w^{-1}= 1$ if and only if $s_ips_ip^{-1} = 1$ if and only if $p \in \fP^{\W_{K_1\cap d^{-1}K_2}} = \fP^{\W_J}$. Moreover, $v=ws_iw^{-1} = s_j$ implies $wJ=dJ$.
\endproof

Let $d,w,p$ be as in Lemma~\ref{dwp} and let $d'\in D_{dJ,J}$ be the shortest double coset representative for the coset of $w$. Since $p$ commutes with $\W_J$, the $(\W_{dJ},\W_J)$-double coset defined by $d=wp$ is the same as the one  defined by $d'p$. Hence, by the previous paragraph, and \eqref{eqschurrel}, any basis element can be written as 
\begin{eqnarray}\label{fullfactor}
\bb^{d}_{K_2,K_1}=\bb^{wp}_{K_2,K_1}=\bb^1_{K_2,dJ}\bb^d_{dJ,J}\bb^1_{J,K_1}= \bb^1_{K_2,dJ}\bb^{d'p}_{dJ,J}\bb^1_{J,K_1} 
\end{eqnarray}
for $J=d^{-1}K_2\cap K_1$. Keeping this notation, we next obtain from Lemma~\ref{tech2},
\begin{equation}\label{fullfactor2}
\begin{split}
 \bb^{d'p}_{dJ,J}(\bv_J)& =\bv_{dJ}T_{wp}=\bv_{dJ}\left(\sum_{u \in \fS} c_uT_u\right) p\\
&= T_{wp}\bv_J = \left(\sum_{u \in \fS} c_uT_u\right) p\bv_{J} = \left(\sum_{u \in \fS} c_uT_u\right)\bv_{J}p
\end{split}
\end{equation}
where, in the last equality,  we have used that $p \in \fP^{\W_J}$ by Lemma~\ref{dwp}. Hence
$$\bv_{dJ}\left(\sum_{u \in \fS} c_uT_u\right)=\left(\sum_{u \in \fS} c_uT_u\right)\bv_{J}$$
meaning that left multiplication by $(\sum_{u \in \fS} c_uT_u)$ is in $\Hom_\cH(\bv_J\cH,\bv_{dJ}\cH)$.

Using these relations, we will now give a different basis of $\cS$, which will be more convenient to work with later on.

\begin{proposition}
\label{Propschurbasis2}
Let $K_1,K_2\subset\bbI$. Then the set
\begin{equation}
\label{schurbasis2}
\mathcal{B}_{K_2,K_1}=\left\{  \bb^1_{K_2,wJ}\bb^w_{wJ,J}\bb^p_{J,J}\bb^1_{J,K_1}  \left|
 \begin{array}{l}
 J = K_1\cap w^{-1}K_2,\\
w \in D^\bbI_{K_2,K_1},  p \in \DMJ
 \end{array}
 \right.\right\},\end{equation}
is a basis of the space of homomorphisms $\Hom_\cH(\bv_{K_1}\cH,\bv_{K_2}\cH)$ and hence 
\begin{eqnarray}
\label{schurbasisB}
\cB^\cS&=& \bigcup_{(K_1,K_2)}\mathcal{B}_{K_1,K_2}
\end{eqnarray}
is a basis of the Schur algebra $\cS$.  
\end{proposition}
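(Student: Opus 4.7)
The strategy is to show that $\mathcal{B}_{K_2,K_1}$ differs from the relabeled Schur basis of Lemma~\ref{samebasis} by an upper-triangular transition matrix, hence is itself a basis. By Proposition~\ref{tiliting}, the indexing set $\{(w,p):w\in D^\bbI_{K_2,K_1},\,p\in\DM_{K_1\cap w^{-1}K_2}\}$ has cardinality equal to $\dim_\FF\Hom_\cH(\bv_{K_1}\cH,\bv_{K_2}\cH)$, so it suffices to prove linear independence of $\mathcal{B}_{K_2,K_1}$.

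I use the factorization \eqref{fullfactor}: $\bb^{wp}_{K_2,K_1}=\bb^1_{K_2,wJ}\bb^{wp}_{wJ,J}\bb^1_{J,K_1}$. Since the outermost factors agree with those defining $\mathcal{B}_{K_2,K_1}$, the comparison reduces to relating $\bb^{wp}_{wJ,J}$ and $\bb^w_{wJ,J}\bb^p_{J,J}$. Evaluating at $\bv_J$ and using that $w\in D^\bbI_{K_2,K_1}$ is also shortest in $\W_{wJ}w\W_J=w\W_J$ (since $\W_{wJ}=w\W_Jw^{-1}$), one obtains $\bb^w_{wJ,J}(\bv_J)=T_w\bv_J$. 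Combined with $\W_{wJ}wp\W_J=w\cdot\W_Jp\W_J$, this yields
\[\bb^w_{wJ,J}\bb^p_{J,J}(\bv_J)=\sum_{v\in\W_Jp\W_J}T_wT_v,\qquad \bb^{wp}_{wJ,J}(\bv_J)=\sum_{v\in\W_Jp\W_J}T_{wv},\]
so the essential task is to compare $T_wT_v$ with $T_{wv}$ for each $v$.

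Each $T_wT_v$ expands in the Iwahori-Matsumoto basis via the Hecke relations as a nonzero scalar multiple of $T_{wv}$ (a nonnegative power of $q$, arising from the length defect $(\ell(w)+\ell(v)-\ell(wv))/2$), plus correction terms $T_\sigma$ whose unique decomposition $\sigma=w'p'$ in $\fS\ltimes\fP$ satisfies $w'<w$ in Bruhat order on $\fS$. Summing over $v$ and composing with the outer factors $\bb^1_{K_2,wJ}$ and $\bb^1_{J,K_1}$, the corrections translate into linear combinations of Schur basis elements $\bb^{w'p'}_{K_2,K_1}$ with $w'<w$. Ordering the pairs $(w,p)$ by Bruhat order on the $\fS$-component, the transition matrix from $\mathcal{B}_{K_2,K_1}$ to the relabeled Schur basis is therefore upper-triangular with nonzero diagonal entries, and hence invertible. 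This establishes $\mathcal{B}_{K_2,K_1}$ as a basis of $\Hom_\cH(\bv_{K_1}\cH,\bv_{K_2}\cH)$; the statement \eqref{schurbasisB} for $\cS$ then follows by taking the direct sum over all $(K_1,K_2)$.

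The main obstacle is the triangularity analysis: establishing that the correction terms in the expansion of $T_wT_v$ only involve pairs $(w',p')$ with $w'<w$ in Bruhat order, and that the leading coefficients combine to give nonzero diagonal entries in the field $\FF$. This requires an inductive expansion using the quadratic Hecke relation $T_s^2=(q-1)T_s+q$ along a reduced expression of $w$, together with careful tracking of the $\fP$-component of $v$ through the Bernstein-style commutation relations in $\cH$, and use of the assumption on the residue field cardinality to ensure nonvanishing.
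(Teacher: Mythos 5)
Your strategy is essentially the one the paper uses: factor through \eqref{fullfactor}, expand the Hecke product of the $\fS$-part against the polynomial part, and exhibit a triangular change of basis against Vign\'eras's basis from Lemma~\ref{VSchurbasis} (relabelled as in Lemma~\ref{samebasis}). The one genuine difference is the partial order governing the triangularity: the paper works with the affine Bruhat order on the double-coset representatives $d\in D_{K_2,K_1}$ and shows (Claims 1 and 2 in its proof) that the correction terms $T_z$ have $z>wp'$ and hence land in cosets with $d'\geq d$, isolating the coefficient of $\bb^d$ itself; you instead order by the finite Bruhat order on the $\fS$-component $w\in D^\bbI_{K_2,K_1}$ and push corrections strictly downward. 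Both orders work, and your version is arguably slightly cleaner since for fixed $w$ the leading terms of all $v\in\W_Jp\W_J$ land in the single coset $\W_{K_2}wp\W_{K_1}$.

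Two points need repair or completion. First, your intermediate claim is misstated: a correction term arising from the quadratic relation has the form $T_{w''v}$ with $w''<w$ a proper subword of $w$, but the $\fS$-component of the \emph{unique decomposition} of $w''v$ in $\fS\ltimes\fP$ is $w''$ times the $\fS$-part of $v$ (an element of $\W_J$), which need not be Bruhat-smaller than $w$. What is true, and what your argument actually needs, is that the minimal-length representative of the double coset $\W_{K_2}w''v\W_{K_1}=\W_{K_2}w''\W_{K_1}\cdot(\text{polynomial part})$ is $\leq w''<w$, since $t_1,t_2\in\W_J\subseteq\W_{K_1}$. Second, the two steps you defer to the final paragraph -- the precise form of the expansion $T_xT_y=q^{a(x,y)}T_{xy}+(q-1)\sum_{z}c_zT_z$ and the nonvanishing of the diagonal coefficient $q^N+(q-1)(\cdots)$ in $\FF$ -- are exactly where the paper's work lies (its Claim 1, an adaptation of \cite[Proposition 1.16]{Ma} to periodic permutations, and the concluding coefficient analysis); note that the nonvanishing does not follow from the hypothesis $q\not\equiv 1\bmod\ell$ alone but from the fact that the leading contribution is an honest power of $q$ while every competing contribution to the same coefficient carries a factor of $q-1$.
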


\proof
We first compute the evaluation of these elements on $\bv_{K_1}$. We have
\begin{eqnarray*}
&&\bb^1_{K_2,wJ}\bb^w_{wJ,J}\bb^p_{J,J}\bb^1_{J,K_1} (\bv_{K_1})\quad=\quad
  \bb^1_{K_2,wJ}(T_wb^p_{J,J}\bv_J\sum_{a\in D^{K_1}_{J,\emptyset}}T_a)\\
&=& \bb^1_{K_2,wJ}(T_w \sum_{b \in D^J_{\emptyset, J\cap p'J}}T_bT_{p'}\bv_J\sum_{a\in D^{K_1}_{J,\emptyset}}T_a)\\
&=&\bb^1_{K_2,wJ}\left(\sum_{b \in D^{wJ}_{\emptyset, wJ\cap wp'J}}T_b T_wT_{p'}\bv_{K_1}\right)
\quad=\quad\bb^1_{K_2,wp'J}(T_wT_{p'}\bv_{K_1}),
\end{eqnarray*}
where $p'$ is the unique element in $\W_Jp\W_J \cap D_{J,J}$. Now, let $d$ be the unique element in $\W_{K_2}wp'\W_{K_1}\cap D_{K_2,K_1}$ and write $wp'=dv$ with $v \in \W_{K_1}$ and $l(dv) = l(d)+l(v)$.
We would like to show that \begin{equation}\label{basechange} \bb^1_{K_2,wJ}\bb^w_{wJ,J}\bb^p_{J,J}\bb^1_{J,K_1} = c_d \bb^d_{K_2,K_1} + \sum_{d' \in D_{K_2,K_1},d'>d}c_{d'}\bb^{d'}_{K_2,K_1}\end{equation} for some $c_{d'}\in \FF$ with $c_d\not=0$. Then the proposition follows from Lemma~\ref{VSchurbasis}.

{\bf Claim 1:} For any $x,y\in \W$ we have $T_xT_y = q^{a(x,y)}T_{xy}+ (q-1)\sum_{z>xy}c_zT_z$ for some $c_{z}\in \FF$ and $a(x,y)=\frac{1}{2}(l(x)+l(y)-l(xy))$.
\proof
This formula is deduced in the proof of  \cite[Proposition 1.16]{Ma} for lengths instead of Bruhat orders and only for $\fS$. However, replacing the permutation realisation of $\fS$ by the realisation of $\W$ as permutations on $\Z$ (see \cite{Gr1}) to define the sets $N(x)$ used in  \cite[Proposition 1.16]{Ma}, the proof generalises verbatim to our situation.
\endproof

{\bf Claim 2:} Let $u\in \W$ with  $u>wp'$. Letting $d_u$ denote the unique element in $\W_{K_2}u\W_{K_1}\cap D_{K_2,K_1}$, we have $d_u\geq d$.

\proof
By definition $u=a_2d_ua_1$ for some unique $a_i\in K_i$ and we can choose a reduced expression of $u$ compatible with this decomposition. Now $wp'<u$ means $wp'$ can be obtained by deleting some of the simple reflections. In particular, $d\leq d_u$.
\endproof

From Claim 1, we see that  $\bb^1_{K_2,wJ}\bb^w_{wJ,J}\bb^p_{J,J}\bb^1_{J,K_1} (\bv_{K_1}) = \bb^1_{K_2,wp'J}(T_wT_{p'}\bv_{K_1} )$, which equals $q^a \bb^1_{K_2,wp'J}\left((T_{wp'}\bv_{K_1} + (q-1) \sum_{z>xy}c_zT_z)\bv_{K_1}\right)$
%\begin{eqnarray*}
%&\bb^1_{K_2,wJ}\bb^w_{wJ,J}\bb^p_{J,J}\bb^1_{J,K_1} (\bv_{K_1})\quad = \quad\bb^1_{K_2,wp'J}(T_wT_{p'}\bv_{K_1} )&\\
%&= \quad q^a \bb^1_{K_2,wp'J}\left((T_{wp'}\bv_{K_1} + (q-1) \sum_{z>xy}c_zT_z)\bv_{K_1}\right)&
%\end{eqnarray*}
for some $c_z\in\FF$ and $a=a(w,p')$. Now Claim 2 implies that, when rewriting this in the basis of the $\bb^{d'}_{K_2,K_1}$ only basis elements indexed with $d'\geq d$ occur. Moreover, the leading term $T_{wp'}$ contributes $q^{\frac{1}{2} (l(w)+l(p')-l(wp'))+l(v)} $ to the  coefficient of $\bb^{d}_{K_2,K_1}$ while any other $T_z$ that might contribute to the coefficient has coefficient of the form $c(q-1)^a$ for some integer $a$ and nonzero scalar $c$, so the coefficient of $\bb^{d}_{K_2,K_1}$ is nonzero and
  \eqref{basechange} follows.
  We conclude that the set given in \eqref{schurbasisB} is indeed a basis for $\cS$.
  \endproof
\begin{remark}\label{actiononGamma}
In the faithful representation $\rho$ from Theorem~\ref{faithfulrep}, a basis element $\bb^1_{K_2,wJ}\bb^w_{wJ,J}\bb^p_{J,J}\bb^1_{J,K_1}$ as in Proposition~\ref{schurbasis2} sends $f\bv^{(K_1)}$ to
$$ \left(\sum_{a \in D^{K_2}_{\emptyset,wJ}}T_a\right)T_wg_p\left(\sum_{b\in D^{K_1}_{ \emptyset, J}}T_b\right)f\bv^{(K_2)}.$$
where $g_p$ is defined in Lemma~\ref{pmult} below.
\end{remark}

\subsection{The subalgebra $\cQ$}
We now construct an important commutative subalgebra  $\cQ$ of $\cS$.  For this let $J\subset\bbI$ and recall the notation from Section~\ref{otherreps}.

\begin{lemma}\label{pmult}
For $p \in \DMJ$ and $J\subseteq\bbI$ we have
%\begin{eqnarray}
%\label{gp}
$\bb^p_{J,J}(\bv_J) = g_p\bv_J$,
%\bb^p_{J,J}(\bv_J) &=& g_p\bv_J
%\end{eqnarray}
for $g_p$ a scalar multiple of the sum $\sum_{x\in\W_J}x(p)$ over all monomials in the $\W_J$-orbit of $p$.
\end{lemma}

\proof
On the one hand, $\bb_{J,J}^{p}(\bv_J )= \sum_{u \in \W_Jp\W_J}T_u$.
Noting that $$\W_Jp\W_J = \{v f \mid v \in \W_J, f =\sigma p \sigma^{-1} \textrm{for some } \sigma \in \W_J\}$$
we see that $\bb_{J,J}^{p}\bv_K = \sum_{v \in \W_K, p' \in \W_J\cdot p}T_{v p'}$, which, using Lemma~\ref{tech2} can be expressed as $\sum_{v \in \fS}T_vf_v$ for some polynomials $f_v$, all of whose monomial terms are conjugate to $p$ under $\W_J$.

On the other hand, writing $p=td = dt'$ for $d\in D_{J,J}, t, t' \in \W_J$, we have
\begin{equation*}
\begin{array}[t]{lll}
\bb_{J,J}^{p}(\bv_J)  &= \bv_JT_d (\sum_{a\in D^J_{d^{-1}J\cap J}}T_a )&= \bv_J q^{-l(t)}T_tT_d \sum_{a\in D^J_{d^{-1}J\cap J}}T_a\\
& = \bv_J (q^{-l(t)}T_p\sum_{a\in D^J_{d^{-1}J\cap J}}T_a) &=  \bv_J q^{-l(t)}p\sum_{a\in D^J_{d^{-1}J\cap J}}T_a 
\\
 &=\bv_J A \in \bv_J\cH_J
\end{array}
\end{equation*}
and similarly
\begin{equation*}
\begin{array}[t]{lll}
\bb_{J,J}^{p}(\bv_J)  &= \sum_{a'\in D^J_{J\cap J}}T_{a'}T_d  \bv_J&= q^{-l(t')}\sum_{a'\in D^J_{dJ\cap J}}T_{a'} T_d T_{t'}\bv_J \\
& =  q^{-l(t')}\sum_{a'\in D^J_{d^{-1}J\cap J}}T_{a'}T_p \bv_J&=  q^{-l(t')}\sum_{a'\in D^J_{dJ\cap J}}T_{a'}p \bv_J  \\&=A'\bv_J\in \bv_J\cH_J.
\end{array}
\end{equation*}
Since $T_i\bv_J=\bv_JT_i = q\bv_J$ for all $s_i\in J$, moreover $A,A' \in \FF[X_1^{\pm 1}, \dots X_n^{\pm 1}]$. Observing $ \bv_J AT_i = \bb_{J,J}^{p}\bv_JT_i =  A'\bv_JT_i = q A'\bv_J$ for all $s_i \in J$, we furthermore deduce, using Lemma~\ref{qifsymm}, that $A,A' \in \FF[X_1^{\pm 1}, \dots X_n^{\pm 1}]^{\W_J}$ and hence $A=A'$. 

We obtain $\bb_{J,J}^{p}(\bv_J)  = \bv_J A \in \bv_J\FF[X_1^{\pm 1}, \dots X_n^{\pm 1}]^{\W_J}$ from the two preceding paragraphs. Here, all summands in $A$ are $\W_K$-conjugate to $p$. Therefore $\bb_{J,J}^{p}(\bv_J)$ is a scalar multiple of $g_p \bv_J$, as claimed. 
\endproof

\begin{proposition}\label{Qsub}
The $\FF$-vector space spanned by the $b^p_{J,J}$, for $J \subset \bbI$, $p \in \DMJ$, forms a commutative subalgebra $\cQ$ of $\cS$ which contains the centre $Z(\cS)$ of $\cS$. 
\end{proposition}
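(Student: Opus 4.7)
\proof[Proof proposal]
The plan is to establish commutativity and closure under multiplication of the span in two steps, then verify the containment $Z(\cS)\subseteq\cQ$ by a direct decomposition argument.

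First I would observe that basis vectors $\bb^p_{J,J}$ and $\bb^{p'}_{J',J'}$ with $J\neq J'$ multiply to zero, because $\bb^{p'}_{J',J'}$ lives in $\End_\cH(\bv_{J'}\cH)$ while $\bb^p_{J,J}$ is zero on $\bv_{J'}\cH$. So the span decomposes as $\bigoplus_{J}\cQ_J$, where $\cQ_J$ is the span of $\bb^p_{J,J}$ for $p\in\DMJ$, and it suffices to show that each $\cQ_J\subset\End_\cH(\bv_J\cH)$ is a commutative subalgebra. For $p,p'\in\DMJ$, Lemma~\ref{pmult} gives $\bb^p_{J,J}(\bv_J)=g_p\bv_J$ and $\bb^{p'}_{J,J}(\bv_J)=g_{p'}\bv_J$, both with $g_p,g_{p'}\in\cP^{\W_J}$. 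Since $\bb^p_{J,J}$ is an $\cH$-module map and $g_{p'}\in\cP\subseteq\cH$, I can move $g_{p'}$ through to get
\begin{eqnarray*}
\bb^p_{J,J}\bb^{p'}_{J,J}(\bv_J)&=&\bb^p_{J,J}(g_{p'}\bv_J)\;=\;g_{p'}\bb^p_{J,J}(\bv_J)\;=\;g_{p'}g_p\bv_J,
\end{eqnarray*}
which is symmetric in $p,p'$, yielding commutativity.

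Closure then follows because $g_pg_{p'}$ is again $\W_J$-invariant, and every element of $\cP^{\W_J}$ is a unique $\FF$-linear combination of the orbit sums $g_{p''}$ for $p''\in\DMJ$; each such $g_{p''}\bv_J$ is, by Lemma~\ref{pmult}, a nonzero scalar multiple of $\bb^{p''}_{J,J}(\bv_J)$. Since morphisms out of $\bv_J\cH$ are determined by their value on $\bv_J$ (Remark~\ref{stupidremark}), this identifies $\bb^p_{J,J}\bb^{p'}_{J,J}$ with an element of $\cQ_J$, establishing that $\cQ$ is a commutative subalgebra.

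For the containment $Z(\cS)\subseteq\cQ$, I would invoke Lemma~\ref{Scentre} which identifies $Z(\cS)$ with the action of $Z(\cH)=\cP^{\fS}$ by left multiplication. Given $z\in\cP^\fS$, the corresponding central element of $\cS$ restricts on $\bv_J\cH$ to the endomorphism $\bv_J\mapsto \bv_Jz=z\bv_J$. Since $\cP^\fS\subseteq\cP^{\W_J}$, I can expand $z$ as a finite $\FF$-linear combination of $\W_J$-orbit sums $g_{p''}$ with $p''\in\DMJ$, and hence as a scalar combination of the $\bb^{p''}_{J,J}$ on $\bv_J$, again by Lemma~\ref{pmult}. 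Summing these expressions over $J\subseteq\bbI$ exhibits multiplication by $z$ as an element of $\bigoplus_J\cQ_J=\cQ$.

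The main conceptual point (rather than an obstacle) is the bookkeeping ensuring that $g_pg_{p'}$ is indeed in $\cP^{\W_J}$ and can be re-expanded in the orbit-sum basis; beyond that, everything reduces to the $\cH$-linearity of the $\bb^p_{J,J}$ plus the concrete formulae of Lemma~\ref{pmult}.
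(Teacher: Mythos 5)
Your proposal is correct and follows essentially the same route as the paper, whose proof is the one-line citation of Lemma~\ref{pmult} and Lemma~\ref{Scentre} via Remark~\ref{stupidremark}; you have simply written out the details that the authors leave implicit (the block decomposition over $J$, the identification of $\bb^p_{J,J}$ with multiplication by the $\W_J$-invariant element $g_p$, and the expansion of $\fS$-invariant Laurent polynomials in the orbit-sum basis).
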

\proof
This follows directly from Lemmas~\ref{pmult} and~\ref{Scentre} via Remark~\ref{stupidremark}.
\endproof

\begin{remark}\label{Qdesc}
Note that by Lemma~\ref{pmult} and Lemma~\ref{qifsymm}, the subalgebra $\cQ$ consists of precisely those elements in $\Hom_{\cH}(\bv_J\cH,\bv_J\cH)$ (for some $J\subseteq \bbI$) which are given by left multiplication with some $f\in \cP\subset\cH$ which satisfies $(T_i-q)f\bv=0$ for all $i\in J$. 
\end{remark}
\begin{remark}
\label{Qasring}
The elements  $b^p_{J,J}$, for $J \subset \bbI$, $p \in \DMJ$, are linearly independent by Lemma~\ref{pmult}, hence form a basis of $\cQ$.  As an algebra, $\cQ$ is a direct sum of algebras indexed by $J\subset\bbI$ with factors isomorphic to  $\FF[X_1^{\pm 1},\dots, X_n^{\pm 1}]^{\W_J}$. The centre $Z(\cS)=\FF[X_1^{\pm 1},\dots, X_n^{\pm 1}]^\fS$, see Lemma~~\ref{Scentre}, embeds diagonally.
\end{remark}

\begin{lemma}\label{SfreeoverQ}
The $\FF$-vector space $\cS$ carries the structure of a finitely generated free $\cQ$-module on basis $$\cB^\cS_\cQ=\left\{ \bb^1_{K_2,wJ}\bb^w_{wJ,J}\bb^1_{J,K_1}  \left|
 \begin{array} {l}
 K_1,K_2\subseteq \bbI ,w \in D^\bbI_{K_2,K_1},\\ J = K_1\cap w^{-1}K_2
 \end{array}
 \right.\right\}.$$
\end{lemma}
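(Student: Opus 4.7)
The approach is to directly reinterpret the $\FF$-basis of $\cS$ provided by Proposition~\ref{Propschurbasis2} as a free $\cQ$-module decomposition. That basis is indexed by tuples $(K_1,K_2,w,p)$ where the triples $(K_1,K_2,w)$ parametrise $\cB^\cS_\cQ$ and (with $J=K_1\cap w^{-1}K_2$) the remaining parameter $p\in\DMJ$ parametrises the $\FF$-basis $\{\bb^p_{J,J}\}_{p\in\DMJ}$ of $\cQ_J$ recorded in Remark~\ref{Qasring}. This factorisation of the indexing data is precisely what underlies the free $\cQ$-module structure.

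More explicitly, for each $b=\bb^1_{K_2,wJ}\bb^w_{wJ,J}\bb^1_{J,K_1}\in\cB^\cS_\cQ$ (with $J=J(b)=K_1\cap w^{-1}K_2$) I would introduce the sandwich map
\[
\sigma_b\colon\cQ_J\longrightarrow\cS,\qquad \sigma_b(q)=\bb^1_{K_2,wJ}\bb^w_{wJ,J}\circ q\circ\bb^1_{J,K_1},
\]
computed as a composition inside $\cS$. On basis elements $\sigma_b(\bb^p_{J,J})=\bb^1_{K_2,wJ}\bb^w_{wJ,J}\bb^p_{J,J}\bb^1_{J,K_1}$ is precisely the basis element of Proposition~\ref{Propschurbasis2} indexed by $(K_1,K_2,w,p)$. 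Hence $\sigma_b$ is $\FF$-injective with image a subspace $S_b\subseteq\cS$, and Proposition~\ref{Propschurbasis2} furnishes the $\FF$-vector space decomposition $\cS=\bigoplus_{b\in\cB^\cS_\cQ}S_b$.

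Next I would transport the regular $\cQ_{J(b)}$-action on itself to $S_b$ via $\sigma_b$ and extend to a $\cQ=\bigoplus_L\cQ_L$-action by declaring $\cQ_L$ to act trivially on $S_b$ whenever $L\neq J(b)$. Under this action each $S_b\cong\cQ e_{J(b)}$ is a free cyclic $\cQ$-module generated by $b=\sigma_b(\bb^1_{J,J})$, so the direct sum $\cS=\bigoplus_b\cQ\cdot b$ realises $\cS$ as a free $\cQ$-module with basis $\cB^\cS_\cQ$. Finite generation is immediate from finiteness of $\cB^\cS_\cQ$: both $K_1,K_2$ range over $2^\bbI$ and each $D^\bbI_{K_2,K_1}$ is a finite set.

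The main subtlety lies in pinning down the correct $\cQ$-action: plain left- or right-composition of $\cQ$ inside $\cS$ only sees $\cQ_{K_2(b)}$ respectively $\cQ_{K_1(b)}$, each of which is generally strictly contained in the block $\cQ_{J(b)}$ that actually appears in the indexing (since $J(b)\subseteq K_1$ forces $\cQ_{K_1}\subseteq\cQ_{J(b)}$, and analogously on the other side). The sandwich construction bridges this gap. Checking that $\sigma_b$ is well-defined then reduces to the commutation $g_p\bv_J=\bv_J g_p$ for $g_p\in\cP^{\W_J}$ which is implicit in Lemma~\ref{pmult}, guaranteeing that $\bb^1_{K_2,wJ}\bb^w_{wJ,J}\circ q\circ\bb^1_{J,K_1}$ really takes values in $\cS$.
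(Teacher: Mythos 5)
Your proposal is correct and is essentially the paper's own argument: both decompose $\cS$ along the basis of Proposition~\ref{Propschurbasis2}, let $\cQ$ act on the middle factor $\bb^p_{J,J}$ of each basis element (explicitly not by left or right composition in $\cS$), and deduce freeness and finite generation from Remark~\ref{Qasring}. The only difference is in how the action is written down: you transport the regular $\cQ_{J(b)}$-action through the sandwich maps $\sigma_b$, which makes associativity and well-definedness automatic, whereas the paper defines $\circledast$ by the explicit rule $\bb^{p'}_{K,K}\circledast(\cdots\bb^{p}_{J,J}\cdots)=\cdots\bb^{pp'}_{J,J}\cdots$ for $K=J$ on basis elements -- your formulation is arguably the cleaner of the two, since the genuine product $\bb^{p'}_{J,J}\circ\bb^{p}_{J,J}$ in $\cQ_J$ need not equal the single basis element $\bb^{pp'}_{J,J}$.
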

(We do not claim that $\cS$ is a free $\cQ$-module by restriction of the regular action.)
\proof
We define the action of a basis element $\bb_{K,K}^{p'} \in \cQ$ on a basis element $\bb^1_{K_2,wJ}\bb^w_{wJ,J}\bb_{J,J}^{p}\bb^1_{J,K_1} \in \cB^\cS_\cQ$ (for some $K_1,K_2\subseteq \bbI$) by 
\begin{eqnarray*}
\bb_{K,K}^{p'} \circledast \bb^1_{K_2,wJ}\bb^w_{wJ,J}\bb_{J,J}^{p} \bb^1_{J,K_1} & =& \begin{cases}\bb^1_{K_2,wJ}\bb^w_{wJ,J}\bb_{J,J}^{pp'} \bb^1_{J,K_1} & \hbox{if } J=K,\\ 0&\hbox{otherwise.}\end{cases}
\end{eqnarray*}
By Remark~\ref{Qasring} this is a well-defined action of $\cQ$. Obviously, the module is generated by $\cB^\cS_\cQ$. Freeness follows from Remark~\ref{Qasring} and Proposition~\ref{Propschurbasis2}.
\endproof

\subsection{The twisted faithful representation of the Schur algebra}
The automorphism $\sharp$ from \eqref{eqhash} allows us to define the Schur algebra using $\bbv_K\cH$ instead of $\bv_K\cH$  via the obvious isomorphism of algebras
$$\cS\cong \Hom_{\cH}\bigoplus_{J,K\subseteq \bbI}((\bv_J\cH)^\sharp, (\bv_K\cH)^\sharp)  = \Hom_{\cH}\bigoplus_{J,K\subseteq \bbI}(\bbv_J\cH, \bbv_K\cH).$$

Similarly to Section~\ref{heckefaith}, we also  have a faithful representation $\orho$ of $\cS$ on 
\begin{eqnarray}
\label{twistedfaithful}
\oFS&=&\bigoplus_{K\subseteq \bbI}\FF[X_1^{\pm 1}, \dots,X_n^{\pm 1}]^{W_K}\bbv^{(K)},
\end{eqnarray}
where again the superscript on $\bbv^{(K)}$ is just a book-keeping device, given by 
$$\orho(\bb^d_{K,J})f\bbv^{(J)} = \sum_{a \in D^K_{\emptyset,K\cap dJ}}T^\sharp_aT^\sharp_df\bbv^{(K)}.$$
We also obtain the following analogue to Corollary~\ref{normalhash}.
\begin{corollary}
We have an isomorphism of representation 
\begin{eqnarray*}
\oFS \cong {}^\sharp\FS&\text{given by}&f\bbv^{(K)} \mapsto f^\sharp \bv^{(K)}.
\end{eqnarray*}
\end{corollary}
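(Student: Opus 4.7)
The plan is to lift Corollary~\ref{normalhash} from the Hecke level to the Schur level by working one summand at a time. First, I would confirm that $\phi\colon \oFS\to\FS$, $f\bbv^{(K)}\mapsto f^\sharp\bv^{(K)}$, is a well-defined $\FF$-linear bijection on each summand: since $\sharp$ restricts on $\cP=\FF[X_1^{\pm1},\dots,X_n^{\pm1}]$ to the involution $X_i\mapsto X_i^{-1}$, which commutes with the permutation action of $W_K$, one has $f\in\cP^{W_K}$ if and only if $f^\sharp\in\cP^{W_K}$, and involutivity of $\sharp$ forces $\phi^2=\op{id}$.

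For $\cS$-equivariance it suffices, by Lemma~\ref{VSchurbasis}, to test on basis elements $\bb^d_{K_2,K_1}$. Writing $h=\sum_{a\in D^{K_2}_{\emptyset,K_2\cap dK_1}}T_a^\sharp T_d^\sharp\in\cH$, the definition of $\orho$ gives
$$\orho(\bb^d_{K_2,K_1})(f\bbv^{(K_1)})\;=\;h\cdot_{\oFH}(f\bbv^{(K_2)}),$$
where the action on the right is the faithful Hecke action on $\oFH$ (landing in the $W_{K_2}$-invariant summand by an argument entirely analogous to Lemma~\ref{inrightinv}). Applying $\phi$ to both sides and invoking Corollary~\ref{normalhash} on the $K_2$-summand transforms the right-hand side into $h^\sharp\cdot_\FH(f^\sharp\bv^{(K_2)})$. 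Since $\sharp$ is an algebra involution, $h^\sharp=\sum_aT_aT_d$, whence by \eqref{rhodef} the result equals $\rho(\bb^d_{K_2,K_1})\phi(f\bbv^{(K_1)})$. This is precisely the intertwining required for $\phi$ to realise the isomorphism $\oFS\cong{}^\sharp\FS$ of $\cS$-modules, where the twist is by the algebra automorphism of $\cS$ induced by $\sharp$ on $\cH$ through the identification $\cS\cong\End_\cH(\bigoplus\bv_J\cH)\cong\End_\cH(\bigoplus\bbv_J\cH)$ recorded at the start of this subsection.

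The only genuine obstacle is the bookkeeping around what ``${}^\sharp\FS$'' means at the algebra level: one has to make explicit that the induced automorphism $\sharp$ of $\cS$ sends the basis endomorphism $\bb^d_{K_2,K_1}$, described by its image of $\bv_{K_1}$, to the endomorphism described by the same formula with each $T_w$ replaced by $T_w^\sharp$ (and $\bv_J$ by $\bbv_J$). Once this identification is fixed the computation above is formal, and the Schur-level isomorphism is read off directly from the Hecke-level one, mirroring the passage from Proposition~\ref{Ben} to Proposition~\ref{Bensign} one categorical level up.
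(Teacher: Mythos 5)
Your proposal is correct and is essentially the argument the paper leaves implicit: the paper states this corollary without proof as the ``analogue of Corollary~\ref{normalhash}'', and your verification — checking that $\sharp$ preserves $W_K$-invariants summand by summand, then testing equivariance on the basis elements $\bb^d_{K_2,K_1}$ by comparing the defining formulas for $\orho$ and $\rho$ and invoking the Hecke-level isomorphism — is exactly the computation that makes ``immediate'' precise. Your closing remark about fixing the meaning of ${}^\sharp\FS$ at the algebra level is the right caveat, since the only real content beyond Corollary~\ref{normalhash} is that conjugation by $\sharp$ carries left multiplication by $\sum_a T_aT_d$ on $\bv_J\cH$ to left multiplication by $\sum_a T_a^\sharp T_d^\sharp$ on $\bbv_J\cH$, which is how $\orho$ was defined.
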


\section{A completion of $\cS$}

Recall the character $\chi=\chi_\bi$ for our fixed $\bi=(i_1,\dots, i_n)\in\mZ^n$ from Definition~\ref{idef}, and the ideals $\bm_\chi$  and $\cI_m$ in $Z(\cH)$ respectively $\cH$ from
Section~\ref{Hcompletionsec}. By Lemma~\ref{Scentre}, we can identify the centre of $\cS$ with the centre of $\cH$. Define $\ccS_\bi$ to be the completion of $\cS$ at the nested sequence of ideals  
\begin{eqnarray}
\label{Im}
\cJ_m&=&\bigoplus_{J, K \subseteq \bbI} \Hom_{\cH}(\cH^J, \bv_K \cI_m)
\end{eqnarray}
in $\cS$ generated by the maximal ideal $\bm_\chi$ of $Z(\cS)$.

\subsection{Compatibility with the completion of $\cH$}
The following gives an alternative definition of  $\ccS_\bi$, analogous to \eqref{affSchur}, using the completed Hecke algebras.

\begin{proposition}
\label{complS}
There is an isomorphism of algebras $$\ccS_\bi \cong \End_{\ccH_\bi}\left (\bigoplus_{J \subseteq \bbI} \bv_J \ccH_\bi\right).$$
\end{proposition}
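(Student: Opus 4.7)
The plan is to reduce the statement to a standard Hom-tensor-completion compatibility, exploiting that $\cS$ and all the permutation modules $\bv_J\cH$ are finitely generated over $Z(\cH)$.

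First I would establish the key finiteness facts. From the basis \eqref{heckebasis}, $\cH$ is a free $\cP$-module of rank $|\fS|$, while $\cP=\FF[X_1^{\pm 1},\ldots,X_n^{\pm 1}]$ is free of rank $|\fS|$ over the symmetric Laurent polynomial ring $Z(\cH)=\cP^\fS$; thus $\cH$ is a finite free module over its Noetherian centre $Z(\cH)$, and the same holds for each $\bv_J\cH$, and for $\cS$ (via Lemma~\ref{SfreeoverQ} combined with Remark~\ref{Qasring}, together with $Z(\cS)=Z(\cH)$ from Lemma~\ref{Scentre}). In particular, the $\bm_\chi$-adic completion $\widehat{Z(\cH)}_\bi$ is flat over $Z(\cH)$, and for any finitely generated $Z(\cH)$-module $N$ one has $\widehat{N}_\bi \cong N \otimes_{Z(\cH)}\widehat{Z(\cH)}_\bi$.

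Applied to $N=\cH$ and $N=\bv_J\cH$, this recovers $\ccH_\bi$ and $\bv_J\ccH_\bi$ as base changes of $\cH$ and $\bv_J\cH$, matching the topological bases exhibited in Lemma~\ref{basisH} and Lemma~\ref{basislemma}. Applied to $N=\Hom_\cH(\bv_J\cH,\bv_K\cH)$, which by Remark~\ref{stupidremark} together with $\bv_J(T_i-q)=0$ can be realised as the kernel of the $Z(\cH)$-linear map
$$\bv_K\cH\longrightarrow\bigoplus_{s_i\in J}\bv_K\cH,\qquad b\longmapsto\bigl(b(T_i-q)\bigr)_i,$$
flatness of $\widehat{Z(\cH)}_\bi$ preserves the kernel. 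Combining with the analogous description of morphisms inside $\ccH_\bi$ then yields a natural identification
$$\Hom_\cH(\bv_J\cH,\bv_K\cH)\otimes_{Z(\cH)}\widehat{Z(\cH)}_\bi\;\cong\;\Hom_{\ccH_\bi}(\bv_J\ccH_\bi,\bv_K\ccH_\bi).$$

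Summing over all pairs $(J,K)$ and using $\ccS_\bi\cong\cS\otimes_{Z(\cH)}\widehat{Z(\cH)}_\bi$ produces a vector-space isomorphism to $\End_{\ccH_\bi}(\bigoplus_J\bv_J\ccH_\bi)$. Since it is constructed from the canonical algebra map $\cS\to\End_{\ccH_\bi}(\bigoplus_J\bv_J\ccH_\bi)$, which factors uniquely through $\ccS_\bi$ by the universal property of completion, it is automatically an algebra homomorphism. The main obstacle I foresee is verifying that the defining filtration $\cJ_m=\bigoplus_{J,K}\Hom_\cH(\bv_J\cH,\bv_K\cI_m)$ of $\cS$ induces the same $\bm_\chi$-adic topology as the sequence $\bm_\chi^m\cS$. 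The inclusion $\bm_\chi^m\cS\subseteq\cJ_m$ is immediate (since $\bv_K\cI_m=\bm_\chi^m\bv_K\cH$), and the reverse containment up to a shift is supplied by the Artin--Rees lemma applied to the finitely generated $Z(\cH)$-module $\cS$, which ensures the two completions agree.
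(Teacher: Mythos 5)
Your argument is correct, but it takes a genuinely different route from the paper's. The paper proves Proposition~\ref{complS} by a direct inverse-limit computation: it writes $\bv_J\ccH_\bi=\underleftarrow{\lim}\,\bv_J\cH/\cI_m$, pulls the limits through $\Hom_{\ccH_\bi}(-,-)$ one at a time, identifies the finite-level Hom spaces with $\Hom_{\cH}(\bv_J\cH,\bv_K\cH)/\Hom_{\cH}(\bv_J\cH,\bv_K\cI_m)$, and lands on $\underleftarrow{\lim}\,\cS/\cJ_m=\ccS_\bi$ by definition, so no comparison of filtrations is needed. You instead organise everything as flat base change along $Z(\cH)\to\widehat{Z(\cH)}_\bi$: the finiteness of $\cH$, $\bv_J\cH$ and $\cS$ over the Noetherian centre, the realisation of $\Hom_\cH(\bv_J\cH,\bv_K\cH)$ as the kernel of $b\mapsto(b(T_i-q))_{i\in J}$ (valid since $\bv_J\cH\cong\FF_{\mathrm{triv}}\otimes_{\cH_J}\cH$, by Lemma~\ref{cyclic}), and Artin--Rees to reconcile $\cJ_m$ with $\bm_\chi^m\cS$. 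This buys you exactness statements for free and makes transparent that $\ccS_\bi$ is literally $\cS$ with completed coefficients; the price is that you must separately justify (i) that Lemma~\ref{cyclic} survives completion, i.e.\ that $\bv_J\ccH_\bi\cong\FF_{\mathrm{triv}}\otimes_{\cH_J}\ccH_\bi$ (which holds because $\ccH_\bi=\cH\otimes_{Z(\cH)}\widehat{Z(\cH)}_\bi$ remains free over $\cH_J$), and (ii) the filtration comparison, which you correctly flag; in fact the paper later shows (in the proof of Proposition~\ref{cschurbasis}) that $\cJ_m=\cS\bm_\chi^m$ on the nose, so Artin--Rees is a safe but slightly stronger tool than necessary. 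Both proofs are sound; yours is the more standard commutative-algebra packaging, the paper's stays closer to the defining data.
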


\proof
We have %$\End_{\ccH_\bi}  \left(\bigoplus_{J \subseteq \bbI\}} \bv_J \ccH_\bi\right)\cong\bigoplus_{J, K \subseteq \bbI}\Hom_{\ccH_\bi}\left(\bv_J \ccH_\bi, \bv_K \ccH_\bi\right)$, which equals
\begin{equation*}
\begin{split}
\End_{\ccH_\bi}  \left(\bigoplus_{J \subseteq \bbI} \bv_J \ccH_\bi\right)&\cong
%&&\End_{\ccH_\bi}  \left(\bigoplus_{J \subseteq \bbI\}} \bv_J \ccH_\bi\right)\;\cong\;\bigoplus_{J, K \subseteq \bbI}\Hom_{\ccH_\bi}\left(\bv_J \ccH_\bi, \bv_K \ccH_\bi\right)\\
\bigoplus_{J, K \subseteq \bbI}\Hom_{\ccH_\bi}\left( \underleftarrow{\lim}\, \bv_J \cH/\cI_m, \underleftarrow{\lim}\,\bv_K \cH/\cI_k\right)\\
&\cong \bigoplus_{J, K \subseteq \bbI}\underleftarrow{\lim}\, \Hom_{\ccH_\bi}\left( \underleftarrow{\lim}\, \bv_J \cH/\cI_m, \bv_K \cH/\cI_k\right)\\
&\cong \bigoplus_{J, K \subseteq \bbI}\underleftarrow{\lim}\, \Hom_{\ccH_\bi}\left( \bv_J \cH/\cI_k, \bv_K \cH/\cI_k\right)\\
&\cong \bigoplus_{J, K \subseteq \bbI}\underleftarrow{\lim}\, \left(\Hom_{\ccH_\bi}( \bv_J \cH, \bv_K \cH)/  \Hom_{\ccH}( \bv_J \cH, \bv_K \cI_m)\right)\\
&\cong \underleftarrow{\lim}\, \cS/\cJ_m = \ccS_\bi.
\end{split}
\end{equation*}
The proposition follows.
\endproof
We like to generalise the idempotent decomposition of $\ccH_\bi$ from \eqref{compldecomp} to $\ccS_\bi$.  Our notation  follows the setup in \cite{SW} and \cite{KL}.

\subsection{Idempotent decomposition}
\label{secidem}
Recall our fixed $\bi=(i_1,\dots, i_n)\in\mZ^n$. Let  $J\subseteq \bbI$ and  $\bu =(u_1,\ldots, u_n)\in \fS\bi$.  
It will be convenient to encode the pair by splitting the tuple $\bu=(u_1,\ldots u_n)$ into blocks determined by $J$,  more precisely,  
\begin{eqnarray}
\label{uJ}
(\bu,J)&=&(u_1\cdots u_{t_1}|u_{t_1+1}\cdots u_{t_2}|\cdots |u_{t_{r-1} +1} \cdots u_{t_r})
\end{eqnarray}
where $t_r=n$, and a line is drawn between $u_k$ and $u_{k+1}$ if and only if $k \notin J$.  In the extreme cases we have $(\bu,{\emptyset}) = (u_1|u_2|\cdots |u_n)$ and $(\bu,{\bbI}) = (u_1,u_2,\cdots, u_{n})$.

For $(\bu,J)$ denote $\bu_J=(\bu',J)$, where $\bu'$ is the unique element in the $\W_J$-orbit of $\bu$  with ascending integers in the parts between the lines in \eqref{uJ},  i.e. 
\begin{eqnarray}
\label{idempotentsSchur}
&\quad u'_1\leq\cdots\leq u'_{t_1},\quad u'_{t_1+1}\leq\cdots\leq u'_{t_2},\quad  \ldots \quad, \quad u'_{t_{r-1}+1}\leq\cdots \leq u'_n.\quad\quad&
\end{eqnarray}
Here, if $q$ is an $e-th$ roots of unity, we  order our chosen  representatives $1,\dots, e$ for $\Z/e\Z$ as $1<\dots <e$. For $ J\subseteq\bbI$, we denote by $$\bU_J=\{\bu_J\mid \bu\in\fS\bi\}$$ the set of such representatives of $\W_J$-orbits in $\fS\bi$. Given $\bu\in\fS\bi$ and $J\subset K\subseteq\bbI$ we call $\bu_J$ a {\it refinement} of $\bu_K$.

\begin{example}\label{mergex}
Let $n=7$ and $J = \{1,3,5\}\subset K=\{1,2,3,5,6\}$. With  $\bu = (1,2,1,1,2,1,1)$ we have $\bu_J = (1,2|1,1|1,2|1)$ and  $\bu_K = (1,1,1,2|1,1,2)$.  Then $\bu_J$ a refinement of $\bu_K$. Note that indeed the additional vertical lines in $\bu_J$ provide a refinement of the parts of $\bu_K$.
\end{example}

Attached to $\bu_J =  (\bu,J)\in \bU_J$ we have the idempotent
\begin{eqnarray}
\label{eplus}
\e_{\bu_J}&=&\sum_{\bu' \in \W_J\cdot \bu} e_{\bu'}\quad\in \ccH_\bi.
\end{eqnarray}

\begin{lemma}\label{idempcomm}
For $J\subset \bbI$ and  $\bu_J \in \bU_J$, the elements $\e_{\bu_J}, \bv_J$ (in $\ccH_\bi$) commute.
\end{lemma}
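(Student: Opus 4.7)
The plan is to reduce the commutation of $\e_{\bu_J}$ with $\bv_J = \sum_{w \in \W_J} T_w$ to checking the commutation with each individual Hecke generator $T_r$ for $s_r \in J$. By induction on length along a reduced expression $w = s_{r_1}\cdots s_{r_k}$ with all $s_{r_j}\in J$, it suffices to verify that $T_r\e_{\bu_J} = \e_{\bu_J}T_r$ whenever $s_r\in J$; summing over $w\in\W_J$ then gives the result for $\bv_J$.

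To verify this, I would use the intertwining elements from \eqref{Phi} to decompose $T_r$ into a well-behaved piece plus a "diagonal" correction. Namely, \eqref{Phi} rearranges to
\begin{equation*}
T_r \;=\; \Phi_r \;-\; \sum_{u_{r+1}\neq u_r}\frac{1-q}{1-X_rX_{r+1}^{-1}}\,e_\bu \;-\; \sum_{u_{r+1}=u_r} e_\bu.
\end{equation*}
The intertwining elements satisfy $\Phi_r e_\bu = e_{s_r\cdot \bu}\Phi_r$ (this is the defining property for which the formula \eqref{Phi} was chosen and is implicit in the reference to \cite{BK}; it can also be verified directly from $\Phi_r X_i = X_{s_r(i)}\Phi_r$). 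Since $s_r\in J$, the involution $\bu'\mapsto s_r\cdot \bu'$ permutes the $\W_J$-orbit of $\bu$, so
\begin{equation*}
\Phi_r\,\e_{\bu_J} \;=\; \sum_{\bu'\in\W_J\cdot\bu} \Phi_r e_{\bu'} \;=\; \sum_{\bu'\in\W_J\cdot\bu} e_{s_r\cdot\bu'} \Phi_r \;=\; \e_{\bu_J}\,\Phi_r.
\end{equation*}

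For the correction terms, each summand has the form $f_{\bu'}(X)\,e_{\bu'}$ with $f_{\bu'}(X)\in\FF[[X_r,X_{r+1}^{-1}]]$. Since the $X_i$ commute with every idempotent $e_{\bu''}$, and since orthogonality gives $e_{\bu'}\e_{\bu_J} = \e_{\bu_J}e_{\bu'}$ (both equal $e_{\bu'}$ if $\bu'\in\W_J\cdot\bu$ and vanish otherwise), each correction term commutes with $\e_{\bu_J}$. Combining, $T_r$ commutes with $\e_{\bu_J}$, and then induction and summation yield $\bv_J\,\e_{\bu_J} = \e_{\bu_J}\,\bv_J$. The main point to be careful with is the intertwining identity $\Phi_r e_\bu = e_{s_r\cdot\bu}\Phi_r$; once that is in hand (either as a quoted fact or verified from $\Phi_r X_i = X_{s_r(i)}\Phi_r$), the rest is formal.
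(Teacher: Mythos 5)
Your proof is correct, but it takes a genuinely different route from the paper's. The paper argues structurally: for $\W_J$ irreducible it observes that $\e_{\bu_J}$ is the identity element of the block subalgebra $\ccH_{J,\bi}$ (the completion of the finite Hecke subalgebra attached to $J$ at the corresponding orbit), and since $\bv_J$ lies in that subalgebra the two elements commute for free; the general case is then reduced to this one by factoring $\W_J=\W_{J_1}\times\W_{J_2}$, $\e_{\bu_J}=\e_{\bu_{J_1}}\e_{\bu_{J_2}}$, $\bv_J=\bv_{J_1}\bv_{J_2}$. You instead reduce to a single generator $T_r$ with $s_r\in J$ and compute, splitting $T_r$ via \eqref{Phi} into the intertwiner $\Phi_r$ (which satisfies $\Phi_r e_{\bu'}=e_{s_r\cdot\bu'}\Phi_r$, so that it permutes the orbit $\W_J\cdot\bu$ and hence commutes with $\e_{\bu_J}$) plus correction terms of the form $f_{\bu'}(X)e_{\bu'}$, which commute with $\e_{\bu_J}$ by orthogonality of idempotents. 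Both arguments are sound. The paper's is shorter and avoids invoking any property of the intertwiners, at the cost of the (unstated but true) identification of $\e_{\bu_J}$ as the unit of $\ccH_{J,\bi}$; yours is more computational but self-contained modulo the standard intertwining identity $\Phi_r e_\bu=e_{s_r\cdot\bu}\Phi_r$, which you correctly flag as the one fact needing justification (it follows from $\Phi_r X_i=X_{s_r(i)}\Phi_r$ together with the characterisation \eqref{eigenspaces} of the idempotents as generalized eigenprojections for the $X_i$, or can simply be quoted from \cite{BK}). Your version also makes explicit the slightly finer statement that each individual $T_r$ with $s_r\in J$ commutes with $\e_{\bu_J}$, which is implicit in, but not stated by, the paper's argument.
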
 

\proof
We may assume $J\not=\emptyset$, since $\bv_\emptyset=1$, in which case the claim is obvious.  Suppose first that $\W_J\cong\fS_k$ for some $k$ and  consider the subalgebra $ \ccH_{J,\bi}$. Its identity element is $\e_{\bu_J}$ and we obtain $\ccH_{J,\bi}=\e_{\bu_J} \ccH_{J,\bi}= \ccH_{J,\bi}\e_{\bu_J}$. In particular $\e_{\bu_J}$ commutes with $\bv_J$. Note that in the extremal case $J=\bbI$, the element $\e_{\bu_\bbI}$ is just the identity element in $ \ccH_{J,\bi}$.
Otherwise, we can find a proper decomposition $J=J_1\cup J_2\subset  \bbI$ such that $W_J=\W_{J_1}\times\W_{J_2}$. Then $\e_{\bu_J}=\e_{\bu_{J_1}}\e_{\bu_{J_2}}$ and $\bv_J = \bv_{J_1}\bv_{J_2}$. By definition,  $\bv_{J_i}$ commutes with $\e_{\bu_{J_j}}$ if $i\not=j$ and also, by the extremal case treated above, with $\e_{\bu_{J_i}}$. Then the  claim follows. 
\endproof

Lemma~\ref{idempcomm} directly implies the following result.
\begin{corollary} There is an isomorphism of algebras
\begin{equation*}\begin{split}
\bigoplus_{J, K \subseteq \{s_1, \dots, s_{n-1}\}}&\Hom_{\ccH_\bi}(\bv_J \ccH_\bi, \bv_K \ccH_\bi) \\ &\cong \bigoplus_{J, K \subseteq \{s_1, \dots, s_{n-1}\}} \bigoplus_{\bu_J\in \bU_J, \bu'_K \in \bU_K}  \Hom_{\ccH_\bi}(\e_{\bu_J}\bv_J \ccH_\bi, \e_{\bu'_K}\bv_K \ccH_\bi).
\end{split}\end{equation*}
\end{corollary}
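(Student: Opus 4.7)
The plan is to deduce the corollary from Lemma~\ref{idempcomm} by decomposing each $\bv_J\ccH_\bi$ along the orthogonal idempotents $\e_{\bu_J}$ and then applying the standard decomposition of $\Hom$ out of (resp.\ into) a direct sum.

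First I would record the elementary properties of the idempotents. The $\W_J$-orbits partition $\fS\bi$, so as $\bu_J$ runs over $\bU_J$ the idempotents $\e_{\bu_J}=\sum_{\bu'\in\W_J\cdot\bu}e_{\bu'}$ are pairwise orthogonal, and by \eqref{compldecomp} they sum to the identity $\sum_{\bu\in\fS\bi}e_\bu=1$ of $\ccH_\bi$. Combining this with Lemma~\ref{idempcomm}, which tells us $\bv_J$ and $\e_{\bu_J}$ commute, we obtain a direct sum decomposition of right $\ccH_\bi$-modules
\begin{equation*}
\bv_J\ccH_\bi \;=\; \bigoplus_{\bu_J\in\bU_J}\e_{\bu_J}\bv_J\ccH_\bi\;=\;\bigoplus_{\bu_J\in\bU_J}\bv_J\e_{\bu_J}\ccH_\bi,
\end{equation*}
where the two expressions agree because of the commutation just recalled.

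Next I would apply the general principle that $\Hom$ turns finite direct sums in the source into products and in the target into direct sums, giving
\begin{equation*}
\Hom_{\ccH_\bi}(\bv_J\ccH_\bi,\bv_K\ccH_\bi)\;\cong\;\bigoplus_{\bu_J\in\bU_J}\bigoplus_{\bu'_K\in\bU_K}\Hom_{\ccH_\bi}\!\bigl(\e_{\bu_J}\bv_J\ccH_\bi,\,\e_{\bu'_K}\bv_K\ccH_\bi\bigr).
\end{equation*}
Summing over all pairs $(J,K)$ of subsets of $\bbI$ produces the asserted isomorphism of $\FF$-vector spaces.

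Finally I would verify that this is an isomorphism of algebras (and not merely of vector spaces). Under the decomposition above, composition is computed block-wise: an element of $\Hom(\e_{\bu_J}\bv_J\ccH_\bi,\e_{\bu'_K}\bv_K\ccH_\bi)$ composed with one of $\Hom(\e_{\bu''_K}\bv_K\ccH_\bi,\e_{\bu'''_L}\bv_L\ccH_\bi)$ produces zero unless $\bu'_K=\bu''_K$, by orthogonality of the $\e_{\bu'_K}$. This is exactly how composition works for the idempotent decomposition of any endomorphism algebra, so the block decomposition is multiplicative, proving the claim. No step is a genuine obstacle here; the corollary is essentially bookkeeping, and the substantive content lies entirely in the commutation statement of Lemma~\ref{idempcomm} that was proved just above.
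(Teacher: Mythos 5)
Your proposal is correct and is exactly the argument the paper intends: the paper derives the corollary directly from Lemma~\ref{idempcomm} (commutation of $\e_{\bu_J}$ with $\bv_J$), and your write-up simply makes explicit the orthogonal idempotent decomposition $\sum_{\bu_J\in\bU_J}\e_{\bu_J}=1$, the resulting direct sum decomposition of each $\bv_J\ccH_\bi$, and the blockwise compatibility of composition. Nothing is missing.
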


\subsection{Splits and merges in in the algebraic basis}
\label{splitsandmerges}
We now define certain split and merge maps motivated by the construction in \cite{SW}.

Let $J\subset K\subseteq\bbI$, $\bu_J=(\bu,J)\in \bU_J$, which uniquely defines $\bu_K\in\bU_K$, of which $\bu_J$ is a refinement. We use the notation from Section~\ref{secidem}.
\begin{definition}
\label{smsigma}
\begin{enumerate}[i.)]
\item  We have $\e_{\bu_J} \bb^1_{J,K}\e_{\bu_K}\not=0$, and this is called a \emph{split} of $\bu_K$. If $|K\setminus J|=1$, we call $J\subset K$ a \emph{simple inclusion}, and  $\e_{\bu_J} \bb^1_{J,K}\e_{\bu_K}$ is called a \emph{simple split}.
\item We have $\e_{\bu_K} \bb^1_{K,J}\e_{\bu_J}\neq 0$, and this is called a \emph{merge} of $\bu_J$. Again, if $|K\setminus J|=1$, it is called a \emph{simple merge} of $\bu_J$. 
\item We denote by $\sigma_{\bu_J}^{\bu_K}\in D_J^K$ the unique element with $\sigma_{\bu_J}^{\bu_K}\bu_J= \bu_K$.
\end{enumerate}
\end{definition}
Note that any split (resp. merge) can be written as a sequence of simple splits (respectively simple merges).

\begin{example}
In the setup of Example~\ref{mergex} we have a (non-simple) split of $\bu_K$. In this case  $\sigma_{\bu_J}^{\bu_K} = s_3s_2s_6$.
\end{example}

\subsection{Dimension matrix and dimension vectors}
To $(\bu, J)$ as in \eqref{uJ} 
we  now associate several combinatorial objects and groups of permutations. 
\begin{definition}
\label{dimmatrix}
The {\it dimension matrix} attached to $(\bu,J)$  is the $e\times r$-matrix $\dimmatrix(\bu_J)$ with entries in $\mathbb{Z}_{\geq 0}$ defined as
\begin{eqnarray}
&\dimmatrix(\bu,J)=\left(d_i^j(\bu,J)\right),  \quad 1\leq i\leq e, 1\leq j\leq r,&\\
\text{where}& d_i^j=d_i^j(\bu,J)=|\{k\mid t_{j-1}+1\leq k\leq t_{j}, u_k=i\}|\quad\text{with $t_0=0$}. &\nonumber
\end{eqnarray}
Let $d_i=d_i(\bu,J)=\sum_{j=1}^r d_i^j(\bu,J)$ and $d^j=d^j(\bu,J)=\sum_{i=1}^e d_i^j(\bu_J)$.
Note that $d_i^j$ counts precisely the number of occurrences of $i$ in the $j$th block of $(\bu,J)$, whereas $d_i$ is the total number of $i$, and $d^j$ gives the size of the $j$th block. For fixed $1\leq i\leq e$, set 
$\dimmatrix_i=\dimmatrix_i(\bu,J)=(d_i^1(\bu,J),\ldots, d_i^r(\bu,J))$, and for fixed $1\leq j\leq e$, set $\dimmatrix^j=\dimmatrix^j(\bu,J)=(d_1^j(\bu,J),\ldots, d_e^j(\bu,J))$. The first encodes the multiplicities how often a certain number appears in each part, the second encodes for a fixed part the multiplicities of the numbers occurring in it. 

 We call $\dimv=\dimv(\bu,J)=(d_1,d_2,\ldots,d_e)$ the {\it dimension vector} 
and $\typ=\typ(\bu,J)=(d^1,d^2,\ldots,d^r)$ the {\it type vector} 
attached to $(\bu,J)$. Hence the dimension vector encodes the multiplicities how often each number occurs in total, whereas the type encodes the sizes of the parts ignoring which numbers occur. Note that the dimension vector only depends in $\bi$ and we thus also write $\dimv=\dimv(\bi)$.
\end{definition}

\begin{example}
\label{mergex2}
In the setup of Example~\ref{mergex} the dimension matrix for $\bu_J$ is given by $d_1^1=d_1^3=d_1^4=1$ and $d_1^2=2$, whereas $d_2^1=d_2^3=1$ and $d_2^2=d_2^4=0$. The dimension vector is $(5,2)$ and its type vector is $(2,2,2,1)$. On the other hand, for $\bu_K$,  we have for  the values $d_1^1=3$ and $d_1^2=2$, and $d_2^1=d_2^3=1$. The dimension vector is again $(5,2)$, but the type vector is $(4,3)$.
\end{example}

Given $(\bu,J)$ we have now several (sub)groups of permutations attached to it (where we omit the $(\bu,J)$ in the notation on the right hand side):
\begin{eqnarray}
\fS_{\dimmatrix_i(\bu,J)}&=&\fS_{d_i^1}\times\fS_{d_i^2}\cdots\times \fS_{d_i^r}<\fS_{d_i},\label{Young1}\\
\fS_{\dimmatrix_\dimv(\bu,J)}&=&\fS_{\dimmatrix_1}\times\cdots\times \fS_{\dimmatrix_e}<\fS,\label{Young2}\\
\fS_{\dimmatrix^j(\bu,J)}&=&\fS_{d_1^j}\times\fS_{d_2^j}\cdots\times \fS_{d_e^j}<\fS_{d^j},\label{Young3}\\
\fS_{\dimmatrix_\typ(\bu,J)}&=&\fS_{\dimmatrix^1}\times\cdots\times \fS_{\dimmatrix^r}<\fS\label{Young4}.
\end{eqnarray}
Note that choosing $\bu_J=(\bu,J)\in \bU_J$ has the nice effect that $\fS_{\dimmatrix_\typ(\bu_J)}=\W_J\cap \Stab_{\fS}\bu$ is a standard parabolic subgroup. Note that $\fS_{\dimmatrix_\dimv(\bu_J)}\cong\fS_{\dimmatrix_\typ(\bu_J)}$, since both groups precisely describe all permutations of $\bu_J$ such that the number as well as the parts given by $J$ are preserved.

\begin{example}
In the setup of Example~\ref{mergex} we have  $\fS_{\dimmatrix_\dimv(\bu_J)}=\fS_1\times \fS_2\times\fS_1\times\fS_1\times\fS_1\times\fS_0\times\fS_1\times\fS_0$ and $\fS_{\dimmatrix_\typ(\bu_J)}=\fS_1\times \fS_1\times\fS_2\times\fS_0\times\fS_1\times\fS_1\times\fS_1\times\fS_0$. 
\end{example}

In the following we will often drop the dependence on $(\bu,J)$ in the notation, if we have some fixed $(\bu,J)$, and we will only ever consider the case where $(\bu,J) = \bu_J$ for some $\bu_J\in \bU_J$. 
In this case, the groups \eqref{Young2} and \eqref{Young4} and then also \eqref{Young1} and \eqref{Young3} are generated by certain standard generators $s_i\in\fS$ labeled by a subset of $\bbI$.  It will be convenient to use also  different labellings of the generators which reflect directly the respective product decompositions. 
\begin{definition}
For the group \eqref{Young2} the $a$th generator in the $i$th factor is denoted $s_{i,a}$, whereas for \eqref{Young4} the $a$th generator of the $j$th factor is called $s_a^{(j)}$.
\end{definition}
In this notation we can make the above isomorphism explicit:

\begin{lemma} \label{zetaiso1}
There is an isomorphism of groups 
\begin{eqnarray}
\label{zeta}
\zeta_{\bu_J}: \quad\fS_{\dimmatrix_\dimv(\bu_J)}\cong\fS_{\dimmatrix_\typ(\bu_J)}&&s_{i,a}\mapsto s_l^{(t)}, 
\end{eqnarray}
where $t$ is such that $\sum_{k=1}^{t-1}d^k_i<a\leq \sum_{k=1}^{t}d^k_i$ and $l=a+\sum_{k=1}^{i-1} d_k^t$. 
\end{lemma}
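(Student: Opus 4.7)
The plan is to observe that both groups in the statement are not merely isomorphic but actually equal as subgroups of $\fS$: each of them describes $\W_J\cap \Stab_\fS(\bu)$, the subgroup of permutations of $\{1,\ldots,n\}$ that preserves the $J$-block structure and fixes the sequence $\bu$ pointwise. This identification was already announced in the paragraph preceding the lemma. Hence the map $\zeta_{\bu_J}$ is literally the identity on this common subgroup, and the content of the lemma is only to make explicit how the two product-decomposition labellings of its simple generators correspond to one another.

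First I would use the ascending condition \eqref{idempotentsSchur} on $\bu_J$ to argue that, within each block of $(\bu,J)$, the positions carrying a fixed value $i$ form a consecutive subinterval of $\{1,\ldots,n\}$. Consequently $\W_J\cap \Stab_\fS(\bu)$ is a standard parabolic in $\fS$, generated precisely by the simple transpositions $s_p\in\fS$ for which $p,p+1$ lie in the same $J$-block and carry the same value under $\bu$. Each such $s_p$ can be found on either side of the asserted isomorphism, and the issue is only one of how it is labelled.

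Next I would verify the formula by tracking positions. In the $\fS_{\dimmatrix_\dimv(\bu_J)}$-labelling, the generators of the $i$-th factor $\fS_{\dimmatrix_i}=\fS_{d_i^1}\times\cdots\times\fS_{d_i^r}$ are partitioned according to which block $t$ they lie in; the condition $\sum_{k=1}^{t-1} d_i^k<a\leq \sum_{k=1}^t d_i^k$ in the statement is exactly the translation from the running index $a$ (counting occurrences of $i$ across all blocks) to the block $t$ in which both occurrences sit. In the $\fS_{\dimmatrix_\typ(\bu_J)}$-labelling, the same generator lives in the $t$-th factor $\fS_{\dimmatrix^t}=\fS_{d_1^t}\times\cdots\times\fS_{d_e^t}$, and by the ascending arrangement the colours $1,\ldots,i-1$ occupy the first $\sum_{k=1}^{i-1} d_k^t$ positions of block $t$. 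This accounts for the offset in the formula $l=a+\sum_{k=1}^{i-1} d_k^t$, with $a$ suitably renormalised to a block-local index according to the notational conventions.

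The main obstacle, to the extent there is one, is purely notational bookkeeping: disentangling the conventions for ``the $a$-th generator in the $i$-th factor'' versus ``the $l$-th generator in the $t$-th factor'' so that one can confirm the stated indices coincide with the genuine global-position indices of the simple transpositions $s_p$ they label. Once this bijection on generating sets is established, one does not need to re-verify Coxeter relations: both sides are the same standard parabolic subgroup of $\fS$, so $\zeta_{\bu_J}$ is automatically a group isomorphism, and the proof is complete.
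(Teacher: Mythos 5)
Your proposal is correct and follows the same route as the paper: both groups are the same standard parabolic subgroup of $\fS$, so the isomorphism is the identity there, and the only work is matching the two labellings of the simple generators by computing their common global position $s_b$. Your extra remarks (the ascending condition forcing consecutive positions, and the block-local renormalisation of the index $a$) are just a more explicit version of the paper's one-line bookkeeping.
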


\begin{proof}
Since the two groups define the same subgroup of $\fS$, it suffices to compare their images there. But $s_{i,a}$ corresponds to  $s_b\in\fS$, where $b=a+(\sum_{k=1}^{t-1}d^k)+(\sum_{k=1}^{i-1}d_k^t)$ whereas $s_l^{(t)}$ corresponds to $s_b\in\fS$,  where $b=c+(\sum_{k=1}^{t-1}d^k)$. Hence the claim follows.
\end{proof}

\begin{definition}
\label{WuJ}
We will abbreviate the group in \eqref{zeta} by $\fS_{\bu_J}$, but keep the two realisations in mind.
Note that it is a standard parabolic subgroup of $\fS$ and we define $I_{\bu_J}$ by $\W_{I_{\bu_J}}=\fS_{\bu_J}$.
For $1\leq i\leq e$ and $1\leq a\leq d_i$ we abbreviate
\begin{eqnarray}
\label{iapos}
(i,a)_{\bu_J}&=&a+\left(\sum_{k=1}^{t-1}d^k\right)+\left(\sum_{k=1}^{i-1}d_k^t\right)
\end{eqnarray}
with $t$ such that $\sum_{k=1}^{t-1}d^k_i<a\leq \sum_{k=1}^{t}d^k_i$. 
\end{definition}
Note that $(i,a)_{u_J}$ is just the position where the $a$th number $i$ occurs in $\bu$. 

\begin{example}
Let us consider  $\bu_J = (1,1,2|1,1,1,2,2|1,1,2)$. Hence $n=11, r=3$ and $J = \{s_1,s_2,s_4,s_5,s_6,s_7,s_9,s_{10}\}$ with $\W_J\cong\fS_3\times\fS_5\times\fS_3$. In the usual generators of $\fS$ we have $\fS_{\dimmatrix_1}\cong\langle s_1,s_4,s_5,s_9\rangle\cong\fS_2\times\fS_3\times\fS_2$ and $\fS_{\dimmatrix_2}=\langle s_7\rangle\cong\fS_1\times\fS_2\times\fS_1$ and then $\fS_{\dimmatrix_\dimv}=\langle s_1, s_4, s_5, s_7, s_9\rangle\cong\fS_2\times\fS_3\times\fS_2\times\fS_1\times\fS_2\times\fS_1$. It agrees with $\fS_{\dimmatrix_\typ}$ as a subgroup of $\fS$ (or $\W_J$).  

Now $\fS_{\dimmatrix_\dimv}=\langle s_{1,1},s_{1,3},s_{1,4},s_{1,6},s_{2,2}\rangle$ and $\fS_{\dimmatrix_\typ(\bu_J)}=\langle s_1^{(1)},s_1^{(2)},s_2^{(2)},s_3^{(2)},s_1^{(3)}\rangle$. The isomorphism $\zeta_{u_J}$ sends $s_{1,1},s_{1,3},s_{1,4},s_{1,6},s_{2,2}$ to  $s_1^{(1)},s_1^{(2)},s_2^{(2)},s_3^{(2),s_1^{(3)}}$ respectively. The corresponding elements $s_{(i,a)_{u_J}}\in\fS$ are $s_1,s_4,s_5,s_9,s_7$. 
\end{example}

\subsection{Rings of invariants}\label{inv}

We now consider invariant polynomials for \eqref{Young2} and \eqref{Young4}. Our different choices of generators come along with different labellings of the variables. We attach, to our fixed $\bi$, the following polynomial rings
\begin{equation*}
\begin{split} 
R_+&=\FF[Y_{1,1},Y_{1,2},\ldots, Y_{1,d_1},Y_{2,1}, \ldots,Y_{2,d_2},\ldots Y_{e,d_e}]\\
&=\FF[Y_{1,1},\ldots, Y_{1,d_1}]\otimes_\FF \FF[Y_{2,1}, \ldots,Y_{2,d_2}]\otimes_\FF\cdots\otimes_\FF
\FF[Y_{e,1},\ldots, Y_{e,d_e}],\\
\text{and}&\text{ moreover}\\
R_-&=\FF[Y_{1,1}^{-1},Y_{1,2}^{-1},\ldots, Y_{1,d_1}^{-1},Y_{2,1}^{-1}, \ldots, Y_{2,d_2}^{-1},\ldots, Y_{e,d_e}^{-1}]\\
&=\FF[Y_{1,1}^{-1},\ldots, Y_{1,d_1}^{-1}]\otimes_\FF \FF[Y_{2,1}^{-1}, \ldots,Y_{2,d_2}^{-1}]\otimes_\FF\cdots\otimes_\FF
\FF[Y_{e,1}^{-1},\ldots, Y_{e, d_e}^{-1}],
\end{split}
\end{equation*}
in $n$ variables. For each $\bu_J\in \bU_J$  for some $J\subset\bbI$, we fix the ring isomorphisms
\begin{eqnarray*}
{\zeta}_{+,\bu_J}: \: R_+\:\cong\:\FF[X_1,\ldots, X_n]&\text{and}& {\zeta}_{-,\bu_J} :\: R_-\:\cong\:\FF[X_1^{-1},\ldots, X_n^{-1}].
\end{eqnarray*}
sending $Y_{i,a}^\pm$ to $X_{(i,a)_{\bu_J}}^\pm$ with the notation from Definition~\ref{WuJ}. Together with Lemma~\ref{zetaiso1} this also gives canonical identifications of invariants
\begin{eqnarray*}
R_+^{\fS_{\dimmatrix_\dimv(\bu_J)}}\:=\: \FF[X_1,\ldots, X_n]^{\fS_{\dimmatrix_\typ(\bu_J)}},&&R_-^{\fS_{\dimmatrix_\dimv(\bu_J)}}\:=\:\FF[X_1^{-1},\ldots, X_n^{-1}]^{\fS_{\dimmatrix_\typ(\bu_J)}}
\end{eqnarray*}
Again, we will often abbreviate these invariants as $R_+^{\fS_{\bu_J}}= \FF[X_1,\ldots, X_n]^{\fS_{\bu_J}}$, 
respectively $R_-^{\fS_{\bu_J}}=\FF[X_1^{-1},\ldots, X_n^{-1}]^{\fS_{\bu_J}}$.

Let $\hat{R}_+$ and $\hat{R}_-$ be the completions of $R_+$ and $R_-$ at the maximal ideals generated by all the $Y_{i,j}$ respectively $Y_{i,j}^{-1}$. We have isomorphisms
\begin{eqnarray*}
\hat{\zeta}_{+,\bu_J}: \: R_+\:\cong\:\FF[[X_1,\ldots, X_n]]&\text{and}& \hat{\zeta}_{-,\bu_J} :\: R_-\:\cong\:\FF[[X_1^{-1},\ldots, X_n^{-1}]].
\end{eqnarray*}
induced  by $\zeta_{\pm,\bu_J}$.

For $\bu_J=(\bu,J)\in\bU_J$, again with the notation from Definition~\ref{WuJ}, define the $\FF$-linear inclusions
\begin{equation*}
\begin{split}
\xi_\bu: \: \hat R_+&\hookrightarrow \bigoplus_{w \in D^J_{\emptyset, I_{\bu_J}}} \FF[[X_1,\ldots, X_n]]\:e_{w\cdot \bu}, \;\;Y_{i,a}\mapsto \sum_{w \in D^J_{\emptyset, I_{\bu_J}}}\left( X_{(i,a)_{w\cdot\bu}}\;e_{w \cdot\bu}\right),\\
%\end{eqnarray*}
%\begin{eqnarray*}
\xi_\bu^\sharp:\: \hat R_-&\hookrightarrow\bigoplus_{w \in D^J_{\emptyset, I_{\bu_J}}
} \FF[[X_1^{-1},\ldots, X_n^{-1}]]\:e_{w\cdot\bu},\;\;
Y_{i,a}^{-1}\mapsto\sum_{w \in D^J_{\emptyset, I_{{\bu}_J}}} (X^{-1}_{(i,a)_{w\cdot\bu}\;}e_{w \cdot\bu}).
\end{split}
\end{equation*}
Denote by $\hat R_{\pm,\bu_J}$  the images of  $\hat R_\pm^{\fS_{\bu_J}}$ under  $\xi_\bu$ respectively $\xi_\bu^\sharp$. In the following, we will identify elements with their images, i.e. we will view elements of $\hat R_{+,\bu_J}$ alternatively as formal power series in the variables $Y_{i,a}$ or as formal power series of the form $\sum_{w \in D^J_{\emptyset, I_{\bu_J}}} f_we_{w \cdot\bu}$ where each $f_w$ is a formal power series in $X_1, \ldots, X_n$, and similarly for elements of $\hat R_{-,\bu_J}$.

\subsection{The completion of $\cQ$}
\label{Qsection}
The goal of this subsection is to describe the completion of the subalgebra $\cQ$, which will play a similar role to the completion of the subalgebra $\cP$ in $\cH$ in giving rise to a completed faithful representation.

The {\it completion} $\ccQ_\bi$ of $\cQ$ is spanned by the elements in $\Hom_{\ccH_\bi}(\bv_J \ccH_\bi, \bv_J \ccH_\bi)$, for $J\subseteq \bbI$, 
which are equal to left multiplication with an element $f\in \ccP$ such that $(T_i-q)f\e_{\bu_J}\bv_J =0$, see Remark~\ref{Qdesc}.
Equivalently, $\ccQ_\bi$ is, via the decomposition $\ccP_\bi\cong \bigoplus_{\bu\in\fS\bi}e_\bu\ccP_\bi$ and Lemma~~\ref{idempcomm},  spanned by those elements in $\Hom_{\ccH_\bi}(\e_{\bu_J}\bv_J  \ccH_\bi, \e_{\bu_J}\bv_J \ccH_\bi)$ for some $J\subseteq \bbI$ and $\bu_J=(\bu,J)\in\bU_J$,  
which are equal to left multiplication with some $f\in \bigoplus_{\bu' \in \W_J\cdot \bu }e_{\bu'}\ccP$ such that $(T_i-q)f\e_{\bu_J}\bv_J =0$.
This last property can be rephrased, similarly to Lemma~\ref{qifsymm},  as follows.

\begin{lemma}
\label{Lakritz}
Let $K\subseteq \bbI$ and $\bu_K=(\bu,K) \in U_K$. Furthermore,  assume that $f \in \bigoplus_{w \in D^K_{\emptyset, I_{\bu_K}}
} \FF[[X_1^{-1},\ldots, X_n^{-1}]]\:e_{w\cdot\bu}$. Then 
%\begin{center}
$f\in  \hat R_{-,\bu_K}$ if and only if 
$(T_i-q) f\e_{\bu_K}\bv^{(K)}= 0$ for all $i\in K$. 
%\end{center}
\end{lemma}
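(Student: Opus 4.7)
My plan is to mimic the proof of Lemma~\ref{qifsymm}, now complicated by the presence of the idempotent $\e_{\bu_K}$ and by the fact that the polynomial coefficients of $f$ depend on the idempotent. I would start by writing $f=\sum_{w\in D^K_{\emptyset,I_{\bu_K}}} f_w\,e_{w\cdot\bu}$ with each $f_w\in\FF[[X_1^{-1},\ldots,X_n^{-1}]]$ (noting that $w\mapsto w\cdot\bu$ is a bijection from $D^K_{\emptyset,I_{\bu_K}}$ onto the orbit $\W_K\cdot\bu$), fix $i\in K$, and compute $(T_i-q)f\e_{\bu_K}\bv^{(K)}$ summand by summand. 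The key tool is to move $T_i$ past $f_w\,e_{w\cdot\bu}$ using the Bernstein-type relation $T_iX_i=X_{i+1}T_i+(1-q)X_{i+1}$ (and its analogue for $X_{i+1}$, which extends by continuity to power series in the $X_j^{-1}$), together with the way $T_i$ interacts with the idempotents, which is encoded in the intertwiner formula \eqref{Phi} and can be read off from \eqref{phiaction3}.

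The computation then splits into two cases according to the action of $s_i$ on $w\cdot\bu$. If $(w\cdot\bu)_i = (w\cdot\bu)_{i+1}$, then $T_i$ preserves the component attached to $e_{w\cdot\bu}$ and the local version of Lemma~\ref{qifsymm} shows that $(T_i-q)f_w e_{w\cdot\bu}\bv^{(K)}=0$ collapses to $s_i(f_w)=f_w$. If $(w\cdot\bu)_i\neq(w\cdot\bu)_{i+1}$, then $T_i$ mixes $e_{w\cdot\bu}$ with $e_{w'\cdot\bu}$, where $w'\in D^K_{\emptyset,I_{\bu_K}}$ is defined by $w'\cdot\bu=s_i\cdot w\cdot\bu$; the corresponding calculation (essentially \eqref{phiaction3} combined with the formula of Lemma~\ref{qifsymm}) shows that the combined vanishing on $e_{w\cdot\bu}\bv^{(K)}$ and $e_{w'\cdot\bu}\bv^{(K)}$ is equivalent to the pairing $f_{w'}=s_i(f_w)$, where $s_i$ swaps $X_i^{\pm 1}$ with $X_{i+1}^{\pm 1}$. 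Using the bases from Lemma~\ref{basislemma} to ensure independence of the different components as $w$ varies, the hypothesis $(T_i-q)f\e_{\bu_K}\bv^{(K)} = 0$ for all $i\in K$ is equivalent to the full collection of these local identities.

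It then remains to identify this system with membership in the image $\hat R_{-,\bu_K}$ of $\hat R_-^{\fS_{\bu_K}}$ under $\xi_\bu^\sharp$. By the definition of $\xi_\bu^\sharp$ and the position labelling \eqref{iapos}, an invariant $g\in\hat R_-^{\fS_{\bu_K}}$ produces the collection $f_w$ obtained by substituting $Y_{j,b}^{-1}\mapsto X^{-1}_{(j,b)_{w\cdot\bu}}$ in $g$; replacing $w$ by $w'=s_iw$ in the non-fixed case permutes two of these position labels by the transposition associated with $s_i$, which is exactly the content of $f_{w'}=s_i(f_w)$. The fixed-point identities $s_i(f_w)=f_w$ correspond to invariance of $g$ under the generators of $\fS_{\bu_K}$ stabilising $w\cdot\bu$. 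The main technical obstacle is the non-fixed case: one must carefully assemble the correction terms produced by commuting $T_i$ past $e_{w\cdot\bu}$ via \eqref{Phi} and check that they combine into exactly the claimed pairing identity, and one must verify that the combinatorial identification of positions $(j,b)_{w\cdot\bu}\leftrightarrow (j,b)_{s_iw\cdot\bu}$ is compatible with the $s_i$-action on the $Y$-variables defining $\fS_{\bu_K}$-invariance.
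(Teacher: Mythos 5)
Your proposal is correct and follows essentially the same route as the paper's proof: both directions are handled by splitting into the cases $s_i(w\cdot\bu)=w\cdot\bu$ (where the claim reduces to the power-series analogue of Lemma~\ref{qifsymm}) and $s_i(w\cdot\bu)\neq w\cdot\bu$ (where the intertwiner $\Phi_i$ from \eqref{Phi} is used to pair the components at $e_{w\cdot\bu}$ and $e_{s_iw\cdot\bu}$). If anything, your treatment of the converse is more complete than the paper's, which only extracts the $\fS_{\bu_K}$-invariance and leaves implicit the pairing identities $f_{w'}=s_i(f_w)$ that force the components at the various idempotents to be related by the permutation action, i.e.\ force $f$ to lie in the image of $\xi_\bu^\sharp$ in the first place.
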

\proof
Assume that $f\in  \hat R_{-,\bu_K}$ and pick  $i\in K$. We have to show that  $(T_i-q) f\sum_{\bu' \in \W_K\cdot \bu} e_{\bu'}\bv^{(K)}= 0$. It then suffices to verify, for any $w \in D^K_{\emptyset, K_\bu}$ $$(T_i-q) f\left(e_{w\cdot\bu}+e_{s_iw\cdot\bu}\right)\bv^{(K)}=0,$$ if $s_iw\cdot\bu\not=w\cdot\bu$ and $(T_i-q) fe_{w\cdot\bu}\bv^{(K)}=0$
otherwise. Assume first $s_iw\cdot\bu\not=w\cdot\bu$. The, setting $\beta_i=\frac{(q-1)}{1-X_iX_{i+1}^{-1}}$, we have, using $\xi_\bu^\sharp$, the equalities
\begin{eqnarray*}
&&(T_i-q)Y_{c,a}^{-1} \left(e_{w\cdot\bu}+e_{s_iw\cdot\bu}\right)\bv^{(K)}\\
&{=}&(T_i-q)\left(X^{-1}_{(c,a)_{w\cdot \bu}}e_{(w\cdot\bu)}+X^{-1}_{(c,a)_{s_iw\cdot\bu}}e_{s_iw\cdot\bu}\right)\bv^{(K)}\\
&{=}&(T_i-q)\left(X^{-1}_{(c,a)_{w\cdot \bu}}e_{(w\cdot\bu)}+X^{-1}_{s_i((c,a)_{w\cdot \bu})}e_{s_iw\cdot\bu}\right)\bv^{(K)}\\
&\stackrel{\eqref{Phi}}{=}&(\Phi_i-q+\beta_i)\left(X^{-1}_{(c,a)_{w\cdot \bu}}e_{w\cdot\bu}+X^{-1}_{s_i((c,a)_{w\cdot \bu})}e_{s_iw\cdot\bu}\right)\bv^{(K)}\\
&=&X^{-1}_{s_i((c,a)_{w\cdot \bu})}e_{s_iw\cdot\bu}\Phi_ie_{w\cdot\bu}+X^{-1}_{(c,a)_{w\cdot \bu}}e_{w\cdot\bu}\Phi_ie_{s_iw\cdot\bu}\\
&&\quad\quad\quad-(q-\beta_i)
(X^{-1}_{(c,a)_{w\cdot \bu}}e_{w\cdot\bu}+X^{-1}_{s_i((c,a)_{w\cdot \bu)}}e_{s_iw\cdot\bu}\bv^{(K)}\\
&=&(X^{-1}_{s_i((c,a)_{w\cdot \bu})}e_{s_iw\cdot\bu}+X^{-1}_{(c,a)_{w\cdot \bu}}e_{w\cdot\bu})(\Phi_i-q+\beta_i)\left(e_{w\cdot\bu}+e_{s_iw\cdot\bu}\right)\bv^{(K)}\\
&\stackrel{\eqref{Phi}}{=}&
(X^{-1}_{(c,a)_{s_iw\cdot\bu}}e_{s_iw\cdot\bu}+X^{-1}_{(c,a)_{w\cdot \bu}}e_{w\cdot\bu})(T_i-q)\bv^{(K)}
\stackrel{\text{(H-1)}}{=}0.
\end{eqnarray*}
Let now $s_iw\cdot\bu=w\cdot\bu$. Then $(w^{-1}s_iw)\cdot \bu=\bu$, hence $t=w^{-1}s_iw\in {\W_{\bu_K}}$. If we view $t\in \fS_{\dimmatrix_\dimv(\bu_J)}$ via Definition~\ref{WuJ}, then by assumption $f$ is $t$-invariant. On the other hand, by our definitions, $f$ being invariant under $t$ when written in the $Y_{c,a}^{-1}$ is equivalent to $f$ being invariant under $s_i$ when written in the $X_j^{-1}$. But now Lemma~\ref{qifsymm} implies $(T_i-q)  fe_{w\cdot\bu}\bv^{(K)} =  e_{w\cdot\bu}(T_i-q)f\bv^{(K)}  =0.$ Hence the ``if'' part of the statement follows.

Now assume that $(T_i-q) f\e_{\bu_K}\bv^{(K)}=0$ for all  $i \in K$. If $\fS_{\bu_K}$
is trivial, then there is nothing to show. Otherwise let $s_b$ be a standard generator in $\fS_{\bu_K}$, in particular $s_b\bu=\bu$. Let $b=(c,a)_\bu$. Then  $s_b\bu=\bu$ implies that $(c,a+1)_\bu=b+1=(c,a)_\bu+1$. 

We can write $f=Y_{c,a+1}^{-1}g+h$ for some unique $g,h\in \hat R_-^{s_{c,a}}=\hat R_-^{s_{b}}$  and, noting that $\xi_\bu^\sharp(g)\in \FF[[X_1^{-1},\ldots,X_n^{-1}]]^{s_b}e_\bu\bv$ for any  $g\in \hat R_-^{s_{c,a}}$, we have 
\begin{eqnarray*}
0&=&e_\bu(T_{b}-q) fe_\bu\bv^{(K)}\\
&=&e_\bu(T_{b}-q) Y_{c,a+1}^{-1}g e_\bu\bv^{(K)}+e_\bu (T_b-q)he_\bu\bv^{(K)}\\
&=&e_\bu(T_{b}-q) X_{b+1}^{-1}ge_\bu\bv^{(K)}+0\quad {(\text{since }h\in \hat R_-^{s_{b}}})\\
%\end{eqnarray*}
%\begin{eqnarray*}
%&{=}&e_\bu(T_{b}-q) (X^{-1}_{b+1}g)e_\bu\bv^{(K)}\\
%&=&e_\bu(T_j-q) (X^{-1}_{j+1}g)e_\bu\bv^{(K)}\\
&\stackrel{\text{(H-7)}}{=}&e_\bu (qX^{-1}_{b}T_{b}^{-1}- qX^{-1}_{b+1}) ge_\bu\bv^{(K)}\\
&\stackrel{\eqref{easyformel}}{=}&e_\bu (X^{-1}_{b}-q X^{-1}_{b+1})g e_\bu\bv^{(K)}.
\end{eqnarray*}
This is nonzero if $g\not=0$. Hence $(T_j-q) f\e_{\bu_K}\bv^{(K)}=0$ implies $g=0$ and thus $f$ has to be $s_{b}$-invariant, and then even $f\in \hat R_{-,\bu_K}$.\endproof

For $J\subseteq \bbI$, denote by $\DMJp\subset\DMJ$ (respectively $\AMJp\subset\AMJ$) the subset of monomials $X_1^{a_1}\cdots X_n^{a_n}$ with $a_i \geq 0$ for all $i$, and by $\DMJm\subset\DMJ$ (respectively $\AMJm\subset\AMJ$) the subset of monomials $X_1^{a_1}\cdots X_n^{a_n}$ with $a_i \leq 0$ for all $i$.  Note that $\flat$ from Section~\ref{extaff} induces a bijection between $\DMJp$ and  $\AMJm$ and between $\AMJp$ and $\DMJm$.

As a consequence we obtain a topological basis of  $\ccQ_\bi$.

\begin{proposition}\label{Qcomplete}
The completion $\ccQ_\bi$ of $\cQ$ has topological $\FF$-basis given by 
\begin{eqnarray}
\ccB_\cQ&=&\{\hat\bb^{p}_{J,J}\e_{\bu_J} \mid J\subseteq \bbI,\bu_J\in \bU_J, p \in \AMJum\}
\end{eqnarray}
where $\hat\bb^{p}_{J,J}\e_{\bu_J}\in \Hom_{\ccH_\bi}(\e_{\bu_J}\bv_J  \ccH_\bi, \e_{\bu_J}\bv_J \ccH_\bi)$ is the homomorphism given by  $$\hat\bb^{p}_{J,J}\e_{\bu_J}(\e_{\bu_J}\bv_J ) =\sum_{w\in D^J_{\emptyset,I_{\bu_J}}}w(p)e_{w\cdot \bu_J} \e_{\bu_J}\bv_J . $$
\end{proposition}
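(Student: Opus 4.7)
The plan is to decompose $\ccQ_\bi$ into summands indexed by pairs $(J,\bu_J)$, use Lemma~\ref{Lakritz} to identify each summand with a completed invariant ring, and then transport the natural basis of orbit sums over to the claimed basis.

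First I would appeal to the description of $\ccQ_\bi$ given just before the statement, together with Lemma~\ref{idempcomm} and the decomposition \eqref{compldecomp}, to write
$$
\ccQ_\bi\;=\;\bigoplus_{J\subseteq\bbI}\;\bigoplus_{\bu_J\in\bU_J}\ccQ_{\bi,\bu_J},
$$
where $\ccQ_{\bi,\bu_J}\subseteq\Hom_{\ccH_\bi}(\e_{\bu_J}\bv_J\ccH_\bi,\e_{\bu_J}\bv_J\ccH_\bi)$ is the subspace of left multiplications by power series $f\in\bigoplus_{\bu'\in\W_J\cdot\bu}e_{\bu'}\ccP$ that satisfy $(T_i-q)f\e_{\bu_J}\bv^{(J)}=0$ for all $i\in J$. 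By Lemma~\ref{Lakritz} this characterisation identifies $\ccQ_{\bi,\bu_J}$ with $\hat R_{-,\bu_J}$, which via $\xi^\sharp_\bu$ from \eqref{xim} is in turn canonically identified with $\hat R_-^{\fS_{\bu_J}}$.

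Next I would produce a topological basis of $\hat R_-^{\fS_{\bu_J}}$. Since $\fS_{\bu_J}$ is a standard parabolic subgroup of $\fS$ acting on $R_-$ by permutation of the variables $Y_{i,a}^{-1}$, every $\fS_{\bu_J}$-orbit on monomials with nonpositive exponents contains a unique $\fS_{\bu_J}$-antidominant representative, and the corresponding orbit sums form an $\FF$-basis of $R_-^{\fS_{\bu_J}}$; passing to completion yields a topological basis of $\hat R_-^{\fS_{\bu_J}}$. Under the identification $\zeta_{-,\bu_J}$ these antidominant representatives correspond precisely to the set $\AMJum$, giving one basis element for each $p\in\AMJum$.

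Finally, for $p\in\AMJum$ I would verify that applying $\xi^\sharp_\bu$ to the corresponding orbit sum and then multiplying by $\e_{\bu_J}\bv_J$ produces exactly the expression in the statement. The key combinatorial input is the identity $(i,a)_{w\cdot\bu}=w((i,a)_\bu)$ for shortest coset representatives $w\in D^J_{\emptyset,I_{\bu_J}}$, which holds because minimality of $w$ prevents it from transposing equal entries of $\bu$. Using this, the $\xi^\sharp_\bu$-image of an orbit sum can be rewritten as $\sum_{w\in D^J_{\emptyset,I_{\bu_J}}}w(p)\,e_{w\cdot\bu_J}$ after multiplication by $\e_{\bu_J}$, matching the formula defining $\hat\bb^{p}_{J,J}\e_{\bu_J}$; taking the union over $(J,\bu_J)$ yields the claimed topological basis $\ccB_\cQ$. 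The main obstacle I foresee is the combinatorial bookkeeping linking the indexing set $D^J_{\emptyset,I_{\bu_J}}$ (shortest representatives of $\W_J/\fS_{\bu_J}$) to orbit sums over $\fS_{\bu_J}$; correctly matching these two descriptions and checking that the resulting enumeration really recovers the whole $\fS_{\bu_J}$-invariant after acting on $\e_{\bu_J}\bv_J$ is the central step of the argument.
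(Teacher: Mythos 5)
Your proposal takes essentially the same route as the paper: the spanning statement is obtained in both cases by combining the description of $\ccQ_\bi$ from the opening paragraph of the subsection with the equivalence of Lemma~\ref{Lakritz}, which identifies each summand of $\ccQ_\bi$ with $\hat R_{-,\bu_J}\cong \hat R_-^{\fS_{\bu_J}}$. The only divergence is in linear independence, which the paper deduces from the basis \eqref{schurbasisB} of the uncompleted Schur algebra, while you obtain it intrinsically from the orbit sums of $I_{\bu_J}$-antidominant monomials forming a topological basis of the completed invariant ring; both are valid, and you correctly single out the one point that genuinely needs care, namely translating between the $\fS_{\bu_J}$-orbit-sum description and the enumeration over $D^J_{\emptyset,I_{\bu_J}}$ via the identity $(i,a)_{w\cdot\bu}=w((i,a)_{\bu})$.
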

\proof
The fact that this set spans $\ccQ_\bi$ follows directly from the first paragraph of this subsection reformulated using the equivalence from Lemma~\ref{Lakritz}.  It is linearly independent as a direct consequence of \eqref{schurbasisB}.
\endproof

\subsection{A basis of $\ccS_\bi$}
The main goal of this subsection is the following basis theorem (with the notation from Proposition~\ref{Qcomplete}).

\begin{proposition}\label{cschurbasis}
For fixed $K_1,K_2\subseteq\bbI$, the set
\begin{eqnarray*}
\ccB_{K_2,K_1}&=&\left\{\bb^1_{K_2,dJ}\bb^d_{dJ,J}\hat\bb^p_{J,J}\e_{\bu_{J}}\bb^1_{J,K_1} \left |
 \begin{array}[c]{l}
d \in D^\bbI_{K_2,K_1}, \\J = K_1\cap d^{-1}K_2,\\
 \bu_{J}\in \bU_J, p \in \DM_{I_{\bu_J}}^-
\end{array}
\right.\right\}
\end{eqnarray*}
is a topological $\FF$-basis for $\Hom_{\ccH_\bi}(\bv_{K_1}\ccH_\bi,\bv_{K_2}\ccH_\bi)$.
\end{proposition}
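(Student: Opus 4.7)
The plan is to obtain the topological basis from the uncompleted basis of Proposition~\ref{Propschurbasis2} via the freeness over $\cQ$ given in Lemma~\ref{SfreeoverQ}, combined with the topological basis of $\ccQ_\bi$ from Proposition~\ref{Qcomplete}.

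First I would fix $K_1, K_2 \subseteq \bbI$. By Proposition~\ref{complS} and the definition of $\cJ_m$ in \eqref{Im}, the space $\Hom_{\ccH_\bi}(\bv_{K_1}\ccH_\bi, \bv_{K_2}\ccH_\bi)$ is identified with the completion of $\Hom_\cH(\bv_{K_1}\cH, \bv_{K_2}\cH)$ at the ideals induced by the powers $\bm_\chi^m \subseteq Z(\cH)$. Combining Proposition~\ref{Propschurbasis2} with Lemma~\ref{SfreeoverQ}, this uncompleted Hom-space decomposes as a direct sum
\begin{equation*}
\Hom_\cH(\bv_{K_1}\cH, \bv_{K_2}\cH) \;=\; \bigoplus_{(d,J)} M_{d,J},
\end{equation*}
indexed by $d \in D^\bbI_{K_2,K_1}$ and $J = K_1 \cap d^{-1}K_2$, where $M_{d,J}$ is the span of the elements $\bb^1_{K_2,dJ}\bb^d_{dJ,J}\bb^p_{J,J}\bb^1_{J,K_1}$ as $p$ ranges over $\DMJ$. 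The assignment $\bb^p_{J,J} \mapsto \bb^1_{K_2,dJ}\bb^d_{dJ,J}\bb^p_{J,J}\bb^1_{J,K_1}$ identifies $M_{d,J}$ with the $J$-summand $\cQ_J$ of $\cQ$ from Remark~\ref{Qasring}. Crucially, this identification intertwines the action of $Z(\cH) \subseteq \cQ$ on both sides, since left and right multiplication by the fixed elements $\bb^1_{K_2,dJ}\bb^d_{dJ,J}$ and $\bb^1_{J,K_1}$ commute with multiplication by central elements.

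Next, since the $\bm_\chi^m$ act through $Z(\cH)$ and this action respects the above decomposition, completion commutes with the direct summand decomposition, so
\begin{equation*}
\Hom_{\ccH_\bi}(\bv_{K_1}\ccH_\bi, \bv_{K_2}\ccH_\bi) \;=\; \bigoplus_{(d,J)} \widehat{M_{d,J}},
\end{equation*}
and the identification above extends to an isomorphism $\widehat{M_{d,J}} \cong \ccQ_{J,\bi}$, where $\ccQ_{J,\bi}$ is the $J$-component of $\ccQ_\bi$. Applying Proposition~\ref{Qcomplete}, the space $\ccQ_{J,\bi}$ has topological basis $\{\hat\bb^p_{J,J}\e_{\bu_J} : \bu_J \in \bU_J,\, p \in \AMJum\}$. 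Since $\W_{I_{\bu_J}} = \fS_{\bu_J}$ by Definition~\ref{WuJ}, both $\AMJum$ and $\DM_{I_{\bu_J}}^-$ are complete sets of orbit representatives for $\fS_{\bu_J}$ acting on monomials with non-positive exponents; the defining formula $\hat\bb^p_{J,J}\e_{\bu_J}(\e_{\bu_J}\bv_J) = \sum_w w(p)\,e_{w\cdot\bu_J}\e_{\bu_J}\bv_J$ is invariant along orbits, so the resulting topological basis is the same regardless of which system of representatives is chosen. Transporting through the isomorphism $\widehat{M_{d,J}} \cong \ccQ_{J,\bi}$ and summing over $(d,J)$ produces exactly $\ccB_{K_2,K_1}$.

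The main obstacle is verifying cleanly that the completion commutes with the direct sum over $(d,J)$ and that the module isomorphism $M_{d,J} \cong \cQ_J$ extends continuously to a topological isomorphism after completion. For the first point, one checks that the central ideal $\cJ_m$ restricts to the $J$-summand of the corresponding $\cQ$-ideal in each $M_{d,J}$, which follows from the fact that $\bm_\chi$ acts diagonally on the decomposition. For the second, since the identification is implemented by left/right multiplication with fixed elements of $\cS$, it is continuous in the $\cJ_m$-adic topology, and the inverse is realised (on $M_{d,J}$) by the $\circledast$-action of Lemma~\ref{SfreeoverQ}, which likewise preserves the filtration induced by $\bm_\chi^m$. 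Together these give the required topological basis and complete the proof.
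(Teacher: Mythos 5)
Your argument is essentially the paper's own proof: both rest on the free $\cQ$-module structure of $\cS$ from Lemma~\ref{SfreeoverQ} (restricted in your case to the $(K_1,K_2)$-component and decomposed over $(d,J)$), use centrality of $\bm_\chi$ to see that the ideals $\cJ_m$ are compatible with this decomposition so that completion yields a free $\ccQ_\bi$-module on the same basis, and then invoke Proposition~\ref{Qcomplete}. The only difference is cosmetic — you work componentwise rather than with all of $\cS$ at once, and you spell out the passage between antidominant and dominant orbit representatives, which the paper leaves implicit.
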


\proof
By Proposition~\ref{Qsub} we have (with  the notation from \eqref{Im}) 
$$\cS(\cQ\cap\cJ_m)\cS =\cS(\cQ\cap\cS\bm_\chi^m\cS)\cS =\cS(\cQ\bm_\chi^m\cQ)\cS = \cS\bm_\chi^m\cS = \cJ_m $$ where the third equality follows from $\bm^m_\chi\subset Z(\cH)\subset \cQ.$

Now, as a $\cQ$-module, we have $\cS\cong \bigoplus_{\bx\in\cB^\cS_\cQ}\cQ \circledast \bx$ by Lemma~\ref{SfreeoverQ} (with the notation defined there). Since $\bm_\chi$ is central, the actions by left multiplication, right multiplication, or the $\circledast$-action induced by $\bm_\chi\subset \cQ$ all coincide, so 
$$\cS/\cJ_m = \cS/\cS\bm_\chi^m \cong 
(\bigoplus_{\bx\in\cB^\cS_\cQ} \cQ\circledast\bx )/
(\bigoplus_{\bx\in\cB^\cS_\cQ}\cQ\bm_\chi^m \circledast \bx)\cong
\bigoplus_{\bx\in\cB^\cS_\cQ}(\cQ/ \cQ\bm_\chi^m) \circledast\bx.$$
Thus we obtain that $\ccS_\bi = \underleftarrow{\lim} \cS/\cJ_m $ equals
\begin{eqnarray*}
%\begin{split}
 \underleftarrow{\lim} \bigoplus_{\bx\in\cB^\cS_\cQ} (\cQ/ \cQ\bm_\chi^m)  \circledast\bx= \bigoplus_{\bx\in\cB^\cS_\cQ} \underleftarrow{\lim}  (\cQ/ \cQ\bm_\chi^m)\circledast\bx
=\bigoplus_{\bx\in\cB^\cS_\cQ}  \ccQ_\bi\circledast\bx.
%\end{split}
\end{eqnarray*}

In particular $\ccS_\bi$ is now free over $ \ccQ_\bi$ on basis $\cB^\cS_\cQ$, which, together with Proposition~\ref{Qcomplete} and the definition of $\circledast$ implies the desired basis for $\ccS_\bi$.
\endproof

We have the following direct consequence.
\begin{corollary}\label{cschurbasisidem}
Let $K_1,K_2\subseteq\bbI$ and moreover let $\bu'_{K_1} = (\bu',K_1)\in U_{K_1}$ and $\bu''_{K_2}= (\bu'',K_2)\in U_{K_2}$. Then a basis of $\Hom_{\ccH_\bi}(\e_{\bu'_{K_1}}\bv_{K_1}\ccH_\bi,\e_{\bu''_{K_2}}\bv_{K_2}\ccH_\bi)$ is
\begin{eqnarray}
\label{cschurbasisidemf}  
&\quad\left\{\e_{\bu''_{K_2}}\bb^1_{K_2,dJ}\bb^d_{dJ,J}\hat\bb^p_{J,J}\e_{\bu_{J}}\bb^1_{J,K_1}\e_{\bu'_{K_1}} \:\left | \:
 \begin{array}[c]{l}
d \in D^\bbI_{K_2,K_1}, \\J = K_1\cap d^{-1}K_2,\\ p \in \DMJ^-,\\
 \bu_{J}=(\bu,J)\in \bU_J \textrm{ with } \\   \bu'_{K_1}=\bu_{K_1},\\ \bu''_{K_2}=(d\cdot\bu)_{K_2} 
\end{array}
\right.\right\} \qquad&
\end{eqnarray}
where we note that $\bu'_{K_1}=\bu_{K_1}$ means $\bu'\in \W_{K_1}\cdot\bu$ and similarly $\bu''_{K_2}=(d\cdot\bu)_{K_2}$ means $\bu'' \in \W_{K_2} \cdot(d\cdot\bu)$.
\end{corollary}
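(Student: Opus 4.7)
The plan is to derive the corollary directly from Proposition~\ref{cschurbasis} by using the idempotent decomposition of the source and target modules.

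First, by Lemma~\ref{idempcomm} we have the orthogonal decomposition
\begin{equation*}
\bv_{K}\ccH_\bi \;=\;\bigoplus_{\bu_{K}\in \bU_{K}} \e_{\bu_K}\bv_{K}\ccH_\bi
\end{equation*}
for each $K\subseteq\bbI$, and correspondingly
\begin{equation*}
\Hom_{\ccH_\bi}(\bv_{K_1}\ccH_\bi,\bv_{K_2}\ccH_\bi)\;=\;\bigoplus_{\bu'_{K_1},\bu''_{K_2}}\Hom_{\ccH_\bi}(\e_{\bu'_{K_1}}\bv_{K_1}\ccH_\bi,\e_{\bu''_{K_2}}\bv_{K_2}\ccH_\bi).
\end{equation*}
So it suffices to prove that each of the basis elements $\bx=\bb^1_{K_2,dJ}\bb^d_{dJ,J}\hat\bb^p_{J,J}\e_{\bu_{J}}\bb^1_{J,K_1}$ from Proposition~\ref{cschurbasis} lies in exactly one of the summands, and to identify which one.

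Next I would track the idempotents through the factorization. On the source side, since $J\subseteq K_1$, the idempotent $\e_{\bu'_{K_1}}$ equals $\sum_{\bu_J : \bu_{K_1}=\bu'_{K_1}}\e_{\bu_J}$, and the orthogonality of the $\e_{\bu_J}$ gives
\begin{equation*}
\e_{\bu_J}\,\bb^1_{J,K_1}\,\e_{\bu'_{K_1}}\;\neq\;0\quad\Longleftrightarrow\quad \bu_{K_1}=\bu'_{K_1}.
\end{equation*}
On the target side, using the intertwining relation $T_d\, e_{\bu}=e_{d\cdot\bu}\,T_d$ (which holds in $\ccH_\bi$ and propagates through the completion) together with Example~\ref{specialmorph} and \eqref{niceformula1}, one sees that $\bb^d_{dJ,J}\e_{\bu_J}$ factors through $\e_{(d\cdot\bu)_{dJ}}\bv_{dJ}\ccH_\bi$, and hence
\begin{equation*}
\e_{\bu''_{K_2}}\bb^1_{K_2,dJ}\bb^d_{dJ,J}\e_{\bu_J}\;\neq\;0\quad\Longleftrightarrow\quad \bu''_{K_2}=(d\cdot\bu)_{K_2}.
\end{equation*}
In other words, $\bx$ lies in the summand indexed by $(\bu'_{K_1},\bu''_{K_2})=(\bu_{K_1},(d\cdot\bu)_{K_2})$, and $\e_{\bu''_{K_2}}\bx\e_{\bu'_{K_1}}=\bx$ if these equalities hold and $0$ otherwise.

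Finally, I would conclude: restricting the basis of Proposition~\ref{cschurbasis} to those elements $\bx$ for which $\bu_{K_1}=\bu'_{K_1}$ and $(d\cdot\bu)_{K_2}=\bu''_{K_2}$ yields a topological basis of the summand $\Hom_{\ccH_\bi}(\e_{\bu'_{K_1}}\bv_{K_1}\ccH_\bi,\e_{\bu''_{K_2}}\bv_{K_2}\ccH_\bi)$, which is precisely the set displayed in \eqref{cschurbasisidemf}. The only subtle step is the idempotent-intertwining computation identifying $T_d \e_{\bu_J} = \e_{(d\cdot\bu)_{dJ}} T_d$ so that the left-hand idempotent can be read off cleanly; everything else is a routine consequence of Proposition~\ref{cschurbasis} and the orthogonal decomposition above.
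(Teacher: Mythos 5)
Your overall strategy (decompose the Hom space via the orthogonal idempotents $\e_{\bu'_{K_1}}$, $\e_{\bu''_{K_2}}$ and sort the basis of Proposition~\ref{cschurbasis} into the summands) is the natural one, and your source-side analysis is correct: since $\bb^1_{J,K_1}(\bv_{K_1})=\bv_{K_1}$ and $\e_{\bu'_{K_1}}$ commutes with $\bv_{K_1}$ by Lemma~\ref{idempcomm}, one indeed gets $\e_{\bu_J}\bb^1_{J,K_1}\e_{\bu'_{K_1}}=\e_{\bu_J}\bb^1_{J,K_1}$ if $\bu_{K_1}=\bu'_{K_1}$ and $0$ otherwise. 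The gap is on the target side: the identity $T_d\,e_{\bu}=e_{d\cdot\bu}\,T_d$ that you invoke is false in $\ccH_\bi$. As recorded in the paper right after Lemma~\ref{basisH}, one only has $T_re_\bu-e_{s_r\cdot\bu}T_r\in\bigoplus_{\bu}\FF[[X_1,\dots,X_n]]e_\bu$; the exact intertwining property $\Phi_re_\bu=e_{s_r\cdot\bu}\Phi_r$ holds for the intertwiners \eqref{Phi}, not for the $T_r$. Concretely, take $n=2$, $K_1=K_2=J=\emptyset$, $\bu=(1,2)$, $d=s_1$: the corresponding basis element of Proposition~\ref{cschurbasis} is left multiplication by $T_1pe_{(1,2)}$, and $e_{(1,2)}T_1e_{(1,2)}=-\frac{1-q}{1-X_1X_2^{-1}}e_{(1,2)}\neq 0$ even though $s_1\cdot(1,2)\neq(1,2)$. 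So a basis element $\bx$ of Proposition~\ref{cschurbasis} does not in general lie in a single summand, and $\e_{\bu''_{K_2}}\bx\e_{\bu'_{K_1}}$ is in general neither $\bx$ nor $0$; your final sorting step therefore does not go through as written.

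The corollary is nevertheless true, and the repair is a triangularity argument rather than a diagonality argument. Writing $T_w=\Phi_{[w]}+\sum_{v<w}c_v\Phi_{[v]}$ with coefficients $c_v$ in the completed polynomial part (this is how the $\Phi$-adapted bases of Lemma~\ref{basislemma} are obtained), the element $\e_{\bu''_{K_2}}\bx\e_{\bu'_{K_1}}$ with matching middle data equals the corresponding exactly-intertwined element built from $\Phi_{[d]}$ --- which does lie in the single summand indexed by $(\bu_{K_1},(d\cdot\bu)_{K_2})$ --- plus correction terms indexed by Bruhat-smaller $v$ that again land in that same summand. Since the $\Phi$-adapted sets form topological bases and the change of basis between the $T$- and $\Phi$-versions is unitriangular with respect to the Bruhat order, the displayed set \eqref{cschurbasisidemf} is still a topological basis of $\Hom_{\ccH_\bi}(\e_{\bu'_{K_1}}\bv_{K_1}\ccH_\bi,\e_{\bu''_{K_2}}\bv_{K_2}\ccH_\bi)$. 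You need to either carry out this comparison explicitly or at least replace the claimed exact commutation by the correct upper-triangular statement.
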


\subsection{A faithful representation of $\ccS_\bi$}
In this subsection we will construct a faithful representation of  $\ccS_\bi$ similar to the construction in Theorem~\ref{faithfulrep} .

For $K\subseteq\bbI$ and $\bu_K\in\bU_K$,
define
$$\cFS^{\bu_K}_\bi=\hat{R}_{-,\bu_K}\bv^{(K)}\quad\hbox{and}\quad\coFS^{\bu_K}_\bi=\hat R_{+^,\bu_K}\bbv^{(K)},$$ where,  as in Theorem~\ref{faithfulrep}, the superscript in $\bbv^{(K)}$ is just a formal index. Set 
\begin{eqnarray}
\label{faithSchur}
\cFS_\bi= \bigoplus_{K\subseteq \bbI}\bigoplus_{\bu_K\in\bU_K} \cFS^{\bu_K}_\bi&\hbox{and}&\coFS_\bi= \bigoplus_{K\subseteq \bbI}\bigoplus_{\bu_K\in\bU_K} \coFS^{\bu_K}_\bi.\quad\quad
\end{eqnarray}
These are the underlying spaces for two faithful representations:

\begin{proposition}\label{cfaithful}
\begin{enumerate}[i.)]
\item\label{fac} There is a faithful representation $\crho$ of $\ccS_\bi$ on $\cFS_\bi$
where the basis elements %$\e_{(w\cdot\bu)_{K_2}}\bb^1_{K_2,wJ}\bb^w_{wJ,J}\hat\bb^p_{J,J}\e_{\bu_{J}}\bb^1_{J,K_1}\e_{\bu_{K_1}}$ %
 of $\ccS_\bi$ as in \eqref{cschurbasisidemf} act via
\begin{eqnarray*} 
%\begin{split}
&\quad\quad\crho\left(\e_{(w\cdot\bu)_{K_2}}\bb^1_{K_2,wJ}\bb^w_{wJ,J}\hat\bb^p_{J,J}\e_{\bu_{J}}\bb^1_{J,K_1}\e_{\bu_{K_1}}\right)
f\bv^{(K_1)}& \\&=\quad\e_{(w\cdot\bu)_{K_2}}\left(\sum_{a \in D^{K_2}_{\emptyset,wJ}}T_a\right)T_wg_p\e_{\bu_{J}}\left(\sum_{b\in D^{K_1}_{ \emptyset, J}}T_b\right)f\bv^{(K_2)}&
%\end{split}
\end{eqnarray*}
for $f\bv^{(K_1)} \in \cFS^{\bu_{K_1}}_\bi$, where $g_p$ is as defined in \eqref{gp} and $\bu_K=(\bu,K)$.
\item\label{faco} There is a faithful representation $\corho$ of $\ccS_\bi$ on $\coFS_\bi$
where basis elements of $\ccS_\bi$ as in \eqref{cschurbasisidemf}  act via
\begin{eqnarray*}  
&\quad\quad\corho\left(\e_{(w\cdot\bu)_{K_2}}\bb^1_{K_2,wJ}\bb^w_{wJ,J}\hat\bb^p_{J,J}\e_{\bu_{J}}\bb^1_{J,K_1}
\e_{\bu_{K_1}}\right)f\bbv^{(K_1)}& \\
&=\e_{(w\cdot\bu)_{K_2}}\left(\sum_{a \in D^{K_2}_{\emptyset,wJ}}T_a^\sharp\right)T_w^\sharp g_p^\sharp\e_{\bu_{J}}\left(\sum_{b\in D^{K_1}_{ \emptyset, J}}T_b^\sharp\right)f\bbv^{(K_2)}.
\end{eqnarray*}
for $f\bbv^{(K_1)}\in  \coFS^{\bu_{K_1}}_\bi$,
where $g_p$ is as defined in \eqref{gp} and $\bu_K=(\bu,K)$.
\end{enumerate}
\end{proposition}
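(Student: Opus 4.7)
The strategy is to mimic the proof of Theorem~\ref{faithfulrep} in the completed setting, using as input the basis of $\ccS_\bi$ from Corollary~\ref{cschurbasisidem}, the characterization of invariants from Lemma~\ref{Lakritz}, and the faithful representations of $\ccH_\bi$ from Corollary~\ref{faithfulcompl}. Part (ii) will then be deduced from part (i) by twisting with $\sharp$, so I focus on (i).

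First, I would verify that the formula is well-defined, i.e.\ that the right-hand side lies in $\cFS^{(w\cdot\bu)_{K_2}}_\bi=\hat R_{-,(w\cdot\bu)_{K_2}}\bv^{(K_2)}$. By Lemma~\ref{Lakritz}, this amounts to showing that the element is annihilated by $(T_i-q)$ for every $s_i\in K_2$, and that it is supported on the idempotents $e_{\bu'}$ with $\bu'\in\W_{K_2}\cdot(w\cdot\bu)$. The latter is immediate from the $\e_{(w\cdot\bu)_{K_2}}$-factor on the left. The former is essentially the same computation as in Lemma~\ref{inrightinv}: split the sum $\sum_{a\in D^{K_2}_{\emptyset,wJ}}(T_i-q)T_a$ into pairs $a,s_ia$ (which cancel by \eqref{easyformel}) and a remainder governed by Deodhar's lemma, which pushes $(T_j-q)$ across $T_w$ to a factor $(T_k-q)$ acting on $g_p\,\e_{\bu_J}(\sum T_b)f\bv^{(K_2)}$; this latter vanishes because $f\in\hat R_{-,\bu_{K_1}}$ and the sum $\sum_{b\in D^{K_1}_{\emptyset,J}}T_b$ upgrades invariance along $K_1$ to invariance along $J$ (again via Lemma~\ref{Lakritz}), while $g_p\e_{\bu_J}$ is by construction $\fS_{\bu_J}$-invariant so multiplication with it preserves vanishing under $(T_k-q)$ for $s_k\in J$.

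Next I would check that $\crho$ is multiplicative. Using the basis of Corollary~\ref{cschurbasisidem} it suffices, as in Lemma~\ref{rhorep}, to compute the product of two basis elements against the expansion in $\cS$ (now $\ccS_\bi$) and compare. The key manipulation is the same cyclic rewriting
\[
\bv_K T_d \sum_{a\in D^J_{d^{-1}K\cap J,\emptyset}} T_a \;=\; \sum_{b\in D^K_{\emptyset,K\cap dJ}} T_b\, T_d\, \bv_J
\]
used in Lemma~\ref{rhorep}, so the difference between $\crho(x)\crho(y)$ and $\crho(xy)$ lies in $\sum_{s_i\in K_1}\ccH_\bi(T_i-q)$ acting on $\cFS^{\bu_{K_1}}_\bi$, and hence vanishes by Lemma~\ref{Lakritz}. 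The only new ingredient is bookkeeping of idempotents $\e_{\bu_J}$, which commutes happily with the $T_a$'s up to a twist of the indexing sequence and is already encoded in the refinement conventions of Section~\ref{secidem}.

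For faithfulness I would argue as in Lemma~\ref{rhofaith}: suppose a (possibly infinite) $\ccQ_\bi$-linear combination $Z$ of basis elements \eqref{cschurbasisidemf} acts as $0$. Projecting onto fixed $(K_1,K_2,\bu'_{K_1},\bu''_{K_2})$-components and evaluating on the cyclic vector $\e_{\bu'_{K_1}}\bv^{(K_1)}$ yields an element of $\bigoplus \ccH_\bi\e_{\bu'_{K_1}}$ acting as zero on the Hecke module $\cFS^{\bu'_{K_1}}_\bi\subset\cFH_\bi$. By completion of the observation \eqref{hvanish} (which only uses invariance and faithfulness of the Hecke action on $\FH$, now upgraded via Corollary~\ref{faithfulcompl}), this element must already vanish inside $\ccH_\bi$. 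Via Remark~\ref{stupidremark}, a homomorphism from $\e_{\bu'_{K_1}}\bv_{K_1}\ccH_\bi$ is determined by its image on the cyclic generator, so $Z=0$ as a morphism.

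The main obstacle I anticipate is handling the infinite sums rigorously: both $\ccS_\bi$ and its faithful module are defined by topological completions, and one must check that the images of basis elements converge and that the various identifications (in particular the completed ``hvanish''-type argument) respect the topology. This is handled by observing that each piece $\Hom_{\ccH_\bi}(\e_{\bu_{J}}\bv_J\ccH_\bi,\e_{\bu_K}\bv_K\ccH_\bi)$ is built from $\ccQ_\bi$ (Proposition~\ref{Qcomplete}) which is a direct sum of completed invariant polynomial rings $\hat R_{-,\bu_K}$, so the topology is controlled degree by degree in the $X_i^{-1}$ and matches the topology of $\cFS_\bi$; everything then reduces to the already-established finite-level statement Theorem~\ref{faithfulrep} together with the limit/completion functor applied termwise.
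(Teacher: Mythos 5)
Your proposal is correct, but it takes a noticeably different route from the paper. The paper's proof is a short base-change argument: it first identifies the completed module with the tensor product, $\cFS_\bi \cong \ccS_\bi\otimes_\cS \FS$, by checking that $\e_{\bu_K}\ccS_\bi\otimes_\cS\FS^K$ equals $\cFS^{\bu_K}_\bi$ — this is exactly where Lemma~\ref{Lakritz} enters, as the completed analogue of Lemma~\ref{qifsymm} — and then observes that $\crho$ restricted to $\cS$ is the already-established representation $\rho$ of Theorem~\ref{faithfulrep} (via Remark~\ref{actiononGamma}), so well-definedness, multiplicativity and faithfulness are all inherited from the uncompleted case in one stroke. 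You instead re-run the three constituent lemmas of Theorem~\ref{faithfulrep} (the analogues of Lemma~\ref{inrightinv}, Lemma~\ref{rhorep} and Lemma~\ref{rhofaith}, including a completed version of observation \eqref{hvanish} resting on Corollary~\ref{faithfulcompl}) directly in the completed setting. Both arguments work. What your version buys is explicitness: the paper's ``follows immediately'' quietly assumes that faithfulness survives passage to the inverse limit over the ideals $\cJ_m$, a point you address head-on in your final paragraph via the $\ccQ_\bi$-module structure from Proposition~\ref{Qcomplete}; what the paper's version buys is economy, since it never has to redo the Deodhar-lemma cancellations or the coset bookkeeping with the idempotents $\e_{\bu_J}$. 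If you wanted to shorten your argument to match the paper's, the one observation you are missing is that your steps 1--3 can be packaged entirely into the single identification $\cFS_\bi\cong\ccS_\bi\otimes_\cS\FS$.
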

\proof We prove \eqref{fac}, the proof of \eqref{faco} being analogous. We first claim  that 
$\cFS_\bi \cong \ccS_\bi\otimes_\cS \FS.$ Indeed, using that 
$$\FS^K = \left\{f \bv^{(K)}\left |  \begin{array}[c]{l} f \bv^{(K)} \in \FF[X_1^{\pm 1}, \dots, X_n^{\pm 1}]\bv^{(K)}\\(T_i-q)f\bv^{(K)} =0 \text{ for all } i \in K\end{array}\right.\right\},$$ we see that 
\small
\begin{eqnarray*}
%\begin{split}
\e_{\bu_K}\ccS_\bi\otimes_\cS \FS^K& =&  \left \{ f  \bv^{(K)}  \left | \begin{array}[c]{l} f  \bv^{(K)}  \in\displaystyle\bigoplus_{\bu' \in \W_K\cdot \bu_K}e_{\bu'}\FF[[X_1^{-1}, \dots, X_n^{-1}]] \bv^{(K)} \\ (T_i-q) f\bv^{(K)} =0 \text{ for all }  i \in K\end{array}\right.\right\}\\& = &\cFS^{\bu_K}_\bi
%\end{split}
\end{eqnarray*} 
\normalsize
by Lemma~\ref{Lakritz}; hence the claim follows. Since we defined our action $\crho$ to coincide with $\rho$ (cf. \eqref{rhodef} and Remark~\ref{actiononGamma}) on elements of $\cS$, the fact that $\crho$ is a faithful representation follows immediately from Theorem~\ref{faithfulrep}.
\endproof
\begin{remark}
Similarly to Corollary~\ref{corhash}, we have an isomorphism of $\ccS_\bi$-modules 
$\cFS_\bi \cong {}^\sharp(\coFS_{-\bi})$ via  $f\e_{\bu_{K_1}}\bv^{(K_1)} \mapsto f^{\sharp}\e_{-\bu_{K_1}}\bbv^{(K_1)}.$
\end{remark}

\section{The action of (algebraic) merges on $\coFS_\bi$}
\label{Section6}

In this section, we describe, explicitly and in detail, the action of a simple merge on the twisted faithful representation $\coFS_\bi$, as we will later use this to compare $\cS_\bi$ to the quiver Schur algebra.  In Proposition~\ref{generatingbetter},  we will deduce a generating set for the Schur algebra which refines Corollary~\ref{corgenerating}.

\subsection{Basic formulae for algebraic merges}
We start by describing some combinatorics of distinguished coset representatives in $D_{\emptyset,J}$, where $J\subseteq\bbI$. 

Thus let $J\subseteq\bbI$ be fixed. Let $\bu=(u_1,u_2,\ldots, u_n)$ and consider $(\bu,J)$ as in \eqref{uJ}. Then for a permutation $w\in\fS$ we have that
\begin{eqnarray*}
\quad w\in D_{\emptyset,J}&\Leftrightarrow&w(k_j+1)<w(k_j+2)<\cdots <w(k_j+d^j),\;\text{ for  $1\leq j\leq r$},
\end{eqnarray*}
where $k_j=\sum_{j'\leq j} d^{j'}$ (and $d^0=0$). 
This means the numbers inside each part of $J$ are kept increasing when applying the permutation $w\in \fS$; i.e.\  in the its permutation diagram, the two strands from the same $J$-part do not cross.
\begin{lemma}
\label{maxparab}
Let $J=I-\{a\}$ for some $a\in\bbI$ and set $b=n-a$. Then $D_{\emptyset,J}$ consists precisely of the elements
\begin{equation}
\label{segments}
(s_{c_{b}}s_{{c_b}+1}\cdots s_{n-1}) (s_{c_{b-1}}\ldots s_{n-2})\cdots(s_{c_2}\ldots s_{a+1}) (s_{c_1}s_{{c_1}+1}\ldots s_a),
\end{equation}
where $1\leq c_1<c_2<\cdots<c_b$ and by convention $(s_r s_{r+1}\cdots s_k)=1$ if $r>k$. 
\end{lemma}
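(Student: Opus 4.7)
To prove this I proceed in two stages. Since $J=\bbI\setminus\{s_a\}$, the parabolic $W_J$ is isomorphic to $\fS_a\times\fS_b$ acting on the positions $\{1,\dots,a\}$ and $\{a+1,\dots,n\}$ respectively, and the minimal length coset representatives are precisely those $w\in\fS$ satisfying $w(1)<\cdots<w(a)$ and $w(a+1)<\cdots<w(n)$. Such a $w$ is uniquely determined by the increasing tuple $(w(a+1),\dots,w(n))$; writing $c_j:=w(a+j)$, this gives a bijection between $D_{\emptyset,J}$ and tuples $1\leq c_1<\cdots<c_b\leq n$, and in particular $|D_{\emptyset,J}|=\binom{n}{b}$, which matches the number of tuples appearing in~\eqref{segments}.

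It therefore suffices to show that, for each such tuple, the product $w_{\mathbf{c}}$ on the right hand side of~\eqref{segments} satisfies $w_{\mathbf{c}}(a+j)=c_j$ for all $j$. Setting $v_k:=s_{c_k}s_{c_k+1}\cdots s_{a+k-1}$ and $u_k:=v_k v_{k-1}\cdots v_1$, I plan to prove by induction on $k$ the more detailed statement: in one-line notation $u_k$ places $c_j$ at position $a+j$ for $j=1,\dots,k$, leaves the values $a+k+1,\dots,n$ at positions $a+k+1,\dots,n$, and arranges the elements of $\{1,\dots,a+k\}\setminus\{c_1,\dots,c_k\}$ in increasing order at positions $1,\dots,a$.

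The induction step rests on the elementary fact that $v_k$ permutes $\{1,\dots,n\}$ by mapping $a+k\mapsto c_k$, shifting $[c_k,a+k-1]$ upward by one, and fixing all other values. By the inductive hypothesis the tail of $u_{k-1}$ is the consecutive block $a+k,a+k+1,\dots,n$, so the value at position $a+k$ is $a+k$, giving $u_k(a+k)=v_k(a+k)=c_k$. The entries $c_1,\dots,c_{k-1}$ at positions $a+1,\dots,a+k-1$ are fixed by $v_k$ thanks to the strict inequality $c_i<c_k$, the tail from position $a+k+1$ onwards is untouched, and the first block is rearranged by an order-preserving shift, yielding the stated one-line form. (The implicit bound $c_k\leq a+k$ needed to make the picture consistent follows automatically from $c_k<c_{k+1}<\cdots<c_b\leq n$, and the case $c_k=a+k$ corresponds to $v_k$ being the empty product by convention, for which the claim is trivial.)

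Since distinct tuples produce distinct elements of $D_{\emptyset,J}$ and both parameterising sets have cardinality $\binom{n}{b}$, this establishes the desired characterisation; as a by-product one obtains $\ell(w_{\mathbf c})=\sum_{j=1}^b(a+j-c_j)$, showing that~\eqref{segments} is in fact a reduced expression. The principal technical obstacle is the careful tracking of the one-line notation through the induction; once one observes that the tail of $u_k$ always remains the consecutive range $a+k+1,\dots,n$, the argument becomes routine.
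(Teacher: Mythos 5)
Your proof is correct. Note that the paper itself gives no argument for this lemma: it simply declares it a standard fact and cites \cite[Proposition A.2]{StrTL}, so there is no ``paper proof'' to match. Your self-contained verification is sound: the identification of $D_{\emptyset,J}$ with permutations increasing on $\{1,\dots,a\}$ and on $\{a+1,\dots,n\}$, hence with $b$-element subsets $\{c_1<\dots<c_b\}$ via $c_j=w(a+j)$, is the standard characterisation of minimal coset representatives; and your induction correctly tracks that $v_k=s_{c_k}\cdots s_{a+k-1}$ sends $a+k\mapsto c_k$, shifts $[c_k,a+k-1]$ up by one, and fixes the rest, so that $u_k(a+j)=c_j$ for $j\le k$ while the first block stays increasing. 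You rightly flag the implicit bound $c_j\le a+j$ (equivalently $c_b\le n$), without which the empty-product convention would collapse distinct tuples. Two tiny remarks: the final cardinality count is not even needed, since any $w\in D_{\emptyset,J}$ is already determined by $(w(a+1),\dots,w(n))$ and therefore equals $w_{\mathbf c}$ for $\mathbf c=(w(a+1),\dots,w(n))$; and the reducedness claim $\ell(w_{\mathbf c})=\sum_j(a+j-c_j)$, while true (it equals the inversion count), is stated rather than proved, but it is not required for the lemma.
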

\proof
This is a standard fact, see for e.g. \cite[Proposition A.2]{StrTL}.
\endproof
\begin{definition}
For  $J\subseteq K\subseteq \bbI$, define the {\it algebraic merges}
\begin{eqnarray}
\label{twistedmerge}
\bC^{K}_{J}=\sum_{w \in D^{K}_{\emptyset, J}}w, &\text{and}& \bM^{K}_{J}=\sum_{w \in D^{K}_{\emptyset, J}}T_w^\sharp. 
\end{eqnarray}
We also write $\bC^{\typ(K)}_{\typ(J)}$ instead of $\bC^{K}_{J}$, respectively $\bM^{\typ(K)}_{\typ(J)}$ instead of $\bM^{K}_{J}$.
\end{definition}
\begin{remark}
\label{movethrough}
Note that $\bC^{K}_{J}=\bC^{K,c}_{J,c}$ and $\bM^{K}_{J}=\bM^{K,c}_{J,c}$ for any $c$ if we interpret the smaller symmetric group as a subgroup of the larger one. We will use this fact tacitly. Moreover, by definition and using (H-7) and Remark~\ref{commute}, we have  $\bC^{K}_{J}f=f\bC^{K}_{J}$ and $\bM^{K}_{J}f=f\bM^{K}_{J}$  for any $W_J$-invariant polynomial. 
\end{remark}
\begin{example}
\label{indstart1}
For instance, if $J=\bbI-\{a\}$ for some $a\in\bbI$ and $K=\bbI$, then $\bC^{K}_{J}=\bC_{a,n-a}^n$ is precisely the sum over all elements of the form \eqref{segments}.  
\end{example}

We first state a few easy properties of these algebraic merges.

\begin{lemma}[Associativity] 
Assume $n=1+(a-1)+(b-1)$ with each summand in $\mZ_{\geq 0}$. Then 
\begin{eqnarray}
\label{associativity}
&\bC^{a+b-1}_{a,b-1}\bC^{a, b-1}_{a-1,1,b-1}\quad =\quad \bC^{a+b-1}_{a-1,1,b-1}\quad=\quad\bC^{a+b-1}_{a-1,b} \bC^{a-1,b}_{a-1,1,b-1}.&
\end{eqnarray}
The analogous formula holds for the $\bM$ as well.
\end{lemma}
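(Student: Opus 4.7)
The identity in \eqref{associativity} is essentially the statement that distinguished shortest coset representatives for nested parabolic subgroups factor in a length-additive way. My plan is to invoke the special case of \eqref{eqaddlength} with trivial left parabolic, where it asserts that for $W_{J'}\subseteq W_J\subseteq W_K$ every $w\in D^K_{\emptyset,J'}$ admits a unique factorization $w=bd$ with $b\in D^K_{\emptyset,J}$, $d\in D^J_{\emptyset,J'}$, and $\ell(w)=\ell(b)+\ell(d)$; and, conversely, every such product lies in $D^K_{\emptyset,J'}$. In other words, $(b,d)\mapsto bd$ is a bijection $D^K_{\emptyset,J}\times D^J_{\emptyset,J'}\xrightarrow{\sim}D^K_{\emptyset,J'}$.

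Concretely, I would take $W_K=\fS_{a+b-1}$ (attached to the subset of $\bbI$ of type $(a+b-1)$), $W_{J'}=\fS_{a-1}\times\fS_1\times\fS_{b-1}$, and two intermediate parabolics: $W_{J_1}=\fS_a\times\fS_{b-1}$ (obtained by merging the middle $\fS_1$ block to the left) and $W_{J_2}=\fS_{a-1}\times\fS_b$ (merging it to the right). Applying the bijection above in each of the two cases and multiplying out $\bC^{K}_{J_i}\bC^{J_i}_{J'}$ inside the group algebra, the product reindexes exactly as $\sum_{w\in D^K_{\emptyset,J'}} w=\bC^K_{J'}$. This yields the two equalities of \eqref{associativity} simultaneously.

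For the $\bM$-version the same bijection is used, but one also needs the product rule $T^\sharp_b T^\sharp_d=T^\sharp_{bd}$ whenever $\ell(bd)=\ell(b)+\ell(d)$. This is immediate from the analogous and familiar identity $T_bT_d=T_{bd}$ under the same length condition, transported through the algebra automorphism $\sharp$ of \eqref{eqhash}; so no extra computation is required beyond the combinatorial bijection.

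I do not expect any serious obstacle here. The only thing to verify is that the factorization statement from \eqref{eqaddlength}, although phrased there for a general double coset $W_Kd W_J$, specializes cleanly to the case of empty left parabolic (in which case $w_K$ is trivial and the decomposition reduces to $w=bd$); this is immediate from the statement.
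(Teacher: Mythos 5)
Your proposal is correct and is essentially the paper's own argument: the paper proves the identity by exhibiting exactly the length-additive bijection $D^{K}_{\emptyset,J}\times D^{J}_{\emptyset,J'}\to D^{K}_{\emptyset,J'}$ for the two intermediate parabolics of types $(a,b-1)$ and $(a-1,b)$, described concretely in terms of which entries remain increasing, and disposes of the $\bM$-version by noting the argument never uses the quadratic relation (equivalently, your $T^\sharp_bT^\sharp_d=T^\sharp_{bd}$ under length-additivity). The only imprecision is that this nested-parabolic factorization is not literally \eqref{eqaddlength} with empty left parabolic (which only gives $w=bw_J$ with $b\in D^K_{\emptyset,J}$, $w_J\in\W_J$); you still need the one-line observation that the $\W_J$-component of a $J'$-minimal element is itself $J'$-minimal in $\W_J$, a standard fact that closes the gap.
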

\proof
The middle term of \eqref{associativity} is precisely the sum of all permutations $w$  of $1,2,\ldots, n$ such that the numbers stay increasing inside the parts of size $a-1,1,b-1$. But each such $w$ can be written as a composition of the form $w=xy$, where $y\in\fS_a$ permutes the first $a$ numbers, but keeps the first $a-1$ increasing, followed by a permutation $x$ which keeps the first $a$ numbers  $y(i)$, ${1\leq i\leq n}$ in order and keeps the last $b-1$ numbers increasing. Moreover, each such $xy$ gives rise to a unique $w$ in the sum. This proves the first equality. The second is similar starting with permuting the last $b$ numbers instead. The statement for $\bM$ follows by the same arguments.
\endproof

\begin{lemma}[Splitting off a simple reflection]
\label{cosetsum}
Let $n=a+b$ with $a,b\in\mZ_{>0}$. Then we have the following equalities
\begin{eqnarray}
\bC^{a+b}_{a,b} &= &
 \bC_{1,a,b-1}^{1,a+b-1}s_{1}\cdots s_a + \bC_{1,a-1,b}^{1,a+b-1}\label{f1}\\
&=&\bC^{1,a+b-1}_{1,a,b-1}\bC^{a+1, b-1}_{a,1,b-1}   -\bC^{1,a+b-1}_{1,a-1,b}  \bC^{a, b}_{a,1,b-1}  +\bC^{1,a+b-1}_{1,a-1,b}.\label{f2}
%\item\label{cosetsum1}\begin{equation*}\begin{split}\bC^{a+b}_{a,b} &= &= \bC_{1,a,b-1}^{1,a+b-1}s_{1}\cdots s_a + \bC_{1,a-1,b}^{1,a+b-1}\\
%&=\bC^{1,a+b-1}_{1,a,b-1}\bC^{a+1, b-1}_{a,1,b-1}   -\bC^{1,a+b-1}_{1,a-1,b}  \bC^{a, b}_{a,1,b-1}  +\bC^{1,a+b-1}_{1,a-1,b},
%\\
%\end{split}\end{equation*} 
%
%\end{enumerate}
\end{eqnarray}
The analogous formulae hold for the $\bM$ as well.
\end{lemma}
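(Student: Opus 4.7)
The plan is to prove \eqref{f1} by a case split on the position of the value $1$ in $w \in D^{\bbI}_{\emptyset, J}$ (where $J = \bbI \setminus \{s_a\}$), and then deduce \eqref{f2} from \eqref{f1} by recognizing both sides of a derived identity as applications of the associativity relation~\eqref{associativity}.

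For \eqref{f1}: Every $w \in D^{\bbI}_{\emptyset, J}$ satisfies $w(1) < \cdots < w(a)$ and $w(a+1) < \cdots < w(a+b)$, so the value $1$ occurs either at position $1$ or at position $a+1$. In the first case, $w$ is precisely a distinguished coset rep for the finer decomposition $(1, a-1, b)$, contributing $\bC^{1, a+b-1}_{1, a-1, b}$. In the second case, a one-line comparison shows $w = \tilde w \cdot (s_1 s_2 \cdots s_a)$ for a unique $\tilde w \in D^{1, a+b-1}_{\emptyset, \{1, a, b-1\}}$ (so $\tilde w(1)=1$, $\tilde w(2)<\cdots<\tilde w(a+1)$, $\tilde w(a+2)<\cdots<\tilde w(a+b)$), and a short inversion count confirms $\ell(w) = \ell(\tilde w) + a$. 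Summing yields $\bC^{1, a+b-1}_{1, a, b-1} \cdot s_1 \cdots s_a$. The $\bM$-version then follows from length-additivity, since $T^\sharp_{uv} = T^\sharp_u T^\sharp_v$ whenever $\ell(uv) = \ell(u) + \ell(v)$.

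For \eqref{f2}: Using \eqref{f1}, the asserted identity reduces to
\[
\bC^{1, a+b-1}_{1, a, b-1} \bigl(\bC^{a+1, b-1}_{a, 1, b-1} - s_1 \cdots s_a\bigr) \;=\; \bC^{1, a+b-1}_{1, a-1, b}\, \bC^{a, b}_{a, 1, b-1}.
\]
Now $\bC^{a+1, b-1}_{a, 1, b-1}$ is the sum of the $a+1$ minimal length reps $s_k s_{k+1} \cdots s_a$ ($k = 1, \ldots, a+1$) of $\fS_{a+1}/\fS_a$; the term $s_1 s_2 \cdots s_a$ is the rep placing value $a+1$ at position $1$, and removing it leaves exactly the reps fixing position $1$, which is $\bC^{1, a, b-1}_{1, a-1, 1, b-1}$ (via Remark~\ref{movethrough}). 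On the right, $\bC^{a, b}_{a, 1, b-1}$ and $\bC^{1, a-1, b}_{1, a-1, 1, b-1}$ both sum over the same set of min-length reps of $\fS_b/\fS_{b-1}$ acting on positions $\{a+1,\ldots,a+b\}$, so they are literally equal. Applying the associativity relation~\eqref{associativity} to the nested chains $(1, a+b-1) \supseteq (1, a, b-1) \supseteq (1, a-1, 1, b-1)$ and $(1, a+b-1) \supseteq (1, a-1, b) \supseteq (1, a-1, 1, b-1)$ then collapses both sides to the common value $\bC^{1, a+b-1}_{1, a-1, 1, b-1}$. The $\bM$-version is obtained identically, replacing $\bC$ by $\bM$ and invoking the associativity already stated for $\bM$ together with the same length-additive factorizations.

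The main technical point is the length-additivity verification in Case B of \eqref{f1}; this is the key ingredient allowing the group-algebra identity to lift to its Hecke-algebra analogue, but it is essentially a bookkeeping exercise on inversions of $\tilde w$ versus $\tilde w \cdot (s_1 \cdots s_a)$.
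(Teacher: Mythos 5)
Your proposal is correct and follows essentially the same route as the paper: for \eqref{f1} the paper splits the coset representatives from Lemma~\ref{maxparab} according to whether the rightmost segment contains $s_1$, which is exactly your dichotomy on where the value $1$ sits, and for \eqref{f2} the paper likewise reduces to the identity $\bC^{1,a+b-1}_{1,a,b-1}\bigl(\bC^{a+1,b-1}_{a,1,b-1}-s_1\cdots s_a\bigr)=\bC^{1,a+b-1}_{1,a-1,b}\bC^{a,b}_{a,1,b-1}$ via the special case \eqref{specialcase}, the observation that $\bC^{1,a-1,b}_{1,a-1,1,b-1}=\bC^{a,b}_{a,1,b-1}$, and associativity. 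Your explicit length-additivity check and the remark that $T^\sharp_{uv}=T^\sharp_uT^\sharp_v$ when lengths add is the precise justification behind the paper's comment that the $\bM$-version follows "by the same arguments".
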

\proof
Consider  the set of elements from \eqref{segments} and divide them into those which  contain $s_1$ (in the rightmost factor) 
 and those which do not. This division corresponds precisely to the two summands on the right hand side of \eqref{f1}. Note that as a special case of \eqref{f1} we obtain  
 \begin{eqnarray}
 \label{specialcase}
\bC_{a,1}^{a+1}&=&\bC_{1,a}^{1,a}s_1s_2\cdots s_a+\bC_{1,a-1,1}^{1,a}=s_1s_2\cdots s_a+\bC_{1,a-1,1}^{1,a}.
 \end{eqnarray}
 To verify \eqref{f2}, it suffices to show
\begin{eqnarray}
\bC^{1,a+b-1}_{1,a,b-1}\bC^{a+1, b-1}_{a,1,b-1}   -\bC^{1,a+b-1}_{1,a-1,b}  \bC^{a, b}_{a,1,b-1} 
&=& \bC_{1,a,b-1}^{1,a+b-1}s_{1}\cdots s_a.
\end{eqnarray}
However, thanks to \eqref{specialcase}, the left hand side equals 
\begin{eqnarray*}
LHS&=&\bC^{1,a+b-1}_{1,a,b-1}\left(s_1\cdots s_a + \bC^{1,a,\;\;\;\;\;\;\;b-1}_{1,a-1,1,b-1}  \right) -\bC^{1,a+b-1}_{1,a-1,b}\bC^{a, \;\;\;b}_{a,1,b-1} \\
&=&\bC^{1,a+b-1}_{1,a,b-1}s_1\cdots s_a +\bC^{1,a+b-1}_{1,a-1,b} \bC^{1,a-1,b}_{1,a-1,1,b-1} -\bC^{1,a+b-1}_{1,a-1,b}  \bC^{a, b}_{a,1,b-1}\\ 
&=&\bC^{1,a+b-1}_{1,a,b-1}s_1\cdots s_a +\bC^{1,a+b-1}_{1,a-1,b} \left(\bC^{1,a-1,b}_{1,a-1,1,b-1} -\bC^{a, b}_{a,1,b-1}\right)\\
&=&\bC^{1,a+b-1}_{1,a,b-1}s_1\cdots s_a.
\end{eqnarray*}
Here the second equality follows from \eqref{associativity}, the third is clear and the last one follows from the obvious fact that the expression inside the brackets is zero.  Hence the claim follows. Note that the same arguments work for $\bM$ as well, since we have not used the quadratic relation (H-1).
\endproof

\begin{definition}
For $1\leq i\not=j\leq n$ set  
\begin{equation}
\beta_{i,j}=qX_i-X_j\quad\text{and}\quad \gamma_{i,j}=X_i-X_j\quad\text{and finally}\quad\gt_{i,j}=\frac{\beta_{ij}}{\gamma_{ij}}.
\end{equation}
\end{definition}

\begin{lemma}
\label{oje}
The  equality 
$\gt_{1,2}\gt_{2,3}-\gt_{1,2}\gt_{1,3}+\gt_{1,3}\gt_{3,2}=q$ holds.
\end{lemma}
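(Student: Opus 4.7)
The plan is to treat the claimed identity as a rational function identity in $X_1, X_2, X_3$ (with $q$ a scalar) and show that the left-hand side is a polynomial (in fact constant) equal to $q$. There are two natural ways to proceed; I would use the residue/polynomiality approach, as it avoids expanding cubic products.

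First I would multiply out by the common denominator $(X_1-X_2)(X_2-X_3)(X_1-X_3)$ to rewrite the identity in the cleared form
\begin{equation*}
(qX_1-X_2)(qX_2-X_3)(X_1-X_3) - (qX_1-X_2)(qX_1-X_3)(X_2-X_3) - (qX_1-X_3)(qX_3-X_2)(X_1-X_2) = q(X_1-X_2)(X_2-X_3)(X_1-X_3),
\end{equation*}
where the minus sign in front of the third term comes from $(X_3-X_2)=-(X_2-X_3)$ in the denominator of $\theta_{3,2}$.

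Rather than expanding both sides, I would observe that the LHS of the original identity is a rational function whose only possible poles lie along $X_i = X_j$ for $\{i,j\}\subset\{1,2,3\}$, and check that each such pole cancels. For instance, along $X_1 = X_2$ only the first two terms have poles; the residue of $\theta_{1,2}$ at this divisor is $(q-1)X_2$, and it is multiplied by $\theta_{2,3}-\theta_{1,3}$, which specializes to $0$ on $X_1=X_2$. The two remaining cases ($X_2=X_3$ and $X_1=X_3$) are analogous: in each case the common factor has residue $(q-1)X_j$ and multiplies a difference of $\theta$'s which vanishes on the corresponding diagonal. Hence the LHS is a polynomial. Since each $\theta_{i,j}$ is a rational function of total degree $0$ in $(X_1,X_2,X_3)$, the LHS is homogeneous of degree $0$, hence a constant (depending only on $q$).

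To pin down the constant, I would take the limit $X_3 \to \infty$: then $\theta_{2,3}\to 1$, $\theta_{1,3}\to 1$, and $\theta_{3,2}\to q$, so the LHS tends to $\theta_{1,2}\cdot 1 - \theta_{1,2}\cdot 1 + 1\cdot q = q$, as required. The only real point of care is getting the sign of the $\theta_{1,3}\theta_{3,2}$ contribution right due to the antisymmetry $\gamma_{3,2}=-\gamma_{2,3}$; otherwise every step is a short residue computation.
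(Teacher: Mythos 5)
Your argument is correct, but it takes a genuinely different route from the paper. The paper proves the lemma by two direct polynomial identities: it checks $\beta_{2,3}\gamma_{1,3}-\beta_{1,3}\gamma_{2,3}=(q-1)X_3\gamma_{1,2}$ and $q\gamma_{1,3}\gamma_{2,3}+\beta_{1,3}\beta_{3,2}=(q-1)X_3\beta_{1,2}$, which show that both $\gt_{1,2}(\gt_{2,3}-\gt_{1,3})$ and $q-\gt_{1,3}\gt_{3,2}$ equal $\frac{(q-1)X_3\beta_{1,2}}{\gamma_{1,3}\gamma_{2,3}}$, and subtracts. You instead argue structurally: the left-hand side has at worst simple poles along the three diagonals $X_i=X_j$, each of which cancels because the singular factor multiplies a difference of $\gt$'s vanishing on that diagonal; hence the expression is a polynomial, which by degree-$0$ homogeneity is a constant, pinned down by letting $X_3\to\infty$. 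Both are valid; your pole-cancellation computations check out (including the sign coming from $\gamma_{3,2}=-\gamma_{2,3}$), and the method is the standard, more conceptual one for partial-fraction identities of this type — it scales better to the longer products of $\gt$'s in Lemma~\ref{tech3}. What the paper's approach buys is that it is manifestly characteristic-free and purely algebraic, whereas your "residue" and "limit as $X_3\to\infty$" steps should, for an arbitrary field $\FF$, be phrased as divisibility of the cleared numerator by each irreducible $\gamma_{i,j}$ and as a comparison of top-degree coefficients in $X_3$ (or verification of the resulting integer-coefficient polynomial identity over $\mathbb{Z}[q]$); this is a cosmetic rather than a substantive fix.
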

\proof
One easily checks that $\beta_{2,3}\gamma_{1,3}-\beta_{1,3}\gamma_{2,3}=(q-1)X_3\gamma_{1,2}$. 
Thus, if we set $\gamma=\gamma_{2,3}\gamma_{1,3}$,  then 
\begin{eqnarray}
\label{oje1}
\gt_{1,2}(\gt_{2,3}-\gt_{1,3})&=&\frac{\beta_{1,2}}{\gamma_{1,2}}\left(\frac{\beta_{2,3}}{\gamma_{{2,3}}}-\frac{\beta_{1,3}}{\gamma_{1,3}}\right)\;=\;\frac{(q-1)X_3\beta_{1,2}}{\gamma}.
\end{eqnarray}
On the other hand one checks easily that $q\gamma_{1,3}\gamma{2,3}+\beta_{1,3}\beta_{3,2}=(q-1)\beta_{1,2}$ and thus
\begin{eqnarray}
\label{oje2}
q-\gt_{1,3}\gt_{3,2}&=&\frac{q\gamma_{1,3}\gamma_{2,3}+\beta_{1,3}\beta_{3,2}}{\gamma}  \;=\; \frac{(q-1)X_3\beta_{1,2}}{\gamma}.
\end{eqnarray}
Subtracting \eqref{oje2} from \eqref{oje1} gives $\gt_{1,2}(\gt_{2,3}-\gt_{1,3})-q+\gt_{1,3}\gt_{3,2}=0$. 
\endproof
The action of simple merges on  polynomials and rational functions in the $X_i$ is quite subtle, but produces interesting formulae.

\begin{example}
\label{indstart2}
For instance we have $\bC^{2}_{1,1}(\gt_{1,2})=(1+q)$. This is because $\bC^{2}_{1,1}(\gt_{1,2})=(1+s_1)(\gt_{1,2})=\frac{qX_1-X_2-qX_2+X_1}{X_1-X_2}=(1+q)$.
\end{example}
More generally, we have the following equalities of rational functions in the $X_i$:

\begin{lemma}\label{tech3}
Let $1\leq c\leq n-1$ and $0\leq a\leq n$. Then the following holds
\begin{enumerate}[i.)]
\item\label{first} For any $a$: $\bC^{a,c}_{a,1,c-1}(\prod_{k=a+2}^{a+c}\gt_{a+1,k}) =\sum_{k=0}^{c-1} q^r$. 
\item\label{second} For $a\geq 1$: $\bC^{a,c}_{a,1,c-1}(\gt_{1,a+1}\prod_{k=a+2}^{a+c}\gt_{a+1,k}) = \prod_{k=a+1}^{a+c}\gt_{1,k} +\sum_{r=1}^{c-1} q^r$. 
\end{enumerate}
The same formulae hold for the $\bM$ as well.
\end{lemma}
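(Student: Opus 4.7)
The plan is to identify the distinguished coset representatives in $D^{a,c}_{\emptyset,(a,1,c-1)}$ explicitly, apply them to the monomials in question, and then prove both resulting sums via a single partial fraction identity. Since $\bC^{a,c}_{a,1,c-1}$ acts trivially on positions $1,\ldots,a$, Lemma~\ref{maxparab} identifies these representatives as $w_j:=s_{a+j}s_{a+j-1}\cdots s_{a+1}$ for $0\leq j\leq c-1$, where each $w_j$ cyclically shifts $a+1\mapsto a+j+1$ and $a+k\mapsto a+k-1$ for $2\leq k\leq j+1$, fixing everything else (in particular position $1$). A direct substitution then yields
\begin{align*}
\bC^{a,c}_{a,1,c-1}\!\Bigl(\prod_{k=a+2}^{a+c}\gt_{a+1,k}\Bigr)&=\sum_{m=1}^{c}\prod_{\substack{1\leq k\leq c\\ k\neq m}}\gt_{a+m,a+k},\\
\bC^{a,c}_{a,1,c-1}\!\Bigl(\gt_{1,a+1}\prod_{k=a+2}^{a+c}\gt_{a+1,k}\Bigr)&=\sum_{m=1}^{c}\gt_{1,a+m}\prod_{\substack{1\leq k\leq c\\ k\neq m}}\gt_{a+m,a+k},
\end{align*}
reducing the problem to evaluating these two rational-function sums.

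The main step is to consider the rational function $P(X)=\prod_{k=1}^{c}(qX-X_{a+k})/(X-X_{a+k})$. Since numerator and denominator both have degree $c$ with $P(\infty)=q^c$, and $P$ has a simple pole at $X_{a+m}$ with residue $(q-1)X_{a+m}\prod_{k\neq m}\gt_{a+m,a+k}$, partial fractions give
\[
P(X)=q^c+(q-1)\sum_{m=1}^{c}\frac{X_{a+m}\prod_{k\neq m}\gt_{a+m,a+k}}{X-X_{a+m}}.
\]
Evaluating at $X=0$ gives $1=q^c-(q-1)\sum_{m}\prod_{k\neq m}\gt_{a+m,a+k}$, from which (i) follows upon dividing by $q-1$. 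For (ii) I would evaluate at $X=X_1$: the left side becomes $\prod_{k=1}^{c}\gt_{1,a+k}$, and using the algebraic identity $(q-1)X_{a+m}/(X_1-X_{a+m})=\gt_{1,a+m}-q$ the right side rewrites as $q^c-q[c]_q+\sum_{m}\gt_{1,a+m}\prod_{k\neq m}\gt_{a+m,a+k}$, and solving for the target sum and using $q[c]_q-q^c=\sum_{r=1}^{c-1}q^r$ yields precisely $\prod_{k=1}^{c}\gt_{1,a+k}+\sum_{r=1}^{c-1}q^r$ as claimed.

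The main conceptual obstacle is that both sums naively invite an induction on $c$ whose recursion is obscured by the tangled indices in $\prod_{k\neq m}\gt_{a+m,a+k}$; packaging all $c$ terms as residues of a single rational function bypasses this entirely and handles (i) and (ii) by two evaluations of the same decomposition. The claimed identical formulae for $\bM$ should then follow by the same coset-representative analysis, combined with the commutation $\bM^K_J f=f\bM^K_J$ on $W_J$-invariant $f$ (Remark~\ref{movethrough}) and the explicit $T^\sharp_i$-action formulae from Section~\ref{heckefaith}, which reduce the Hecke-level identity to the polynomial one just established.
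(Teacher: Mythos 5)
Your argument for the two $\bC$-identities is correct, but it takes a genuinely different route from the paper. The paper proves i.) and ii.) simultaneously by induction on $c$: it peels off one simple reflection at a time via the recursion $\bC^{c}_{1,c-1}=\bC^{1,c-1}_{1,1,c-2}(1+s_1)$ from Lemma~\ref{cosetsum}, with base cases Example~\ref{indstart1}/\ref{indstart2} and the three-variable identity of Lemma~\ref{oje}; the inductive step interleaves the two statements (each case of the induction uses the other case for $c-1$). You instead make the sum completely explicit using the coset representatives $w_j=s_{a+j}\cdots s_{a+1}$ and then derive both identities from the single partial-fraction expansion of $P(X)=\prod_{k}\frac{qX-X_{a+k}}{X-X_{a+k}}$, evaluated at $X=0$ and at $X=X_1$; your computation of the residues and the identity $(q-1)X_{a+m}/(X_1-X_{a+m})=\gt_{1,a+m}-q$ check out, and the division by $q-1$ is harmless (work over $\mathbb{Z}[q]$, or note $q\neq 1$ in $\FF$ by the standing assumption). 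Your method is non-inductive, dispenses with Lemma~\ref{oje} entirely, and exhibits both right-hand sides as two specializations of one generating identity; the paper's induction is clumsier here but matches the recursive split-off-a-reflection structure that is reused throughout Section~\ref{Section6}.

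One caveat: your final sentence on the $\bM$-version is too quick. The operators $T_w^\sharp$ do \emph{not} act on rational functions by permuting variables (they act by Demazure-type operators on $\coFH$), so "the same coset-representative analysis" does not literally transfer; indeed $(1+T_1^\sharp)\gt_{1,2}\bbv\neq(1+q)\bbv$ as naive operators on the polynomial representation. The $\bM$-statement has to be read at the level of the formal recursions (which, as the paper notes in Lemma~\ref{cosetsum}, never use the quadratic relation), not via your substitution argument. Since the paper's own proof is equally silent on this point and only the $\bC$-version is invoked later, this does not affect the substance of your proof, but the reduction you sketch is not the one that works.
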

\proof
Without loss of generality, we may assume $a=0$ in \eqref{first} and $a=1$ in \eqref{second}, since the general case then follows by shifting labels. We prove both statements in parallel by induction on $c$. The base case is $c=2$. (For the extreme case $c=1$ we have $\bC_{1,1}^{1,1}(\gt_{1,2})$ respectively $\bC_{1,0}^{1}(1)=1$ by convention.)
For  \eqref{first}, the base case is Example~\ref{indstart2}, while for \eqref{second} we need to show $(1+s_2)(\gt_{1,2}\gt_{2,3})=\gt_{1,2}\gt_{1,3}+q$, or equivalently $\gt_{1,2}(\gt_{2,3}-\gt_{1,3})=q-\gt_{1,3}\gt_{3,2}$. This, however, is Lemma~\ref{oje}.  So assume now both, \eqref{first} and \eqref{second}, are true for $c-1$.

For \eqref{first} we abbreviate  $\Pi=\gt_{1,2}\prod_{k=3}^{c+1}\gt_{2,k}=\gt_{1,2}\Pi'$ and obtain
\begin{eqnarray*}
\bC^{c}_{1,c-1}\Pi&\stackrel{\eqref{f1}}{=}&
 \left(\bC^{1,c-1}_{1,1,c-2}(1+s_{1})\right)\Pi\\
&=&\Pi+\bC^{1,c-1}_{1,1,c-2} \left(s_{1}(\gt_{1,2}) \Pi'\right)\\
&=&\Pi+\bC^{1,c-1}_{1,1,c-2}  (1+q- \gt_{1,2})\Pi'\\
&=&\Pi+ (1+q)\left(\bC^{1,c-1}_{1,1,c-2} \Pi' \right) - \bC^{1,c-1}_{1,1,c-2} \gt_{1,2} \Pi' \\
&\stackrel{\text{ind. hyp.}}{=}&\prod_{k=2}^{c}\gt_{1,k} + (1+q)\sum_{r=0}^{c-2}q^r - \prod_{k=2}^{c}\gt_{1,k} -\sum_{r=1}^{c-2}q^r \;=\;\sum_{r=0}^{c-1}q^r,
\end{eqnarray*}
where in the penultimate line we have used the induction hypothesis for $c-1$, namely \eqref{second} for the first summand and \eqref{first} for the second summand. In the third line we have also used the induction hypothesis for $c=2$ for \eqref{second}.

For \eqref{second}, we abbreviate  $Z=\prod_{k=2}^{c}\gt_{2,k}=\gt_{2,3}Z'$ and obtain 
\begin{eqnarray*}
&&\bC^{1,c}_{1,1,c-1}(\gt_{1,2}Z)\\
&\stackrel{\eqref{f1}}=&\left((\bC^{1,1,c-1}_{1,1,1,c-2}s_{2})+1\right)(\gt_{1,2}Z) \\
&=&\gt_{1,2}Z+ \bC^{1,1,c-1}_{1,1,1,c-2}\left(s_{2}(\gt_{1,2}\gt_{2,3})Z'\right) \\ 
& =  &\gt_{1,2}Z+\bC^{1,1,c-1}_{1,1,1,c-2}(\gt_{1,2}\gt_{1,3} -\gt_{1,2}\gt_{2,3}  +q) Z'\\ 
&=& \gt_{1,2}Z
+\left(\gt_{1,2} \bC^{1,1,c-1}_{1,1,1,c-2}\gt_{1,3} Z'\right)
-\left(\gt_{1,2} \bC^{1,1,c-1}_{1,1,1,c-2}\gt_{2,3} Z' \right)+ q\left( \bC^{1,1,c-1}_{1,1,1,c-2}Z' \right).
\end{eqnarray*}
Now we use the induction hypothesis for $c-1$, namely \eqref{second} for the middle and \eqref{first} for the last summand and obtain 
\begin{equation*}
( \gt_{1,2}\prod_{k=3}^{c+1}\gt_{2,k})+(\prod_{k=2}^{c+1}\gt_{1,k}) +(\gt_{1,2}\sum_{r=1}^{c-2} q^r ) 
 -( \gt_{1,2}\prod_{k=3}^{c+1}\gt_{2,k} )- (\gt_{1,2}\sum_{r=1}^{c-2} q^r )
  + q\sum_{r=0}^{c-2}q^r
  \end{equation*}
  Hence, altogether we have
\begin{eqnarray*}
 \bC^{1,c}_{1,1,c-1}(\gt_{1,2}Z)&=& \prod_{k=2}^{c+1}\gt_{1,k} +  q\sum_{r=0}^{c-2}q^r\quad=\quad \prod_{k=2}^{c+1}\gt_{1,k} +  \sum_{r=1}^{c-1}q^r
\end{eqnarray*}
This completes the proof.
\endproof

\subsection{Algebraic merges in the faithful representation}
In this subsection we give explicit formulae for the action of the simple merges on the faithful representation $\coFS_\bi$ from \eqref{faithSchur}. 

{\it Setup for the whole subsection:} Assume that  $K=\bbI$, so $\bu_K=(\bu,K)$ with $\bu=(1^{d_1},2^{d_2},\ldots, e^{d_e})$ and $J=\bbI\setminus\{a\}$. Set $b=n-a$. Let $\bu_J\in \bU_J$. Set $a_i=d_i^1(\bu_J)$ and $b_i=d_i^2(\bu_J)$ in the notation of Definition~\ref{dimmatrix}. In particular, $a_i+b_i=d_i$ for  all $i=1,\ldots,e$. Then the action of the merge from \eqref{bbdef} can be expressed in the $\#$-twisted versions as follows:
\begin{proposition}\label{mergeactionschur}
For $\coFS_\bi$ and $f\in  \hat{R}_{+,\bu_J}$ as in Proposition~\ref{cfaithful} \eqref{faco} we have
\begin{eqnarray*}
e_{\bu}\corho(\bb^1_{K,J})f\bbv^{(J)}&=&e_{\bu_K}\;{\bC_{a_1,b_1,a_2,b_2,\ldots,a_e,b_e}^{d_1,\;\;\;\;d_2,\;\;\;\;\;\ldots,   d_e}}\;\sigma_{\bu_J}^{\bu_K} \prod_{k=a+1}^n\prod_{l=1}^a \gt_{l,k}f\bbv^{(K)}.
\end{eqnarray*}
where $\sigma_{\bu_J}^{\bu_K}$ is as in Definition~\ref{smsigma}, explicitly 
\begin{eqnarray}
\label{oursigma}
\sigma_{\bu_J}^{\bu_K}&=&\sigma^{(1^{d_1}, 2^{d_2},\ldots , e^{d_e})}_{(1^{a_1}, 2^{a_2},\ldots , e^{a_e}|1^{b_1}, \ldots , e^{b_e})}.
\end{eqnarray}
\end{proposition}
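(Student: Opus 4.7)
By Proposition~\ref{cfaithful}, the left-hand side equals $e_{\bu_K}\sum_{w\in D^\bbI_{\emptyset,J}}T_w^\sharp\cdot f\bbv^{(K)}$. Combining Proposition~\ref{Ben} with $T_i^\sharp=q-1-T_i$ yields the key single-step formula
\[
T_i^\sharp(f\,e_{\bu'}\bbv)\;=\;(q-\gt_{i+1,i})\,f\,e_{\bu'}\bbv+\gt_{i+1,i}\,s_i(f)\,e_{s_i\bu'}\bbv
\]
for any character label $\bu'\in\fS\bi$, where $s_i$ acts simultaneously on the $X$-variables of $f$ and on the label $\bu'$ (since permuting variables permutes their eigenvalues on the central character). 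Iterating this along a reduced expression $w=s_{i_1}\cdots s_{i_\ell}$ expands $T_w^\sharp f\,e_{\bu_J}\bbv$ as a sum of $2^\ell$ terms, one per subword $S\subseteq\{1,\dots,\ell\}$; each term contributes a polynomial coefficient (a product of $\gt_{\cdot,\cdot}$ and $(q-\gt_{\cdot,\cdot})$ factors with appropriately permuted variables) times $\sigma_S(f)\,e_{\sigma_S\bu_J}\bbv$, where $\sigma_S$ is the partial product of the $s_{i_k}$ for $k\in S$ in the given order.

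The projection by $e_{\bu_K}$ then kills all subwords with $\sigma_S\bu_J\neq\bu_K$. Each surviving permutation $\sigma_S$ factors uniquely as $\sigma_S=v\cdot\sigma_{\bu_J}^{\bu_K}$ with $v\in\bC^{d_1,\ldots,d_e}_{a_1,b_1,\ldots,a_e,b_e}$ a color-preserving shuffle, and $\sigma_{\bu_J}^{\bu_K}$ the block-interleaving permutation from Definition~\ref{smsigma}. Since $f\in\hat R_{+,\bu_J}$ is $\fS_{\bu_J}$-invariant by Lemma~\ref{Lakritz} and the $v$'s permute variables only within single color classes, one may write $\sigma_S(f)=v\cdot\sigma_{\bu_J}^{\bu_K}(f)$ uniformly in $v$, reducing the claim to an identity for the $\gt$-coefficient alone.

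My plan is to establish this remaining coefficient identity by induction on $b=n-a$. Peeling off one simple merge via the decomposition \eqref{f1} of Lemma~\ref{cosetsum}, namely $\bM^{a+b}_{a,b}=\bM^{1,a+b-1}_{1,a,b-1}(T_1\cdots T_a)^\sharp+\bM^{1,a+b-1}_{1,a-1,b}$, matches precisely the two contributions from the single-step formula applied at position~$1$. The base case $b=0$ is tautological ($J=K$ and both sides equal $f\bbv$). In the inductive step, the collapsing identities of Lemma~\ref{tech3}\eqref{first} and~\eqref{second}, together with the commutation property in Remark~\ref{movethrough}, convert the alternating $\gt$-vs-$(q-\gt)$ sums over subwords into the required Euler product $\prod_{k=a+1}^n\prod_{l=1}^a\gt_{l,k}$, which, interpreted color by color, yields the $\bC^{d_1,\ldots,d_e}_{a_1,b_1,\ldots,a_e,b_e}$-sum on the right-hand side. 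The main obstacle is the bookkeeping of variable positions after repeated conjugation by the $s_i$ comprising $\sigma_S$; this is best organized by tracking the evolution of the two blocks under each simple reflection, exactly as Lemma~\ref{tech3} does at the level of $\gt$-products.
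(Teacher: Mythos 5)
Your overall strategy is the one the paper follows: pass to the faithful representation, where $\corho(\bb^1_{K,J})$ acts by $\bM^{a+b}_{a,b}=\sum_{w\in D^{\bbI}_{\emptyset,J}}T_w^\sharp$, and then induct on the number of strands being merged, peeling off a layer with Lemma~\ref{cosetsum} and collapsing the resulting rational-function sums with Lemma~\ref{tech3}. Your single-step formula is also correct: from \eqref{faithfulHecke1} and $T_i^\sharp=q-1-T_i$ one indeed gets $T_i^\sharp f e_{\bu'}\bbv=(q-\gt_{i+1,i})fe_{\bu'}\bbv+\gt_{i+1,i}\,s_i(f)e_{s_i\bu'}\bbv$. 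However, two steps in your outline do not go through as stated. First, the regrouping of the $2^\ell$-term subword expansion is asserted rather than proved, and it is essentially the content of the proposition. For a surviving $\sigma_S$ the factorization $\sigma_S=v\,\sigma_{\bu_J}^{\bu_K}$ only forces $v\in\Stab_{\fS}(\bu)$; it does not force $v$ to be a shuffle in $D^{I_{\bu_K}}_{\emptyset,\sigma(I_{\bu_J})}$, and the map $S\mapsto\sigma_S$ is far from injective. What must actually be shown is that, after summing the alternating $\gt$-versus-$(q-\gt)$ coefficients over all subwords $S$ of all $w\in D^{\bbI}_{\emptyset,J}$ with the same $\sigma_S$, the contributions with $v$ not a shuffle cancel and the remaining ones produce exactly $v\sigma_{\bu_J}^{\bu_K}\bigl(\prod_{k>a}\prod_{l\leq a}\gt_{l,k}\bigr)$. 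Nothing in your write-up addresses this cancellation.

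Second, your inductive step uses \eqref{f1}, i.e.\ $\bM^{a+b}_{a,b}=\bM^{1,a+b-1}_{1,a,b-1}(T_1\cdots T_a)^\sharp+\bM^{1,a+b-1}_{1,a-1,b}$, but the factor $(T_1\cdots T_a)^\sharp$ is a single group-algebra element, not a merge, and its output applied to $f\bbv^{(J)}$ is not of the form (parabolically invariant power series)$\,\cdot\,\bbv$ supported on one block structure. So the induction hypothesis cannot be applied to the outer factor $\bM^{1,a+b-1}_{1,a,b-1}$, and the induction does not close. This is precisely why the paper uses \eqref{f2} instead: there every factor is itself a merge $\bM^{\cdot}_{\cdot}$ over fewer strands, each covered by the induction hypothesis. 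Even with that fix, the remaining work is substantial and is not visible in your sketch: one must analyse which intermediate idempotents survive at the newly split-off position (the colour at position $1$ can be $1$ or any $t=2,\dots,e$, and these cases behave differently), verify several factorization and commutation identities for the permutations $\sigma_{\bu_J}^{\bu_K}$ relative to the merges, and only then does Lemma~\ref{tech3} apply to a specific pair of terms, with one further cancellation needed before \eqref{f1} can be used to reassemble the final merge. As it stands, your argument reduces the proposition to an unproved coefficient identity and then proposes an induction scheme that cannot be run in the stated form.
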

\begin{remark}
Note that by Lemma~\ref{Lakritz} the component at $e_{\bu}$ completely determines the element in $\coFS_\bi$.
\end{remark}

As a direct consequence from the definitions, \eqref{twistedmerge}, we obtain
\begin{corollary}
We have
\begin{eqnarray*}
e_{\bu}\corho(\bb^1_{K,J})f\bbv^{(J)}= e_{\bu}\left(\sum_{w}w\sigma_{\bu_J}^{\bu_K}\right) \prod_{k=a+1}^n\prod_{l=1}^a \gt_{l,k}f\bbv^{(K)}.
\end{eqnarray*}
where the sum runs over $w\in D^{I_{\bu_K}}_{\emptyset,\sigma_{\bu_J}^{\bu_K} (I_{\bu_J})}$ with notation from Definition~\ref{WuJ}.
\end{corollary}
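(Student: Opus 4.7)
The plan is to reduce to the faithful-representation formula of Proposition~\ref{cfaithful}(ii). For the element $\bb^1_{K,J}$, which decomposes as in \eqref{fullfactor} with $w=1$ and $p=1$, that proposition specialises to
\begin{eqnarray*}
\corho(\bb^1_{K,J})f\bbv^{(J)} \;=\; \e_{\bu_K}\,\bM^K_J\,\e_{\bu_J}\,f\,\bbv^{(K)},
\end{eqnarray*}
where $\bM^K_J = \sum_{b\in D^K_{\emptyset,J}}T_b^\sharp$ is the algebraic merge from \eqref{twistedmerge}. Since $e_\bu$ is one summand of $\e_{\bu_K}=\sum_{\bu'\in\fS\cdot\bu}e_{\bu'}$ and, by Lemma~\ref{Lakritz}, the $e_\bu$-component determines the full element of $\coFS^{\bu_K}_\bi$, the claim reduces to the $e_\bu$-level identity
\begin{eqnarray*}
e_\bu\,\bM^K_J\,\e_{\bu_J}\,f\,\bbv^{(K)} \;=\; e_\bu\,\bC^{d_1,\ldots,d_e}_{a_1,b_1,\ldots,a_e,b_e}\,\sigma_{\bu_J}^{\bu_K}\!\left(\prod_{k=a+1}^{n}\prod_{l=1}^a\gt_{l,k}\cdot f\right)\bbv^{(K)}.
\end{eqnarray*}

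Next I would identify which summands $T_b^\sharp$ contribute to this $e_\bu$-component. Expanding $\e_{\bu_J}$ as a sum of $e_{\bu'}$ over the $\W_J$-orbit of the underlying tuple of $\bu_J$, and using the near-intertwining relation $T_r^\sharp e_{\bu'} = e_{s_r\cdot\bu'}T_r^\sharp + (\text{polynomial correction})$ iterated along a reduced expression of $b$, only the pairs $(b,\bu')$ with $b\cdot\bu'=\bu$ contribute. A direct combinatorial count identifies these pairs with $D^{(d_1,\ldots,d_e)}_{(a_1,b_1,\ldots,a_e,b_e)}$: each contributing $b$ factors uniquely as $b=\tau\cdot\sigma_{\bu_J}^{\bu_K}$ with $\tau$ a distinguished coset representative for $\fS_{a_1}\!\times\!\fS_{b_1}\!\times\cdots\!\times\!\fS_{a_e}\!\times\!\fS_{b_e}$ in $\fS_{d_1}\!\times\cdots\!\times\!\fS_{d_e}$, i.e.\ precisely the set indexing $\bC^{d_1,\ldots,d_e}_{a_1,b_1,\ldots,a_e,b_e}$.

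The core computation is then to show each contributing summand satisfies
\begin{eqnarray*}
e_\bu\, T^\sharp_{\tau\sigma_{\bu_J}^{\bu_K}}\,\e_{\bu_J}\,f\,\bbv^{(K)} \;=\; e_\bu\,\tau\,\sigma_{\bu_J}^{\bu_K}\!\left(\prod_{k=a+1}^{n}\prod_{l=1}^a\gt_{l,k}\cdot f\right)\bbv^{(K)}.
\end{eqnarray*}
For this I would rewrite $T^\sharp_{\tau\sigma_{\bu_J}^{\bu_K}}$ along a reduced expression compatible with the factorisation $\tau\cdot\sigma_{\bu_J}^{\bu_K}$, using the intertwining elements $\Phi_r$ to control the idempotent-permutation part. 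The transpositions appearing in a reduced expression of $\sigma_{\bu_J}^{\bu_K}$ swap adjacent positions occupied by \emph{distinct} colours of the current intermediate tuple (because $\sigma_{\bu_J}^{\bu_K}$ is the shortest merge of two already-sorted subsequences into the sorted order), so each intertwining step contributes the clean factor $\frac{X_r-qX_{r+1}}{X_{r+1}-X_r}$ via \eqref{firstformula}. These factors telescope to exactly $\prod_{k=a+1}^n\prod_{l=1}^a\gt_{l,k}$, and the remaining sum over $\tau$ reproduces the tautological action of $\bC^{d_1,\ldots,d_e}_{a_1,b_1,\ldots,a_e,b_e}$ on polynomials.

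The main obstacle will be handling the polynomial correction terms that arise when commuting $T_r^\sharp$ past $f$ and past the idempotents: these corrections are not individually clean, and only after summing over all $b\in D^K_{\emptyset,J}$ do they reorganise into the claimed product formula. The combinatorial identities of Section~\ref{Section6} (particularly Lemmas~\ref{tech3} and \ref{cosetsum}) are tailored for exactly this cancellation, so I expect the proof to proceed by induction on $b=n-a$ using the splitting of $\bC^{a+b}_{a,b}$ in Lemma~\ref{cosetsum}, with the inductive step reducing to the base-case computations established in Lemma~\ref{tech3}.
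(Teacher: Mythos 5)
Your proposal misses the intended argument entirely: this corollary is not meant to be proved from scratch, it is a one-line consequence of Proposition~\ref{mergeactionschur}, which is stated and proved immediately before it. That proposition already gives
$e_{\bu}\corho(\bb^1_{K,J})f\bbv^{(J)}=e_{\bu}\,\bC_{a_1,b_1,\ldots,a_e,b_e}^{d_1,\ldots,d_e}\,\sigma_{\bu_J}^{\bu_K}\prod_{k=a+1}^n\prod_{l=1}^a\gt_{l,k}\,f\bbv^{(K)}$, and the corollary is obtained simply by substituting the definition \eqref{twistedmerge}, $\bC^{K}_{J}=\sum_{w\in D^{K}_{\emptyset,J}}w$, applied to the parabolic pair $\sigma_{\bu_J}^{\bu_K}(I_{\bu_J})\subseteq I_{\bu_K}$. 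What you propose instead is a full independent re-derivation of Proposition~\ref{mergeactionschur} itself, i.e.\ you are re-proving the hard technical result of Section~\ref{Section6} in order to deduce its trivial reformulation.

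Taken on its own terms, your re-derivation also has a genuine gap. In your third paragraph you claim the term-by-term identity
$e_\bu T^\sharp_{\tau\sigma_{\bu_J}^{\bu_K}}\e_{\bu_J}f\bbv^{(K)}=e_\bu\,\tau\sigma_{\bu_J}^{\bu_K}\bigl(\prod_{k,l}\gt_{l,k}\cdot f\bigr)\bbv^{(K)}$,
but this is false for individual summands: commuting $T_r^\sharp$ past idempotents and polynomials produces correction terms (the $\beta$-type terms in \eqref{Phi}), and moreover strands carrying \emph{equal} colours do occur inside the blocks, where \eqref{firstformula} gives $0$ rather than a clean $\gt$-factor. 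Your fourth paragraph then concedes exactly this ("only after summing over all $b$ do they reorganise"), contradicting the third. The actual proof of Proposition~\ref{mergeactionschur} does proceed by induction via Lemma~\ref{cosetsum} and Lemma~\ref{tech3}, but through the specific decomposition \eqref{S1andS2} into the pieces $\qqoneone$, $\qqonetwo$, $\qqtwo$ and a delicate cancellation of Euler-class-type factors; your sketch names the right lemmas but does not supply this reorganisation, so as written it is not a proof. The correct and economical route is: cite Proposition~\ref{mergeactionschur} and unwind \eqref{twistedmerge}.
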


The proof of Proposition~\ref{mergeactionschur} is rather technical and occupies the rest of the subsection, proceeding by induction on $a+b$, with the base case being trivial.  We start with some preparations. 

\begin{remark}
\label{sigmas}
Note that $\sigma_{\bu_J}^{\bu_K}$ in \eqref{oursigma} factorizes as $\sigma_{\bu_J}^{\bu_K}=\sigma_1\sigma_2$  where
\begin{eqnarray*}
\sigma_1&=&\sigma^{{(1\mid 1^{d_1-1}, 2^{d_2},3^{d_3},\;\;\;\;\ldots\;\;\;\;\;\;, e^{d_e})}}_{(1\mid 1^{a_1},\ldots, e^{a_e}\mid 1^{b_1-1},2^{b_2},\ldots, e^{b_e})} \\
\sigma_2&=&\sigma^{(1^{a_1}\mid 1,2^{a_2},\ldots \;e^{a_e}\mid 1^{b_1-1},2^{b_2},\ldots, e^{b_e})}_{(1^{a_1}\mid 2^{a_2},\ldots, e^{a_e}\mid 1\mid 1^{b_1-1},2^{b_2},\ldots, e^{b_e})}\quad  = \quad s_{a_1+1}\cdots  s_a
\end{eqnarray*}
 Moreover, we have $\sigma_{\bu_J}^{\bu_K}=\sigma_3\sigma_4\sigma_5^{-1}$, where 
\begin{eqnarray*}
\sigma_3&=&\sigma^{(1\mid 1^{d_1-1},\;2^{d_2},\; 3^{d_3},\;\;\;\ldots\;\;\;\;\ldots,\;\; t^{d_t},\;\;\ldots\;\;\;\;\ldots\;\; e^{a_e} )}
_{(1\mid 1^{a_1-1},2^{a_2},\ldots ,t^{a_t+1},\ldots, e^{a_e}\mid 1^{b_1}, \ldots  t^{b_t-1},\; \ldots,\; e^{b_e}) }\\
\quad\sigma_4 &= &\sigma^{(1^{a_1},\ldots, t^{a_t+1},\ldots,e^{a_e}\mid 1^{b_1},  \ldots, t^{b_t-1},\ldots, e^{b_e}) }_{(1^{a_1},2^{a_2}, \;\: \ldots\;\ldots\:,\;e^{a_e}\mid t\mid 1^{b_1},  \ldots ,t^{b_t-1},\ldots ,e^{b_e}) }= s_{a_1+a_2+\cdots+a_t+1}\cdots s_{a-1}s_a\nonumber,\\
\sigma_5 &= &\sigma^{(1^{a_1}\ldots e^{a_e}|1^{b_1}\;\;\;\;\;  \ldots \;\;\;\;\;\;\;\;e^{b_e}) }_{(1^{a_1}\ldots e^{a_e}|t|1^{b_1}  \ldots t^{b_t-1}\ldots e^{b_e}) }= s_{(a+\sum_{j=1}^{t-1}b_j)} \cdots s_{a+2}s_{a+1},
\end{eqnarray*}
\end{remark}

By definition of the representation in Proposition~\ref{cfaithful} \eqref{faco}  we have
\begin{eqnarray*}
e_{\bu}\corho(\bb^1_{K,J})f\bbv^{(J)} \quad=\quad e_{\bu}\:\bM^{a+b}_{a,b}\:f\bbv^{(J)}&=:&\qq
\end{eqnarray*}
%In the following let us abbreviate $f\bbv^{(J)}$ as 
Applying Lemma~\ref{cosetsum} to the right hand side we obtain
\begin{eqnarray*}
\qq=e_{\bu}( \bM^{1,a+b-1}_{1,\:a,\;b-1}\;\bM^{a+1, b-1}_{a,\;1,\;b-1}   +\bM^{1,a+b-1}_{1,\;a-1,\;b} \left( 1-\bM^{a, b}_{a,1,b-1} \right)f\bbv^{(J)}.
\end{eqnarray*}
Note that $ \bM^{a, b}_{a,1,b-1} $ commutes past $f$ by Remark~\ref{movethrough}, and therefore we have $ \bM^{a, b}_{a,1,b-1}  f\bbv = \sum_{s=0}^{b-1}q^s f\bbv$, using Lemma~\ref{maxparab}, \eqref{eqhash} and $T_i\bbv=-\bbv$ by \eqref{easyformel}.
Altogether we obtain
\begin{eqnarray}
\label{S1andS2}
\qq& =& e_{\bu}\left( \bM^{1,a+b-1}_{1,a,b-1}\bM^{a+1, b-1}_{a,\;1,\;b-1}   - \bM^{1,a+b-1}_{1,\;a-1,\;b}\left(\sum_{s=1}^{b-1}q^s\right)\right)f\bbv^{(J)}.
\end{eqnarray}
We can then rewrite the two summands, which we denote by  $\qqone$ and $\qqtwo$ respectively,as in the following two lemmas.

\begin{lemma}
\label{S2}
The second summand in \eqref{S1andS2} equals 
\begin{eqnarray*}
\qqtwo&=&
-\left( \sum_{s=1}^{b-1}q^s  \right)  \bC^{1,d_1-1,\;\;\;\;d_2,\;\;\;\;\ldots, d_e}_{1,a_1-1,b_1,a_2,b_2,\ldots, a_e,b_e}   \sigma_{\bu_J}^{\bu_K}   \left(\prod_{k=a+1}^n\prod_{j=2}^a \gt_{j,k}\right)   f\bbv^{(K)}.
\end{eqnarray*}
where $\sigma_{\bu_J}^{\bu_K} $ is as in \eqref{oursigma}.
\end{lemma}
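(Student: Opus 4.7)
The strategy is to prove this lemma by induction on $a+b$, which is the outer induction for Proposition~\ref{mergeactionschur}. Since the scalar $\sum_{s=1}^{b-1} q^s$ already matches on both sides, the task reduces to showing
\begin{equation*}
e_{\bu}\, \bM^{1,a+b-1}_{1,a-1,b}\, f\bbv^{(J)}\; =\; \bC^{1, d_1-1, d_2, \ldots, d_e}_{1, a_1-1, b_1, a_2, b_2, \ldots, a_e, b_e}\, \sigma_{\bu_J}^{\bu_K} \left(\prod_{k=a+1}^{n}\prod_{j=2}^{a}\gt_{j,k}\right) f\bbv^{(K)}.
\end{equation*}
By Remark~\ref{movethrough}, the leading ``$1$'' in both indices of $\bM^{1,a+b-1}_{1,a-1,b}$ may be dropped: the operator acts as $\bM^{a+b-1}_{a-1,b}$ on strands $2, \ldots, n$, leaving strand $1$ untouched. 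Since $a+b-1 < a+b$, this smaller merge falls under the induction hypothesis.

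The next step is to transform $f\bbv^{(J)}$, whose sign element antisymmetrises over parts $(a,b)$, into a form amenable to the smaller merge, which expects antisymmetrisation over parts $(a-1,b)$ on strands $2, \ldots, n$. For this we use the standard coset decomposition
\begin{equation*}
\bbv_{\fS_a} \;=\; \bbv_{\fS'_{a-1}}\sum_{k=0}^{a-1}(-q)^{-k} T_{s_1 s_2 \cdots s_k},
\end{equation*}
where $\fS'_{a-1}$ permutes strands $2, \ldots, a$ and $s_1\cdots s_k$ runs over the shortest right-coset representatives of $\fS'_{a-1}\setminus\fS_a$. Substituting this into $f\bbv^{(J)}$ and commuting the factors $T_{s_1 \cdots s_k}^\sharp$ past $f$ via relations (H-6) and (H-7) (in their $\sharp$-twisted forms) produces precisely the polynomial factors $\gt_{1,j}$ for $j = 2, \ldots, a$ that are absent from the product $\prod_{k>a}\prod_{j\geq 2}\gt_{j,k}$ in the target.

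Applying the induction hypothesis of Proposition~\ref{mergeactionschur}, with $a$ replaced by $a-1$ and the first-block multiplicities shifted to $(a_1-1, a_2, \ldots, a_e)$, then yields on strands $2, \ldots, n$ an expression of the form
\begin{equation*}
\bC^{d_1-1, d_2, \ldots, d_e}_{a_1-1, b_1, a_2, b_2, \ldots, a_e, b_e}\, \sigma_1 \left(\prod_{k=a+1}^n\prod_{j=2}^a\gt_{j,k}\right) f\bbv^{(K)},
\end{equation*}
where $\sigma_1$ is the first factor in the decomposition $\sigma_{\bu_J}^{\bu_K} = \sigma_1\sigma_2$ from Remark~\ref{sigmas}. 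The idempotent $e_\bu$ selects precisely the summand in the coset expansion that positions strand $1$ correctly within $\bu_K$, and the residual permutation produced by this surviving term is exactly $\sigma_2 = s_{a_1+1}\cdots s_a$, so that the total permutation reassembles as $\sigma_{\bu_J}^{\bu_K}$. Finally, prepending the trivial first block ``$1$'' to both indices of $\bC$ does not change its value by Remark~\ref{movethrough}, giving $\bC^{1, d_1-1, d_2, \ldots, d_e}_{1, a_1-1, b_1, a_2, b_2, \ldots, a_e, b_e}$ as required.

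The main obstacle is the bookkeeping in the collapse of the coset sum under $e_\bu$: one has to identify the unique $k \in \{0, \ldots, a-1\}$ for which $T_{s_1\cdots s_k}$ lands strand $1$ in an idempotent component compatible with $\bu$, and verify that the accompanying scalar $(-q)^{-k}$ combines with the Hecke factors surviving from the commutations with $f$ to give precisely the advertised coefficient. The decomposition of $\sigma_{\bu_J}^{\bu_K}$ supplied by Remark~\ref{sigmas} provides the correct blueprint for matching conventions, while the auxiliary identities of Lemma~\ref{tech3} and the $\sharp$-versions of Lemma~\ref{erstesLemma} supply the computational tools for pushing $T^\sharp$-factors through polynomials and assembling the correct $\gt$-products.
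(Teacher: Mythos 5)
Your overall strategy is the right one and coincides with the paper's at its core: the coefficient $-\sum_{s=1}^{b-1}q^s$ is already in place from \eqref{S1andS2}, and the remaining task is to evaluate $e_{\bu}\bM^{1,a+b-1}_{1,a-1,b}f\bbv^{(J)}$ by the induction hypothesis of Proposition~\ref{mergeactionschur}, since $a+b-1<a+b$. But the middle of your argument takes a wrong turn. No coset decomposition of a sign vector over $\fS_{a-1}\subset\fS_a$ is needed (nor is there really such a vector to decompose: $\bbv^{(J)}$ is the fixed element $\bbv_\bbI$ with a formal superscript, and the antisymmetry lives in the invariance of $f$). Since $f$ is $\W_J$-invariant with $\W_J\cong\fS_a\times\fS_b$, it is a fortiori invariant under the smaller parabolic $\fS_{a-1}\times\fS_b$ acting on strands $2,\dots,n$, so $f\bbv^{(J)}$ already lies in the sum of the spaces $\coFS_\bi^{\bu_{J'}}$ over refinements $\bu_{J'}$ of type $(1,a-1,b)$, and the induction hypothesis applies verbatim. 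The only extra input is the idempotent analysis: left multiplication by $e_{\bu}$ kills every refinement except $(1|1^{a_1-1},2^{a_2},\dots,e^{a_e}|1^{b_1},\dots,e^{b_e})$, whose associated permutation is $\sigma_{\bu_J}^{\bu_K}$ itself (merely rewritten with the first entry split off from the first block), and the induction hypothesis then yields the symbol $\bC^{1,d_1-1,d_2,\dots,d_e}_{1,a_1-1,b_1,a_2,b_2,\dots,a_e,b_e}$ together with the product $\prod_{k=a+1}^{n}\prod_{j=2}^{a}\gt_{j,k}$ in one step.

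Three of your specific claims would derail the computation. First, the factors $\gt_{1,j}$, $j=2,\dots,a$, which you say the coset expansion ``produces'', must not appear: the target of this lemma contains no $\gt$ involving strand $1$, and the induction hypothesis applied to the parts $\{2,\dots,a\}$ and $\{a+1,\dots,n\}$ already gives exactly $\prod_{k=a+1}^{n}\prod_{j=2}^{a}\gt_{j,k}$, so nothing is missing and nothing may be added. (The factors genuinely absent from $\qqtwo$ relative to the full merge are $\gt_{1,k}$ for $k=a+1,\dots,n$; these only reappear when $\qqoneone$, $\qqonetwo$ and $\qqtwo$ are recombined later.) Second, the scalars $(-q)^{-k}$ from your coset sum cannot ``combine to give the advertised coefficient'': that coefficient was extracted before the lemma, and any further scalar would spoil the statement. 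Third, the factorisation $\sigma_{\bu_J}^{\bu_K}=\sigma_1\sigma_2$ of Remark~\ref{sigmas} is the one relevant to $\qqone$, where strand $1$ is split off from the $b$-block; in $\qqtwo$ it is split off from the $a$-block and $\sigma_{\bu_J}^{\bu_K}$ appears undecomposed. The fix is to delete the coset decomposition entirely and replace it by the short idempotent analysis.
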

\proof
First we analyse which idempotents $e$ can appear to the right of the merge for the result not to be annihilated by  $e_{\bu}$, i.e. 
$$\bM^{1,a+b-1}_{1,\;a-1,\;b}e\left(\sum_{s=1}^{b-1}q^s\right)f\bbv^{(J)}\neq 0.$$
%\begin{eqnarray*}
% e_{\bu_K} \bC^{1,d_1-1,\;\;\;\;d_2,\;\;\;\;\ldots, d_e}_{1,a_1-1,b_1,a_2,b_2,\ldots, a_e,b_e}   \sigma_{\bu_J}^{\bu_K} e\sum_{s=1}^{b-1}q^sf\bbv^{(J)}&\not=&0.
%\end{eqnarray*}
Clearly, any such $e$ must be a summand of $\e_{(1|1^{a_1-1}, 2^{a_2},\ldots , e^{a_e}|1^{b_1}, \ldots , e^{a_e})}.$  Moreover, note that $ \sigma_{\bu_J}^{\bu_K}=\sigma^{(1|1^{d_1-1}, 2^{d_2},\ldots , e^{d_e})}_{(1|1^{a_1-1}, 2^{a_2},\ldots , e^{a_e}|1^{b_1}, \ldots , e^{b_e})}$.
Then 
$$e_{\bu}\bM^{1,a+b-1}_{1,a-1,b} f\bbv^{(J)} =   \bC^{1,d_1-1,\;\;\;\;d_2,\;\;\;\;\ldots, d_e}_{1,a_1-1,b_1,a_2,b_2,\ldots, a_e,b_e}   \sigma_{\bu_J}^{\bu_K}f\bbv^{(J)}$$  by the inductive hypothesis, since  $a+b-1 < a+b$. 
\endproof

\begin{lemma}
\label{S1}
The first summand $\qqone$ in \eqref{S1andS2} equals 
\begin{eqnarray*}
%e&_{\bu_K} \bM^{1,a+b-1}_{1,a,b-1}\bM^{a+1, b-1}_{a,1,b-1} f\bbv^{(J)}\\
e_{\bu}\big[\bC_{1,a_1,b_1-1,a_2,b_2,\ldots,a_e,b_e}^{1,d_1-1,\;\;\;\;d_2,\;\;\;\;\ldots,d_e} \sigma_1 P 
\unclear \sigma_2   \prod_{i=1}^{a} \gt_{i,a+1}\big] f\bbv^{(K)} \\
 +\quad e_{\bu}
 \big[\sum_{t=2}^e \bC_{1, a_1-1,b_1,a_2,b_2,\ldots,a_{t}+1,b_{t}-1\ldots, a_e,b_e }^{1,d_1-1,\;\;\;\;d_2,\;\;\;\;\;\ldots,d_t,\;\;\:\;\;\;\;\;\;\;\;\;\;\;\;\;d_e} Z  \prod_{i=1}^{a} \gt_{i,a+1} \big] f\bbv^{(K)}&
\end{eqnarray*}
where $P=\prod_{k=a+2}^n\prod_{j=2}^{a+1} \gt_{j,k}$, and $Z=\sigma_3 P \gt_{j,k} \bC_{a_1,\ldots, a_t,1,a_{t+1},\ldots,a_e,b-1}^{a_1,\ldots, a_t+1,a_{t+1},\ldots,a_e,b-1} \sigma_4$. The elements $\sigma$ are as in Remark~\ref{sigmas}.
\end{lemma}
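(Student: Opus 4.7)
The plan is to argue by induction on $a+b$, following the same template as the proof of Lemma~\ref{S2}: apply Proposition~\ref{mergeactionschur} (the statement currently being proved) to each of the two inner merges $\bM^{a+1,b-1}_{a,1,b-1}$ and $\bM^{1,a+b-1}_{1,a,b-1}$ making up $\qqone$, processing them one at a time. Both have total width strictly less than $a+b$, so the inductive hypothesis is available.

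First, I will evaluate $\bM^{a+1,b-1}_{a,1,b-1}f\bbv^{(J)}$. Viewing this merge as acting only on the first $a+1$ strands (the trailing $b-1$ strands play no role here), it is a simple merge of the block of size $a$ with the singleton at position $a+1$. By the inductive hypothesis at width $a+1<a+b$, the result expands into a sum indexed by the colour $t\in\{1,\ldots,e\}$ carried by the incoming singleton. The case $t=1$ will correspond to the singleton (of colour $1$) being absorbed into the leading block of $a_1$ ones, making it of effective size $a_1+1$ next to the remaining $b_1-1$; this produces the interior merge $\unclear$. The case $t\geq 2$ will correspond to the singleton splitting off as a new sub-block, producing $\bC^{a_1,\ldots,a_t+1,a_{t+1},\ldots,a_e,b-1}_{a_1,\ldots,a_t,1,a_{t+1},\ldots,a_e,b-1}$. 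The common factor $\prod_{i=1}^{a}\gt_{i,a+1}$ appearing in the statement of Lemma~\ref{S1} is to be pulled out of this step.

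Next, I will apply $\bM^{1,a+b-1}_{1,a,b-1}$ to each of the two types of summand above. This outer merge has width $a+b-1<a+b$, so the inductive hypothesis applies once more. It will produce the outer merge factors $\bC_{1,a_1,b_1-1,a_2,b_2,\ldots,a_e,b_e}^{1,d_1-1,d_2,\ldots,d_e}$ in the $t=1$ case and $\bC_{1,a_1-1,b_1,\ldots,a_t+1,b_t-1,\ldots,a_e,b_e}^{1,d_1-1,d_2,\ldots,d_t,\ldots,d_e}$ in the $t\geq 2$ case, together with outer permutations to be identified, via Remark~\ref{sigmas}, as $\sigma_1\sigma_2$ and $\sigma_3\sigma_4\sigma_5^{-1}$ respectively. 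The polynomial $P=\prod_{k=a+2}^n\prod_{j=2}^{a+1}\gt_{j,k}$ arises as the analogue of $\prod_{k=a+1}^n\prod_{l=1}^a\gt_{l,k}$ on the refined partition produced by the first step, and combines with the extra $\gt_{j,k}$ factor that comes in the $t\geq 2$ case to give $Z$.

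The hardest part will be the combinatorial bookkeeping. Each individual application of the inductive hypothesis is conceptually straightforward, but one must carefully track how the idempotent $e_{\bu}$ propagates, how the partition data refine when a strand is split off or absorbed, and how $W_J$-invariant factors move past merges via Remark~\ref{movethrough}. The scalar identities of Lemma~\ref{tech3} will control the cancellations amongst the $\gt$-products that ensure the final answer assembles into precisely the two summands claimed. In particular, the $\sigma_5^{-1}$ twist in the $t\geq 2$ case will arise from reinserting the $t$-coloured strand into its correct location within the second block, which is the most delicate step to verify.
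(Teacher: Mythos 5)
Your proposal follows the paper's own proof: the paper likewise splits $\qqone$ by inserting the intermediate idempotents, distinguishes the case where the singleton strand carries colour $1$ from the cases $t\geq 2$, and invokes the inductive hypothesis (Proposition~\ref{mergeactionschur}) for each of the two shorter merges together with the factorisations of Remark~\ref{sigmas}. The only small misplacement is that Lemma~\ref{tech3} and the $\sigma_5^{-1}$ twist enter only in the subsequent simplification of \eqref{3}, not in the proof of Lemma~\ref{S1} itself, but this does not affect the validity of your approach.
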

\proof
To have $e_{\bu}\bM^{1,a+b-1}_{1,a,b-1}\e_{\bu_{1,a,b-1}}\not=0$ we need $\bu_{1,a,b-1}$ to be of the form $(t|\dots)$ for some $t\in\{1,2,\ldots, e\}$. 
We distinguish two cases, namely 
\begin{equation*}
e_{\bu}\;\bM^{1,a+b-1}_{1,a,b-1}\;  \e_{(1|1^{a_1}\ldots e^{a_e}|1^{b_1-1}2^{b_2}\ldots e^{b_e})} \; \bM^{a+1, b-1}_{a,1,b-1}\; \e_{(1^{a_1},\ldots, e^{a_e}|1|1^{b_1-1},2^{b_2},\ldots, e^{b_e})}
\end{equation*}
and 
 \begin{equation*}
e_{\bu}\;\bM^{1,a+b-1}_{1,a,b-1}\;  \e_{(t|1^{a_1}\ldots e^{a_e}|1^{b_1}\ldots t^{b_t-1}\ldots e^{b_e})}\;  \bM^{a+1, b-1}_{a,1,b-1}\;  \e_{(1^{a_1},\ldots, e^{a_e}|t|1^{b_1},\ldots, t^{b_t-1},\ldots, e^{b_e})}.
\end{equation*}
for $t=2,\ldots, e$.
Then the claim follows directly from the definition of the permutations $\sigma$ in Remark~\ref{sigmas} and the induction hypothesis.
\endproof

\begin{lemma} The second summand in Lemma~\ref{S1}, denoted by $\qqonetwo$,  equals
\begin{eqnarray}
\label{secondsummandofQ1}
e_{\bu}\sum_{t=2}^e \bC_{1, a_1-1,b_1,a_2,b_2,\ldots,a_t,1,b_t-1,a_{t+1} \ldots, a_e,b_e }^{1,d_1-1,\;\;\;\;d_2,\ldots,\;\;\;\;d_t,\;\;\;\;\;\;\;\;\;\;\;\;d_{t+1},\;\;\; d_e}\sigma_3\sigma_4 P  \prod_{i=1}^{a} \gt_{i,a+1} f\bbv^{(K)}.\quad
\end{eqnarray}
\end{lemma}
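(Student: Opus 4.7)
The plan is to use the associativity of merges from \eqref{associativity} to fuse the outer $\bC$ in the formula for $\qqonetwo$ from Lemma~\ref{S1} with the inner $\bC$ hidden inside $Z$, thereby producing the single finer merge $\bC^{1,d_1-1,d_2,\ldots,d_t,d_{t+1},\ldots,d_e}_{1,a_1-1,b_1,a_2,b_2,\ldots,a_t,1,b_t-1,a_{t+1},\ldots,a_e,b_e}$ appearing in the claim. This refinement is precisely the splitting of the $(a_t+1)$-block in the outer's bottom composition into an $a_t$-block followed by a singleton, which is exactly what the inner $\bC$ inside $Z$ prescribes on its slice.

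Concretely, I would substitute the explicit form $Z=\sigma_3\, P\, \gt_{j,k}\, \bC^{a_1,\ldots,a_t+1,a_{t+1},\ldots,a_e,b-1}_{a_1,\ldots,a_t,1,a_{t+1},\ldots,a_e,b-1}\, \sigma_4$ into the expression from Lemma~\ref{S1} and rearrange the factors using Remark~\ref{movethrough}: invariant polynomials commute with the merges they are invariant under, so both $P$ and the lone $\gt_{j,k}$ can be pushed past the inner merge and the permutation $\sigma_4$, leaving a clean product of two $\bC$'s joined by $\sigma_3\sigma_4$. After the $\sigma_4$-conjugation places the inner merge into the correct ambient positions inside the group carrying the outer merge, the top composition $(a_1,\ldots,a_t+1,\ldots,a_e,b-1)$ of the inner $\bC$ aligns, block-for-block, with the relevant sub-chunk of the outer's bottom $(a_1-1,b_1,a_2,b_2,\ldots,a_t+1,b_t-1,\ldots,a_e,b_e)$, so \eqref{associativity} fuses the two into the advertised finer merge. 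Gathering the surviving permutations as $\sigma_3\sigma_4$ and the polynomials as $P\prod_{i=1}^a \gt_{i,a+1}$ then yields the claimed form.

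The main obstacle is the combinatorial bookkeeping, rather than any deep calculation: one must verify that after the $\sigma_4$-conjugation the bottom composition of the outer merge indeed aligns with the top of the inner $\bC$ so that associativity applies verbatim, and one must track the indices of the lone factor $\gt_{j,k}$ in $Z$ carefully, so that it either combines with $P$ without residue or cancels against a symmetric counterpart produced by the inner $\bC$. A minor secondary check is that the $\sigma_3$ on the left really does commute past the outer $\bC$ in the relevant way, which follows from the fact that $\sigma_3$ permutes positions already lying in a single block of the outer's top composition. Once these identifications are in place, the identity follows immediately from \eqref{associativity} together with Remark~\ref{movethrough}.
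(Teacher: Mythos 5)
Your proposal follows essentially the same route as the paper: the paper likewise rewrites $Z$ by pushing the ($\sigma_4$-invariant) polynomial $P$ past the inner merge and $\sigma_4$ via Remark~\ref{movethrough}, checks that $\sigma_3$ commutes with the inner merge so that it can be re-indexed into ambient block positions, and then fuses the outer and inner merges by the associativity formula \eqref{associativity}. The only cosmetic difference is that the paper attributes the re-indexing of the inner merge to the $\sigma_3$-commutation rather than to a $\sigma_4$-conjugation, but the substance of the argument is identical.
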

\proof
We first rewrite the term $Z$ appearing in Lemma~\ref{S1} as
\begin{eqnarray*}
&\sigma_3 P
\bC_{a_1,\ldots, a_t,\;1,\;a_{t+1},\ldots,a_e,b-1}^{a_1,\ldots, a_t+1,a_{t+1},\ldots,a_e,b-1} \sigma_4
\quad =\quad\sigma_3\bC_{a_1,\ldots, a_t,\;1,\;a_{t+1},\ldots,a_e,b-1}^{a_1,\ldots, a_t+1,a_{t+1},\ldots,a_e,b-1} \sigma_4P &\\
&=\quad\bC_{a_1,b_1,\ldots, a_t,1,b_t-1,a_{t+1},b_{t+1},\;\ldots,\;a_e,b_e}^{a_1,b_1,\ldots, a_t+1,b_t-1,a_{t+1},b_{t+1},\ldots, a_e,b_e}\sigma_3\sigma_4 P&
\end{eqnarray*}
where the first equality uses that  $P=\prod_{k=a+2}^n\prod_{j=2}^{a+1} \gt_{j,k}$ is $\sigma_4$-invariant and Remark~\ref{movethrough}. For the second equality one checks that $\sigma_3$ from Remark~\ref{sigmas} commutes with $\bC_{a_1,\ldots, a_t,\;1,\;a_{t+1},\ldots,a_e,b-1}^{a_1,\ldots, a_t+1,a_{t+1},\ldots,a_e,b-1}$. The claim follows by substituting this into the formula in Lemma~\ref{S1} and using associativity of merges.
\endproof
\begin{lemma} The first summand in Lemma~\ref{S1},  denoted by $\qqoneone$,  equals
\label{firstsummandofQ1}
\begin{eqnarray*}
\qqoneone&=&e_{\bu}\big[\bC_{1,d_1-1,a_2,b_2,\ldots,a_e,b_e}^{1,d_1-1,d_2,\;\;\;\;\ldots,d_e}
\bC_{1,a_1-1,1,b_1-1,a_2,b_2,\ldots,a_e,b_e}^{1,d_1-1,\;\;\;\;\;\;\;\;\;\;\;a_2,b_2,\ldots,a_e,b_e} \sigma_{\bu_J}^{\bu_K}P
  \\&+&\bC_{1,a_1,b_1-1,a_2,b_2,\ldots,a_e,b_e}^{1,d_1-1,\;\;\;\;d_2,\;\;\;\;\ldots,d_e}
\!\!\!(s_1s_2\cdots s_{a_1}\sigma_{\bu_J}^{\bu_K})\prod_{k=a+2}^n\prod_{j=1}^{a} \gt_{j,k}       \big] \prod_{i=1}^{a} \gt_{i,a+1}f\bbv^{(K)}
\end{eqnarray*}
\end{lemma}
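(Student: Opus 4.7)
The plan is to apply the splitting identity \eqref{specialcase} from Lemma~\ref{cosetsum} to the ``inner'' merge $\unclear$ appearing in the expression for $\qqoneone$ given by Lemma~\ref{S1}. Since $\unclear = \bC^{a_1+1, b_1-1, a_2, b_2, \ldots, a_e, b_e}_{a_1, 1, b_1-1, a_2, b_2, \ldots, a_e, b_e}$ acts only on the positions corresponding to the first two blocks (of sizes $a_1$ and $1$) and leaves the remaining blocks untouched, \eqref{specialcase} gives
$$\unclear \;=\; s_1 s_2 \cdots s_{a_1} \;+\; \uunclear.$$
Substituting this decomposition into the formula for $\qqoneone$ produces two summands, which I plan to identify, in order, with the two terms claimed in the lemma.

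For the summand containing $\uunclear$, the strategy is to apply the associativity of merges from \eqref{associativity} to refactor
$$\bC^{1, d_1-1, d_2, \ldots, d_e}_{1, a_1, b_1-1, a_2, b_2, \ldots, a_e, b_e} \cdot \uunclear \;=\; \bC^{1, d_1-1, d_2, \ldots, d_e}_{1, d_1-1, a_2, b_2, \ldots, a_e, b_e} \cdot \bC^{1, d_1-1, a_2, b_2, \ldots, a_e, b_e}_{1, a_1-1, 1, b_1-1, a_2, b_2, \ldots, a_e, b_e}.$$
To bring the outer merge next to $\uunclear$, I plan to commute $\sigma_1$ past $P$ and $\uunclear$, exploiting the $\fS_{I_{\bu_J}}$-invariance of $P$ (cf.\ Remark~\ref{movethrough}) and absorbing the remaining shuffle into the idempotent $e_\bu$. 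Combining $\sigma_1$ with $\sigma_2$ via the factorisation $\sigma_{\bu_J}^{\bu_K} = \sigma_1 \sigma_2$ from Remark~\ref{sigmas} then yields the first target summand, with the polynomial factor $P \prod_{i=1}^a \gt_{i,a+1}$ unchanged.

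For the summand containing $s_1 s_2 \cdots s_{a_1}$, this permutation combines with $\sigma_2 = s_{a_1+1} \cdots s_a$ to give $s_1 s_2 \cdots s_a$, and together with $\sigma_1$ produces the factor $s_1 s_2 \cdots s_{a_1}\,\sigma_{\bu_J}^{\bu_K}$ appearing in the target (modulo commutation of $\sigma_1$ through the cycle, which shuffles only positions $\leq a_1+1$ and so commutes with the part of $\sigma_1$ affecting positions $> a_1+1$ after projection by $e_\bu$). The cycle $s_1 s_2 \cdots s_{a_1}$ acts on $P = \prod_{k=a+2}^n \prod_{j=2}^{a+1} \gt_{j,k}$ by cyclically relabelling the indices in $\{1, \ldots, a_1+1\}$, and together with its action on $\prod_{i=1}^a \gt_{i,a+1}$ this polynomial reorganises into $\prod_{k=a+2}^n \prod_{j=1}^{a} \gt_{j,k} \cdot \prod_{i=1}^a \gt_{i,a+1}$, matching the claim.

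The main technical obstacle is the polynomial bookkeeping in the second summand: one must carefully verify that the action of the cycle $s_1 s_2 \cdots s_{a_1} = (1\,2\,\cdots\,a_1+1)$ on the product $P\cdot\prod_{i=1}^a\gt_{i,a+1}$ produces exactly the target polynomial $\prod_{k=a+2}^n\prod_{j=1}^{a}\gt_{j,k}\cdot\prod_{i=1}^a\gt_{i,a+1}$. Concretely, the cycle removes the row of factors $\gt_{2,k}$ from $P$ and reintroduces $\gt_{1,k}$ for $k \geq a+2$ in its stead, while the factor $\prod_{i=1}^a\gt_{i,a+1}$ gets permuted among itself (the cycle fixing $a+1$ modulo a relabelling that the subsequent idempotent $e_\bu$ absorbs). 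This is a routine but slightly intricate rewriting in the spirit of Lemma~\ref{tech3}.
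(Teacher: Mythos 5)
Your overall route is the paper's own: split $\unclear=\uunclear+s_1\cdots s_{a_1}$ via \eqref{specialcase}, treat the $\uunclear$-summand by pushing the polynomial to the right, commuting $\sigma_1$ with $\uunclear$ and invoking associativity of merges, and treat the other summand via the identity $\sigma_1 s_1\cdots s_a=s_1\cdots s_{a_1}\sigma_{\bu_J}^{\bu_K}$. The first summand is handled correctly in substance (modulo the harmless misstatement that one commutes $\sigma_1$ "past $P$": in fact $P$ is moved rightwards past $\uunclear$ and $\sigma_2$, using only its symmetry in the positions $2,\dots,a+1$, while $\sigma_1$ stays to the left of $P$ throughout).

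The step you yourself flag as the main technical obstacle is, however, carried out incorrectly. In the expression from Lemma~\ref{S1} the factor $\prod_{i=1}^{a}\gt_{i,a+1}$ already sits to the right of $\unclear\sigma_2$, so it is never conjugated by anything; the only polynomial that must cross a permutation is $P=\prod_{k=a+2}^{n}\prod_{j=2}^{a+1}\gt_{j,k}$, and it crosses the \emph{full} cycle $s_1\cdots s_a=(s_1\cdots s_{a_1})\sigma_2$, not the short cycle $s_1\cdots s_{a_1}$. Since $(s_1\cdots s_a)^{-1}$ sends $\{2,\dots,a+1\}$ to $\{1,\dots,a\}$, one gets $P\,s_1\cdots s_a=s_1\cdots s_a\prod_{k=a+2}^{n}\prod_{j=1}^{a}\gt_{j,k}$, i.e.\ the row $\gt_{a+1,k}$ is traded for $\gt_{1,k}$. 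Conjugating $P$ by $s_1\cdots s_{a_1}$ as you propose produces the index set $\{1,\dots,a_1\}\cup\{a_1+2,\dots,a+1\}$ rather than $\{1,\dots,a\}$, and your verbal description (trading $\gt_{2,k}$ for $\gt_{1,k}$) matches neither; conjugating $\prod_{i=1}^{a}\gt_{i,a+1}$ by $s_1\cdots s_a$ would moreover alter that factor, since $s_1\cdots s_a$ does not fix $a+1$. None of these discrepancies is absorbed by the idempotent $e_{\bu}$, which only selects components and does not identify distinct polynomials. The repair is exactly the paper's bookkeeping: move $P$ alone across $s_1\cdots s_a$ and leave $\prod_{i=1}^{a}\gt_{i,a+1}$ untouched.
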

We first use the special case \eqref{specialcase} of Lemma ~\ref{cosetsum} to write 
\begin{eqnarray}
\unclear& =&  \uunclear+s_1\cdots s_{a_1}
  \end{eqnarray}
 
 This element obviously commutes with $P=\prod_{k=a+2}^n\prod_{j=2}^{a+1} \gt_{j,k}$ by (H-7). The same holds for 
 $\sigma_2$. Hence $\qqoneone$ equals 
\begin{eqnarray}
&&e_{\bu}\big[\bC_{1,a_1,b_1-1,a_2,b_2,\ldots,a_e,b_e}^{1,d_1-1,\;\;\;\;d_2,\;\;\;\;\ldots,d_e} \sigma_1 P\unclear \sigma_2        \big]w    \nonumber \\ [\dist] 
 &=&e_{\bu}\bC_{1,a_1,b_1-1,a_2,b_2,\ldots,a_e,b_e}^{1,d_1-1,\;\;\;\;d_2,\;\;\;\;\ldots,d_e} \sigma_1  \big[\uunclear \sigma_2 P+Ps_1\cdots s_a     \big] w\nonumber \\ [\dist] 
%  \\&&\quad \quad\quad\quad\quad \cdot\quad 
% \prod_{i=1}^{a} \gt_{i,a+1}     f\bbv^{(K)}\\
% &\bC_{a_1,1,n-(a_1+1)}^{a_1+1,n-(a_1+1)}  \sigma_2   \prod_{i=1}^{a} \gt_{i,a+1}         f\bbv^{(K)}\\
 &=&e_{\bu}\big[\bC_{1,a_1,b_1-1,a_2,b_2,\ldots,a_e,b_e}^{1,d_1-1,\;\;\;\;d_2,\;\;\;\;\ldots,d_e} \sigma_1  \uunclear \sigma_2P   
  \\ [\dist] &&\quad +\bC_{1,a_1,b_1-1,a_2,b_2,\ldots,a_e,b_e}^{1,d_1-1,\;\;\;\;d_2,\;\;\;\;\ldots,d_e}
 \sigma_1 
s_1\cdots s_a \prod_{k=a+2}^n\prod_{j=1}^{a} \gt_{j,k}       \big] w\label{middle}
\end{eqnarray}
where we have abbreviated  $w= \prod_{i=1}^{a} \gt_{i,a+1}f\bbv^{(K)}$, and, for the last equality, used that  $s_1\cdots s_a$ maps the set $\{1,2,\ldots, a\}$ to $\{2,3,\ldots, a+1\}$. 

Next, observe that $ \sigma_1$ and $\uunclear$ commute. Therefore, 
\begin{eqnarray}
&&\bC_{1,a_1,b_1-1,a_2,b_2,\ldots,a_e,b_e}^{1,d_1-1,\;\;\;\;d_2,\;\;\;\;\ldots,d_e} \sigma_1\uunclear \sigma_2\nonumber\\ [\dist] 
&=&\bC_{1,a_1-1,1,b_1-1,a_2,b_2,\ldots,a_e,b_e}^{1,d_1-1,\;\;\;\;\;\;\;\;\;\;\;\;d_2,\;\;\;\;\ldots,d_e} \sigma_{\bu_J}^{\bu_K}\nonumber\\[\dist] 
&=&
\bC_{1,d_1-1,a_2,b_2,\ldots,a_e,b_e}^{1,d_1-1,d_2,\;\;\;\;\ldots,d_e}
\bC_{1,a_1-1,1,b_1-1,a_2,b_2,\ldots,a_e,b_e}^{1,d_1-1,\;\;\;\;\;\;\;\;\;\;\;a_2,b_2,\ldots,a_e,b_e} \sigma_{\bu_J}^{\bu_K}
\label{end}
\end{eqnarray}
by the associativity for merges and the factorisation from Remark~\ref{sigmas} for the first equality and again associativity for the last equality. The claim follows by substituting \eqref{end} into \eqref{middle} and  using $\sigma_1 s_1\cdots s_a=s_1s_2\cdots s_{a_1}\sigma_{\bu_J}^{\bu_K}$.
\endproof
Altogether, the left hand side of the asserted formula in Proposition~\ref{mergeactionschur} equals
\begin{eqnarray}
\label{action}
e_{\bu}\corho(\bb^1_{K,J})f\bbv^{(J)}&=&\qqoneone+\qqonetwo+\qqtwo\nonumber\\
&=&e_{\bu}\bC_{1,d_1-1,a_2,b_2,\ldots,a_e,b_e}^{1,d_1-1,d_2,\;\;\;\;\ldots,d_e}Y\prod_{k=a+1}^n\prod_{j=2}^{a} \gt_{j,k}  f\bbv^{(K)},\quad\quad
\end{eqnarray}
where $Y=\eqref{no1}+\eqref{no2}+\eqref{3}-\eqref{4}$ with the summands given by

\begin{eqnarray}
&&\bC_{1,a_1-1,1,b_1-1,a_2,b_2,\ldots,a_e,b_e}^{1,d_1-1,\;\;\;\;\;\;\;\;\;\;\;a_2,b_2,\ldots,a_e,b_e} \sigma_{\bu_J}^{\bu_K}\gt_{1,a+1} \prod_{k=a+2}^{a} \gt_{a+1,k}\label{no1}\\
&& \bC_{1,a_1,b_1-1,a_2,b_2,\ldots,a_e,b_e}^{1,d_1-1,\;\;\;\;a_2,b_2,\ldots,a_e,b_e}
 \sigma_1 s_1\cdots s_a \prod_{k=a+1}^n \gt_{1,k}  \label{no2}\\
 &&\sum_{t=2}^e \bC_{1, a_1-1,b_1,a_2,b_2,\ldots,a_t,1,b_t-1,a_{t+1} \ldots,  a_e,b_e }^{1,d_1-1,\;\;\;\;a_2,b_2,\ldots, \;\;\;\;\;\;\;\;\;\;\;\;\;\;\;\;\;\;\;\;\;\;\; \;\;\;\;\;a_e,b_e}\sigma_3\sigma_4 \gt_{1,a+1} \prod_{k=a+2}^{a} \gt_{a+1,k}\quad\quad\label{3}\\
 &&\left( \sum_{s=1}^{b-1}q^s  \right)  \bC^{1,d_1-1,\;\;\;\;a_2,b_2,\ldots, a_e,b_e}_{1,a_1-1,b_1,a_2,b_2,\ldots, a_e,b_e}.    \label{4}
\end{eqnarray}

We rewrite \eqref{3}. First $\sum_{t=2}^e 
\bC_{1, a_1-1,b_1,a_2,b_2,\ldots,a_t,1,b_t-1,a_{t+1} \ldots, a_e,b_e }^{1,d_1-1,\;\;\;\;a_2,b_2,\ldots, \;\;\;\;\;\;\;\;\;\;\;\;\;\;\;\;\;\;\;\;\; \;\;\;\;\;a_e,b_e}\sigma_3\sigma_4$ equals
\begin{eqnarray}
&&\bC_{1, a_1-1,b_1 }^{1,d_1-1}\sum_{t=2}^e \bC_{1, a_1-1,b_1,a_2,b_2,\ldots,a_t,1,b_t-1,\ldots, a_e,b_e }^{1, a_1-1,b_1,a_2,b_2,\ldots,a_t,b_t, \ldots,a_e,b_e}\sigma_{\bu_J}^{\bu_K}\sigma_5 \nonumber\\
&=&\bC_{1, a_1-1,b_1 }^{1,d_1-1}\sigma_{\bu_J}^{\bu_K} \sum_{t=2}^e\bC_{a+b_1+\cdots b_{t-1}, 1,b_t-1, b-(b_1+\cdots +b_{t})}^{a+b_1+\cdots b_{t-1}, b_t, b-b_1-\cdots - b_{t}} \sigma_5\nonumber\\
&=&\bC_{1, a_1-1,b_1 }^{1,d_1-1}\sigma_{\bu_J}^{\bu_K} \sum_{t=2}^e \sum_{l=0}^{b_t-1} s_{a+\sum_{j=1}^{t-1}b_j+l}\cdots   s_{a+\sum_{j=1}^{t-1}b_j}\cdots s_{a+2}s_{a+1}\nonumber\\
&=&\bC_{1, a_1-1,b_1 }^{1,d_1-1}\sigma_{\bu_J}^{\bu_K} \sum_{l=b_1}^{b-1}s_ls_{l-1}\cdots s_{a+1}\nonumber\\
&=&\bC_{1, a_1-1,b_1 }^{1,d_1-1}\sigma_{\bu_J}^{\bu_K}(\bC_{a,1,b-1}^{a,b}-\bC_{a,1,b_1-1,b-b_1}^{a,b_1,b-b_1}),
\end{eqnarray}
where for the first equality we have used Remark~\ref{sigmas} and rewritten the merge as a product of two (non-interacting) merges, for the second equality the commutativity of $\sigma_{\bu_J}^{\bu_K}$ with the respective merges, for the third equality Lemma~\ref{maxparab} and the definition of $\sigma_5$, and finally for the last equality formula \eqref{specialcase}. 

On the other hand,  by Lemma~\ref{tech3} we have 
\begin{eqnarray}
\bC_{a,1,b-1}^{a,b}\gt_{1,a+1}\left( \prod_{k=a+2}^{n} \gt_{a+1,k}\right)-\left( \sum_{s=1}^{b-1}q^s \right)
&=&\prod_{k=a+1}^{n} \gt_{1,k}.
\end{eqnarray}
 which simplifies \eqref{3}+\eqref{4} further. Altogether we obtain 

\begin{eqnarray}\label{eq301}
Y&=&\bC_{1,a_1-1,1,b_1-1,a_2,b_2,\ldots,a_e,b_e}^{1,d_1-1,\;\;\;\;\;\;\;\;\;\;\;a_2,b_2,\ldots,a_e,b_e} \sigma_{\bu_J}^{\bu_K}\gt_{1,a+1} \prod_{k=a+2}^{n} \gt_{a+1,k}\label{84}\\
&&+\quad\bC_{1,a_1,b_1-1,a_2,b_2,\ldots,a_e,b_e}^{1,d_1-1,\;\;\;\;a_2,b_2,\ldots,a_e,b_e}
s_1s_2\cdots s_{a_1}\sigma_{\bu_J}^{\bu_K}\prod_{k=a+1}^n \gt_{1,k}  \label{85}\\
&&+\quad  \bC^{1,d_1-1,\;\;\;\;a_2,b_2,\ldots, a_e,b_e}_{1,a_1-1,b_1,a_2,b_2,\ldots, a_e,b_e}    
\end{eqnarray}
\begin{eqnarray}
&&+\quad  \bC^{1,d_1-1,\;\;\;\;a_2,b_2,\ldots, a_e,b_e}_{1,a_1-1,b_1,a_2,b_2,\ldots, a_e,b_e}    \sigma_{\bu_J}^{\bu_K}\prod_{k=a+1}^{n} \gt_{1,k}\label{86}\\
&&-\quad\bC_{1, a_1-1,b_1 }^{1,d_1-1}\sigma_{\bu_J}^{\bu_K}\bC_{a,1,b_1-1,b-b_1}^{a,b_1,b-b_1}\gt_{1,a+1} \prod_{k=a+2}^{n} \gt_{a+1,k}.\quad\label{87}
\end{eqnarray}
Since $\sigma_{\bu_J}^{\bu_K}\bC_{a,1,b_1-1,b-b_1}^{a,b_1,b-b_1}\; =\; \bC_{a_1,1,b_1-1}^{a_1,b_1}\sigma_{\bu_J}^{\bu_K}\;=\; \bC_{1,a_1-1,1,b_1-1}^{1,a_1-1,b_1}\sigma_{\bu_J}^{\bu_K}$, 
the terms \eqref{84} and \eqref{87} cancel.  Applying Lemma~\ref{cosetsum} to \eqref{85}+\eqref{86} gives 
\begin{eqnarray}
Y&=&\bC_{a_1,b_1,a_2,b_2,\ldots,a_e,b_e}^{d_1,\;\;\;\;a_2,b_2,\ldots,a_e,b_e}\sigma_{\bu_J}^{\bu_K}\prod_{k=a+1}^n \gt_{1,k}. 
\end{eqnarray}
Substituting this back into \eqref{action}, we obtain that $e_{\bu_K}\corho(\bb^1_{K,J})f\bbv^{(J)}$ equals
\begin{eqnarray*}
%&&e_{\bu_K}\corho(\bb^1_{K,J})f\bbv^{(J)}\\&=&
e_{\bu_K}\bC_{d_1,a_2,b_2,\ldots,a_e,b_e}^{d_1-1,d_2,\;\;\;\;\ldots,d_e}\;\bC_{a_1,b_1,a_2,b_2,\ldots,a_e,b_e}^{d_1,\;\;\;\;a_2,b_2,\ldots,a_e,b_e}\sigma_{\bu_J}^{\bu_K}\prod_{k=a+1}^n\prod_{j=1}^{a} \gt_{j,k}  f\bbv^{(K)}.
\end{eqnarray*}
The associativity property of merges gives finally the desired formula from Proposition~\ref{mergeactionschur}. This finishes the proof of  Proposition~\ref{mergeactionschur}.
\subsection{A refined generating set of the affine Schur algebra $\cS$}

In this section, we improve on our generating set for the algebra $\cS$ from Corollary~\ref{corgenerating}.
%Our goal is to show that $\boldsymbol{\tau}$ is in fact an isomorphism of algebras. The idea of proof is to verify that its image contains a generating set of the algebra  $\ccS_\bi$. Since for some of our previously constructed generators the action on the faithful representation is hard to describe we first deduce an alternative set of generators. 
\begin{proposition}
\label{generatingbetter}
The algebra $\cS$ is generated by
\begin{equation}
\{\bb^1_{K_2,K_1}, \bb^p_{J,J}\mid   K_1,K_2 \subseteq \bbI, p \in \DM_{I_{\bu_J}}\}.
\end{equation}
In other words, the algebra is generated by the subalgebra $\cQ$ from Proposition~\ref{Qsub} and the splits and merges $\bb^1_{K_1,K_2}$.
\end{proposition}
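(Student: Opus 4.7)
By Corollary~\ref{corgenerating}, $\cS$ is generated by the family $\{\bb^1_{K,J}\}$ together with $\{\bb^w_{wJ,J}:w\in\W,\,|wJ|=|J|\}$. Since the $\bb^1_{K_2,K_1}$ are already in the proposed set, it suffices to show each $\bb^w_{wJ,J}$ lies in the subalgebra $A$ generated by $\{\bb^1_{K_2,K_1}\}\cup\{\bb^p_{J,J}\}$. I may assume $w$ is the shortest element of $W_{wJ}wW_J$; by Lemma~\ref{dwp}, write $w=\sigma p$ with $\sigma\in\fS$, $p\in\fP^{W_J}\subseteq\DMJ$, and $\sigma J=wJ$. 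Evaluating both sides on $\bv_J$ and using Lemma~\ref{pmult} (which gives $\bb^p_{J,J}(\bv_J)=\bv_J p$ because $p$ is $W_J$-invariant), together with $\cH$-linearity and the length additivity $\ell(vp)=\ell(v)+\ell(p)$ for $v\in W_{\sigma J}\sigma W_J$ (consequence of $w$ being shortest), one directly obtains
\[
\bb^w_{wJ,J}\;=\;\bb^\sigma_{\sigma J,J}\circ\bb^p_{J,J}.
\]
Since $\bb^p_{J,J}\in\cQ\subseteq A$, the problem reduces to showing $\bb^\sigma_{\sigma J,J}\in A$ for every $\sigma\in\fS$ with $|\sigma J|=|J|$.

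For the remaining $\sigma\in\fS$ piece, the plan is to combine the base-change identity~\eqref{basechange} from the proof of Proposition~\ref{Propschurbasis2} with the factorization~\eqref{fullfactor}, namely $\bb^\sigma_{K_2,K_1}=\bb^1_{K_2,\sigma J}\bb^\sigma_{\sigma J,J}\bb^1_{J,K_1}$, which is valid whenever $K_1\supseteq J$ and $K_2\supseteq\sigma J$. For fixed such $K_1,K_2$ and varying $p_0\in\DMJ$, the composition $\bb^1_{K_2,\sigma J}\bb^\sigma_{\sigma J,J}\bb^{p_0}_{J,J}\bb^1_{J,K_1}$ expands, by~\eqref{basechange}, as $c\,\bb^\sigma_{K_2,K_1}\cdot g_{p_0}+\sum_{d'>\sigma}c_{d'}\bb^{d'}_{K_2,K_1}\cdot(\text{correction})$ with nonzero leading coefficient $c$. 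A descending induction on the Bruhat order of $d'$, combined with the explicit merge/split formulas of Proposition~\ref{mergeactionschur} which decompose the action of $\bb^1_{K,J}$ on the faithful representation into a pure permutation $\sigma_{\bu_J}^{\bu_K}$ times a polynomial product of $\gt_{l,k}$'s (the latter being precisely the action of an element of $\cQ$), then allows us to isolate the permutation component and express $\bb^\sigma_{\sigma J,J}$ as a combination of products of $\bb^1$'s and elements of $\cQ$, keeping us inside $A$.

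The principal obstacle is that the naive composition $\bb^1_{\sigma J,\emptyset}\bb^\sigma_{\emptyset,\emptyset}\bb^1_{\emptyset,J}$ evaluates to $[W_J]_q\cdot\bb^\sigma_{\sigma J,J}$, where $[W_J]_q=\sum_{u\in W_J}q^{\ell(u)}$ is the Poincar\'e polynomial of $W_J$ at $q$. Even under the running assumption $q\ne 1$ in $\FF$, this scalar factor can vanish in $\FF$ when $q$ is a root of unity whose multiplicative order is at most the size of a symmetric-group component of $W_J$, so one cannot simply invert it. The Bruhat-triangular strategy above is designed precisely to bypass this division by non-units, rebuilding $\bb^\sigma_{\sigma J,J}$ combinatorially via the formulas of Section~\ref{Section6}; this is the main technical content of the proof.
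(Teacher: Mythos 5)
Your first reduction --- splitting off the translation part so that only the finite-Weyl-group elements $\bb^\sigma_{\sigma J,J}$ with $\sigma\in\fS$ remain to be generated --- is sound, and it is essentially what the paper achieves by starting from Proposition~\ref{Propschurbasis2} rather than Corollary~\ref{corgenerating}: in that basis the $\fP$-part is already isolated as a separate factor $\bb^p_{J,J}\in\cQ$. You also correctly diagnose the real difficulty, namely that composing a merge with a split reintroduces Poincar\'e-polynomial factors $\sum_{u\in \W_J}q^{\ell(u)}$ which may vanish in $\FF$, so one cannot simply divide.

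The gap is in your second step. The elements you propose to expand triangularly, $\bb^1_{K_2,\sigma J}\bb^\sigma_{\sigma J,J}\bb^{p_0}_{J,J}\bb^1_{J,K_1}$, contain the unknown $\bb^\sigma_{\sigma J,J}$ as a factor and are therefore not known to lie in the subalgebra $A$; the identity \eqref{basechange} is a change of basis between two bases of $\cS$, \emph{both} of which involve the elements you are trying to eliminate, so it provides no mechanism for manufacturing $\bb^\sigma_{\sigma J,J}$ out of splits, merges and $\cQ$. (Moreover, the word $\bb^1_{\sigma J,\emptyset}\bb^\sigma_{\emptyset,\emptyset}\bb^1_{\emptyset,J}$ in your ``obstacle'' paragraph uses $\bb^\sigma_{\emptyset,\emptyset}$, i.e.\ left multiplication by $T_\sigma$ on $\cH$, which is not among the proposed generators, so that computation is not the relevant one.) What is needed, and what the paper supplies, is an explicit recursion inside $\cH$: one first reduces to the case where $\sigma$ interchanges two adjacent blocks of $J$, so that after restricting to those blocks one may take $J=\bbI\setminus\{a\}$; one then observes that $\bb^1_{\bbI,J}(\bv_J)=(\bM^{a+b}_{a,b})^\sharp\bv_J$, whose top Bruhat term is exactly $T_d$ with $\bb^d_{dJ,J}(\bv_J)=\bv_{dJ}T_d$, and proves by induction on $n=a+b$ that the truncation $\overline{\bM^{a+b}_{a,b}}=\bM^{a+b}_{a,b}-T_d^\sharp$ acts by an element of $A$. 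The induction closes because of \eqref{f2} together with the length-additive factorization $d=d_1d_2$ of the top term, which yields $T_d^\sharp=T_{d_1}^\sharp T_{d_2}^\sharp$ and lets the product of the two lower-rank truncated merges reproduce the truncated product. Your sketch contains no analogue of this closing step, and without it the ``descending induction on the Bruhat order'' has neither a starting point nor a supply of elements of $A$ to induct with.
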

\proof
By Proposition~\ref{Propschurbasis2} the proposed generating set  together with  $\bb^d_{dJ,J}$, where $d \in D^\bbI_{K_2,K_1}$, $J = K_1\cap d^{-1}K_2$ generate the algebra $\cS$. Hence it suffices to show that the $\bb^d_{dJ,J}$ are redundant. First note that $d\in D_{dJ,J}$, since for any $i\in J$ we have $l(ds_id^{-1}d)=l(ds_i)>l(d_i)$, because $J\subset K_1$ and $d \in D_{K_2,K_1}$. Therefore $d$ permutes the blocks in $J$ without changing the order inside the blocks. Moreover,  $d\in D_{dJ,J}$ implies that $\bb^d_{dJ,J}(\bv_J)=\bv_{dJ}T_d$ by \eqref{niceformula1}. This also implies that without loss of generality we may assume that $d$ only swaps two neighbouring parts of $J$, as an arbitrary permutation of parts can be written as a composition of swapping neighbouring ones. Since the arguments are all local we can even assume that $J$ contains only two parts, i.e. $J=\bbI\setminus{a}$. Set $b=n-a$. Note that in this case $d\in D_{dJ,J}$ is then the shortest coset representative of the longest element in $\fS$.  Hence $T_d^\#$ is the summand corresponding to the longest element $d=d_{a,b}^{a+b}$ appearing in $\bC^{a+b}_{a,b}$. Define
%\begin{equation}
%\label{gestutzteMistgabel}
$\overline{\bM^{a+b}_{a,b}}=\bM^{a+b}_{a,b}-T_d^\#. $
%\end{equation}  
  By Example~\ref{indstart1} we have $(\bM^{a+b}_{a,b})^\# \bv_{J}=(\bM^{a+b}_{a,b} \bbv_{J})^\# =b^1_{\bbI,J}(\bv_{J})$. Hence it suffices to show that  $\overline{\bM^{a+b}_{a,b}}^\#\bv_{J}$ can be expressed in terms of simple splits and merges applied to $\bv_J$.
We argue by induction on $a+b=n$. The base case $a=b=1$, so $n=2$, is obvious. For the general case, using \eqref{f2}, we obtain
\begin{equation}
\label{xx1}
\overline{\bM^{a+b}_{a,b}}=\bM^{1,a+b-1}_{1,a,b-1}\bM^{a+1, b-1}_{a,1,b-1}   -\bM^{1,a+b-1}_{1,a-1,b}  \bM^{a, b}_{a,1,b-1}  -T_d^\#+\bM^{1,a+b-1}_{1,a-1,b}.
\end{equation}
On the other hand if we abbreviate $d_1={d_{1,a,b-1}^{1,a+b-1}}$ and $d_2={d_{a,1,b-1}^{a+1, b-1}}$ and  set $D_1=T^\#_{d_1}$ and $D_2=T^\#_{d_2}$, then $\bM^{1,a+b-1}_{1,a,b-1}\bM^{a+1, b-1}_{a,1,b-1}$ is equal to
\begin{equation*}\begin{split}
\label{xx2}
&(\overline{\bM^{1,a+b-1}_{1,a,b-1}}+D_1)(\overline{\bM^{a+1, b-1}_{a,1,b-1}}+D_2)\\
&=\overline{\bM^{1,a+b-1}_{1,a,b-1}}\;\overline{\bM^{a+1, b-1}_{a,1,b-1}}+D_1\overline{\bM^{a+1, b-1}_{a,1,b-1}}+\overline{\bM^{1,a+b-1}_{1,a,b-1}}D_2+D_1D_2\\
&={\bM^{1,a+b-1}_{1,a,b-1}}\;\overline{\bM^{a+1, b-1}_{a,1,b-1}}+\overline{\bM^{1,a+b-1}_{1,a,b-1}}\;{\bM^{a+1, b-1}_{a,1,b-1}}-\overline{\bM^{1,a+b-1}_{1,a,b-1}}\;\overline{\bM^{a+1, b-1}_{a,1,b-1}}+D_1D_2
\end{split}\end{equation*}
where for the last line we used the equalities $D_1={\bC^{1,a+b-1}_{1,a,b-1}}-\overline{\bC^{1,a+b-1}_{1,a,b-1}}$ and the analogous one for $D_2$.

By Lemma~\ref{maxparab} we have  $d_1=(s_{b-1}s_{b}\cdots s_{n-1})(s_3s_4\cdots s_{a+1})(s_2s_3\cdots s_{a})$ and $d_2=s_1s_2\cdots s_{a-1}$, in particular $d=d_1d_2$ with $l(d)=l(d_1)+l(d_2)$. But this implies $T_d^\#=D_1D_2$ and thus we obtain from \eqref{xx1} and \eqref{xx2} the following 
\begin{eqnarray*}
\overline{\bM^{a+b}_{a,b}} &=&{\bM^{1,a+b-1}_{1,a,b-1}}\;\overline{\bM^{a+1, b-1}_{a,1,b-1}}+\overline{\bM^{1,a+b-1}_{1,a,b-1}}\;{\bM^{a+1, b-1}_{a,1,b-1}}-\overline{\bM^{1,a+b-1}_{1,a,b-1}}\;\overline{\bM^{a+1, b-1}_{a,1,b-1}}\\
&&  -\bM^{1,a+b-1}_{1,a-1,b}  \bM^{a, b}_{a,1,b-1}+\bM^{1,a+b-1}_{1,a-1,b}.
\end{eqnarray*}
Applying ${}^\#$ to the whole equation and using the inductive hypothesis, the right hand side of the equation is in the subalgebra generated by our proposed generating set, hence so is the left hand side as desired.
\endproof

\section{Quiver Hecke algebras and the isomorphism theorem}
In this section, we finally connect the constructions developed so far with the so-called quiver Hecke algebras originally introduced by Khovanov-Lauda \cite{KL} using diagrammatics and by Rouquier \cite{Rouquier} using algebraic and categorical constructions, and later connected to flagged quiver representations in \cite{VV}. The quiver Schur algebra treated in the next section is a generalisation of the quiver Hecke algebra introduced in \cite{SW} using flagged quiver representations where,  generalising \cite{VV}, partial flags are used instead of full flags only. 

\subsection{The quiver Hecke algebra}\label{klrsec}

%Recall the set $U$ from~\ref{Hcompletionsec}, and endow it with the structure of an oriented graph by putting an arrow $u \to u'$ if $u' = qu$.

We identify the fixed representatives $1,\ldots, e$ of $\mathbb{Z}/e\mathbb{Z}$ with the vertices in the affine Dynkin diagram $\Gamma=\Gamma_e$ attached to the affine Kac-Moody Lie algebra $\hat{\mathfrak{sl}}_e$ with the vertices numbered clockwise from $1$ to $e$, and encode the fixed ordering on the representatives by a clockwise orientation of the diagram. Recall our $\bi \in \mZ^n$ from Definition~\ref{idef}.

\begin{definition}
We denote by $\bR_\bi$ the {\it quiver Hecke algebra} associated to $\bi$. This is the unital $\FF$-algebra generated by elements $$\{e(\bu)\mid\bu \in \fS\bi\} \cup \{\psi_1, \dots, \psi_{n-1}\} \cup \{x_1, \dots, x_n\}$$ subject to relations
\begin{align*}
&e(\bu) e(\bu') = \delta_{\bu,\bu'} e(\bu);  \qquad\sum_{\bu \in \fS\bi} e(\bu) = 1;\\
&x_r e(\bu) = e(\bu) x_r; \qquad
\psi_r e(\bu) = e(s_r\cdot\bu) \psi_r;\quad x_r x_s = x_s x_r;\\
&\psi_r \psi_s = \psi_s \psi_r\qquad\hbox{if $|r-s|>1$};\\
&\psi_r x_s  = x_s \psi_r \qquad\hbox{if $s \neq r,r+1$};\\
&\psi_r x_{r+1} e(\bu) = (x_{r}\psi_{r}+\delta_{u_r,u_{r+1}})e(\bu);\quad 
x_{r+1} \psi_re(\bu) =(\psi_{r} x_{r}+\delta_{u_r,u_{r+1}}) e(\bu);
\\
&\psi_r^2e(\bu) = 
\begin{cases}
0&\hbox{if $u_r = u_{r+1}$},\\
e(\bu)&\hbox{if $u_{r+1}\neq u_r \pm 1, u_r$},\\
(x_{r+1}-x_{r})e(\bu)&\hbox{if $u_{r+1} =u_r+1, e \neq 2$},\\
(x_{r} - x_{r+1})e(\bu)&\hbox{if $u_{r+1}= u_r -1, e \neq 2$;}\\
(x_{r+1}-x_r)(x_r - x_{r+1})e(\bu)&\hbox{if $u_{r+1}= -u_r, e=2$}
\end{cases}\\
%\end{align*}
%\begin{align}
&\psi_{r}\psi_{r+1} \psi_{r} e(\bu)
=
%\left\{\begin{array}{ll}
\begin{cases}
(\psi_{r+1} \psi_{r} \psi_{r+1} +1)e(\bu)&\hbox{if $u_{r+2}=u_r =u_{r+1} -1,e \neq 2$},\\
(\psi_{r+1} \psi_{r} \psi_{r+1} -1)e(\bu)&\hbox{if $u_{r+2}=u_r = u_{r+1}+1, e \neq 2$},\\
 (\dagger)&\hbox{if $u_{r+2}=u_r = -u_{r+1}, e= 2$},\\
\psi_{r+1} \psi_{r} \psi_{r+1} e(\bu)&\hbox{otherwise}.
\end{cases}
%\end{array}\right.
\end{align*}
where $(\dagger)=(\psi_{r+1} \psi_{r} \psi_{r+1} -x_r-x_{r+2}+2x_{r+1})e(\bu).$
\end{definition}
The commutative subalgebra of $\bR_\bi$ generated by $\{e(\bu)\mid \bu \in \fS\bi\}\cup \{x_1, \dots, x_n\}$ is denoted by $\bP_\bi$. 

The following can be found in \cite{KL} or \cite{Rouquier} and can be easily verified.

\begin{lemma}
The algebra $\bR_\bi$ has a faithful representation on $$\Fa_\bi=\bigoplus_{\bu \in \fS\bi}e(\bu) \FF[x_1,\dots x_n]\cdot \mathbbm{1}$$ where the action of $\bP_\bi$ is the regular action and  
\begin{eqnarray}
\label{actionpsi}
\psi_r e(\bu) \cdot \mathbbm{1} &=& 
\left\{\begin{array}{ll}0&\hbox{if $u_r = u_{r+1}$},\\ (x_{r}-x_{r+1})e(s_k\cdot\bu)\cdot \mathbbm{1} & \hbox{if $u_{r+1}=u_r+1$},  \\ e(s_k\cdot \bu)\cdot \mathbbm{1}  & \hbox{if $u_{r+1}\neq u_r, u_r+1$}.  \end{array}\right.
\end{eqnarray}
\end{lemma}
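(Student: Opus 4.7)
The plan is to split the statement into two tasks: showing that the prescribed formulas actually define an action of $\bR_\bi$ on $\Fa_\bi$, and then proving that this action is faithful.

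\textbf{Checking the relations.} The orthogonality of the $e(\bu)$, the completeness $\sum_\bu e(\bu) = 1$, the commutation $x_r e(\bu) = e(\bu)x_r$ and the intertwining $\psi_r e(\bu) = e(s_r\cdot\bu)\psi_r$ are immediate from the definition of $\Fa_\bi$ and the formula \eqref{actionpsi}. Commutation of ``distant'' generators ($x_rx_s = x_sx_r$; $\psi_r\psi_s = \psi_s\psi_r$ for $|r-s|>1$; $\psi_r x_s = x_s\psi_r$ for $s\ne r,r+1$) is automatic because the relevant operators act on disjoint variables. The remaining three families --- the mixed $\psi x$ relation, the $\psi_r^2$-relation, and the braid relation --- are verified by direct computation. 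For instance, to check $\psi_r x_{r+1}e(\bu) = (x_r\psi_r + \delta_{u_r,u_{r+1}})e(\bu)$, one applies both sides to $f\cdot\mathbbm{1}$ and splits into the three subcases appearing in \eqref{actionpsi}; each case reduces to a short polynomial identity. The $\psi_r^2$ relation follows analogously: when $u_r = u_{r+1}$ both sides vanish, when $u_{r+1} = u_r\pm 1$ one obtains the prescribed factor $\pm(x_{r+1}-x_r)$, and otherwise $\psi_r^2$ acts as the identity on the appropriate summand.

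\textbf{Braid relation.} This is the most laborious verification and requires running through cases of the triple $(u_r,u_{r+1},u_{r+2})$ governed by their mutual distances in $\mathbb{Z}/e\mathbb{Z}$. In each case one substitutes \eqref{actionpsi} twice to expand $\psi_r\psi_{r+1}\psi_r e(\bu)\cdot f\mathbbm{1}$ and $\psi_{r+1}\psi_r\psi_{r+1}e(\bu)\cdot f\mathbbm{1}$, and then compares them term-by-term; the correction $\pm 1$ arises precisely when $u_r = u_{r+2} = u_{r+1}\mp 1$, and the special $e=2$ case produces the extra polynomial terms of the definition.

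\textbf{Faithfulness.} Using the verified relations, the standard basis theorem for the quiver Hecke algebra from \cite{KL}, \cite{Rouquier} provides an $\FF$-basis of $\bR_\bi$ of the form $\{\psi_w x^\alpha e(\bu) : w\in\fS_n,\, \alpha\in\mZ_{\geq 0}^n,\, \bu\in\fS\bi\}$, where for each $w$ we fix a reduced expression and $\psi_w$ is the corresponding product. It then suffices to prove that these elements act as $\FF$-linearly independent operators on $\Fa_\bi$. Since $\psi_w e(\bu)\cdot\mathbbm{1}$ lies in the summand indexed by $w\cdot\bu$, after fixing $\bu$ we reduce to independence within a single block. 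Up to lower-order terms coming from the $\delta_{u_r,u_{r+1}}$ contributions, $\psi_w$ acts on polynomials as a divided-difference (Demazure) operator $\partial_w$, and the classical fact that monomial multiplications times the $\partial_w$ form $\FF$-linearly independent operators on $\FF[x_1,\dots,x_n]$ yields the desired independence.

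\textbf{Main obstacle.} The verification of relations is purely bookkeeping; the substantial point is faithfulness, which depends on the basis theorem for $\bR_\bi$. The cleanest route is to cite the existing basis theorems in \cite{KL}, \cite{Rouquier} and then carry out the leading-term divided-difference argument, rather than reproving the basis theorem from scratch.
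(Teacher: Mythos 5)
Your proposal is correct and takes essentially the same route as the paper, which offers no proof of its own beyond citing \cite{KL} and \cite{Rouquier} and asserting the claim "can be easily verified" --- i.e.\ exactly the standard argument you sketch (case-by-case relation check, then faithfulness via the spanning set $\{\psi_w x^\alpha e(\bu)\}$ acting by linearly independent operators, with divided differences controlling the leading terms). The one thing to make explicit before the relation check can even begin: the lemma only prescribes $\psi_r e(\bu)\cdot \mathbbm{1}$, so you must first record the full action of $\psi_r$ on a general element $f e(\bu)\cdot\mathbbm{1}$ (a Demazure operator when $u_r=u_{r+1}$, and $s_r(f)$ possibly multiplied by $x_r-x_{r+1}$ otherwise), which is forced by the mixed $\psi_r x_{r+1}$ relations and by $\bP_\bi$-cyclicity of $\Fa_\bi$.
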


Again, we complete our algebra, this time at the sequence of ideals $\bJ_m = \bR_\bi \bI^m \bR_\bi$ where $\bI$ is the ideal in  $\FF[x_1,\dots x_n]$ generated by all $x_i, i=1 \dots n$. We denote the completed algebra by $\cbR_\bi$, its polynomial subalgebra generated by $\{e(\bu)|\bu \in \fS\bi\}\cup \{x_1, \dots,  x_n\}$ by $\cbP_\bi$ and complete our faithful representation to obtain $\cFa_\bi= \cbR_\bi \otimes_{\bR_\bi}\Fa_\bi$. %A topological $\FF$-basis of $\cbR$ is given by $$\{e(\bu)\psi_\}. $$ 

\subsection{The isomorphism $\ccH_\bi \cong \cbR_\bi$}\label{heckeklrisosec}

Next, we provide an explicit isomorphism between the completed algebras  $\ccH_\bi$ and $\cbR_\bi$. A similar result can be found in \cite{Ben}. Note that our approach differs from that used in \cite{Ben}, in that we do not use exponentials, but rather an affine shift following the ideas of \cite{BK}, where a corresponding isomorphism for cyclotomic quotients was established.

First observe that there is an isomorphism 
\begin{eqnarray}
\label{isogamma}
\gamma: \ccP_\bi\to \cbP_\bi: &&(X_i-q^{u_i})e_\bu \mapsto -q^{u_i}x_ie(\bu)
\end{eqnarray}
 which induces an isomorphism
\begin{eqnarray*}
\coFH_\bi \to \cFa_\bi :&& \prod_{i=1}^n X_i^{a_i}e_\bu\bbv \mapsto  \prod_{i=1}^n (q^{u_i}(1-x_i))^{a_i}e(\bu)
\end{eqnarray*}
between the restrictions of the respective faithful representations to the subalgebras $\ccP_\bi$ respectively $\cbP_\bi$. Direct computation then verifies the following.

\begin{theorem}\label{heckeklriso}
The isomorphism $\gamma$ from \eqref{isogamma} can be extended to an isomorphism of algebras $\gamma: \ccH_\bi \to \cbR_\bi$, via $\gamma(e_{s_r\cdot\bu}\Phi_r) = A^\bu_r \psi_r e(\bu)$ where
\begin{eqnarray*}
A_r^\bu& = &
\begin{cases}
\; 1-q-x_r +qx_{r+1}&\hbox{if $u_{r+1}=u_r$},\\[2mm]
\;\frac{-q}{\left(1-q-x_{r+1}+qx_r\right)}&\hbox{if $u_{r+1}=u_r+1$,}\\[2mm]
\;\frac{q^{u_{r+1}}(1-x_r)-q^{u_r+1}(1-x_{r+1}) }{q^{u_r}(1-x_{r+1})-q^{u_{r+1}}(1-x_r)} & \hbox{if $u_{r+1}\neq u_r, u_r+1$}
\end{cases}
\end{eqnarray*}
\end{theorem}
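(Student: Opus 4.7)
The plan is to transport everything to the faithful representations of Section~\ref{heckefaith} and Subsection~\ref{klrsec} and compare the actions term by term.

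First I would verify that the assignment on the polynomial subalgebra extends coherently: the map $X_i^\pm e_\bu \mapsto (q^{u_i}(1-x_i))^{\pm 1}e(\bu)$ makes sense in the completions precisely because $1-x_i$ is a unit in $\cbP_\bi$ (constant term~$1$), and conversely $x_i=1-q^{-u_i}X_i$ lies in the completion of $\ccP_\bi$ at the ideal generated by the $(X_i-q^{u_i})e_\bu$. This gives the iso $\gamma\colon\ccP_\bi\xrightarrow{\sim}\cbP_\bi$ and, by direct substitution into $\prod X_i^{a_i}e_\bu\bbv\mapsto\prod(q^{u_i}(1-x_i))^{a_i}e(\bu)$, an iso of $\ccP_\bi$-modules $\coFH_\bi\xrightarrow{\sim}\cFa_\bi$.

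Next I would check that the transported action of $\Phi_r$ agrees with left multiplication by $A_r^\bu\psi_r e(\bu)$. Using Lemma~\ref{erstesLemma} and the fact that on $e_{s_r\cdot\bu}$ one has $X_r\mapsto q^{u_{r+1}}(1-x_r)$ and $X_{r+1}\mapsto q^{u_r}(1-x_{r+1})$, the expression $\frac{X_r-qX_{r+1}}{X_{r+1}-X_r}e_{s_r\cdot\bu}\bbv$ becomes
\[
\frac{q^{u_{r+1}}(1-x_r)-q^{u_r+1}(1-x_{r+1})}{q^{u_r}(1-x_{r+1})-q^{u_{r+1}}(1-x_r)}e(s_r\cdot\bu).
\]
For $u_{r+1}\neq u_r,u_r+1$ this is exactly $A_r^\bu e(s_r\cdot\bu)=A_r^\bu\psi_re(\bu)\cdot\mathbbm{1}$ using \eqref{actionpsi}. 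For $u_{r+1}=u_r+1$ the numerator simplifies to $-q(x_r-x_{r+1})$ and the denominator to $1-q-x_{r+1}+qx_r$; matching with $\psi_re(\bu)\cdot\mathbbm{1}=(x_r-x_{r+1})e(s_r\cdot\bu)\cdot\mathbbm{1}$ gives $A_r^\bu=-q/(1-q-x_{r+1}+qx_r)$. For $u_{r+1}=u_r$ both sides vanish, so $A_r^\bu$ can be chosen as any polynomial, and the stated value $1-q-x_r+qx_{r+1}$ (with nonzero constant $1-q$, using $q\not\equiv 1\mod\ell$) is convenient for the inverse construction below.

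Having matched the actions, I would promote this to an algebra isomorphism. Because $1-q\neq 0$ and $-q\neq 0$ in $\FF$, the element $A_r^\bu$ has nonzero constant term in $\cbP_\bi$ in every case and is therefore invertible in the completion. This lets me define an inverse candidate $\gamma^{-1}\colon\cbR_\bi\to\ccH_\bi$ by $x_ie(\bu)\mapsto -q^{-u_i}(X_i-q^{u_i})e_\bu$ and $\psi_re(\bu)\mapsto (A_r^\bu)^{-1}e_{s_r\cdot\bu}\Phi_r$. Both $\gamma$ and $\gamma^{-1}$ are then well-defined on generators, and since both algebras act faithfully on the identified space $\coFH_\bi\cong\cFa_\bi$, the equality of operators forces all defining relations to hold. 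Formally: the subalgebra of $\op{End}_\FF(\cFa_\bi)$ generated by the transported operators is independent of which presentation we use, and the basis descriptions $\{e_\bu\Phi_w X^{\mathbf a}\}$ of $\ccH_\bi$ (from Lemma~\ref{basisH}) together with the corresponding $\{e(\bu)\psi_w x^{\mathbf a}\}$-basis of $\cbR_\bi$ are mapped to each other modulo invertible polynomial factors, yielding bijectivity.

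The main obstacle is bookkeeping the three cases ($u_{r+1}=u_r$, $u_{r+1}=u_r+1$, and generic) consistently: in particular, verifying that the chosen $A_r^\bu$ in the degenerate case $u_{r+1}=u_r$ is compatible with the quadratic relation $\psi_r^2 e(\bu)=0$ and the braid-type relations of $\bR_\bi$. The cleanest way around this, rather than a direct calculation with the relations, is the faithful-representation argument above: once the operators agree, there is nothing left to check, since any relation violated in $\cbR_\bi$ would force a nontrivial kernel on $\cFa_\bi$, contradicting faithfulness.
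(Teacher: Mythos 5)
Your overall strategy --- transport everything to the faithful representations and compare operators --- is the same as the paper's, but there is a genuine gap in how you verify that the operators agree. You only check that $\gamma(e_{s_r\cdot\bu}\Phi_r)$ and $A_r^\bu\psi_r e(\bu)$ act identically on the single vector $\bbv$ (resp.\ $\mathbbm{1}$). That is not enough: neither $\Phi_r$ nor $\psi_r$ commutes with multiplication by arbitrary power series in the variables, only with $s_r$-symmetric ones, so agreement on the cyclic vector does not propagate to all of $\coFH_\bi\cong\cFa_\bi$. Since $\FF[[X_1,\dots,X_n]]$ is free over the $s_r$-invariants on $\{1,\,X_{r+1}-X_r\}$, one must additionally check that the two operators agree on $(X_{r+1}-X_r)\bbv$; this is exactly the second family of identities the paper verifies, using \eqref{phiaction2}.

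That second check is not a formality. In the case $u_{r+1}=u_r$ your claim that ``$A_r^\bu$ can be chosen as any polynomial'' is false: while both sides do vanish on $\bbv$, formula \eqref{phiaction2} gives $e_{s_r\cdot\bu}\Phi_r(X_{r+1}-X_r)\bbv=2(qX_{r+1}-X_r)e_\bu\bbv\neq 0$, and on the quiver Hecke side the relation $\psi_rx_{r+1}e(\bu)=(x_r\psi_r+\delta_{u_r,u_{r+1}})e(\bu)$ yields $\psi_r(x_{r+1}-x_r)e(\bu)\cdot\mathbbm{1}=2e(\bu)\cdot\mathbbm{1}$; equating the two transported expressions forces $A_r^\bu=1-q-x_r+qx_{r+1}$ exactly. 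A different choice would break the commutation of $\gamma(e_{s_r\cdot\bu}\Phi_r)$ with the image of $X_{r+1}$, so $\gamma$ would fail to be multiplicative, and the faithfulness argument you invoke at the end would have nothing to apply to. The remaining points of your proposal (invertibility of the $A_r^\bu$ from their nonzero constant terms, bijectivity via the $\Phi$- and $\psi$-type bases) are sound once this missing verification on $(X_{r+1}-X_r)\bbv$ is supplied in all three cases.
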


\section{Quiver Schur algebras}

In this section we establish our main isomorphism theorem by connecting the (algebraically defined) affine Schur algebra with the (geometrically defined) quiver Schur algebra from \cite{SW}. We do this via an auxiliary {\it modified} quiver Schur algebra:
\subsection{The modified quiver Schur algebra}
Recall $\bi$ from Definition~\ref{idef}. 
For $J\in \bbI$, and $\bu_J =(u_1,\cdots, u_{t_1}|u_{t_1+1},\cdots, u_{t_2}|\cdots |u_{t_{r-1} +1}, \cdots, u_{t_r})\in \bU_J$ with dimension vector $\dimv=\dimv(\bi) =\dimv(\bu,J)=(d_1,d_2,\ldots,d_e)$ we define
\begin{eqnarray*}
\Lambda_{\bu_J}=\FF[y_{1,1}, \ldots, y_{1,d_1},y_{2,1}, \ldots, y_{2,d_2},\ldots, y_{e,d_e}]^{\fS_{\bu_J}}
%\end{eqnarray*}
 \hbox{ and }
 % \begin{eqnarray*}
  \Lambda=\bigoplus_{\substack{J\subseteq\bbI\\ \bu_J\in\bU_J}}\Lambda_{\bu_J}.
  \end{eqnarray*}

  \begin{definition}  
Let $1\leq i\leq e$. For $1\leq k\leq r$ let $c(k)_i=\sum_{j=1}^k d_i^j$, using Definition~\ref{dimmatrix} and $c(0)_i=0$. Then the {\it total reversed Euler class of $\bu_J$} is 
\begin{eqnarray}
\label{Etotal}
\mathtt{E}_{\bu_J} &=& \prod_{i=1}^e \prod_{s=1}^{r-1}\prod_{j=c(s-1)_i+1}^{c(s)_i} \prod_{k=c(s)_{i+1}+1}^{d_{i+1}} (y_{i,j}-y_{i+1, k}).\end{eqnarray}
with $\mathtt{E}_{\bu_\bbI}:=1$. The {\it total symmetriser} is defined as as
\begin{eqnarray}
\label{Stotal}
\mathtt{S}_{\bu_J} &=& \prod_{i=1}^e \prod_{s=1}^{r-1}\prod_{j=c(s-1)_i+1}^{c(s)_i} \prod_{k=c(s)_{i}+1}^{d_{i}} (y_{i,j}-y_{i, k})
\end{eqnarray}
with $\mathtt{S}_{\bu_\bbI}:=1$. 
More generally, assume $J\subset K$ and let $\bu_K \in \bU_K$ be a merge of $\bu_J$. Then their {\it relative reversed Euler class} and the {\it relative symmetriser} are 
\begin{eqnarray}
\label{relative}
\mathtt{E}_{\bu_J}^{\bu_K}\;= \;\frac{\mathtt{E}_{\bu_J}}{\mathtt{E}_{\bu_K}}&\quad\text{respectively}\quad&\mathtt{S}_{\bu_J}^{\bu_K}\;= \;\frac{\mathtt{S}_{\bu_J}}{\mathtt{S}_{\bu_K}}.
 \end{eqnarray}
 In particular, the special case $K=\bbI$ gives the total reversed Euler class respectively the total symmetriser. Note that the relative Euler class and relative symmetriser are again polynomials.
\end{definition} 
\begin{example}
Note that $\mathtt{E}_{(1|2)}=y_{1,1}-y_{2,1}$, and $\mathtt{E}_{(2|1)}=1$ if $e>2$ whereas $\mathtt{E}_{(2|1)}=y_{2,1}-y_{1,1}$ if $e=2$ and $\mathtt{E}_{(1|1)}=1$. Moreover, $\mathtt{S}_{(1|2)}=\mathtt{S}_{(2|1)}=1$, whereas $\mathtt{E}_{(1|1)}=y_{1,1}-y_{1,2}$.
\end{example}
\begin{example}
Let for instance $\bu_J = (1,2|1,1|1,2|1)$, $\bu_K = (1,2|1,1,1,2|1)$.  Then for $e\geq 3$ we have $\mathtt{E}_{\bu_J}=(y_{1,1}-y_{2,2})(y_{1,2}-y_{2,2})(y_{1,3}-y_{2,2})=:E$ and $\mathtt{E}_{\bu_K}=(y_{1,1}-y_{2,2})$ and therefore $\mathtt{E}_{\bu_J}^{\bu_K}=(y_{1,2}-y_{2,2})(y_{1,3}-y_{2,2})$, whereas for $e=2$ we have 
$\mathtt{E}_{\bu_J}=E(y_{2,1}-y_{1,2})(y_{2,1}-y_{1,3})(y_{2,1}-y_{1,4})(y_{2,1}-y_{1,5})(y_{2,2}-y_{1,5})$ but again $\mathtt{E}_{\bu_J}^{\bu_K}=(y_{1,2}-y_{2,2})(y_{1,3}-y_{2,2})$. On the other hand, for any $e\geq 2$, we have $\mathtt{S}_{\bu_J}^{\bu_K}=(y_{1,2}-y_{1,4})(y_{1,3}-y_{1,4})$.
\end{example}

\begin{definition}
\label{modifiedSchur} We define the {\it modified quiver Schur algebra} 
$\bQ_\bi$
as the subalgebra of $\End_\FF(\Lambda)$ generated by the following endomorphisms:%, where $\bu_J, \bu_K,\bu'_K\in \bigcup_{K\subset \bbI }\bU_K$:
\begin{trivlist}
\item   $\quad\bullet$  the {\it idempotents} $e(\bu_J)$ for $\bu_J\in\bU_J$ for any $J$, projecting onto $\Lambda_{\bu_J}$,
\item   $\quad\bullet$  the {\it polynomial}  $e(\bu_J) pe(\bu_J)$ for $\bu_J\in\bU_J$ for any $J$, and $p\in \Lambda_{\bu_J}$, 
\item \quad\quad defined as multiplication by $p$ on the summand $\Lambda_{\bu_J}$.
\item   $\quad\bullet$  the {\it splits} $\bigcurlyvee_{\bu_K}^{\bu_{J}}$ for $J\subset K$, $\bu_J=(\bu,J)\in\bU_J$, which are just the embedding
%thus defining $\bu_K\in\bU_K$ %(i.e.  $\bu_{J}$ is a split of $\bu_K$)
%sending any element 
%$f\in  \Lambda_{\bu_{K}}$  to $f \in \Lambda_{\bu_{J}}$ and any $f\in  \Lambda_{\bu'_{K'}}$ with $\bu'_{K'}\not=\bu_K$ to zero.
%In other words, a split 
\item \quad\quad of the summand $\Lambda_{\bu_K}$ into the summand $\Lambda_{\bu_{J}}$. 

\item   $\quad\bullet$  the {\it merges} $\bigcurlywedge_{\bu_{J}}^{\bu_K}$ for $J\subset K$ $\bu_J=(\bu,J)\in\bU_J$, 
%thus defining $\bu_K\in\bU_K$, 
defined on  
$f\in \Lambda_{\bu'_{J'} } $ by 
\begin{eqnarray}
\label{Dem1}
f&\mapsto 
\begin{cases}
\Delta( \mathtt{E}_{\bu_{J}}^{\bu_K}  f)\in  \Lambda_{\bu_K}&\text{if $\bu_J =\bu'_{J'}$},\\ 
0 &\text{otherwise}.
\end{cases}
\end{eqnarray}
\item \quad\quad where 
$\Delta=\Delta_{\bu_{J}}^{\bu_K}$ sends an element $f$ to the total invariant
$\bC_{I_{\bu_J}}^{I_{\bu_K}} \left( \frac{f}{\mathtt{S}_{\bu_{J}}^{\bu_K} }\right). $
%$$\bC_{c_1,d_1,c_2,d_2,\ldots,c_e,d_e}^{c_1+d_1,\ldots,c_e+d_e}\left(f\displaystyle\prod_{i=1}^e\frac{1}{\displaystyle\prod_{j=1}^{c_i}\prod_{k=c_i+1}^{c_i+d_i} y_{i,j}-y_{i,k} }\right)$$
%where we have written $c_i:=\eta^{\bu'_J'}(1)_i$ and $c_i:=\eta^{\bu'_J'}(1)_i$ 
%or equivalently
%$\Delta_{\bu'_{J'}}^{\bu_J}$
\end{trivlist}
Using a reformulation in terms of Demazure operators, see Proposition~\ref{matchDemazure}, it follows that $\bC_{I_{\bu_J}}^{I_{\bu_K}} \left( \frac{f}{\mathtt{S}_{\bu_{J}}^{\bu_K} }\right)$ is indeed again a polynomial.
\end{definition}

\begin{example}
Consider for instance $\bu_{J}=(1|1)$ and $\bu_{K}=(11)$. Then for $f\in\Lambda_{\bu_K}=\FF[y_{1,1},y_{1,2}]$ we have  $\bigcurlywedge_{\bu_{J}}^{\bu_K}(f)=\Delta(\frac{f}{y_{1,1}-y_{1,2}})=2f_2$, where $f=f_1+(y_{1,1}-y_{1,2})f_2$ with (uniquely determined) $f_1,f_2\in  \Lambda_{\bu_J}$.
\end{example}

\begin{example}
\label{mergeexplicit}
Let us describe the merge endomorphism explicitly in the simplest case where  $\bu_J =(1^{a_1},2^{a_2},\ldots, e^{a_e}|1^{b_1},2^{b_2},\ldots, e^{b_e} )$ has only two parts, hence $\bu_K=(1^{d_1},2^{d_2},\ldots, e^{d_e})$ with $d_i=a_i+b_i$.  Then our formulae give
%
%\begin{eqnarray*}
%\Delta( \mathtt{E}_{\bu_{J}}^{\bu_K}  f) & =&\bC_{a_1,b_1,a_2,b_2,\ldots,a_e,b_e}^{d_1,\;\;\;\; d_2,\;\;\;\;\;\ldots,d_e} \left(\displaystyle\prod_{i=1}^e \displaystyle\prod_{j=1}^{a_{i}}\displaystyle\prod_{k=a_{i+1}+1}^{d_{i+1}}(y_{i,j}-y_{i+1,k}) f \right) 
%\end{eqnarray*}
%\todo{I do not understand this example and included above  what I think should be there and below what was there}
\begin{eqnarray*}
\Delta( \mathtt{E}_{\bu_{J}}^{\bu_K}  f) & =&\bC_{a_1,b_1,a_2,b_2,\ldots,a_e,b_e}^{d_1,\;\;\;\; d_2,\;\;\;\;\;\ldots,d_e} \left(\frac{\displaystyle\prod_{i=1}^e \displaystyle\prod_{j=1}^{a_{i}}\displaystyle\prod_{k=a_{i+1}+1}^{d_{i+1}}(y_{i,j}-y_{i+1,k})}{\displaystyle\prod_{i=1}^e\displaystyle\prod_{j=1}^{a_i}\prod_{k=a_{i}+1}^{d_i}( y_{i,j}-y_{i,k})} f \right) .
\end{eqnarray*}
\end{example}

We denote by $\cbQ_\bi$ the completion of $\bQ_\bi$ at the ideal generated by all $e(\bu_J)pe(\bu_J)$ for all  $J\subset\bbI,\bu_J\in \bU_J$ and all $p\in \Lambda_{\bu_J}$ with zero constant term. 

Then  $\cbQ_\bi$ has a faithful representation on $$\cLa=\bigoplus_{\substack{J\subseteq \bbI\\ \bu_J\in\bU_J}}\cLa_{\bu_J}$$ where $\cLa_{\bu_J}$ is the completion of $\Lambda_{\bu_J}$ at all polynomials with zero constant term.

\subsection{The quiver Schur algebra}

Here we recall the definition of the quiver Schur algebra $\bA_\bi$, introduced by the second author and Webster in \cite{SW}.
For $J\in \bbI$, and $\bu_J =(u_1,\cdots, u_{t_1}|u_{t_1+1},\cdots, u_{t_2}|\cdots |u_{t_{r-1} +1}, \cdots, u_{t_r})\in \bU_J$ with dimension vector $\dimv=\dimv(\bi)=\dimv(\bu,J)=(d_1,d_2,\ldots,d_e)$  we define
\begin{eqnarray}\label{polyringQS} 
\round{\Lambda}_{\bu_J}=\FF[z_{1,1}, \ldots, z_{1,d_1},z_{2,1}, \ldots, z_{2,d_2},\ldots z_{e,d_e}]^{\fS_{\bu_J}},
%\end{eqnarray}
&& \round{\Lambda}=\bigoplus_{\substack{J\subseteq\bbI\\ \bu_J\in\bU_J}}\round{\Lambda}_{\bu_J}.\quad\quad
\end{eqnarray}
\begin{definition}
Let $1\leq i\leq e$. For $1\leq k\leq r$ let $c(k)_i=\sum_{j=1}^k d_i^j$, using Definition~\ref{dimmatrix}, and $c(0)_i=0$. 
The {\it total Euler class} 
for ${\bu_J}$ is defined as
\begin{eqnarray*}
\round{\mathtt{E}}_{\bu_J} &=& \prod_{i=1}^e \prod_{s=1}^{r-1}\prod_{j=c(s-1)_{i+1}+1}^{c(s)_{i+1}} \prod_{k=c(s)_{i}+1}^{d_{i}} (z_{i+1,j}-z_{i, k}).
\end{eqnarray*}
(notice that this is $\fS_{\bu_J}$-invariant), and its {\it symmetriser} is defined as
\begin{eqnarray*}
\round{\mathtt{S}}_{\bu_J} &= &\prod_{i=1}^e \prod_{s=1}^{r-1}\prod_{j=c(s-1)_i+1}^{c(s)_i} \prod_{k=c(s)_{i}+1}^{d_{i}} (z_{i,j}-z_{i, k}).
\end{eqnarray*}
(Note that $\round{\mathtt{S}}_{\bu_J}$ is the same as $\mathtt{S}_{\bu_J}$, only written in variables $z_{i,j}$ instead of $y_{i,j}$.)
\end{definition}

More generally, assume $J\subset K$ and let $\bu_K \in \bU_K$ be a merge of $\bu_J$. Then their {\it relative  Euler class} respectively the {\it relative symmetriser}  are defined as
\begin{eqnarray*}
\round{\mathtt{E}}_{\bu_J}^{\bu_K}\;=\; \frac{\round{\mathtt{E}}_{\bu_J}}{\round{\mathtt{E}}_{\bu_K}}&\quad\text{respectively}\quad&\round{\mathtt{S}}_{\bu_J}^{\bu_K}\;= \;\frac{\round{\mathtt{S}}_{\bu_J}}{\round{\mathtt{S}}_{\bu_K}}.
\end{eqnarray*}

The following was introduced in \cite{SW}.
\begin{definition}
\label{QS}
The {\it quiver Schur algebra} $\bA_\bi$ 
is the subalgebra of $\End_\FF(\round\Lambda)$ generated by the following endomorphisms:

\begin{trivlist}
\item $\quad\bullet$ the {\it idempotents $e(\bu_J)$ for $\bu_J\in\bU_J$ for any $J$, 
projecting onto $\round\Lambda_{\bu_J}$,
\item $\quad\bullet$ the {\it polynomial}  $e(\bu_J) pe(\bu_J)$ for any $J$ and $p\in \round\Lambda_{\bu_J}$, 
\item \quad\quad defined as multiplication by $p$ on $\round\Lambda_{\bu_J}$.
\item $\quad\bullet$ the {\it splits} $\round{\bigcurlyvee}_{\bu_K}^{\bu_{J}}$ for $J\subset K$ and $\bu_J = (\bu,J) \in \bU_J$, defined as
\begin{eqnarray*}
f&\mapsto 
\begin{cases}
\round{\mathtt{E}}_{\bu_J}^{\bu_K}f \in \round\Lambda_{\bu_{J}} &\text{if $\bu'_{K'} =\bu_K$},\\ 
0 &\text{otherwise}.
\end{cases}
\end{eqnarray*}
%where 
%\begin{equation}
%\label{euler}
%E:=\prod_{i=1}^e \prod_{j=1}^{c_{i+1}}\prod_{k=c_i+1}^{d_i+c_i}(Y_{i+1,j}-Y_{i,k}),
%\end{equation}
\item\quad\quad for $f\in  \round\Lambda_{\bu'_{K'}}$. In other words, a split is just the embedding of the summand \item\quad\quad $\round\Lambda_{\bu_K}$ into the summand $\round\Lambda_{\bu_{J}}$, followed by multiplication with $\round{\mathtt{E}}_{\bu_J}^{\bu_K}$. 
\item $\quad\bullet$ the {\it merges} $\round{\bigcurlywedge}_{\bu_{J}}^{\bu_K}$ for $J\subset K$ and $\bu_{J}=(\bu,J) \in \bU_J$,
defined as 
\begin{eqnarray}
f&\mapsto 
\begin{cases}
\round\Delta( f)\in  \round\Lambda_{\bu_K}&\text{if $\bu'_{J'} =\bu_{J}$},\\ 
0 &\text{otherwise}.
\end{cases}
\label{goodmerge}
\end{eqnarray}
%\begin{equation}
%\label{euler}
%E:=\prod_{i=1}^e \prod_{j=1}^{c_{i+1}}\prod_{k=c_i+1}^{d_i+c_i}(y_{i+1,j}-y_{i,k}),
%E:=\prod_{i=1}^e \prod_{k=1}^{c_{i+1}}\prod_{j=c_i+1}^{d_i+c_i}(y_{i,j}-y_{i+1,k}),
%\end{equation}
%and
\item\quad\quad for $f\in \round\Lambda_{\bu'_{J'} }$, where  $\round\Delta=\round\Delta_{\bu_{J}}^{\bu_K}$ sends an element $f$ to the total invariant
\begin{eqnarray}
\label{Demazurelike}
\round\Delta(f)&=&\bC_{I_{\bu_J}}^{I_{\bu_K}} \left( \frac{f}{\mathtt{S}_{\bu_{J}}^{\bu_K} }\right). 
\end{eqnarray}
}
\end{trivlist}
\end{definition}
Note that again the translation into Demazure operators from Proposition~\ref{matchDemazure} ensures that $\round\Delta(f)$ is in fact a polynomial.

Again we have simple splits and merges: In case $|K\setminus J|=1$ and  $\bu_J=(\bu,J)\in\bU_J$, we call  $\round{\bigcurlyvee}_{\bu_K}^{\bu_{J}}$  a {\it simple split} and 
$\round{\bigcurlywedge}_{\bu_{J}}^{\bu_K}$ a {\it simple merge}. If $K=\bbI$ and $J=K\setminus\{a\}$,  we  also denote these by $\round{\bigcurlyvee}_{\bf{a},\bf{b}}^{\bf{a}+\bf{b}}$ respectively $\round{\bigcurlywedge}^{\bf{a},\bf{b}}_{\bf{a}+\bf{b}}$, where $a_i=d_i^1$ and $b_i=d_i^2$ with the notation from Definition~\ref{dimmatrix}.

\begin{remark}
In \cite{SW}, the quiver Schur algebra was only defined over $\mathbb{C}$, since the involved geometry would require more advanced tools. However, the faithful representation defined in \cite{SW} makes sense over any field, so we {\it define} the quiver Schur algebra over an arbitrary field as in Definition~\ref{QS}. In characteristic zero it agrees with the one defined in \cite{SW} by  Remark~\ref{foranyfield} below.
\end{remark}

\begin{example}
Note that in the case of a simple merge $\round{\bigcurlywedge}_{\bu_{J}}^{\bu_K}$ of the form $\bu_J =(1^{a_1},2^{a_2},\ldots, e^{a_e}|1^{b_1},2^{b_2},\ldots, e^{b_e})$ and
$K=\bbI$, hence $\bu_K=(1^{d_1},2^{d_2},\ldots, e^{d_e})$, formula \eqref{goodmerge} simplifies to $\round{\bigcurlywedge}_{\bu_{J}}^{\bu_K}(f)=
\round{\bigcurlywedge}_{\bf{a},\bf{b}}^{\bf{a}+\bf{b}}(f)=
\round\Delta(  f)  $, which yields
\begin{eqnarray*}
\round{\bigcurlywedge}_{\bu_{J}}^{\bu_K}(f)
&=&\bC_{a_1,b_1,a_2,b_2,\ldots,a_e,b_e}^{d_1,\ldots,d_e} \;\frac{f}{\displaystyle\prod_{i=1}^e\displaystyle\prod_{j=1}^{a_i}\prod_{k=a_i+1}^{a_i+d_i} (z_{i,j}-z_{i,k}) }\;
\end{eqnarray*}
with the relative  Euler class $\round{\mathtt{E}}_{\bu'_J}^{\bu_K} = \prod_{i=1}^e \prod_{k=1}^{c_{i+1}}\prod_{j=c_i+1}^{d_i+c_i}(z_{i+1,k}-z_{i,j}).$
This, indeed, corresponds to the formulae given in \cite{SW}.
\end{example}

 \begin{remark}
 \label{foranyfield}
Assume $K=\bbI$, $J=K\setminus\{a\}$ and $\bu_J=(\bu,J)\in \bU_J$. Let $(\bf{a},\bf{b})$ be the dimension vector of $\bu$, where $a_i=d_i^1$ and $b_i=d_i^2$ with the notation from Definition~\ref{dimmatrix}. Write $S_{\mathbf{a}+\mathbf{b}}$ for $W_{I_{\bu_K}}$ and $S_{\mathbf{a},\mathbf{b}}$ for $W_{I_{\bu_J}}$. Sending $f$ to
\small
\begin{equation*}
\displaystyle\sum_{w\in S_{\mathbf{a}+\mathbf{b}}} (-1)^{l(w)} w(f)\prod_{i=1}^e\frac{1}{a_i!b_i!}
\frac{\displaystyle w\bigg(\prod_{1\leq j<k\leq a_i} (z_{i,j}-z_{i,k})\prod_{a_i<j<k\leq a_i+b_i}(z_{i,j}-z_{i,k})\bigg)}
{\displaystyle \prod_{1\leq j<k\leq a_i+b_i} (z_{i,j}-z_{i,k})}.
\end{equation*}
\normalsize
is the action of a simple merge $\round{\bigcurlywedge}_{\bf{a},\bf{b}}^{\bf{a}+\bf{b}}$ on the faithful representation  \eqref{polyringQS} defined in \cite{SW}. Note that, in contrast to formula \eqref{goodmerge}, this expression does not make sense in positive characteristic in general. In characteristic zero however, this expression coincides with \eqref{goodmerge}, since we have
\small
\begin{eqnarray*}
%&&\sum_{w\in S_{\mathbf{a}+\mathbf{b}}}(-1)^{l(w)} w(f)\prod_{i=1}^e\frac{1}{c_i!d_i!}
%\frac{ w\bigg(\prod_{1\leq j<k\leq c_i} (y_{i,j}-y_{i,k})\prod_{c_i<j<m\leq c_i+d_i}(y_{i,j}-y_{ik}) \bigg)}
%{ \prod_{1\leq j<k\leq c_i+d_i} (y_{i,j}-y_{i,k})}\\
&&\sum_{w\in S_{\mathbf{a}+\mathbf{b}}}w(f)  \prod_{i=1}^e\frac{1}{a_i!b_i!}
w\left(\frac{ \prod_{1\leq j<k\leq a_i} (z_{i,j}-z_{i,k})\prod_{a_i<j<k\leq a_i+b_i}(z_{i,j}-z_{i,k})}
{ \prod_{1\leq j<k\leq a_i+b_i} (z_{i,j}-z_{i,k})}\right)\\
&=&\sum_{w\in S_{\mathbf{a}+\mathbf{b}}} w(f) \prod_{i=1}^e\frac{1}{a_i!b_i!}
w\left(\frac{1}
{ \prod_{j=1}^{a_i}\prod_{k=a_i+1}^{a_i+b_i} (z_{i,j}-z_{i,k}) }\right)\\
&=&\sum_{w\in D^{I_{\bu_K}}_{\emptyset, I_{\bu_J}}} w(f) \prod_{i=1}^e
w\left(\frac{1}
{ \prod_{j=1}^{a_i}\prod_{k=a_i+1}^{a_i+b_i} (z_{i,j}-z_{i,k}) }\right)\\
%\end{eqnarray*}
%\begin{eqnarray*}
&=&\sum_{w\in D^{I_{\bu_K}}_{\emptyset, I_{\bu_J}}} w\left(f \prod_{i=1}^e
\frac{1}
{ \prod_{j=1}^{a_i}\prod_{k=a_i+1}^{a_i+b_i} (z_{i,j}-z_{i,k}) }\right)\\
&=& \bC_{a_1,\;\:b_1,a_2,\;\;b_2,\ldots,a_e,\;b_e}^{a_1+b_1,a_2+b_2,\ldots,a_e+b_e}\left(f\prod_{i=1}^e
\frac{1}
{ \prod_{j=1}^{a_i}\prod_{k=a_i+1}^{a_i+b_i} (z_{i,j}-z_{i,k}) }\right).
\end{eqnarray*}
\normalsize
%as desired.
\end{remark}

\subsection{Demazure or Bernstein-Gelfand-Gelfand difference operators}
\label{SectionDem}
In this subsection we connect our merging formulae to the classical difference operators.  For $1\leq i\leq n-1$, the $i$th {\it Demazure operator} or {\it divided difference operator} from \cite{Demazure} or \cite{BGG} is the endomorphism 
\begin{equation}\label{Demazure}
\Delta_i \colon\FF[X_1,\dots,X_{n}]\rightarrow\FF[X_1,\dots,X_{n}], \quad f\mapsto %\Delta_i(f)=
\frac{f-s_i(f)}{X_i-X_{i+1}}.
\end{equation}
%defined as 
%\begin{eqnarray}
%\label{Demazure}
%\Delta_i(f)=\frac{f-s_i(f)}{X_i-X_{i+1}}. 
%\end{eqnarray}
For $f,g\in \FF[X_1,\dots,X_{n}]$, we have $\Delta_i(fg)=\Delta_i(f)g+s_i(f)\Delta_i(g)$, in particular   $\Delta_i(fg)=f\Delta_i(g)$ if $f$ is $s_i$-invariant, and  $\Delta_i^2=0$. Moreover, for a reduced expression $w=s_{i_1}s_{i_2}\cdots s_{i_r}$, the operator $\Delta_w=\Delta_{s_{i_1}}\Delta_{s_{i_2}}\cdots \Delta_{s_{i_r}}$ is independent of the chosen reduced expression. For future reference, we record the following.
\begin{lemma}
\label{bekommeneins}
Let $w_0=w_0(n)\in\fS$ be the longest element, then 
\begin{eqnarray*}
\Delta_{w_0}(X_1^{n-1}X_2^{n-2}\cdots  X_{n-2}^2X_{n-1})&=&1.
\end{eqnarray*}
In particular if $w_K$ is  the longest element of some parabolic subgroup $W_K$ in $\fS$, then there exists some polynomial $h$ such that $\Delta_{w_K}(h)=1$. 
\end{lemma}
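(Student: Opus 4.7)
The strategy is induction on $n$. The base case $n=2$ is immediate: $\Delta_{s_1}(X_1) = (X_1-X_2)/(X_1-X_2) = 1$. For the inductive step, I use the reduced factorisation
$$w_0(n) \;=\; w_0(n-1)\cdot (s_{n-1}s_{n-2}\cdots s_1),$$
of length $\binom{n-1}{2}+(n-1)=\binom{n}{2}$, where $w_0(n-1)$ denotes the longest element of the parabolic stabiliser of $n$. This yields
$$\Delta_{w_0(n)} \;=\; \Delta_{w_0(n-1)}\circ \Delta_{n-1}\circ \Delta_{n-2}\circ \cdots\circ \Delta_1,$$
reducing the computation to first evaluating the right-hand string of operators and then recognising the output as the staircase in one fewer variable.

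The computational heart is the single-step identity
$$\Delta_i\bigl(X_i^{a+1}X_{i+1}^a\bigr) \;=\; \frac{X_i^{a+1}X_{i+1}^a - X_i^a X_{i+1}^{a+1}}{X_i-X_{i+1}} \;=\; X_i^aX_{i+1}^a,$$
combined with the fact that $\Delta_i$ is linear over the $s_i$-invariants. Applying $\Delta_1,\Delta_2,\ldots,\Delta_{n-1}$ successively to $P = X_1^{n-1}X_2^{n-2}\cdots X_{n-1}^1$, at the $k$-th step only the adjacent pair $X_k^{n-k}X_{k+1}^{n-k-1}$ fails to be $s_k$-invariant (with the exponent on $X_k$ dropping by one), and the final step $k=n-1$ reduces $X_{n-1}^1X_n^0$ to $1$. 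A routine bookkeeping of exponents shows the result is precisely $X_1^{n-2}X_2^{n-3}\cdots X_{n-2}^1$, which lives in $\FF[X_1,\ldots,X_{n-1}]$, whence the inductive hypothesis applied to $w_0(n-1)$ on this subring yields the value $1$.

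For the ``in particular'' statement, write $W_K \cong \fS_{K_1}\times\cdots\times \fS_{K_r}$ as a product of symmetric groups acting on pairwise disjoint blocks of variables. Then $w_K$ factors as the commuting product of the longest elements of the $\fS_{K_j}$, and the Demazure operators from different blocks commute and act trivially on variables outside their block. Taking $h$ to be the product over $j$ of the staircase monomials from the first claim, each written in the variables of the corresponding block $K_j$, gives $\Delta_{w_K}(h)=1$ by applying the first claim factor by factor. The only mild obstacle in the whole argument is the exponent bookkeeping in the induction; no essentially new difficulty arises beyond this classical Schubert-polynomial-style computation.
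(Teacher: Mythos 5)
Your proof is correct and follows essentially the same route as the paper: induction on $n$ via a length-additive factorization of $w_0$ through the longest element of the maximal parabolic, the only difference being that you peel the cycle off on the right (so the string $\Delta_{n-1}\cdots\Delta_1$ acts first and reduces the staircase $X_1^{n-1}\cdots X_{n-1}$ directly to $X_1^{n-2}\cdots X_{n-2}$), whereas the paper writes $w_0(n+1)=(s_1\cdots s_n)\,w_0(n)$ and instead exploits that the cofactor $X_1\cdots X_n$ is symmetric. Your single-step identity $\Delta_i(X_i^{a+1}X_{i+1}^a)=X_i^aX_{i+1}^a$ and the block-by-block argument for the parabolic case are both sound.
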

\begin{comment}
\proof If $n=1$ then $\Delta_1(X_1)=\frac{X_1-X_2}{X_1-X_2}=1$ and so the claim holds. 
Assuming the formula holds for $n$, we deduce it for $n+1$ by writing first $w_0(n+1)=dw_0(n)$ with $d=s_1s_2\cdots s_n$ via Lemma~\ref{maxparab}, and then compute 
\begin{eqnarray*}
&&\Delta_{w_0(n+1)}(X_1^{n-1}X_2^{n-2}\cdots X_{n-1})\; =\;\Delta_d(\Delta_{w_0(n)}(X_1^{n-1}X_2^{n-2}\cdots X_{n-1}))\\
&=&\Delta_d(\Delta_{w_0(n)}(X_1^{n-2}X_2^{n-3}\cdots X_{n-2})X_1\cdots X_{n-1}))\; =\;\Delta_d(X_1\cdots X_{n-1}))\\
&=&\Delta_{s_1s_2\cdots s_{n-1}}(X_1X_2\cdots X_{n-2}\Delta_{n-1}(X_{n-1}))\\
&=&\Delta_{s_1s_2\cdots s_{n-1}}(X_1X_2\cdots X_{n-2})
\;=\;\cdots\;=\; \Delta_{s_1}(X_{1})=1.
\end{eqnarray*}
Hence the lemma follows.
\endproof
\end{comment}

There exists in fact a closed formula for $\Delta_{w_0}$, namely
\begin{eqnarray}
\label{Fulton}
\Delta_{w_0}&=&\frac{1}{\blacktriangle}\sum_{w\in\fS}(-1)^{l(w)}w = \sum_{w\in\fS} w \frac{1}{\blacktriangle},
\end{eqnarray}
where $\blacktriangle=\prod_{1\leq i<j\leq n}(X_i-X_j)$. The first equality can, for example,  be found in \cite[Lemma 12]{Fulton}, the second equality is an elementary calculation observing that a simple transposition changes the sign of $\blacktriangle$ by $-1$.

The merges from \eqref{Demazurelike} can then be rephrased in terms of Demazure operators as follows (explaining the notations $\Delta$ and $\round\Delta$)
\begin{proposition}
\label{matchDemazure}
Assume we are in the setup from  \eqref{Demazurelike} and abbreviate $J'=I_{\bu_J}$ and $K'=I_{\bu_K}$ using Definition~\ref{WuJ}. Then we have the equality 
\begin{eqnarray}
\bC_{J}^{K} \left( \frac{f}{\mathtt{S}_{\bu_{J}}^{\bu_K} }\right)&=&\Delta_{d^K_J}.
\end{eqnarray}
on $\Lambda_{\bu_{J}}$, where $d^K_J\in D^{K'}_{J'}$ is of maximal length (i.e. the representative of the longest element in $W_{J'}\subset W_{K'}$). 
\end{proposition}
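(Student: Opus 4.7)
The plan is to use the classical closed formula \eqref{Fulton} for the longest-element Demazure operator in a parabolic, combined with the factorization $w_{K'} = d^K_J\, w_{J'}$ (lengths additive), to rewrite the right-hand side as a sum over coset representatives that matches $\bC^{K'}_{J'}$. Throughout, set $J'=I_{\bu_J}$, $K'=I_{\bu_K}$, and let $w_{J'}$, $w_{K'}$ denote the longest elements of $W_{J'}\subseteq W_{K'}$. Since $d^K_J\in D^{K'}_{\emptyset,J'}$ is the (necessarily unique) shortest representative of the coset $w_{K'}W_{J'}$, one has $w_{K'}=d^K_J\,w_{J'}$ with $\ell(w_{K'})=\ell(d^K_J)+\ell(w_{J'})$, so the Demazure operators compose as $\Delta_{w_{K'}}=\Delta_{d^K_J}\Delta_{w_{J'}}$.

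First I would handle the problem that $\Delta_{w_{J'}}$ kills all $W_{J'}$-invariants. By Lemma~\ref{bekommeneins} there is a polynomial $h$ (in the variables on which $W_{J'}$ acts) such that $\Delta_{w_{J'}}(h)=1$. For any $f\in\Lambda_{\bu_J}$ (which is $W_{J'}$-invariant), the Leibniz rule for Demazure operators gives $\Delta_{w_{J'}}(fh)=f\Delta_{w_{J'}}(h)=f$, so
\[
\Delta_{d^K_J}(f)=\Delta_{d^K_J}\bigl(\Delta_{w_{J'}}(fh)\bigr)=\Delta_{w_{K'}}(fh).
\]
Applying the parabolic analogue of \eqref{Fulton} (which holds in each symmetric-group factor of the parabolic $W_{K'}$), namely $\Delta_{w_{K'}}(g)=\sum_{w\in W_{K'}}w(g/\blacktriangle_{K'})$ where $\blacktriangle_{K'}$ is the product of the positive roots inside $W_{K'}$, and decomposing $W_{K'}=\bigsqcup_{\sigma\in D^{K'}_{\emptyset,J'}}\sigma W_{J'}$, I rewrite
\[
\Delta_{d^K_J}(f)=\sum_{\sigma\in D^{K'}_{\emptyset,J'}}\sigma\left(\sum_{v\in W_{J'}}v\!\left(\frac{fh}{\blacktriangle_{K'}}\right)\right).
\]
Using $W_{J'}$-invariance of $f$ and the antisymmetry $v(1/\blacktriangle_{K'})=(-1)^{\ell(v)}/\blacktriangle_{K'}$ (because $W_{J'}\subseteq W_{K'}$ and $\blacktriangle_{K'}$ is $W_{K'}$-antisymmetric), the inner sum collapses, via \eqref{Fulton} applied in $W_{J'}$ together with $\Delta_{w_{J'}}(h)=1$, to $f\blacktriangle_{J'}/\blacktriangle_{K'}$. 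Consequently
\[
\Delta_{d^K_J}(f)=\sum_{\sigma\in D^{K'}_{\emptyset,J'}}\sigma\!\left(\frac{f}{\blacktriangle_{K'}/\blacktriangle_{J'}}\right)=\bC^{K'}_{J'}\!\left(\frac{f}{\blacktriangle_{K'}/\blacktriangle_{J'}}\right).
\]

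The final step, and the main obstacle, is to identify $\blacktriangle_{K'}/\blacktriangle_{J'}$ with $\mathtt{S}_{\bu_J}^{\bu_K}$. Under the isomorphism $\zeta_{\bu_J}$ of Lemma~\ref{zetaiso1}, the subgroup $W_{J'}$ is identified with the product of symmetric groups permuting the variables $y_{i,j}$ of the same colour $i$ that lie in the same $J$-part, and similarly $W_{K'}$ permutes same-colour variables within the same (larger) $K$-part. Thus $\blacktriangle_{J'}$ is the product of the differences $(y_{i,j}-y_{i,k})$ with $j<k$ same colour and same $J$-part, and analogously for $\blacktriangle_{K'}$ with $K$ in place of $J$. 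Since each $K$-block (of a fixed colour) is the disjoint union of the $J$-blocks it contains, $\blacktriangle_{K'}/\blacktriangle_{J'}$ is precisely the product of $(y_{i,j}-y_{i,k})$ with $j<k$, same colour, same $K$-part but different $J$-parts. A direct inspection of the defining formula \eqref{Stotal} for $\mathtt{S}_{\bu_J}$ and $\mathtt{S}_{\bu_K}$, reading $\mathtt{S}_{\bu_J}^{\bu_K}=\mathtt{S}_{\bu_J}/\mathtt{S}_{\bu_K}$, shows it is exactly this same product (the pairs appearing in $\mathtt{S}_{\bu_J}$ but not in $\mathtt{S}_{\bu_K}$ are those crossing $J$-parts but not $K$-parts). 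This identification completes the argument and, as a by-product, confirms that $\bC^{K'}_{J'}(f/\mathtt{S}_{\bu_J}^{\bu_K})$ is indeed a polynomial, since it equals the Demazure operator $\Delta_{d^K_J}(f)$ applied to a polynomial.
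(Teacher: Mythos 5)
Your proposal is correct and follows essentially the same route as the paper: write $\Delta_{d^K_J}(f)=\Delta_{w_{K'}}(fh)$ using Lemma~\ref{bekommeneins}, apply the closed formula \eqref{Fulton} to $W_{K'}$, split the sum along the coset decomposition $W_{K'}=\bigsqcup_{\sigma}\sigma W_{J'}$, and identify the Vandermonde quotient $\blacktriangle_{K'}/\blacktriangle_{J'}$ with $\mathtt{S}_{\bu_J}^{\bu_K}$. If anything, your handling of the inner sum (invoking the $W_{J'}$-antisymmetry of $\blacktriangle_{K'}$ before collapsing it) is slightly more carefully stated than the paper's factorization of the double sum.
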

\proof
Let $f\in \Lambda_{\bu_{J}}$ and let $w_J$ be the longest element in $W_{J'}\subset W_{K'}$ and $w_K$ the longest element in $W_{K'}$. 
Then 
$$\Delta_{d^K_J}(f)=\Delta_{d^K_J}(f\cdot1)=\Delta_{d^K_J}(f\cdot\Delta_{w_J}(h))=\Delta_{d^K_J}(\Delta_{w_J}(fh))=\Delta_{w_0}(fh)$$
with $h$ as in Lemma~\ref{bekommeneins}. Here, for the penultimate equality, we have used that $f$ is $W_J$-invariant and for the last equality that $dw_J=w_K$. With the explicit formula from \eqref{Fulton}, we obtain that $\Delta_{w_0}(fh)$ equals 
\begin{eqnarray*}
%\Delta_{w_0}(fh)&=&
\frac{1}{\blacktriangle}\sum_{w\in W_{K'}}(-1)^{l(w)}w(fh)
\;=\;\frac{1}{\blacktriangle}\left(\sum_{d\in D^{K'}_{J'}}(-1)^{l(d)} d(f) \right)\left(\sum_{w\in W_{J'}}(-1)^{l(w)} w(h)\right),
\end{eqnarray*}
with $\blacktriangle$ the Vandermonde determinant $\prod_{i=1}^e \prod_{1\leq j<k\leq d_i} (z_{i,j}-z_{i, k})$, equal to
$$\prod_{i=1}^e \prod_{s=1}^{r-1}\prod_{j=c(s-1)_i+1}^{c(s)_i} 
\prod_{k=c(s)_{i}+1}^{d_{i}} (z_{i,j}-z_{i, k})\cdot\prod_{i=1}^e \prod_{s=1}^{r-1}\prod_{c(s-1)_i+1\leq j<k\leq c(s)_i} (z_{i,j}-z_{i, k}).$$
%\begin{eqnarray*}
%\blacktriangle&=&\prod_{i=1}^e \prod_{1\leq j<k\leq d_i} (z_{i,j}-z_{i, k})\\
%&=&\left(\prod_{i=1}^e \prod_{s=1}^{r-1}\prod_{j=c(s-1)_i+1}^{c(s)_i} 
%\prod_{k=c(s)_{i}+1}^{d_{i}} (z_{i,j}-z_{i, k})\right)\\
%&&\quad\quad\cdot
%\left(\prod_{i=1}^e \prod_{s=1}^{r-1}\prod_{c(s-1)_i+1\leq j<k\leq c(s)_i} (z_{i,j}-z_{i, k})\right)\\
%\end{eqnarray*}
From \eqref{Fulton} we obtain $\Delta_{w_0}(fh)=\;\bC_{J}^{K} \left( \frac{f}{\mathtt{S}_{\bu_{J}}^{\bu_K} }\right)\Delta_{w_J}(h)$, and we are done.
\endproof

\subsection{The shifted quiver Schur algebra $\bB_\bi$}
We now define the {\it shifted quiver Schur algebra} $\bB_\bi$ in almost the same way, except that the  Euler class moves from the split to the merge. (Note that this algebra is again defined for any field.) More precisely, define 
$\bB_\bi$ as the subalgebra of $\End_\FF(\round\Lambda)$ generated by the {\it idempotents} and {\it polynomial}  as in $\bA_\bi$, and splits and merges now defined as 
\begin{trivlist}
%\item the {\it idempotents}: $e(\bu_J)$ for $\bu_J\in\bU_J$ for any $J$, \\
%defined as the projection onto the summand $\round\Lambda_{\bu_J}$,
%\item the {\it polynomials}:  $e(\bu_J) pe(\bu_J)$ for any $J$ and $p\in \round\Lambda_{\bu_J}$, \\
%defined as multiplication by $p$ on the summand $\round\Lambda_{\bu_J}$.
\item $\quad\bullet$ the {\it splits}: $\lrarrow\bigcurlyvee_{\bu_K}^{\bu'_{J}}$ for $J\subset K$ and $\bu_J = (\bu,J) \in \bU_J$, given on 
%$f$ with 
$f
\in \round\Lambda_{\bu'_{K'}}$ 
by 
\begin{eqnarray*}
f&\mapsto 
\begin{cases}
f \in \round\Lambda_{\bu_{J}} &\text{if $\bu'_{K'} =\bu_K$},\\ 
0 &\text{otherwise}.
\end{cases}
\end{eqnarray*}
%where 
%\begin{equation}
%\label{euler}
%E:=\prod_{i=1}^e \prod_{j=1}^{c_{i+1}}\prod_{k=c_i+1}^{d_i+c_i}(Y_{i+1,j}-Y_{i,k}),
%\end{equation}
\item \quad\quad In other words, a split is just the embedding of $\round\Lambda_{\bu_K}$ into  $\round\Lambda_{\bu_{J}}$.
\item $\quad\bullet$ the {\it merges}: $\lrarrow\bigcurlywedge_{\bu_{J}}^{\bu_K}$ for any $J\subset K$  and $\bu_J = (\bu,J) \in \bU_J$, given %on  $f\in \round\Lambda_{\bu'_{J'} } $
 by 
\begin{eqnarray*}
f&\mapsto 
\begin{cases}
\round\Delta(\round{\mathtt{E}}_{\bu_J}^{\bu_K} f)\in  \round\Lambda_{\bu_K}&\text{if $\bu'_{J'} =\bu_{J}$},\\ 
0 &\text{otherwise}.
\end{cases}
\end{eqnarray*}
\end{trivlist}
% $\FF$.

\section{The main result: The Isomorphism Theorem}\label{mainiso}
The goal of this section is to prove the main Isomorphism Theorem~\ref{IsoTheorem} between the completed affine Schur algebra and the quiver Schur algebra.

\subsection{The isomorphism $\ccS_\bi\cong \cbQ_\bi$.}

%\subsection{Algebraic versions of the generators of $\bQ_\bi$}
We now compare the faithful representation of the modified quiver Schur algebra $\bQ_\bi$ with the faithful representation of the completed affine Schur algebra $\ccS_\bi$. The following isomorphism of vector spaces
\begin{eqnarray*}
\tau:  \FF[y_{1,1}, \ldots, y_{1,d_1},\ldots, y_{e,d_e}]&\to&R_+ = \FF[Y_{1,1}, \ldots, Y_{1,d_1},\ldots, Y_{e,d_e}]\\
y_{c,j}&\mapsto&  1-q^{-c}Y_{c,j}
\end{eqnarray*}
induces an isomorphism $\tau_{\bu_K}: \cLa_{\bu_K}\to\coFS^{\bu_K}_\bi=\hat R_{+,\bu_K}\bbv^{(K)}, 
f\bbv^{(K)}\mapsto  \tau(f)\bbv^{(K)}$
%of vector spaces
%\begin{eqnarray*}
%\tau_{\bu_K}: \cLa_{\bu_K}  &\to&   \coFS^{\bu_K}_\bi=\hat R_{+,\bu_K}\bbv^{(K)}, \quad
%f\bbv^{(K)}\mapsto  \tau(f)\bbv^{(K)}
%\end{eqnarray*}
of vector spaces and thus a total isomorphism 
\begin{eqnarray}
\label{tautotal}
\boldsymbol{\tau} = \bigoplus_{\substack{K\subseteq \bbI\\ \bu_K\in\bU_K}} \tau_{\bu_K}\; : &&  \cLa\to\coFS_\bi.
\end{eqnarray}
From now on we will identify these two vector spaces via our chosen isomorphism. With this identification we can compare our endomorphism algebras:

\begin{proposition}\label{StoQ}

The isomorphism $\boldsymbol{\tau}$ can be extended to an algebra isomorphism  $\boldsymbol{\tau}: \cbQ_\bi\to\ccS_\bi$ which 
\begin{itemize}
\item identifies  the subalgebra of $\cbQ_\bi$ generated by all $e(\bu_J)pe(\bu_J)$ for all  $J\subset\bbI,\bu_J\in \bU_J$ and $p\in \Lambda_{\bu_J}$with the algebra  $\ccQ_\bi$  from Section~\ref{Qsection},
\item identifies splits in the sense that, for any $J\subset K\subseteq$ and $\bu_J=(\bu,J) \in \bU_J$, it maps $\bigcurlyvee_{\bu_K}^{\bu_{J}}$ to  $\e_{\bu_J}\bb^1_{J,K}\e_{\bu_K}$, 
\item identifies merges in the sense that, in case  $J\subset K$, $|J|=|K|-1$ and $\bu_J=(\bu,J) \in \bU_J$, the generator $\bigcurlywedge_{\bu_J}^{\bu_{K}}$ maps to $\e_{\bu_K}\bb^1_{K,J}\e_{\bu_J}P^{-1}$ for an invertible power series $P$. 
\end{itemize}
\end{proposition}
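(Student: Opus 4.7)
The strategy is to promote $\boldsymbol{\tau}$ from an isomorphism of vector spaces to one that intertwines the faithful representations of $\cbQ_\bi$ on $\cLa$ (by definition) and of $\ccS_\bi$ on $\coFS_\bi$ (by Proposition~\ref{cfaithful}\eqref{faco}). Once the prescribed correspondence is checked generator-by-generator on the action, injectivity of the induced algebra map follows from faithfulness of either representation, while surjectivity follows from Proposition~\ref{generatingbetter}, which ensures that $\ccQ_\bi$ together with the splits $\e_{\bu_J}\bb^1_{J,K}\e_{\bu_K}$ and the merges $\e_{\bu_K}\bb^1_{K,J}\e_{\bu_J}$ generate $\ccS_\bi$.

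Compatibility on idempotents and polynomial generators is essentially built into the construction: $\boldsymbol{\tau}$ is a direct sum of isomorphisms $\cLa_{\bu_J}\xrightarrow{\sim}\coFS^{\bu_J}_\bi$, so the projections $e(\bu_J)$ correspond to $\e_{\bu_J}$; and the substitution $y_{c,j}\mapsto 1-q^{-c}Y_{c,j}$ is an isomorphism of complete local rings, so multiplication by $p\in\Lambda_{\bu_J}$ on $\cLa_{\bu_J}$ transports to multiplication by an element of $\hat{R}_{+,\bu_J}$, which, by Lemma~\ref{Lakritz} and Proposition~\ref{Qcomplete}, is exactly the action of the corresponding element of $\ccQ_\bi$. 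For a split, setting $w=1$ and $p=1$ in Proposition~\ref{cfaithful}\eqref{faco} and using $D^J_{\emptyset,J}=\{1\}$ shows that $\e_{\bu_J}\bb^1_{J,K}\e_{\bu_K}$ acts on $\coFS_\bi$ as the identity embedding $\coFS^{\bu_K}_\bi \hookrightarrow \coFS^{\bu_J}_\bi$, which under $\boldsymbol{\tau}$ is precisely the defining action of $\bigcurlyvee_{\bu_K}^{\bu_J}$.

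The main obstacle is matching the merges. By Proposition~\ref{generatingbetter} it suffices to treat simple merges, and the local structure of Proposition~\ref{mergeactionschur} allows a further reduction to $K=\bbI$ and $J=\bbI\setminus\{a\}$. That proposition expresses the action of $\e_{\bu_K}\bb^1_{K,J}\e_{\bu_J}$ on $f\bbv^{(J)}$ as
\[
\e_{\bu_K}\,\bC_{a_1,b_1,\ldots,a_e,b_e}^{d_1,\ldots,d_e}\,\sigma_{\bu_J}^{\bu_K}\,\Bigl(\prod_{k=a+1}^n\prod_{l=1}^a\gt_{l,k}\Bigr)\,f\,\bbv^{(K)}.
\]
Applying $\boldsymbol{\tau}$ and expanding each $\gt_{l,k}=(qX_l-X_k)/(X_l-X_k)$ via $X_i=q^{u_i}(1-y_{u_i,\cdot})$, one analyses the constant terms of numerator and denominator: precisely the pairs with $u_k=u_l+1$ contribute, through the numerator, factors proportional to $y_{u_l,\cdot}-y_{u_k,\cdot}$ that assemble into the relative reversed Euler class $\mathtt{E}_{\bu_J}^{\bu_K}$, while in all remaining cases (namely $u_k=u_l$ and $u_k\neq u_l,u_l+1$) both numerator and denominator have a non-zero constant term coming from $q^{u_l}$ and $q^{u_k}$, yielding an invertible element of $\hat{R}_{+,\bu_K}$. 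Invoking Proposition~\ref{matchDemazure} to identify the combinatorial merge $\bC$ with the Demazure operator defining $\Delta=\Delta_{\bu_J}^{\bu_K}$, and absorbing $\sigma_{\bu_J}^{\bu_K}$ into the change of variables, the whole expression rewrites as $P\cdot\Delta(\mathtt{E}_{\bu_J}^{\bu_K}f)$ for some invertible $P\in\hat{R}_{+,\bu_K}^{\times}$, which is precisely $P$ times the action of $\bigcurlywedge_{\bu_J}^{\bu_K}$.

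With the three generator types identified, $\boldsymbol{\tau}$ defines an algebra map $\cbQ_\bi\to\ccS_\bi$, injective by faithfulness of the $\cbQ_\bi$-action on $\cLa$ and surjective since its image contains $\ccQ_\bi$, all splits and all simple merges (times invertible elements of $\ccQ_\bi$), hence, by Proposition~\ref{generatingbetter}, all of $\ccS_\bi$. The only genuinely delicate step is the bookkeeping in the merge computation that sorts the factors $\gt_{l,k}$ into Euler-class contributions and units; the rest is formal.
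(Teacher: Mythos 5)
Your overall strategy coincides with the paper's: transport each type of generator through the faithful representations, then deduce injectivity from faithfulness and surjectivity from Proposition~\ref{generatingbetter}. The treatment of idempotents, polynomials and splits is correct and matches the paper. The gap is in the merge computation, in exactly the step you yourself flag as the delicate one.

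You sort the factors $\gt_{l,k}=(qX_l-X_k)/(X_l-X_k)$ into two classes only: Euler-class numerators when $u_k=u_l+1$, and units in all remaining cases. This misses a third class. When $u_k=u_l$ (say both equal to $i$), the \emph{denominator} $X_l-X_k$ becomes, under the substitution $X=q^{u}(1-y)$, equal to $q^{i}(y_{i,k}-y_{i,j})$, which has zero constant term and is therefore not invertible in $\hat R_{+,\bu_K}$; your claim that in this case ``both numerator and denominator have a non-zero constant term coming from $q^{u_l}$ and $q^{u_k}$'' fails precisely because $q^{u_l}-q^{u_k}=0$. These non-invertible denominators cannot be absorbed into $P$: they assemble exactly into $1/\mathtt{S}_{\bu_J}^{\bu_K}$, and they are what turns $\bC_{I_{\bu_J}}^{I_{\bu_K}}\bigl(\mathtt{E}_{\bu_J}^{\bu_K}f\bigr)$ into $\bC_{I_{\bu_J}}^{I_{\bu_K}}\bigl(\mathtt{E}_{\bu_J}^{\bu_K}f/\mathtt{S}_{\bu_J}^{\bu_K}\bigr)=\Delta\bigl(\mathtt{E}_{\bu_J}^{\bu_K}f\bigr)$, i.e.\ into the actual merge of Definition~\ref{modifiedSchur}. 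As written, your accounting would yield $P\cdot\bC_{I_{\bu_J}}^{I_{\bu_K}}\bigl(\mathtt{E}_{\bu_J}^{\bu_K}f\bigr)$, which is not the modified quiver Schur merge, and your appeal to Proposition~\ref{matchDemazure} presupposes that the division by $\mathtt{S}_{\bu_J}^{\bu_K}$ is already present. The repair is the computation carried out in the paper: the factor for $u_k=u_l$ equals $\frac{1-q+qy_{i,j}-y_{i,k}}{y_{i,j}-y_{i,k}}$, whose numerator (constant term $1-q\neq 0$) contributes to the unit $P$ while its denominator contributes to $1/\mathtt{S}_{\bu_J}^{\bu_K}$.
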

From our identification of local and global indices in \eqref{zeta}, it is immediate that $\boldsymbol{\tau}$ extends the isomorphism $\eqref{isogamma}$ given in Section~\ref{heckeklrisosec}.
\proof
It follows from Proposition~\ref{cfaithful} and Lemma~\ref{pmult} that the action of the algebra  $\ccQ_\bi$ coincides with the action of the subalgebra of $\cbQ_\bi$ generated by all $e(\bu_J)pe(\bu_J)$ under the identification $\boldsymbol{\tau}^{-1}$. Hence the first claim holds. It is also clear from Example~\ref{specialmorph} that for $J\subset K$ and $\bu_J=(\bu,J) \in \bU_J$, the action  of $\e_{\bu_J}\bb^1_{J,K}\e_{\bu_K}$ on $\coFS_\bi$ translates directly to the action of $\bigcurlyvee_{\bu_K}^{\bu_{J}}$ under $\boldsymbol{\tau}^{-1}$. Hence the second assertion holds as well. 
We now claim that  for $J\subset K$, $|J|=|K|-1$ and $\bu_K = (\bu',K) \in \bU_K$ a  simple merge of $\bu_J =(\bu,J)\in \bU_J$, the action  of $\e_{\bu_K}\bb^1_{K,J}\e_{\bu_J}$ on $\coFS_\bi$ translates into the action of $\bigcurlywedge_{\bu_J}^{\bu_{K}} P$ for an invertible power series $P$.
Again, to ease terminology, we check this in the case of $K=\bbI$ and $\bu_J=(1^{a_1},2^{a_2},\cdots, e^{a_e}|1^{b_1},2^{b_2},\cdots, e^{b_e})$. Since the calculations are local this is sufficient.
Recall from Proposition~\ref{mergeactionschur} that for $f\in \hat{R}_{+,\bu_J}$ %(on which $\corho(\e_{\bu_K}\bb^1_{K,J} \e_{\bu_J})$ acts as $\e_{\bu_K}\corho(\bb^1_{K,J})$, while it kills all other summands of $\coFS_\bi$)
\begin{equation*}
e_{\bu'}\corho(\bb^1_{K,J})f\bbv^{(J)}= e_{\bu'}\bC_{a_1,b_1,a_2,b_2,\ldots,a_e,b_e}^{d_1,\;\;\ldots\;\; d_2,\;\ldots\; d_n}\sigma_{\bu_J}^{\bu'_K} \prod_{k=a+1}^n\prod_{j=1}^a \gt_{j,k}f\bbv^{(K)}.\end{equation*}

Translating this into the variables $Y_{i,j}$, notice that $\sigma_{\bu_J}^{\bu_K}$ becomes superfluous as it is precisely the element mapping $(i,j)_{\bu_J}$ from Definition~\ref{WuJ} to $(i,j)_{\bu'_K}$ and is hence the identity on the variable $Y_{i,j}$. We obtain
\begin{eqnarray*}
e_{\bu'}\corho(\bb^1_{K,J})f\bbv^{(J)}= e_{\bu'}\bC_{a_1,b_1,a_2,b_2,\ldots,a_e,b_e}^{ d_1,\;\;\;\ldots\;\;\;d_2, \ldots, d_e} 
\prod_{i=1}^e\prod_{s=1}^e\prod_{j=1}^{a_i}\prod_{k=a_s+1}^{ d_s} \frac{qY_{i,j}-Y_{s,k}}{Y_{i,j}-Y_{s,k}}
\end{eqnarray*}

Under $\boldsymbol{\tau}^{-1}$, multiplication by $\prod_{i=1}^e\prod_{s=1}^e\prod_{j=1}^{a_i}\prod_{k=a_s+1}^{ d_s} \frac{qY_{i,j}-Y_{s,k}}{Y_{i,j}-Y_{s,k}}
$ translates to multiplication by
\begin{eqnarray*}
&&\prod_{i=1}^e\prod_{s=1}^e\prod_{j=1}^{a_i}\prod_{k=a_s+1}^{ d_s} \frac{q^{i+1}(1-y_{i,j})-q^s(1-y_{s,k})}{q^{i}(1-y_{i,j})-q^s(1-y_{s,k})}\\
&=&\prod_{i=1}^e\prod_{j=1}^{a_i}\left(\prod_{k=a_i+1}^{ d_i}      \frac{1-q+qy_{i,j}-y_{i,k}}{y_{i,j}-y_{i,k}}    \prod_{k=a_{i+1}+1}^{ d_{i+1}}   \frac{-q(y_{i,j}-y_{i+1,k})}{1-q-y_{i,j}+qy_{i+1,k}}\mathbf{R}\right)\\
&=&\left(\prod_{i=1}^e\prod_{j=1}^{a_i}\frac{ \prod_{k=a_{i+1}+1}^{ d_{i+1}} y_{i,j}-y_{i+1,k} }{\prod_{k=a_i+1}^{ d_i} y_{i,j}-y_{i,k} }\right)\mathbf{P}, \text{   where}
\end{eqnarray*} 
\begin{eqnarray*}
\mathbf{R}&=&\prod_{\substack{s=1\\s\neq i,i+1}}^e \prod_{k=a_s+1}^{d_s} \frac{q^{i+1}(1-y_{i,j})-q^s(1-y_{s,k})}{q^{i}(1-y_{i,j})-q^s(1-y_{s,k})}\\
\mathbf{P}& = &\prod_{i=1}^e\prod_{j=1}^{a_i}(\prod_{k=a_i+1}^{ d_i}  (1-q+qy_{i,j}-y_{i,k})   \prod_{k=a_{i+1}+1}^{ d_{i+1}}   \frac{-q}{1-q-y_{i,j}+qy_{i+1,k}}\mathbf{R}).
\end{eqnarray*}
Note that $\mathbf{P}$ is an invertible power series in  $\Lambda_{\bu_K}$.
Hence the third assertion holds. By definition of the modified quiver Schur algebra we have mapped all generators to the corresponding elements in $\ccS_\bi$ by identifying their action on the faithful representations. Thus, $\boldsymbol{\tau}$ is injective, and hence an  isomorphism, as the image of $\boldsymbol{\tau}$ contains a generating set for $\ccS_\bi$ by Proposition~\ref{generatingbetter}.
\endproof

\subsection{The isomorphism $\bB_\bi\cong\bA_\bi$ of (shifted) quiver Schur algebras}
We next show that the shifted quiver Schur algebra is isomorphic to the ordinary quiver Schur algebra. We start with some preparation. First, we again identify the vector spaces underlying the faithful representations. For $\bu_J\in\bU_J$, we set
${}^\downarrow\round\Lambda_{\bu_J}:=E_{\bu_J}\round\Lambda_{\bu_J}$ and ${}^\downarrow\round\Lambda= \bigoplus_{\substack{J\subseteq \bbI\\ \bu_J\in\bU_J}}{}^\downarrow\round\Lambda_{\bu_J}$. Fix the vector space isomorphism
\begin{eqnarray*}
\shift_{\bu_J}\;:\;\round\Lambda_{\bu_J} &\to & {}^\downarrow\round\Lambda_{\bu_J}, \quad 
 f \mapsto E_{\bu_J}f \hbox{ and } \shift=\bigoplus_{\substack{J\subseteq \bbI\\ \bu_J\in\bU_J}} \shift_{\bu_J}:\round\Lambda \to  {}^\downarrow\round\Lambda.
\end{eqnarray*}
%and the induced vector space isomorphism $$\shift=\bigoplus_{\substack{J\subseteq \bbI\\ \bu_J\in\bU_J}} \shift_{\bu_J}:\round\Lambda \to  {}^\downarrow\round\Lambda.$$

\begin{lemma}
\label{repshifted}
Endowing ${}^\downarrow\round\Lambda$ with a representation of $\bB_\bi$ via $\kappa$, the induced action is given by 
the same formulae as the action of $\bA_\bi$ on $\round\Lambda$ for idempotents and polynomials and, for splits and merges as follows.%, i.e.
\begin{trivlist}
%\item the {\it idempotents} $e(\bu_J)$, for $\bu_J\in\bU_J$ for any $J$, \\
%act as the projection onto the summand ${}^\downarrow\round\Lambda_{\bu_J}$,
%\item the {\it polynomials}  $e(\bu_J) pe(\bu_J)$, for any $J$ and $p\in {}^\downarrow\round\Lambda_{\bu_J}$, \\
%act as multiplication by $p$ on the summand ${}^\downarrow\round\Lambda_{\bu_J}$.
\item  $\quad\bullet$ The {\it split} $\lrarrow\bigcurlyvee_{\bu_K}^{\bu_{J}}$ for $J\subset K$, $\bu_J = (\bu,J) \in \bU_J$, acts on $f\in  {}^\downarrow\round\Lambda_{\bu'_{K'}}$ by 
\begin{eqnarray*}
f&\mapsto 
\begin{cases}
\round{\mathtt{E}}_{\bu_J}^{\bu_K}f \in {}^\downarrow\round\Lambda_{\bu_{J}} &\text{if $\bu'_{K'} =\bu_K$},\\ 
0 &\text{otherwise}.
\end{cases}
\end{eqnarray*}
%where 
%\begin{equation}
%\label{euler}
%E:=\prod_{i=1}^e \prod_{j=1}^{c_{i+1}}\prod_{k=c_i+1}^{d_i+c_i}(Y_{i+1,j}-Y_{i,k}),
%\end{equation}
%In other words, a split is just the embedding of the summand $\round\Lambda_{\bu_K}$ into the summand $\round\Lambda_{\bu'_{J}}$ followed by multiplication with $\round{\mathtt{E}}_{\bu'_J}^{\bu_K}$. 
\item $\quad\bullet$The {\it merge} $\lrarrow\bigcurlywedge_{\bu_{J}}^{\bu_K}$ for $J\subset K$, $\bu_J = (\bu,J) \in \bU_J$ act on 
$f\in \round\Lambda_{\bu'_{J'} } $ by 
\begin{eqnarray*}
f&\mapsto 
\begin{cases}
\round\Delta( f)\in  {}^\downarrow\round\Lambda_{\bu_K}&\text{if $\bu'_{J'} =\bu_{J}$},\\ 
0 &\text{otherwise}.
\end{cases}
\end{eqnarray*}
%where 
%\begin{equation}
%\label{euler}
%E:=\prod_{i=1}^e \prod_{j=1}^{c_{i+1}}\prod_{k=c_i+1}^{d_i+c_i}(y_{i+1,j}-y_{i,k}),
%E:=\prod_{i=1}^e \prod_{k=1}^{c_{i+1}}\prod_{j=c_i+1}^{d_i+c_i}(y_{i,j}-y_{i+1,k}),
%\end{equation}
%and
%$\round\Delta=\round\Delta_{\bu'_{J}}^{\bu_K}$ sends an element $f$ to the total invariant
%$$\bC_{J^{\bu'}}^{K^\bu} \left( \frac{f}{\mathtt{S}_{\bu'_{J}}^{\bu_K} }\right). $$

%$$\bC_{c_1,d_1,c_2,d_2,\ldots,c_e,d_e}^{c_1+d_1,\ldots,c_e+d_e}\left(f\displaystyle\prod_{i=1}^e\frac{1}{\displaystyle\prod_{j=1}^{c_i}\prod_{k=c_i+1}^{c_i+d_i} y_{i,j}-y_{i,k} }\right)$$
%where we have written $c_i:=\eta^{\bu'_J'}(1)_i$ and $c_i:=\eta^{\bu'_J'}(1)_i$ 
%or equivalently
%$\Delta_{\bu'_{J'}}^{\bu_J}$
\end{trivlist}
\end{lemma}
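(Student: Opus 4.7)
The plan is to check the transferred action generator-by-generator, using the fact that $\kappa$ is explicitly the multiplication-by-total-Euler-class isomorphism. By definition, for any $\bu_J\in\bU_J$ the map $\kappa_{\bu_J}$ is a vector space isomorphism and the target is, by construction, the summand ${}^\downarrow\round\Lambda_{\bu_J}$, so idempotents are automatically transported to the projections onto the correct summands. For a polynomial $p\in\round\Lambda_{\bu_J}$ and $\round{\mathtt{E}}_{\bu_J}f\in {}^\downarrow\round\Lambda_{\bu_J}$, commutativity of $\round\Lambda_{\bu_J}$ gives $\kappa_{\bu_J}(p\cdot\kappa_{\bu_J}^{-1}(\round{\mathtt{E}}_{\bu_J}f)) = \round{\mathtt{E}}_{\bu_J}\,pf = p\cdot\round{\mathtt{E}}_{\bu_J}f$, as required.

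For a split, I would use that the action of $\lrarrow\bigcurlyvee_{\bu_K}^{\bu_{J}}$ on $\round\Lambda$ is the identity embedding $\round\Lambda_{\bu_K}\hookrightarrow\round\Lambda_{\bu_J}$. Unfolding the conjugation by $\kappa$ on $\round{\mathtt{E}}_{\bu_K}f\in{}^\downarrow\round\Lambda_{\bu_K}$ gives $\kappa_{\bu_J}(f) = \round{\mathtt{E}}_{\bu_J}f$, and since $\round{\mathtt{E}}_{\bu_J}^{\bu_K}=\round{\mathtt{E}}_{\bu_J}/\round{\mathtt{E}}_{\bu_K}$ by \eqref{relative}, this equals $\round{\mathtt{E}}_{\bu_J}^{\bu_K}\cdot\round{\mathtt{E}}_{\bu_K}f$, matching exactly the $\bA_\bi$ formula.

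The main point to verify — and the only step that is not immediate from the definitions — is the case of a merge. Applied to $\round{\mathtt{E}}_{\bu_J}f\in{}^\downarrow\round\Lambda_{\bu_J}$, the transported action produces $\kappa_{\bu_K}\bigl(\round\Delta(\round{\mathtt{E}}_{\bu_J}^{\bu_K}f)\bigr)=\round{\mathtt{E}}_{\bu_K}\round\Delta(\round{\mathtt{E}}_{\bu_J}^{\bu_K}f)$, whereas the proposed $\bA_\bi$ action gives $\round\Delta(\round{\mathtt{E}}_{\bu_J}f)$. Writing $\round{\mathtt{E}}_{\bu_J}=\round{\mathtt{E}}_{\bu_K}\cdot\round{\mathtt{E}}_{\bu_J}^{\bu_K}$, the identification reduces to the $\fS_{I_{\bu_K}}$-linearity of $\round\Delta_{\bu_J}^{\bu_K}$, i.e. to the fact that $\round{\mathtt{E}}_{\bu_K}$ can be pulled outside. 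This $\fS_{I_{\bu_K}}$-invariance of $\round{\mathtt{E}}_{\bu_K}$ is transparent from its defining product \eqref{Etotal}, and the linearity property itself is the familiar $W$-invariant-linearity of Demazure operators, which in our setup follows at once from Proposition~\ref{matchDemazure} combined with the standard fact that $\Delta_w(gh)=g\Delta_w(h)$ whenever $g$ is invariant under the parabolic subgroup generated by the simple reflections occurring in reduced expressions for $w$.

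Having matched all four types of generators, the formulae displayed in the statement of the lemma coincide with the pull-backs via $\kappa$ of the $\bB_\bi$-action, so $\kappa$ intertwines the representations. The main (indeed, the only non-formal) obstacle is the verification of the merge case, which rests on the invariance property of $\round{\mathtt{E}}_{\bu_K}$ and on the translation between the $\bC$-operators and Demazure operators recorded in Proposition~\ref{matchDemazure}; once these are in hand, the lemma is established.
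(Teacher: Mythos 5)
Your proposal is correct and follows essentially the same route as the paper: the idempotent, polynomial and split cases are formal, and the merge case is settled by pulling the $W_{I_{\bu_K}}$-invariant factor $\round{\mathtt{E}}_{\bu_K}$ through $\round\Delta$ after cancelling against the relative Euler class, exactly as in the paper's commutative-diagram computation. (The only cosmetic difference is that you route the invariant-linearity through Proposition~\ref{matchDemazure}, whereas it is immediate from $\bC_{I_{\bu_J}}^{I_{\bu_K}}$ being a sum over elements of $W_{I_{\bu_K}}$; also note the invariance you need is that of $\round{\mathtt{E}}_{\bu_K}$, stated at its definition, not at \eqref{Etotal}.)
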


\proof
The actions of idempotents and polynomials are immediate. In order to compare the actions of splits and merges, consider the diagrams
$$\xymatrix{ \round\Lambda_{\bu_K}\ar^{1}[d] \ar^{\cdot\round{\mathtt{E}}_{\bu_K}}[rr]&&{}^\downarrow\round\Lambda_{\bu_K}\ar[d]\\
 \round\Lambda_{\bu_J} \ar^{\cdot\round{\mathtt{E}}_{\bu_J}}[rr]&&{}^\downarrow\round\Lambda_{\bu_J}
}\qquad \qquad\xymatrix{ \round\Lambda_{\bu_K}\ar^{\cdot\round{\mathtt{E}}_{\bu_K}}[rr]&& {}^\downarrow\round\Lambda_{\bu_K}\\
 \round\Lambda_{\bu_J} \ar^{\round{\Delta}(\round{\mathtt{E}}_{\bu_J}^{\bu_K} - ) }[u] \ar^{\round{\cdot\mathtt{E}}_{\bu_J}}[rr]&& \ar[u]{}^\downarrow\round\Lambda_{\bu_J}.
}$$
Then, for splits, the claim of the lemma is equivalent to the commutativity of the first diagram, which is equivalent to the fact that the induced action of $\lrarrow\bigcurlyvee_{\bu_K}^{\bu_{J}}$ on ${}^\downarrow\round\Lambda$ is indeed given by multiplication with $\round{\mathtt{E}}_{\bu_J}/\round{\mathtt{E}}_{\bu_K} = \round{\mathtt{E}}_{\bu_J}^{\bu_K}. $

%For a merge $\lrarrow\bigcurlywedge_{\bu_{J}}^{\bu_K}$, consider the diagram
%$$\xymatrix{ \round\Lambda_{\bu_K}\ar^{\cdot\round{\mathtt{E}}_{\bu_K}}[rr]&& {}^\downarrow\round\Lambda_{\bu_K}\\
% \round\Lambda_{\bu_J} \ar^{\round{\Delta}(\round{\mathtt{E}}_{\bu_J}^{\bu_K} - ) }[u] \ar^{\round{\cdot\mathtt{E}}_{\bu_J}}[rr]&& \ar[u]{}^\downarrow\round\Lambda_{\bu_J}.
%}$$
For merges, we need to verify commutativity of the second diagram, which again stems from the fact that the action of $\lrarrow\bigcurlywedge_{\bu_{J}}^{\bu_K}$ on ${}^\downarrow\round\Lambda$ is given by
$$f \mapsto  \round{\mathtt{E}}_{\bu_K}\round{\Delta}(\round{\mathtt{E}}_{\bu_J}^{\bu_K} \round{\mathtt{E}}_{\bu_J}^{-1} f ) =\round{\mathtt{E}}_{\bu_K}\round{\Delta}(\round{\mathtt{E}}_{\bu_K}^{-1} f ).  $$
Since $\round{\mathtt{E}}_{\bu_K}$ commutes with $\bC_{I_{\bu_J}}^{I_{\bu_K}}$ by $\fS_{\bu_K}$-invariance of $\round{\mathtt{E}}_{\bu_K}$, we are done.
\endproof

\begin{lemma}
\label{boring}
The representation of  $\bB_\bi$ on  ${}^\downarrow\round\Lambda$ is faithful. Moreover, the action of $\bA_\bi$ restricted to ${}^\downarrow\round\Lambda$ is equal to the action of $\bB_\bi$.
\end{lemma}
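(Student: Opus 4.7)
\medskip

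\textbf{Proof proposal.} The first assertion will be immediate from the general principle that faithfulness is preserved under intertwining by vector space isomorphisms. Since $\bB_\bi$ is defined as a subalgebra of $\End_\FF(\round\Lambda)$, it acts faithfully on $\round\Lambda$ by construction. The map $\kappa : \round\Lambda \to {}^\downarrow\round\Lambda$ is a vector space isomorphism, and the $\bB_\bi$-action on ${}^\downarrow\round\Lambda$ is (by the setup of Lemma~\ref{repshifted}) defined precisely so that $\kappa$ is $\bB_\bi$-equivariant. Consequently the induced representation on ${}^\downarrow\round\Lambda$ remains faithful.

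For the second assertion, the plan is to verify generator-by-generator that ${}^\downarrow\round\Lambda \subseteq \round\Lambda$ is stable under the $\bA_\bi$-action, and that the restricted action agrees with the $\bB_\bi$-action described in Lemma~\ref{repshifted}. For idempotents and polynomial multiplications this is trivial, since both algebras are defined to act by exactly the same formulas, and these operators visibly preserve each summand ${}^\downarrow\round\Lambda_{\bu_J} = \round{\mathtt{E}}_{\bu_J}\round\Lambda_{\bu_J}$. For a split $\round\bigcurlyvee_{\bu_K}^{\bu_J}$ applied to $f = \round{\mathtt{E}}_{\bu_K}g \in {}^\downarrow\round\Lambda_{\bu_K}$, the $\bA_\bi$-formula yields $\round{\mathtt{E}}_{\bu_J}^{\bu_K} f = \round{\mathtt{E}}_{\bu_J}^{\bu_K}\round{\mathtt{E}}_{\bu_K}g = \round{\mathtt{E}}_{\bu_J}g \in {}^\downarrow\round\Lambda_{\bu_J}$, exactly matching the formula given in Lemma~\ref{repshifted}.

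The one calculation requiring a remark is the merge $\round\bigcurlywedge_{\bu_J}^{\bu_K}$. For $f = \round{\mathtt{E}}_{\bu_J}g \in {}^\downarrow\round\Lambda_{\bu_J}$, the $\bA_\bi$-action gives
\begin{eqnarray*}
\round\Delta(f) &=& \bC^{I_{\bu_K}}_{I_{\bu_J}}\!\left(\frac{\round{\mathtt{E}}_{\bu_K}\,\round{\mathtt{E}}_{\bu_J}^{\bu_K}\, g}{\round{\mathtt{S}}_{\bu_J}^{\bu_K}}\right)
\;=\; \round{\mathtt{E}}_{\bu_K}\,\bC^{I_{\bu_K}}_{I_{\bu_J}}\!\left(\frac{\round{\mathtt{E}}_{\bu_J}^{\bu_K}\, g}{\round{\mathtt{S}}_{\bu_J}^{\bu_K}}\right)
\;=\; \round{\mathtt{E}}_{\bu_K}\,\round\Delta(\round{\mathtt{E}}_{\bu_J}^{\bu_K} g),
\end{eqnarray*}
where the second equality uses that $\round{\mathtt{E}}_{\bu_K}$ is $\fS_{I_{\bu_K}}$-invariant and therefore pulls out of the symmetrisation $\bC^{I_{\bu_K}}_{I_{\bu_J}}$. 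The resulting element lies in $\round{\mathtt{E}}_{\bu_K}\round\Lambda_{\bu_K} = {}^\downarrow\round\Lambda_{\bu_K}$, and agrees term-by-term with the formula from Lemma~\ref{repshifted}.

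The argument has no real obstacle: the work has been done by Lemma~\ref{repshifted}, which already rewrote the transferred $\bB_\bi$-action in precisely the shape of the $\bA_\bi$-formulas. The only point requiring care is the commutativity of $\round{\mathtt{E}}_{\bu_K}$ with $\round\Delta$, which follows from its $\fS_{I_{\bu_K}}$-invariance. Both assertions of the lemma then follow by assembling these observations.
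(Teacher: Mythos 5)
Your proof is correct and follows essentially the same route as the paper, which likewise deduces both claims from Lemma~\ref{repshifted}; your generator-by-generator check of stability of ${}^\downarrow\round\Lambda$ under the $\bA_\bi$-action is exactly the computation already carried out in the proof of that lemma. If anything, your argument for faithfulness (transporting the tautologically faithful $\bB_\bi$-action through the isomorphism $\kappa$) is slightly more direct than the paper's appeal to faithfulness of $\bA_\bi$ on $\round\Lambda$.
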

\proof
Directly from the proof of the Lemma~\ref{repshifted}, we see that the action of $\bA_\bi$ on $\round\Lambda$ when restricted to ${}^\downarrow\round\Lambda$ is equal to the action of $\bB_\bi$. 
Since the representation of $\bA_\bi$ on $\round\Lambda$ was faithful, the representation of $\bB_\bi$ on ${}^\downarrow\round\Lambda$ is faithful as well. The second statement follows from the commutative diagrams above. 
\endproof

The canonical embedding $\iota\colon {}^\downarrow\round\Lambda \hookrightarrow \round\Lambda, \, f\mapsto f$ induces an algebra isomorphism:
\begin{proposition}\label{shiftA}
The algebras $\bA_\bi$ and $\bB_\bi$ are isomorphic.
\end{proposition}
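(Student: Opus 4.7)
The plan is to realize the isomorphism via conjugation by the injective $\FF$-linear endomorphism $\mu = \iota\circ\kappa$ of $\round\Lambda$ which multiplies each summand $\round\Lambda_{\bu_J}$ by the total Euler class $\round{\mathtt{E}}_{\bu_J}$, and whose image is ${}^\downarrow\round\Lambda$.

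First I would check that ${}^\downarrow\round\Lambda$ is stable under the $\bA_\bi$-action by going through the four types of generators. Idempotents and polynomial multiplications trivially preserve each component. A split $\round{\bigcurlyvee}_{\bu_K}^{\bu_{J}}$ sends $\round{\mathtt{E}}_{\bu_K}\,g\in{}^\downarrow\round\Lambda_{\bu_K}$ to $\round{\mathtt{E}}_{\bu_J}^{\bu_K}\round{\mathtt{E}}_{\bu_K}\,g=\round{\mathtt{E}}_{\bu_J}\,g\in{}^\downarrow\round\Lambda_{\bu_J}$. For a merge $\round{\bigcurlywedge}_{\bu_J}^{\bu_K}$, the $\fS_{\bu_K}$-invariance of $\round{\mathtt{E}}_{\bu_K}$ yields
$\round\Delta(\round{\mathtt{E}}_{\bu_J}\,g)=\round\Delta(\round{\mathtt{E}}_{\bu_K}\round{\mathtt{E}}_{\bu_J}^{\bu_K}\,g)=\round{\mathtt{E}}_{\bu_K}\round\Delta(\round{\mathtt{E}}_{\bu_J}^{\bu_K}\,g)\in{}^\downarrow\round\Lambda_{\bu_K}$.

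Setting $\Phi(a)=\mu^{-1}\circ a\circ\mu$ then defines an algebra homomorphism $\Phi:\bA_\bi\to\End_\FF(\round\Lambda)$, well-defined by the stability above. The same calculations show that $\Phi$ fixes idempotents and polynomial generators while interchanging the placement of the Euler class:
$\Phi(\round{\bigcurlyvee}_{\bu_K}^{\bu_J})=\lrarrow{\bigcurlyvee}_{\bu_K}^{\bu_J}$ and $\Phi(\round{\bigcurlywedge}_{\bu_J}^{\bu_K})=\lrarrow{\bigcurlywedge}_{\bu_J}^{\bu_K}$. Hence $\Phi$ lands in $\bB_\bi$ and hits every generator of $\bB_\bi$, so it is surjective onto $\bB_\bi$.

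For injectivity, $\Phi(a)=0$ forces $a$ to annihilate $\mu(\round\Lambda)={}^\downarrow\round\Lambda$. By Lemma~\ref{boring} the restriction $\bA_\bi\to\End_\FF({}^\downarrow\round\Lambda)$ has the same image as the faithful $\bB_\bi$-action on ${}^\downarrow\round\Lambda$, and since $\Phi$ induces a bijection on generators between the two algebras, the plan is to upgrade this to a bijection of natural bases (split--polynomial--merge chains indexed by the same combinatorial data of pairs $(\bu_{K_1},\bu_{K_2})$ and shortest double coset data analogous to $\cB^\cS_\cQ$) and conclude injectivity. The main obstacle will be precisely this last step: $\mu^{-1}$ is only defined on the proper submodule ${}^\downarrow\round\Lambda$, so one cannot form a naive two-sided inverse $b\mapsto\mu\circ b\circ\mu^{-1}$, and one must instead either compare bases directly or argue that the $\bA_\bi$-annihilator of ${}^\downarrow\round\Lambda$ is trivial, using that each $\round{\mathtt{E}}_{\bu_J}$ is a non-zero-divisor in the polynomial ring $\round\Lambda_{\bu_J}$.
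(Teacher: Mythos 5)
Your proposal is essentially the paper's own proof: the paper introduces the same map $\kappa$ (multiplication by $\round{\mathtt{E}}_{\bu_J}$ onto the subspace ${}^\downarrow\round\Lambda$), verifies exactly your two intertwining computations for splits and merges in Lemmas~\ref{repshifted} and~\ref{boring}, and deduces the isomorphism from faithfulness of both actions on ${}^\downarrow\round\Lambda$. The injectivity step you flag as the main obstacle is indeed the point the paper treats most tersely, and your non-zero-divisor idea does close it once you add that every element of $\bA_\bi$ is linear over the totally symmetric polynomials, so that annihilating $\round{\mathtt{E}}_{\bu_J}\round\Lambda_{\bu_J}$ --- which contains $N\,\round\Lambda_{\bu_J}$ for the symmetric element $N=\prod_{w}w(\round{\mathtt{E}}_{\bu_J})$ --- forces annihilating all of $\round\Lambda_{\bu_J}$.
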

\proof
Since the action of $\bA_\bi$ restricted to ${}^\downarrow\round\Lambda$ is equal to the action of $\bB_\bi$ by Lemma~\ref{boring}, we see that ${}^\downarrow\round\Lambda$ is a faithful subrepresentation of $\round\Lambda$ for $\bA_\bi$. 
The algebra $\bA_\bi$ is therefore completely defined by its action on ${}^\downarrow\round\Lambda$, and we obtain the desired isomorphism $\bA_\bi\cong\bB_\bi$ from Lemma~\ref{repshifted}.
\endproof
%To complete the proof, we claim that conversely, the action of $\bA_\bi$ on $\round\Lambda$ is uniquely defined by its action on ${}^\downarrow\round\Lambda$. Indeed, from the definition of $\bA_\bi$ we see that all generators of $\bA_\bi$ are linear with respect to the smaller (if applicable) polynomial ring involved. More precisely, the actions of idempotents and polynomials on $\round\Lambda$ are uniquely determined by their actions on ${}^\downarrow\round\Lambda$, as is the action fo a split, since this is also given by multiplication by a polynomial.
%The action on $\round\Lambda$ of a merge $\round\bigcurlywedge_{\bu'_{J}}^{\bu_K}$ in $\bA_\bi$  is $\round\Lambda_{\bu_K}$-linear, and since 

\subsection{The isomorphism $\bQ_\bi\cong\bB_\bi$.}
In order to prove that $\ccS_\bi$ and $\bA_\bi$ are isomorphic, it now remains to show that $\bQ_\bi$ and $\bB_\bi$ are isomorphic.

In order to do this, we define a bijection 
\begin{eqnarray*}\widetilde{}\quad\colon \bigcup_{J \subset \bbI} \bU_J&\to &\bigcup_{J \subset \bbI} \bU_J, \quad
\bu_J\mapsto \widetilde{\bu_J}.\end{eqnarray*}
where for $\bu_J=(u_1,\cdots, u_{t_1}|u_{t_1+1},\cdots, u_{t_2}|\cdots |u_{t_{r-1} +1}, \cdots, u_{t_r})$, we set 
$$\widetilde{\bu_J}=(u_{t_{r-1} +1}, \cdots, u_{t_r}|u_{t_{r-2} +1}, \cdots, u_{t_{r-1}}|\cdots|u_1,\cdots, u_{t_1}).$$

We further define the inner automorphism  of $\fS_n$ which is given by conjugation with the longest element $w_0^J$ of $D^\bbI_{\emptyset,J}$ and notice that this interchanges $\fS_{\bu_J}$ and $\fS_{\widetilde{\bu_J}}$.
It induces an isomorphism $\theta$ of vector spaces
$$\xymatrix{\FF[y_{1,1}, \ldots, y_{1,d_1},y_{2,1}, \ldots, y_{2,d_2},\ldots, y_{e,d_e}]\ar[d]_\theta&y_{c,j}\ar@{|->}[d]\\
\FF[z_{1,1}, \ldots, z_{1,d_1},z_{2,1}, \ldots, z_{2,d_2},\ldots, z_{e,d_e}]&z_{c,w_0^J(j) }=-z_{c,d_c+1-j}
}$$

which restricts to an isomorphism $\theta$ of vector spaces
%\begin{eqnarray*}
$\theta \colon\Lambda_{\bu_J}\overset{\sim}{\to} \round\Lambda_{\widetilde{\bu_J}}.$
%\end{eqnarray*}

\begin{example}
Considering $\bu_J = (1,1,1,1,2,2,3|1,2,3|1,1,2,3,3)$, we have $\widetilde{\bu_J} = (1,1,2,3,3,|1,2,3|1,1,1,1,2,2,3)$ and $\W_J \cong \fS_7\times \fS_3\times\fS_4$. Further, $$\fS_{\bu_J} \cong (\fS_4\times \fS_2\times\{1\}) \times(\{1\}\times\{1\}\times \{1\}) \times(\fS_2\times\{1\}\times \fS_2).$$
Under conjugation by $w_0^J$ this is sent to $$(\fS_2\times\{1\}\times \fS_2) \times(\{1\}\times\{1\}\times \{1\}) \times(\fS_4\times\fS_2\times \{1\})\cong \fS_{\widetilde{\bu_J}}.$$
%The isomorphism $\theta$ exchanges $y_{1,1}$ and $y_{1,7}$,$y_{1,2}$ and $y_{1,6}$,$y_{1,3}$ and $y_{1,4}$, fixes $y_{1,4}$, exchanges $y_{2,1}$ and $y_{2,4}$, etc.
\end{example}
%\todo{$\fS_d$ ist nicht richtige Notation, aber was ist die richtige? ist, glaube ich, repariert}

\begin{proposition}\label{twistA}
There is an isomorphism of algebras $\bQ_\bi \rightarrow \bB_\bi$ given by
$$e(\bu_J)\mapsto e(\widetilde{\bu_J}), \quad pe(\bu_J)\mapsto\theta(p)e(\widetilde{\bu_J}), \quad{\bigcurlyvee}_{\bu_K}^{\bu_{J}} \mapsto  \lrarrow\bigcurlyvee_{\widetilde{\bu_K}}^{\widetilde{\bu_{J}}},\quad
{\bigcurlywedge}_{\bu_{J}}^{\bu_K}\mapsto\lrarrow\bigcurlywedge_{\widetilde{\bu_{J}}}^{\widetilde{\bu_K}}.$$
%
%\begin{eqnarray*}
%\bQ_\bi &\longrightarrow& \bB_\bi\\
%e(\bu_J)&\longmapsto & e(\widetilde{\bu_J})\\
%pe(\bu_J)&\longmapsto & \theta(p)e(\widetilde{\bu_J})\\
%{\bigcurlyvee}_{\bu_K}^{\bu_{J}} & \longmapsto&  \lrarrow\bigcurlyvee_{\widetilde{\bu_K}}^{\widetilde{\bu_{J}}}\\
%{\bigcurlywedge}_{\bu_{J}}^{\bu_K}&\longmapsto&\lrarrow\bigcurlywedge_{\widetilde{\bu_{J}}}^{\widetilde{\bu_K}}.
%\end{eqnarray*}
\end{proposition}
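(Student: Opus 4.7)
The plan is to exhibit the asserted isomorphism $\bQ_\bi\to\bB_\bi$ as conjugation by a vector-space isomorphism between the defining faithful representations. The ring homomorphism $\theta$ of polynomial rings with $y_{c,j}\mapsto -z_{c,d_c+1-j}$ restricts, for each pair $(J,\bu_J)$, to a linear isomorphism $\Lambda_{\bu_J}\to\round\Lambda_{\widetilde{\bu_J}}$; summing these yields a vector-space isomorphism $\Theta:\Lambda\to\round\Lambda$. It then suffices to verify that conjugation by $\Theta$ sends the four families of generators of $\bQ_\bi$ to those of $\bB_\bi$ in the prescribed form.

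First I would check that $\theta$ does restrict to an isomorphism $\Lambda_{\bu_J}\to\round\Lambda_{\widetilde{\bu_J}}$ of invariant rings. The key point is that the assignment $s_{c,a}\mapsto s_{c,d_c-a}$ defines an isomorphism of abstract Coxeter groups $\fS_{\bu_J}\cong\fS_{\widetilde{\bu_J}}$ with respect to which $\theta$ is equivariant, and the same recipe applied to $\bu_K$ restricts to the one for $\bu_J$. Indeed, the reversal $\widetilde{\cdot}$ sends the $\bu_J$-block containing the $a$-th and $(a+1)$-st instances of colour $c$ to the $\widetilde{\bu_J}$-block containing the $(d_c-a)$-th and $(d_c+1-a)$-th instances of colour $c$. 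Granting this, the compatibility of $\Theta$ with idempotents $e(\bu_J)$ and polynomial multiplications $pe(\bu_J)$ is immediate, and the compatibility with splits $\bigcurlyvee_{\bu_K}^{\bu_J}$, which are simply the inclusions $\Lambda_{\bu_K}\hookrightarrow\Lambda_{\bu_J}$ and $\round\Lambda_{\widetilde{\bu_K}}\hookrightarrow\round\Lambda_{\widetilde{\bu_J}}$, follows at once.

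The crux is the compatibility with merges, namely the identity
\begin{equation*}
\Theta\circ\bigcurlywedge_{\bu_J}^{\bu_K}=\lrarrow\bigcurlywedge_{\widetilde{\bu_J}}^{\widetilde{\bu_K}}\circ\Theta.
\end{equation*}
This I would decompose into the two polynomial identities
\begin{equation*}
\theta(\mathtt{E}_{\bu_J}^{\bu_K})=\round{\mathtt{E}}_{\widetilde{\bu_J}}^{\widetilde{\bu_K}}\qquad\text{and}\qquad \theta(\mathtt{S}_{\bu_J}^{\bu_K})=\round{\mathtt{S}}_{\widetilde{\bu_J}}^{\widetilde{\bu_K}},
\end{equation*}
together with the Coxeter-sum intertwining $\theta\circ\bC^{I_{\bu_K}}_{I_{\bu_J}}=\bC^{I_{\widetilde{\bu_K}}}_{I_{\widetilde{\bu_J}}}\circ\theta$. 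For the polynomial identities I would expand both sides directly from \eqref{Etotal}, \eqref{Stotal} and \eqref{relative}: the substitution $j'=d_c+1-j$, $k'=d_{c+1}+1-k$ turns a factor $(y_{i,j}-y_{i+1,k})$ of $\mathtt{E}_{\bu_J}$ into $(z_{i+1,k'}-z_{i,j'})$ with $k'$ in a $\widetilde{\bu_J}$-block strictly earlier than that of $j'$, which is precisely a factor of $\round{\mathtt{E}}_{\widetilde{\bu_J}}$; the two minus signs contributed by $\theta$ cancel in each factor. The identity for $\mathtt{S}$ is analogous and only involves one colour. The Coxeter-sum intertwining is then formal: the Coxeter isomorphism from the previous paragraph is length-preserving and hence sends $D^{I_{\bu_K}}_{\emptyset,I_{\bu_J}}$ bijectively onto $D^{I_{\widetilde{\bu_K}}}_{\emptyset,I_{\widetilde{\bu_J}}}$, which combined with the equivariance of $\theta$ yields the identity.

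The main obstacle I expect is the careful bookkeeping in the Euler-class identity, since one must check that the reversal $\widetilde{\cdot}$ genuinely sends the merge $\bu_K$ of $\bu_J$ to the merge $\widetilde{\bu_K}$ of $\widetilde{\bu_J}$, so that relative Euler classes really do correspond, and match each factor of $\mathtt{E}_{\bu_J}^{\bu_K}$ with a unique factor of $\round{\mathtt{E}}_{\widetilde{\bu_J}}^{\widetilde{\bu_K}}$ under the index reversal. Once these combinatorial checks are completed, the conclusion that $\Theta$ realises the prescribed map on generators, and hence the asserted isomorphism of algebras, is purely formal.
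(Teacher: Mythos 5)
Your proposal is correct and follows essentially the same route as the paper: conjugating by the vector-space isomorphism induced by $\theta$, observing that idempotents, polynomials and splits match trivially, and reducing the merge compatibility to the intertwining of the Coxeter sums $\bC_{I_{\bu_J}}^{I_{\bu_K}}$ under conjugation by $w_0^J$ together with the factor-by-factor identities $\theta(\mathtt{E})=\round{\mathtt{E}}$ and $\theta(\mathtt{S})=\round{\mathtt{S}}$ under segment reversal. The only cosmetic difference is that you phrase the Euler-class identity for the relative classes while the paper proves the total versions (from which the relative ones follow by division); the combinatorial matching of factors is identical.
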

\proof
We check that the isomorphism $\theta: \Lambda \to \round\Lambda$ intertwines the actions of $\bQ_\bi$ and $\bB_\bi$ with respect to the isomorphism given in the proposition.
It is obvious that this is true for the idempotents and the polynomials, as well as the splits. From the action of merges and the fact that conjugation with $w_0^J$ sends $\bC_{I_{\bu_J}}^{I_{\bu_K}}$ to $\bC_{I_{\widetilde{\bu_J}}}^{I_{\widetilde{\bu_K}}}$, we see that, in order to prove the proposition, it suffices to show that
$\theta(\mathtt{E}_{\bu_J}) = \round{\mathtt{E}}_{\widetilde{\bu_J}}  \hbox{ and }  \theta(\mathtt{S}_{\bu_J}) = \round{\mathtt{S}}_{\widetilde{\bu_J}}.$
Notice that the term $y_{i,j}-y_{i+1,k}$ appears in $\mathtt{E}_{\bu_J}$ (and thus the term $z_{i+1,d_{i+1}+1-k}-z_{i,d_i+1-j}$ appears in $\theta(\mathtt{E}_{\bu_J})$) if and only if the $j$th appearance of $i$ in $\bu_J$ is in an earlier segment than the $k$th appearance of $i+1$. As applying $\;\widetilde{}\;$ reverses segments, this is equivalent to the $(d_i+1-j)$th appearance of $i$ in $\widetilde{\bu_J}$ being in a later segment than the $(d_{i+1}+1-k)$th appearance of $i$; or to the term $z_{i+1,d_{i+1}+1-k}-z_{i,d_i+1-j}$ appearing in $\round{\mathtt{E}}_{\widetilde{\bu_J}}$.
The claim that $\theta(\mathtt{S}_{\bu_J}) = \round{\mathtt{S}}_{\widetilde{\bu_J}}$ is checked analogously.
\endproof
\subsection{The main theorem.}
We are now prepared to prove our main result:
\begin{theorem}[Isomorphism Theorem]
\label{IsoTheorem}
There is an isomorphism of algebras
\begin{eqnarray*}
\ccS_\bi&\cong &\cbA_\bi.
\end{eqnarray*}
Via this isomorphism $\ccS_\bi$ inherits a grading from $\cbA_\bi$, i.e. the category of representations of $\ccS$ with fixed central character corresponding to $\bi$ inherits a grading.
\end{theorem}
\proof Composing the isomorphism from Proposition~~\ref{StoQ} with the completions of the respective isomorphisms in Propositions~\ref{twistA} and ~\ref{shiftA} provides the required isomorphism. In formulae, the isomorphism is the composition
\begin{equation*}
\xymatrix{
\ccS_\bi\ar[rrr]^{\text{Proposition~\ref{StoQ}}\quad}&&&\cbQ_\bi\ar[rrr]^{\text{Proposition~\ref{twistA}}}&&&\cbB_\bi\ar[rrr]^{\text{Proposition~\ref{shiftA}}}&&& \cbA_\bi. \hfill\qed
}
\end{equation*} 
\section{The  example $\mathrm{GL}_2(\Q_5)$ in characteristic $3$}
\label{Lastsection}
We finish with an explicit example. Consider  the unipotent block  $\cB$ (the block  containing the trivial representation) of the category of smooth representations for $\mathrm{GL}_2(\Q_5)$ over an algebraically closed field $\FF$ of characteristic~$3$, so $e=2$.

\subsection{The (completed) quiver Schur algebra}
 Let $\cB^1$ as in \eqref{BandS} and $\cB^1_{\ba}$ the full subcategory of $\cB^1$ consisting of representations with generalized central character $\chi_\ba$ where $\ba=(q,q^2)$. Equivalently, $\cB^1_{\ba}
$ is the full subcategory of $\cS-\mathrm{Mod}$ of all representations with generalized central character $\chi_\ba$. 

Recall that the path algebra of a quiver is the $\FF$-algebra with basis all possible paths obtained by concatenating the arrows, including the paths of length zero corresponding to the vertices of the graph.  The multiplication of two paths is the path obtained by concatenation if this makes sense and zero otherwise.
\begin{theorem}
\label{ThmEx}
Let $n=2=e$. Then the quiver Schur algebra $\bA_\bi$ for $\bi=(1,2)$ is (as graded algebra) isomorphic to the path algebra $B$ of the following quiver  with grading given by putting the horizontal arrows in degree $1$ and the loops in degree $2$ 
\begin{eqnarray}
\label{quiver}
\begin{tikzcd}
(1|2)\arrow[bend left]{rr}{\bigcurlywedge_{(1|2)}^{(1,2)}}\ar[loop below]{}{x_{2,1}e(1|2)} \ar[loop above]{}{x_{1,1}e(1|2)} 
&&(1,2)\arrow[bend left]{rr}{\bigcurlyvee_{(1,2)}^{(2|1)}}\arrow[bend left]{ll}{\bigcurlyvee_{(1,2)}^{(1|2)}}\ar[loop below]{}{x_{2,1}e(1,2)} \ar[loop above]{}{x_{1,1}e(1,2)} 
&&(2|1)\arrow[bend left]{ll}{\bigcurlywedge_{(2|1)}^{(1,2)}}\ar[loop below]{}{x_{2,1}e(2|1)} \ar[loop above]{}{x_{1,1}e(2|1)} 
\end{tikzcd}
\end{eqnarray}
modulo the following (homogeneous) relations: 
\begin{eqnarray*}
\bigcurlyvee_{(12)}^{(1|2)}\bigcurlywedge_{(1|2)}^{(12)} &=&(x_{2,1}-x_{1,1})e((1|2)),\\
\bigcurlyvee_{(12)}^{(2|1)}\bigcurlywedge_{(2|1)}^{(12)} &=& (x_{1,1}-x_{2,1})e((2|1)),\\
\bigcurlywedge_{(1|2)}^{(12)} \bigcurlyvee_{(12)}^{(1|2)}&=&-\bigcurlywedge_{(2|1)}^{(12)}\bigcurlyvee_{(12)}^{(2|1)}\;=\; (x_{1,1}-x_{2,1})e((1,2)),\\
x_{i,1}\bigcurlyvee_{(12)}^{\bu_J}& =& \bigcurlyvee_{(12)}^{\bu_J}x_{i,1} \quad \hbox{for } i\in\{1,2\}, \bu_J\in\{(1|2),(2|1)\},\\
x_{i,1}\bigcurlywedge^{(12)}_{\bu_J}& =& \bigcurlywedge^{(12)}_{\bu_J}x_{i,1} \quad \hbox{for } i\in\{1,2\}, \bu_J\in\{(1|2),(2|1)\}.
\end{eqnarray*}
 \end{theorem}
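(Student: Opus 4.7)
The plan is to apply Definition~\ref{QS} directly to the case $n=e=2$ with $\bi=(1,2)$, read off the structure of $\bA_\bi$, and then construct an explicit isomorphism with the path algebra $B$ by matching generators and checking relations.

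First I would enumerate the combinatorial data. Since $\bbI=\{s_1\}$ has only two subsets and $\fS\bi=\{(1,2),(2,1)\}$, the indexing set $\bigcup_{J\subseteq\bbI}\bU_J$ has exactly three elements: $(1|2)$ and $(2|1)$ (from $J=\emptyset$), and $(1,2)$ (from $J=\bbI$). The stabilizer $\fS_{\bu_J}$ is trivial in each case, so all three polynomial rings $\round\Lambda_{\bu_J}$ equal $\FF[z_{1,1},z_{2,1}]$, and a direct computation from the definition of $\round{\mathtt{E}}$ yields $\round{\mathtt{E}}_{(1|2)}=z_{1,1}-z_{2,1}$ (the contribution coming from $i=2$, wrapping to $i+1\equiv 1\pmod 2$), $\round{\mathtt{E}}_{(2|1)}=z_{2,1}-z_{1,1}$, and $\round{\mathtt{E}}_{(1,2)}=1$; all symmetrizers are trivial. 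Hence $\bA_\bi$ is generated by the three idempotents $e(\bu_J)$, the polynomial multiplications by $z_{1,1}$ and $z_{2,1}$ at each vertex, the two splits $\round{\bigcurlyvee}_{(1,2)}^{(1|2)},\round{\bigcurlyvee}_{(1,2)}^{(2|1)}$ (each acting as multiplication by its Euler class), and the two merges $\round{\bigcurlywedge}_{(1|2)}^{(1,2)},\round{\bigcurlywedge}_{(2|1)}^{(1,2)}$ (each equal to the identity on polynomials, since all Weyl groups and symmetrizers involved are trivial).

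Next I would define $\phi\colon B\to\bA_\bi$ on generators: idempotents and arrows go to their namesakes, while the loops are sent to polynomial multiplication with the sign convention $x_{i,1}e((1,2))\mapsto z_{i,1}e((1,2))$ and $x_{i,1}e(\bu_J)\mapsto -z_{i,1}e(\bu_J)$ for $\bu_J\in\{(1|2),(2|1)\}$. The commutation families hold because every generator in the image acts by polynomial multiplication on $\round\Lambda$, and polynomial multiplications commute. The four split--merge identities reduce to explicit computations of compositions: e.g.\ $\round{\bigcurlywedge}_{(1|2)}^{(1,2)}\round{\bigcurlyvee}_{(1,2)}^{(1|2)}$ sends $f\in\round\Lambda_{(1,2)}$ to $(z_{1,1}-z_{2,1})f\in\round\Lambda_{(1|2)}$ (split) and then back to $(z_{1,1}-z_{2,1})f\in\round\Lambda_{(1,2)}$ (trivial merge), matching $\phi((x_{1,1}-x_{2,1})e((1,2)))$; the sign flip between the two ``through-$(1,2)$'' compositions reflects $\round{\mathtt{E}}_{(1|2)}=-\round{\mathtt{E}}_{(2|1)}$, and the remaining identities are verified analogously. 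Surjectivity of $\phi$ is immediate from Definition~\ref{QS}, and the grading matches: each polynomial generator has degree $2$ and each simple split/merge has degree $1$ via the standard flag-dimension shift on the quiver Schur algebra, consistent with the fact that any split--merge composition multiplies by a degree-$2$ polynomial.

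Injectivity is the final step. Using the commutation families, every path in $B$ can be rewritten with all loops at one end; the split--merge identities then reduce any arrow word of length $\geq 2$ either to a shorter word or to a polynomial. This yields, for each pair $(\bu_K,\bu'_{K'})$ of vertices, a spanning set of $e(\bu'_{K'})Be(\bu_K)$ of the form (polynomial in loops)$\cdot\pi$, where $\pi$ is the unique shortest arrow word between the two vertices, and each such space becomes free of rank one over the polynomial ring. Evaluating these normal forms via $\phi$ on the faithful representation $\round\Lambda$, one observes that each maps to a distinct rank-one $\FF[z_{1,1},z_{2,1}]$-submodule of $\Hom_\FF(\round\Lambda_{\bu_K},\round\Lambda_{\bu'_{K'}})$, realised concretely as multiplication by the appropriate product of Euler classes, and linear independence of polynomial multiplications then gives injectivity. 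The main obstacle is executing the normal-form reduction cleanly and identifying the resulting Hom-spaces with the correct ones in $\bA_\bi$, but the very small size of the quiver makes this a routine combinatorial check.
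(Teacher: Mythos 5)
Your overall strategy --- enumerate the three summands of $\round{\Lambda}$, compute $\round{\mathtt{E}}_{(1|2)}=z_{1,1}-z_{2,1}$, $\round{\mathtt{E}}_{(2|1)}=z_{2,1}-z_{1,1}$, $\round{\mathtt{E}}_{(1,2)}=1$ with all symmetrisers and stabilisers trivial, so that the splits act by multiplication with these Euler classes and the merges act as the identity, then match generators and verify relations on the faithful representation --- is exactly the paper's (very terse) route, and those computations are correct. The gap is in the definition of your map $\phi$: the vertex-dependent sign twist $x_{i,1}e(\bu_J)\mapsto -z_{i,1}e(\bu_J)$ for $\bu_J\in\{(1|2),(2|1)\}$ but $x_{i,1}e((1,2))\mapsto +z_{i,1}e((1,2))$ is incompatible with the commutation relations. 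In $\bA_\bi$ one has, with no sign, $\bigl(z_{i,1}e((1|2))\bigr)\circ\round{\bigcurlyvee}_{(1,2)}^{(1|2)}=\round{\bigcurlyvee}_{(1,2)}^{(1|2)}\circ\bigl(z_{i,1}e((1,2))\bigr)$, since the split is the embedding followed by multiplication with $z_{1,1}-z_{2,1}$. Hence $\phi$ sends the two sides of $x_{i,1}\bigcurlyvee_{(12)}^{(1|2)}=\bigcurlyvee_{(12)}^{(1|2)}x_{i,1}$ to $-z_{i,1}\cdot(\text{split})$ and $+z_{i,1}\cdot(\text{split})$ respectively, which differ in characteristic $3$. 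Your justification (``polynomial multiplications commute'') does not address the actual issue, namely that the loop at the source vertex and the loop at the target vertex must be sent to the \emph{same} operator conjugated through the arrow.

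This is not a slip you can absorb elsewhere, because in $\bA_\bi$ the two composites $\round{\bigcurlyvee}_{(1,2)}^{(1|2)}\circ\round{\bigcurlywedge}_{(1|2)}^{(1,2)}$ and $\round{\bigcurlywedge}_{(1|2)}^{(1,2)}\circ\round{\bigcurlyvee}_{(1,2)}^{(1|2)}$ are both multiplication by the \emph{same} polynomial $z_{1,1}-z_{2,1}$, whereas the first and third printed relations assign them \emph{opposite} signs $(x_{2,1}-x_{1,1})e((1|2))$ and $(x_{1,1}-x_{2,1})e((1,2))$. Combined with the commutation relations, the printed presentation forces $(x_{2,1}-x_{1,1})\bigcurlyvee_{(12)}^{(1|2)}=\bigcurlyvee_{(12)}^{(1|2)}(x_{1,1}-x_{2,1})=(x_{1,1}-x_{2,1})\bigcurlyvee_{(12)}^{(1|2)}$, hence $(x_{1,1}-x_{2,1})\bigcurlyvee_{(12)}^{(1|2)}=0$ and then $(x_{1,1}-x_{2,1})^2e((1,2))=0$ in $B$ when $2$ is invertible; no such relation holds in $\bA_\bi$, whose component $e((1,2))\bA_\bi e((1,2))$ contains a faithful copy of $\FF[z_{1,1},z_{2,1}]$. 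So no choice of signs (or units) on your generators can make all five families of relations hold simultaneously: your task is not to invent a twist but to detect and correct the sign discrepancy in the first two relations (the untwisted identification $x_{i,1}\mapsto z_{i,1}$ at every vertex satisfies $\bigcurlyvee_{(12)}^{(1|2)}\bigcurlywedge_{(1|2)}^{(12)}=(x_{1,1}-x_{2,1})e((1|2))$ and $\bigcurlyvee_{(12)}^{(2|1)}\bigcurlywedge_{(2|1)}^{(12)}=(x_{2,1}-x_{1,1})e((2|1))$, together with the remaining relations as printed). Once that is fixed, your surjectivity observation and your normal-form plus faithfulness argument for injectivity are sound and match the paper's appeal to the basis theorem of \cite{SW}.
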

\begin{remark}
\label{idemp}
Since the algebra is non-negatively graded and $3$-dimensional in degree $0$, the three idempotents $e(1|2)$, $e(2|1)$, and $e(1,2)$  must be primitive. 
 \end{remark}
\proof

By Remark~\ref{idemp}, the given three idempotent are primitive and by definition pairwise orthogonal, hence the quiver has three vertices.
The idempotents together with the elements corresponding to the arrows generate the quiver Schur algebra by Definition~\ref{QS}. The relations are easily verified on the faithful representation from Definition~\ref{QS}. That these are all the relations is again checked by a direct calculation, or follows from the basis theorem in \cite{SW}.
\endproof

\begin{remark}
\label{Heckesub}
Note that the elements $e(1|2)$, $e(2|1)$, $x_{i,1}e(1|2)$, $x_{i,1}e(1|2)$ with $i=1,2$ together with ${{\bigcurlyvee}^{(2|1)}\above -40pt{\bigcurlywedge}_{(1|2)}}=\bigcurlyvee^{(2|1)}_{(1,2)}\bigcurlywedge_{(1|2)}^{(1,2)}$  and ${{\bigcurlyvee}^{(1|2)}\above -30pt{\bigcurlywedge}_{(2|1)}}=\bigcurlyvee^{(1|2)}_{(1,2)}\bigcurlywedge_{(2|1)}^{(1,2)}$ generate a graded subalgebra of $B\cong\bA_\bi$ isomorphic to the quiver Hecke or KLR algebra attached in \cite{KL} and \cite{Rouquier} to the cyclic quiver and the sequence $\bi=(1,2)$.
\end{remark}

From our main theorem we get the following consequence.

\begin{corollary}
Let $\G=\op{GL}_2(\Q_5)$ and assume $\ell=3$, hence $e=2$. Then the category $\cB^1_{\ba}$ of representations in $\cB^1$ with generalised central character $\chi_\ba$ 
 is equivalent to the category of $\hat{B}$-modules, where  $\hat{B}$ is the completion of $B$ at the maximal ideal $(x_{1,1}, x_{2,1})$ of $\FF[x_{1,1}, x_{2,1}]\subset B$. 
\end{corollary}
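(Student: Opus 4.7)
The plan is to assemble a chain of equivalences from results already established in the paper, reducing the statement to the presentation of $\bA_\bi$ given in Theorem~\ref{ThmEx}. First, I would apply the equivalence $\cB^1 \cong \cS\text{-Mod}$ from \eqref{BandS}, which identifies the subcategory $\cB^1_\ba$ with the full subcategory of $\cS$-modules $M$ on which each element is annihilated by a sufficiently large power of $\bm_{\chi_\ba}\subset Z(\cS)$. Since $q=5\equiv 2\pmod 3$ has multiplicative order $e=2$ and $\ba=(q,q^2)$, the central character $\chi_\ba$ is precisely the character $\chi_\bi$ for $\bi=(1,2)$. Modules of this form factor through the completion, so $\cB^1_\ba \simeq \ccS_\bi\text{-Mod}$, by the same direct-limit argument that establishes Proposition~\ref{complS}.

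Next, I would invoke the main Isomorphism Theorem~\ref{IsoTheorem} to pass from $\ccS_\bi\text{-Mod}$ to $\cbA_\bi\text{-Mod}$. It then remains to identify $\cbA_\bi$ explicitly with $\hat B$. The presentation in Theorem~\ref{ThmEx} provides an isomorphism of graded algebras $\bA_\bi\cong B$ sending the polynomial generators $x_{i,1}e(\bu_J)$ to the corresponding loops in the quiver \eqref{quiver}, so it is enough to match the two completions.

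The main obstacle, and the only nontrivial step, is to check that the completion $\cbA_\bi$, which is a priori defined via the image of $\bm_{\chi_\ba}\subset Z(\cS)$ under the chain of isomorphisms assembled in the proof of Theorem~\ref{IsoTheorem}, corresponds under $\bA_\bi\cong B$ to completion at the maximal ideal $(x_{1,1},x_{2,1})\subset\FF[x_{1,1},x_{2,1}]\subset B$. For this one traces through the identification $\gamma$ of Theorem~\ref{heckeklriso}, under which $(X_i-q^{u_i})e_\bu\mapsto -q^{u_i}x_ie(\bu)$, and then through Propositions~\ref{StoQ}, \ref{twistA} and~\ref{shiftA}; since each of these sends polynomial generators to polynomial generators up to an invertible power series, the image of $\bm_{\chi_\ba}$ is the ideal in $\bA_\bi$ generated by the polynomial generators $x_{i,1}e(\bu_J)$, which under Theorem~\ref{ThmEx} is exactly $(x_{1,1},x_{2,1})B$. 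Completing $B$ at this ideal then yields $\hat B$, and composing all the equivalences gives $\cB^1_\ba\simeq \hat B\text{-Mod}$ as required.
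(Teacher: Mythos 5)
Your proposal is correct and follows essentially the same route as the paper's own (very terse) proof: combine the equivalence \eqref{BandS}, the Isomorphism Theorem~\ref{IsoTheorem}, and the presentation of Theorem~\ref{ThmEx}, identifying $\cB^1_\ba$ with modules over the completion $\ccS_\bi$. Your extra step of tracing the ideal $\bm_{\chi_\ba}$ through $\gamma$ and Propositions~\ref{StoQ}, \ref{twistA}, \ref{shiftA} to confirm it lands on $(x_{1,1},x_{2,1})B$ is a detail the paper leaves implicit, and it is carried out correctly.
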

\proof
By Theorem~\ref{IsoTheorem} and Theorem~\ref{ThmEx}, $\hat{B}$ is isomorphic to the completed affine Schur algebra from Proposition~\ref{complS}. Hence it is isomorphic to the completion of the endomorphism ring of a projective progenerator of $\cB^1$ by \eqref{BandS}, the module category over which gives precisely the category of objects in $\cB^1$ with the given generalised central character.
\endproof

\begin{remark}
Since every irreducible representation in $\cB$ is smooth and therefore admissible (see e.g.  \cite[Theorem 4.42]{Blondel} or \cite[Theorem 3.25]{BZ}), it has a central character by Schur's Lemma. The category of objects in $\cB^1$ with some generalised central character thus includes all finite length objects in $\cB^1$.
\end{remark}

Note that $\op{End}_B(Be)\cong\FF[x_1,x_2]$, generated by $ex_{1,1}e$ and $ex_{1,2}e$ for any $e\in\{e(1|2),e(2|1),e(12)\}$. Moreover, $\op{Hom}_B(Be,Be')\cong\FF[x_1,x_2]$ as vector spaces for any pair $(e,e')$ of these idempotents. 
%where exactly one is $e(1,2)$ and the other is $e(1|2)$ or $e(2|1)$. 
It is a free left $\op{End}_B(Be)$-module and a free right $\op{End}_B(Be')$-module of rank $1$ with basis the minimal degree morphism in $\op{Hom}_B(Be,Be')$. Hence $B$ can be viewed as a $\FF[x_1,x_2]$-algebra. As such it is quadratic, i.e. generated in degree one (by the morphisms corresponding to the arrows given by simple merges and splits) with relations in degree two.

\subsection{Indecomposable projectives}
The indecomposable projective $B$-modules $P((1|2))$%, $P((2|1))$ 
and $P((1,2))$ are shown in the pictures below, where the numbers stand for simple objects and the lines for a basis vector in $\operatorname{Ext}^1$.

The part indicated by the non-dashed lines should be extended to infinity at the bottom and then the whole resulting part is copied infinitely many times (indicated by the dashed lines), once for each power of $(x_{1,1}+x_{2,1})$. The structure of $P((2|1))$ is similar to that of ${P((1|2))}$, with $(1|2)$ and $(2|1)$ swapped.
\vspace{0.1cm}

\begin{array}[t]{c|c}
{P((1|2))}&{P((1,2))}\\
\begin{minipage}{6.3cm}
\vspace{0.1cm}
% \hfill\\ \hfill\\ \hfill\\ \hfill\\
\tiny
\scalebox{0.7}{\xymatrix{
&& (1|2)\ar@{-}[d]\ar@{-}[ddl]\\
&& (1,2)\ar@{-}[d]\ar@{-}[dr]\ar@{-}[dr]\ar@{-}[ddl]\\
&(1|2)\ar@{-}[d] \ar@{--}[ddl]&(2|1)\ar@{-}[dr]\ar@{-}[ddl]&(1|2)\ar@{-}[d]\ar@{-}[ddl]\\
& (1,2)\ar@{--}[ddl]\ar@{-}[d]\ar@{-}[dr]\ar@{-}[dr] & &(1,2)\ar@{-}[dr]\ar@{-}[d]\ar@{-}[ddl]\\
& (2|1)\ar@{--}[ddl]\ar@{-}[dr]&(1|2)\ar@{--}[ddl]\ar@{-}[d]&(2|1)\ar@{-}[dr]\ar@{-}[ddl]&(1|2)\ar@{-}[d]\ar@{-}[ddl]\\
 &&(1,2)\ar@{--}[ddl]\ar@{-}[dr]\ar@{-}[d]&&(1,2)\ar@{-}[d]\ar@{-}[dr]\ar@{-}[ddl]\\
&& (2|1)\ar@{--}[ddl]\ar@{-}[dr]&(1|2)\ar@{--}[ddl]\ar@{-}[d]&(2|1)\ar@{-}[dr]\ar@{-}[ddl]&(1|2)\ar@{-}[d]\ar@{-}[ddl]\\
 &&&(1,2)\ar@{--}[ddl]\ar@{-}[d]\ar@{-}[dr]&&\vdots\\
 &&&\vdots&\vdots&\\
 &&&&}
}
\end{minipage}
&
\begin{minipage}{6.5cm}
%\vspace{0.3cm}
%\vspace{0.3cm}
\tiny
\scalebox{0.7}{\xymatrix{
&& (1,2)\ar@{-}[d]\ar@{-}[dr]\ar@{-}[dr]\ar@{-}[ddl]\\
&&(2|1)\ar@{-}[dr]\ar@{-}[ddl]&(1|2)\ar@{-}[d]\ar@{-}[ddl]\\
& (1,2)\ar@{--}[ddl]\ar@{-}[d]\ar@{-}[dr]\ar@{-}[dr] & &(1,2)\ar@{-}[dr]\ar@{-}[d]\ar@{-}[ddl]\\
& (2|1)\ar@{--}[ddl]\ar@{-}[dr]&(1|2)\ar@{--}[ddl]\ar@{-}[d]&(2|1)\ar@{-}[dr]\ar@{-}[ddl]&(1|2)\ar@{-}[d]\ar@{-}[ddl]\\
 &&(1,2)\ar@{--}[ddl]\ar@{-}[dr]\ar@{-}[d]&&(1,2)\ar@{-}[d]\ar@{-}[dr]\ar@{-}[ddl]\\
&& (2|1)\ar@{--}[ddl]\ar@{-}[dr]&(1|2)\ar@{--}[ddl]\ar@{-}[d]&(2|1)\ar@{-}[dr]\ar@{-}[ddl]&(1|2)\ar@{-}[d]\ar@{-}[ddl]\\
 &&&(1,2)\ar@{--}[ddl]\ar@{-}[d]\ar@{-}[dr]&&\vdots\\
 &&&\vdots&\vdots&&&&\\
 &&&&
}}
\end{minipage}
\end{array}
\normalsize
%\vspace{-0.2cm}
\subsection{The corresponding irreducible representations}
The labelling of the primitive idempotents in \eqref{quiver} corresponds to a labelling of the three simple modules in $\cB$. Explicitly, we have  
\begin{itemize}
\item $(1|2)$ (corresponding to the trivial representation), 
\item $(2|1)$  (corresponding to the composition of the valuation on $\Q_5$ and the determinant), and
\item $(1,2)$ (corresponding to the cuspidal representation). 
\end{itemize}
To verify this, that the first two idempotents are contained in the quiver Hecke algebra (see Remark~\ref{Heckesub}), hence correspond to the two non-cuspidal simple representations. For these two the identification is a matter of conventions.

\subsection{The Extquiver of $B$}
\begin{corollary}
\label{ThmExt}
In the situation from above, the Ext-quiver of $B$ is 
%$$\xymatrix{(1|2) \ar@/^/[rr]&&(12)\ar@/^/[ll] \ar@/^/[rr]&&(2|1)\ar@/^/[ll] },$$
% 
% $$\begin{tikzcd}
% (1|2) \ar[rr,"\bigcurlywedge_{(1|2)}^{(12)}", bend left=20]
%  \ar[loop above]{}{x_{1,1}}  \ar[loop below]{}{x_{2,1}} 
% &&(12)\ar[ll,"\bigcurlyvee_{(12)}^{(1|2)}",bend left=20] \ar[rr,"\bigcurlyvee_{(12)}^{(2|1)}",bend left=20] \ar[loop above]{}{x_{1,1}}  \ar[loop below]{}{x_{2,1}} 
% &&(2|1)\ar[ll,"\bigcurlywedge_{(2|1)}^{(12)}",bend left=20] \ar[loop above]{}{x_{1,1}}  \ar[loop below]{}{x_{2,1}} 
% \end{tikzcd}$$
\begin{eqnarray}
\label{quiver2}
\begin{tikzcd}
(1|2)\arrow[bend left]{rr}{\bigcurlywedge_{(1|2)}^{(1,2)}}\ar[loop above]{}{ze(1|2)} 
&&(1,2)\arrow[bend left]{rr}{\bigcurlyvee_{(1,2)}^{(2|1)}}\arrow[bend left]{ll}{\bigcurlyvee_{(1,2)}^{(1|2)}}\ar[loop above]{}{ze(1,2)} 
&&(2|1)\arrow[bend left]{ll}{\bigcurlywedge_{(2|1)}^{(1,2)}} \ar[loop above]{}{ze(2|1)} 
\end{tikzcd}
\end{eqnarray}
and the relations are that $z=ze(1|2)+ze(2|1)+ze(1,2)$ is central and 
\begin{eqnarray*}
\bigcurlywedge_{(1|2)}^{(12)} \bigcurlyvee_{(12)}^{(1|2)}&=&-\bigcurlywedge_{(2|1)}^{(12)}\bigcurlyvee_{(12)}^{(2|1)}\\
\end{eqnarray*}
\end{corollary}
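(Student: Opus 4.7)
\textbf{Proof proposal for Corollary~\ref{ThmExt}.}
The plan is to derive the Ext-quiver presentation directly from the presentation of $B$ given in Theorem~\ref{ThmEx}. First, I would determine the underlying Ext-quiver by computing $e_v(\mathrm{rad}/\mathrm{rad}^2)e_w$ for each ordered pair of primitive idempotents $(e_v,e_w)$ of $B$. Between distinct vertices the count matches the four non-loop arrows of \eqref{quiver} directly, since those arrows contribute linearly independent classes that cannot be written as compositions. At each vertex $v$, the three merge-split relations of Theorem~\ref{ThmEx} show that
\[ (x_{2,1}-x_{1,1})e_v=\pm\,\bigcurlyvee_{(12)}^{\bu_J}\bigcurlywedge_{\bu_J}^{(12)}\quad\text{or}\quad\pm\,\bigcurlywedge_{\bu_J}^{(12)}\bigcurlyvee_{(12)}^{\bu_J},\]
which lies in $\mathrm{rad}^2$. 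Consequently the two loops $x_{1,1}e_v$ and $x_{2,1}e_v$ coincide modulo $\mathrm{rad}^2$, leaving a single loop per vertex in the Ext-quiver and yielding \eqref{quiver2}.

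Next, I would define $z:=x_{1,1}=x_{1,1}e(1|2)+x_{1,1}e(2|1)+x_{1,1}e(1,2)$ as the loop element. Centrality of $z$ in $B$ is exactly the commutativity relations of Theorem~\ref{ThmEx} specialised to $i=1$. The stated arrow relation $\bigcurlywedge_{(1|2)}^{(12)}\bigcurlyvee_{(12)}^{(1|2)}=-\bigcurlywedge_{(2|1)}^{(12)}\bigcurlyvee_{(12)}^{(2|1)}$ is read off directly from the third relation of Theorem~\ref{ThmEx} since both sides equal $(x_{1,1}-x_{2,1})e(1,2)$. The other loop $x_{2,1}e_v$ is then expressed via
\begin{align*}
x_{2,1}e(1|2)&=ze(1|2)+\bigcurlyvee_{(12)}^{(1|2)}\bigcurlywedge_{(1|2)}^{(12)},\\
x_{2,1}e(2|1)&=ze(2|1)-\bigcurlyvee_{(12)}^{(2|1)}\bigcurlywedge_{(2|1)}^{(12)},\\
x_{2,1}e(1,2)&=ze(1,2)-\bigcurlywedge_{(1|2)}^{(12)}\bigcurlyvee_{(12)}^{(1|2)},
\end{align*}
and the alternative expression $ze(1,2)+\bigcurlywedge_{(2|1)}^{(12)}\bigcurlyvee_{(12)}^{(2|1)}$ for the last line is precisely the content of the arrow relation.

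Finally, to show that $B$ is presented by \eqref{quiver2} modulo only the stated relations, I would construct the evident surjective algebra homomorphism $\phi$ from that presentation to $B$ (sending generators to the corresponding elements) and establish injectivity by comparing the graded dimensions of both sides in each degree. On the $B$ side these dimensions are controlled by the basis of Theorem~\ref{ThmEx} together with the isomorphism from Theorem~\ref{IsoTheorem}, while on the path-algebra side one enumerates paths modulo centrality of $z$ and the single arrow relation. I expect the main obstacle to be exactly this injectivity check: subtler identities such as $\bigcurlyvee_{(12)}^{(1|2)}\bigcurlywedge_{(1|2)}^{(12)}\bigcurlyvee_{(12)}^{(1|2)}=0$, which hold in $B$ by combining the full relation set of Theorem~\ref{ThmEx} with the characteristic-$3$ hypothesis (so that $2$ is invertible), must be deduced from (R1) and (R2) alone. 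The key step is to push the arrow relation through the centrality of $z$ in all triple compositions of the form $\bigcurlyvee\bigcurlywedge\bigcurlyvee$, exploiting that the two ways of transporting $z$ across a split-merge pair differ by $2\bigcurlyvee\bigcurlywedge\bigcurlyvee$; combining this with the arrow relation at $(1,2)$ should suffice to collapse the relevant higher-degree components of the path algebra to match those of $B$.
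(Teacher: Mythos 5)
Your core argument is the same as the paper's: the paper's entire proof of this corollary is ``set $z=x_{1,1}+x_{2,1}$'', and your choice $z=x_{1,1}$ works just as well, since the two differ by an element of $\mathrm{rad}^2$, both are central by the commutation relations of Theorem~\ref{ThmEx}, and in either case $x_{1,1}-x_{2,1}$ (hence the missing polynomial generator) is recovered from the split--merge composites because $2$ is invertible in characteristic $3$. Your $\mathrm{rad}/\mathrm{rad}^2$ computation, the resulting quiver, and your explicit formulas for $x_{2,1}e_v$ are all consistent with the printed relations, and you are right that the paper silently skips the completeness-of-relations check.

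The difficulty is in your final paragraph. The identity $\bigcurlyvee_{(12)}^{(1|2)}\bigcurlywedge_{(1|2)}^{(12)}\bigcurlyvee_{(12)}^{(1|2)}=0$ is \emph{false} in $\bA_\bi$: on the faithful representation of Definition~\ref{QS} both merges here act as the identity (all stabilizers are trivial) and both splits act as multiplication by $z_{1,1}-z_{2,1}$, so this composite sends $f$ to $(z_{1,1}-z_{2,1})^2f\neq 0$; its vanishing would also contradict the freeness of $\Hom_B(Be,Be')$ over $\FF[x_1,x_2]$ asserted right after the corollary and the shape of the displayed projectives. Your derivation of it from the relations of Theorem~\ref{ThmEx} \emph{as printed} is formally correct, which means you have actually uncovered a sign inconsistency there: the faithful representation gives $\bigcurlyvee_{(12)}^{(1|2)}\bigcurlywedge_{(1|2)}^{(12)}=(x_{1,1}-x_{2,1})e((1|2))$, i.e.\ the \emph{same} sign as $\bigcurlywedge_{(1|2)}^{(12)}\bigcurlyvee_{(12)}^{(1|2)}$ on $e((1,2))$, not the opposite one. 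Once the signs are consistent, $w=x_{1,1}-x_{2,1}$ is, at each vertex, the same composite of arrows read from either end, so its commutation with every split and merge is a formal consequence of the two relations of the corollary; the injectivity check then reduces to the graded dimension count you set up (every path collapses to $z^aw^b$ times a minimal path), and no relation of the form $\bigcurlyvee\bigcurlywedge\bigcurlyvee=0$ is needed or derivable. As written, your proposed ``key step'' cannot be carried out, and pursuing it would only propagate the typo rather than complete the proof.
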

\proof
This follows directly from the theorem by setting $z=x_{1,1}+x_{2,1}$.
\endproof  
In this example one can in fact verify our general expectation that $\cB$ only differs from $\cB^1$ by self-extensions of the simple cuspidal representation, and thus $\cB^1$ contains more or less all information about the unipotent block $\cB$.
\section{The special case $q=1$.}
\label{Appendix}
We finally consider the special case where $q=1$, hence $e=1$. Then $\cH=\FF[\W]$ is the group algebra of the extended affine Weyl group, \eqref{W}. We identify the representative $1\in \mZ/1\mZ$ with the single vertex of the {\it Jordan quiver} $\Gamma_1$ which has one vertex and one loop. The underlying graph is the Dynkin diagram of the Borcherds algebra attached to the Borcherds-Cartan matrix $(0)$. 

Fix a dimension vector ${\bf d}=n\in\mZ_{>0}$. Then ${\op{Rep}_{\bf d}}={\op{Rep}_{n}}$ denotes the set of complex representations $(V,x)$ of  $\Gamma_1$. For a composition $\la$ of $n$ let $\cF_\la$ be the variety of flags $F_\bullet$ in $\mathbb{C}^n$ of type $\la$, i.e. $\op{dim}F_i/F_{i-1}=\la_i$. 

Let $\cQ(\la)\subset{\op{Rep}_{n}}\times\cF_\la$ be the space of strictly stable flags, that means pairs $((V,x),F_\bullet)$ such that $x(F)_i\subset F_{i-1}$.  For compositions $\la$, $\mu$ of $n$ we consider the {\it Steinberg type variety}  $\mathcal{Z}({\la, \mu})=\cQ(\la) \times_{\op{Rep}_{n}}\cQ(\mu)$. The {\it quiver Schur algebra} $\bA_{\bf{d}}=\bA_{\bf{n}}$ attached to $\Gamma_1$ is then the direct sum of the $\op{GL}_{n}(\mathbb{C})$-equivariant Borel-Moore homologies 
%\begin{eqnarray}
%\label{quivSchurq1geom}
$\bA_{n}=\bigoplus_{(\la,\mu)}H_{\op{GL}_{n}}^{\op{BM}}(\mathcal{Z}({\la, \mu})),$
%\end{eqnarray}
equipped with the convolution product.  By construction, this algebra $\bA_{n}$ comes along with {\it a $\mathbb{Z}$-grading} and with a faithful representation, see \cite[(2.7), Proposition 2.9]{SW}, \cite[Proposition 2.7]{KK}.  The subspace
 \begin{eqnarray*}
\bR_n&=&H_{\op{GL}_{n}}^{\op{BM}}(\mathcal{Z}({(1,1,\ldots 1), (1,1,\ldots 1)})),
\end{eqnarray*}
is a subalgebra which we call the {\it quiver Hecke algebra} of rank $n$ attached to $\Gamma_1$. An explicit description of this algebra was given in \cite[Definition 1.2]{KK}.

\begin{lemma} \label{klrisoe1}
 The following holds for $\bR_n$.  
\begin{enumerate}
\item It is free over $H_{\op{GL}_{n}}^{BM}(\op{pt})=\mathbb{C}[x_1,\ldots, x_n]$ of rank $n$.
\item There is an isomorphism  $\bR_n[x_1^{-1},\ldots x_n^{-1}]/(x_ix_i^{-1}-1,x_i^{-1}x_i-1) \cong\mathbb{C}[\W]$ of graded algebras with $X_i$ of degree two and $s_i$ of degree zero.
\item There is an isomorphism between the completion $\hat\bR_n$ at the ideal generated by the positive degree polynomials and the completion of $\mathbb{C}[\W]$ at the ideal generated by the central character associated to the $n$-tuple $(1,\dots,1)$.
\item  $\bR_n$ has a faithful representation on $\mathbb{C}[x_1,\ldots, x_n]$ such that $x_i$ acts by multiplication  and $\psi_i$ acts by the Demazure operator \eqref{Demazure} followed by multiplication with $x_{i+1}-x_i$. 
\end{enumerate}
\end{lemma}

\begin{proof}
By \cite[Definition 1.2 and Theorem 2.8]{KK}, the assignments $1\mapsto e(1,1,\ldots, 1)$, $s_i\mapsto (\psi_i+1)e(1,1,\ldots, 1)$, $X_j\mapsto x_j$, for $1\leq i\leq n-1$ and $1\leq j\leq n$ defines an isomorphism between the quiver Hecke algebra $\bR_n$ and the subalgebra  $\mathbb{C}[S_n]\otimes\mathbb{C}[X_1,X_2,\ldots X_n]$ of $\mathbb{C}[\W]$ with the choice $P_1(u,v)=u-v$ and $Q_{1,1}(u,v)=0$ in the notation of \cite{KK}. Then the first two statements follow. The third statement follows immediately from the second. The last statement is a special case of \cite[Proposition 1.5]{KK}.
\end{proof}

The faithful representation above extends to a faithful representation of $\hat\bR_n$ on $\mathbb{C}[[x_1,\ldots, x_n]]$, which matches the completion of the faithful (natural) representation of $\mathbb{C}[\W]$ on $\mathbb{C}[[x_1^{\pm1},\ldots, x_n^{\pm1}]]$.
Since both of the faithful representations can be defined over $\mathbb{Z}$, the isomorphism of Lemma 
\ref{klrisoe1} is still valid over $\FF$.

Computations analogous to those in \cite{SW} show that the action of $\bA_n$ on its faithful representation can again be defined over $\mathbb{Z}$ and hence over $\FF$, where a presentation by generators and relations is given precisely as in the case of $e>1$ (except that Euler classes are now taken with respect to the Jordan quiver). Defining $\hat \bA_n$ as before and letting $\hat\cS_{q=1}$ be the completion at the ideal generated by the central character associated to the $n$-tuple $(1,\dots,1)$, we obtain the following theorem.

\begin{theorem}
There are algebra isomorphisms 
%$\bA_{n}[x_1^{-1},\ldots x_n^{-1}]/(x_ix_i^{-1}-1, x_i^{-1}x_i-1)\cong \cS_{q=1}$
%between $\bA_{n}[x_1^{-1},\ldots x_n^{-1}]/(x_ix_i^{-1}-1, x_i^{-1}x_i-1) and  $\cS_{q=1}$ 
\begin{eqnarray*}
\bA_{n}[x_1^{-1},\ldots x_n^{-1}]/(x_ix_i^{-1}-1, x_i^{-1}x_i-1) \cong \cS_{q=1} &\text{and}&  \hat \bA_n \cong\hat\cS_{q=1}.
\end{eqnarray*}
\end{theorem}
\proof
Defining the modified quiver Schur algebra $\bQ_n$ as before, the isomorphism between $\bQ_n[x_1^{-1},\ldots x_n^{-1}]/(x_ix_i^{-1}-1, x_i^{-1}x_i-1)$  and $\cS_{q=1}$ simply identifies the corresponding faithful representations. Indeed, for $J\subset K$ the split gets identified with the element $\bb_{J,K}^1$ which acts as the identity on the faithful representation, and the merge is identified with $\bb_{K,J}^1$ which acts as the sum of elements in $D^K_{\emptyset,J}$.
In order to obtain the isomorphism between the completions, we use an affine shift sending $x_i$ to $x_i-1$.
The isomorphism between the modified quiver Schur algebra and $\bA_n$ is proved exactly as in Section~ \ref{mainiso}.
\endproof

\newpage
\end{document}